\documentclass[a4paper,dvipdfmx]{amsart}
\usepackage{amssymb}
\usepackage{mathrsfs}
\usepackage{stmaryrd}
\usepackage{verbatim}
\usepackage{epic}
\usepackage{eepic}
\usepackage{hyperref}
\usepackage{graphicx}
\usepackage{tikz}
\usetikzlibrary{intersections,calc,arrows.meta,patterns,cd}
\definecolor{refkey}{rgb}{1,0,0}
\definecolor{labelkey}{rgb}{0,0,1}
\sloppy
\theoremstyle{plain}
  \newtheorem{thm}{Theorem}[section]
  \newtheorem{lem}[thm]{Lemma}
  \newtheorem{cor}[thm]{Corollary}
  \newtheorem{prop}[thm]{Proposition}
  \newtheorem{conj}[thm]{Conjecture}

  \newtheorem*{obs*}{Observation}
\theoremstyle{definition}
  \newtheorem{defn}[thm]{Definition}

\theoremstyle{remark}
  \newtheorem{rem}[thm]{Remark}

\newcommand{\Z}{\mathbb{Z}}
\newcommand{\C}{\mathbb{C}}
\newcommand{\R}{\mathbb{R}}
\newcommand{\Q}{\mathbb{Q}}
\newcommand{\Vol}{\operatorname{Vol}}

\newcommand{\CS}{\operatorname{CS}}
\newcommand{\Li}{\operatorname{Li}}
\newcommand{\Hom}{\operatorname{Hom}}
\renewcommand{\L}{\mathcal{L}}
\newcommand{\cs}{\mathbf{cs}}
\newcommand{\cv}{\operatorname{cv}}

\newcommand{\SL}{\rm{SL}}

\newcommand{\Khyp}{\mathscr{H}}
\newcommand{\arccosh}{\operatorname{arccosh}}

\newcommand{\floor}[1]{\lfloor#1\rfloor}
\newcommand{\pic}[2]{\raisebox{-0.5\height}{\includegraphics[scale=#1]{#2.eps}}}
\renewcommand{\Re}{\operatorname{Re}}
\renewcommand{\Im}{\operatorname{Im}}

\newcommand{\Int}{\operatorname{Int}}
\newcommand{\FE}{\mathscr{E}}
\newcommand{\St}{\mathscr{S}}
\renewcommand{\i}{\sqrt{-1}}
\newcommand{\saddle}{\sigma}
\makeatletter
\newcommand{\oset}[3][0ex]{%
  \mathrel{\mathop{#3}\limits^{
    \vbox to#1{\kern-1\ex@
    \hbox{$\scriptstyle#2$}\vss}}}}
\makeatother
\newcommand{\Rpath}{\oset{\frown}{\mathbb{R}}}
%
\renewcommand{\theenumi}{\roman{enumi}}
\numberwithin{equation}{section}
\hyphenation{Hi-kami Mura-kami Yoko-ta Oh-tsuki}
\allowdisplaybreaks
\begin{document}
\title[The asymptotic behaviors of the colored Jones polynomials]
{The asymptotic behaviors of the colored Jones polynomials of the figure eight-knot, and an affine representation}
\author{Hitoshi Murakami}
\address{
Graduate School of Information Sciences,
Tohoku University,
Aramaki-aza-Aoba 6-3-09, Aoba-ku,
Sendai 980-8579, Japan}
\email{hitoshi@tohoku.ac.jp}
\date{\today}
\begin{abstract}
We study the asymptotic behavior of the $N$-dimensional colored Jones polynomial of the figure-eight knot evaluated at $\exp\bigl((\kappa+2p\pi\i/N\bigr)$, where $\kappa:=\arccosh(3/2)$ and $p$ is a positive integer.
We can prove that it grows exponentially with growth rate determined by the Chern--Simons invariant of an affine representation from the fundamental group of the knot complement to the Lie group $\SL(2;\C)$.
\end{abstract}
\keywords{colored Jones polynomial, figure-eight knot, volume conjecture, Chern--Simons invariant, affine representation}
\subjclass{Primary 57M27 57M25 57M50}
\thanks{The author is supported by JSPS KAKENHI Grant Numbers JP20K03601, JP22H01117, JP20K03931.}
\maketitle
\section{Introduction}\label{sec:intro}
For a knot $K$ in the three-dimensional sphere $S^3$ and a positive integer $N$, let $J_N(K;q)$ be the $N$-dimensional colored Jones polynomial associated with the $N$-dimensional irreducible representation of the Lie algebra $\mathfrak{sl}(2;\C)$, where we normalize it as $J_N(U;q)=1$ for the unknot $U$, and when $N=2$, it satisfies the following skein relation:
\begin{equation*}
  qJ_2\left(\raisebox{0.5\height}{\pic{0.1}{positive}};q\right)
  -
  q^{-1}J_2\left(\raisebox{0.5\height}{\pic{0.1}{negative}};q\right)
  =
  (q^{1/2}-q^{-1/2})J_2\left(\raisebox{0.5\height}{\pic{0.1}{null}};q\right).
\end{equation*}
\par
If we replace $q$ with $e^{2\pi\i/N}$, we obtain a complex number $J_N(K;e^{2\pi\i/N})$, which is known as the Kashaev invariant \cite{Kashaev:MODPLA95,Murakami/Murakami:ACTAM12001}.
The volume conjecture \cite{Kashaev:LETMP97,Murakami/Murakami:ACTAM12001} states that the series $\{J_N(K;e^{2\pi\i/N})\}_{N=1,2,3,\dots}$ would grow exponentially with growth rate proportional to the simplicial volume $\Vol(K)$ of $S^3\setminus{K}$.
Here the simplicial volume is also known as the Gromov norm \cite{Gromov:INSHE82}.
It coincides with the hyperbolic volume divided by the volume $v_3$ of the ideal regular hyperbolic tetrahedron if the knot is hyperbolic, that is, its complement $S^3\setminus{K}$ processes a complete hyperbolic structure with finite volume.
If the knot is not hyperbolic, then the simplicial volume times $v_3$ is the sum of the hyperbolic volumes of the hyperbolic pieces of $S^3\setminus{K}$ after the Jaco--Shalen--Johannson decomposition \cite{Jaco/Shalen:MEMAM1979,Johannson:1979}.
\begin{conj}[Volume conjecture]
For any knot $K$ in $S^3$, we have
\begin{equation*}
  \lim_{N\to\infty}
  \frac{\log\left|J_N(K;e^{2\pi\i/N})\right|}{N}
  =
  \frac{\Vol(K)}{2\pi}.
\end{equation*}
\end{conj}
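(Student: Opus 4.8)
The plan is to reduce the conjecture to the case of hyperbolic knots and then to extract the growth rate from an integral presentation of the Kashaev invariant by the saddle point method.

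\textbf{Step 1 (geometric reduction).} Both sides are compatible with the Jaco--Shalen--Johannson decomposition. On the left, $J_N(K_1\#K_2;q)=J_N(K_1;q)\,J_N(K_2;q)$, and a satellite formula expresses the colored Jones polynomial of a satellite in terms of those of the companion and the pattern; on the right, $\Vol$ is additive under connected sum and behaves predictably under satellite operations. Seifert fibered pieces contribute $0$ to $\Vol(K)$, and for torus knots one checks directly, from the known closed form of $J_N$, that $\log\bigl|J_N(K;e^{2\pi\i/N})\bigr|=O(\log N)$. Hence it suffices to prove the limit for hyperbolic $K$.

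\textbf{Step 2 (integral presentation).} For a hyperbolic knot fix an ideal triangulation $\mathcal T$ of $S^3\setminus K$ with $n$ tetrahedra. Using the quantum-dilogarithm state-sum model of the Kashaev invariant (equivalently the Andersen--Kashaev TQFT), rewrite $J_N(K;e^{2\pi\i/N})$, up to an explicit elementary prefactor, as a contour integral
\[
  \int_{\Gamma}\exp\bigl(N\,\Phi_{\mathcal T}(\mathbf z)\bigr)\,\psi_N(\mathbf z)\,d\mathbf z,
\]
where $\Gamma\subset\C^{\,n-1}$ is the cycle dictated by the $R$-matrix, the leading potential $\Phi_{\mathcal T}$ is assembled from the dilogarithm $\Li_2$, and $\psi_N$ gathers subexponential factors coming from Faddeev's quantum dilogarithm, uniformly controlled away from its poles.

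\textbf{Step 3 (saddle point analysis).} The critical equations $\partial\Phi_{\mathcal T}/\partial z_j=0$ are the exponentiated Thurston gluing equations for $\mathcal T$; the solution corresponding to the complete hyperbolic structure is a nondegenerate critical point of $\Phi_{\mathcal T}$, the nondegeneracy being exactly the nonvanishing of the relevant Neumann--Zagier determinant, and at it one has $\Re\,\Phi_{\mathcal T}=\Vol(K)/(2\pi)$ (the imaginary part recovering the Chern--Simons invariant $\CS(K)$). Deforming $\Gamma$ to a steepest descent cycle through this saddle and applying the standard multidimensional stationary phase expansion then gives $\log\bigl|J_N(K;e^{2\pi\i/N})\bigr|/N\to\Vol(K)/(2\pi)$.

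\textbf{The main obstacle.} The crux is the \emph{global} contour analysis in Step 3. One must show that the state-sum cycle $\Gamma$ can be homotoped, within the region where the quantum-dilogarithm estimates hold and without crossing its poles, onto a union of steepest descent cycles dominated by the \emph{geometric} saddle, and that no non-geometric solution of the gluing equations (nor any contribution from the boundary of the integration domain) has strictly larger real part. Carrying this out \emph{uniformly over all hyperbolic knots} --- as opposed to knot by knot, where the integral is low-dimensional and the saddle can be pinned down explicitly, as in the known cases of the figure-eight knot and of twist and double twist knots --- is the step that remains genuinely open, and it is the reason the conjecture is still a conjecture; the present paper's analysis of the figure-eight knot at the deformed parameter $\exp\bigl(\kappa+2p\pi\i/N\bigr)$ is precisely a controlled instance of this circle of ideas.
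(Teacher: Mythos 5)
The statement you are asked about is the Volume Conjecture itself: it is stated in the paper as an open conjecture, and the paper offers no proof of it (nor does anyone else --- the paper only proves asymptotics for the figure-eight knot at the deformed parameter $e^{(\kappa+2p\pi\i)/N}$, and cites proofs of related conjectures for a handful of specific knots). Your text is therefore not, and cannot be graded as, a proof; it is an outline of the well-known Kashaev--Yokota--Ohtsuki strategy, and you yourself concede in the final paragraph that the decisive step is open. That concession is accurate, and it is exactly where the argument fails: in Step 3 nothing guarantees that the state-sum cycle $\Gamma$ can be deformed, inside the region where the quantum-dilogarithm estimates are valid and without crossing poles, onto steepest-descent cycles on which the geometric saddle dominates, nor that non-geometric solutions of the gluing equations or boundary strata of the integration domain do not contribute a strictly larger exponential rate. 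This is not a uniformity-over-all-knots technicality; it is unresolved even for most individual hyperbolic knots, which is why the known cases (figure-eight, knots with at most seven crossings, twist knots) are each handled by ad hoc contour and positivity arguments of the kind carried out in Sections~\ref{sec:Poisson} and \ref{sec:saddle} of this paper.

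Step 1 also contains an unproved reduction. While $J_N(K_1\# K_2;q)=J_N(K_1;q)J_N(K_2;q)$ and the torus-knot case (polynomial growth) are fine, the claim that a satellite formula reduces the conjecture to the hyperbolic JSJ pieces is not established: the cabling formula mixes colors and, evaluated at the fixed root $e^{2\pi\i/N}$, it is not known that the Kashaev invariant of a satellite grows with rate given by the sum of the volumes of the hyperbolic pieces (this is itself a form of the conjecture for non-hyperbolic knots). So even granting Step 3 for hyperbolic knots, the general statement would not follow from what you wrote. In short: the proposal correctly identifies the standard approach and honestly flags the gap, but the gap is genuine and the statement remains a conjecture, exactly as the paper presents it.
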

The volume conjecture has been generalized in various ways.
It can be complexified as follows \cite{Murakami/Murakami/Okamoto/Takata/Yokota:EXPMA02}.
Let $\Khyp\subset S^3$ be a hyperbolic knot, and $\cv(\Khyp):=\i\Vol(\Khyp)-2\pi^2\CS^{\rm{SO(3)}}(\Khyp)$ be the complex volume of $S^{3}\setminus\Khyp$, where $\CS^{\rm{SO(3)}}(\Khyp)\pmod{\pi^2}$ is the Chern--Simons invariant of the Levi--Civita connection of $S^3\setminus\Khyp$ associated with the complete hyperbolic structure.
\begin{conj}[Complexification of the volume conjecture]
For any hyperbolic knot $\Khyp$ in $S^3$, we have
\begin{equation*}
  \lim_{N\to\infty}
  \frac{\log J_N(\Khyp;e^{2\pi\i/N})}{N}
  =
  \frac{\cv(\Khyp)}{2\pi\i}.
\end{equation*}
\end{conj}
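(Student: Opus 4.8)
The plan is to follow the ``saddle-point plus quantum dilogarithm'' strategy that is known to work for the figure-eight knot and a handful of other examples, while noting that for a general hyperbolic knot $\Khyp$ this conjecture is still open; what follows is the line of attack one would attempt. The first step is to fix a concrete finite presentation of $J_N(\Khyp;e^{2\pi\i/N})$ with an explicit summand. For many knots this is the Kashaev state sum built from the quantum $R$-matrix, and for the figure-eight knot it collapses to the one-line Habiro formula whose summand is the single product of quantum factorials $\prod_{j=1}^{k}\{j\}$ with $\{m\}:=e^{\pi\i m/N}-e^{-\pi\i m/N}$.

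Next I would use the asymptotic expansion of the quantum dilogarithm --- equivalently, Euler--Maclaurin summation applied to $\sum\log\{j\}$ --- to rewrite such a product, to leading order, in the form $\exp\!\bigl(\tfrac{N}{2\pi\i}\,\Phi(z)+O(\log N)\bigr)$, where $z$ is the continuum limit of the summation index divided by $N$ and $\Phi$ is a potential function built from the Euler dilogarithm $\Li_2$. Poisson summation then replaces the discrete sum by an integral $\int\exp\!\bigl(\tfrac{N}{2\pi\i}\Phi(z)\bigr)\,dz$ up to controllably small tails, with the lattice shifts produced by Poisson summation contributing further explicit integrals that have to be compared against the main one.

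I would then run the method of steepest descent, deforming the contour through the critical points of $\Phi$, i.e.\ the solutions of $\exp\!\bigl(\Phi'(z)\bigr)=1$. These are exactly Thurston's gluing equations for an ideal triangulation of $S^3\setminus\Khyp$, so one of the critical points $z_0$ is the discrete faithful (geometric) solution, and at $z_0$ the Neumann--Zagier/Yoshida dilogarithm identity evaluates the critical value to $\Phi(z_0)=\i\Vol(\Khyp)-2\pi^2\CS^{\rm{SO(3)}}(\Khyp)=\cv(\Khyp)$, modulo the $\pi^2\Z$- and $(2\pi\i)^2\Z$-ambiguities coming from the choice of dilogarithm branches and from $\CS^{\rm{SO(3)}}$ being defined only mod $\pi^2$. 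A stationary-phase evaluation at $z_0$ then gives $\tfrac1N\log J_N(\Khyp;e^{2\pi\i/N})\to\tfrac{1}{2\pi\i}\Phi(z_0)=\tfrac{\cv(\Khyp)}{2\pi\i}$.

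The genuinely hard part --- and the reason the conjecture remains open in general --- is the last step: proving that the geometric critical point is the \emph{dominant} saddle, that the original contour can be pushed onto the steepest-descent path through $z_0$ without a competing saddle of larger $\Re\bigl(\Phi/(2\pi\i)\bigr)$ obstructing the deformation, and that the errors accumulated in the Euler--Maclaurin and Poisson steps really are $o(N)$ uniformly along the contours in play. This requires global control of the geometry of $\Phi$, which at present must be done knot by knot; for the figure-eight knot it reduces to an explicit one-variable analysis, and it is precisely this philosophy --- carried out at the parameter $q=\exp\!\bigl(\kappa+2p\pi\i/N\bigr)$, with an affine $\SL(2;\C)$-representation replacing the holonomy of the complete structure --- that the present paper develops.
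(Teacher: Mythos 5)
The statement you were asked to prove is labelled as a \emph{conjecture} in the paper, and the paper offers no proof of it: the complexified volume conjecture for an arbitrary hyperbolic knot is open, and the paper only attributes partial results (the figure-eight knot and hyperbolic knots with at most seven crossings, for the refined version) to other work. What the paper actually proves is Theorem~\ref{thm:main}, an asymptotic formula for $J_N(\FE;e^{\xi/N})$ at the shifted parameter $\xi=\kappa+2p\pi\i$, not the conjecture at $q=e^{2\pi\i/N}$ for general $\Khyp$. So there is nothing in the paper against which your argument can be checked as a proof.

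Your proposal is an accurate description of the standard strategy (quantum dilogarithm asymptotics, Poisson summation, saddle point method, identification of critical points with solutions of the gluing equations, and evaluation of the critical value as the complex volume), and this is indeed the skeleton the paper uses for its own theorem. But as a proof of the stated conjecture it contains a genuine gap, which you yourself flag: for a general hyperbolic knot one must show that the geometric critical point is the dominant saddle, that the integration contour can be deformed onto the steepest-descent path through it without being obstructed by competing saddles or branch cuts of the potential, and that the Euler--Maclaurin and Poisson errors are uniformly $o(N)$ along the deformed contours. None of these steps is carried out, and at present they can only be done knot by knot with explicit global control of $\Re\Phi$ (as the paper does, laboriously, for the figure-eight knot via Lemmas~\ref{lem:Poisson_Phi} and \ref{lem:saddle_Phi}). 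In addition, even the first step --- a state-sum or Habiro-type formula with a single well-controlled potential function --- is not available in a uniform way for all hyperbolic knots. So the proposal is a correct roadmap but not a proof; the conjecture remains open, and your write-up should present it as such rather than as a proof attempt.
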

The volume conjecture and its complexification can be refined as follows \cite{Gukov:COMMP2005} (see also \cite{Gukov/Murakami:FIC2008,Dimofte/Gukov/Lenells/Zagier:CNTP2010,Ohtsuki:QT2016}):
\begin{conj}[Refined volume conjecture]
Let $\Khyp\subset S^3$ be a hyperbolic knot.
Then the following asymptotic equivalence holds.
\begin{equation*}
  J_N(\Khyp;e^{2\pi\i/N})
  \underset{N\to\infty}{\sim}
  \left(\frac{T(\Khyp)}{2\i}\right)^{1/2}N^{3/2}
  \exp\left(\frac{\cv(\Khyp)}{2\pi\i}N\right),
\end{equation*}
where $F(N)\underset{N\to\infty}{\sim}G(N)$ means $\lim_{N\to\infty}\frac{F(N)}{G(N)}=1$, and $T(\Khyp)$ is the adjoint {\rm(}cohomological{\rm)} Reidemeister torsion twisted by the holonomy representation $\rho_0\colon\pi_1(S^3\setminus{\Khyp})\to\SL(2;\C)$.
\end{conj}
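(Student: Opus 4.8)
The general hyperbolic case is out of reach, so I would aim at the case $\Khyp=4_1$ of the figure-eight knot --- the knot studied in this paper --- which also exhibits the mechanism behind the known proofs and the scheme we shall adapt for the non-standard evaluation $e^{\kappa+2p\pi\i/N}$. The plan is to run a \emph{saddle-point analysis} on Habiro's cyclotomic formula
\begin{equation*}
  J_N(4_1;q)=\sum_{k=0}^{N-1}\prod_{j=1}^{k}\bigl(q^{(N-j)/2}-q^{-(N-j)/2}\bigr)\bigl(q^{(N+j)/2}-q^{-(N+j)/2}\bigr),
\end{equation*}
which at $q=e^{2\pi\i/N}$ collapses to the finite sum of positive reals $\sum_{k=0}^{N-1}\prod_{j=1}^{k}\bigl(2\sin(j\pi/N)\bigr)^{2}$. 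First I would rewrite the logarithm of the $k$-th summand, by Euler--Maclaurin summation --- equivalently by the asymptotic expansion of Faddeev's quantum dilogarithm --- as $N\,V(k/N)+\tfrac12\log\!\bigl(\text{something algebraic in }e^{2\pi\i k/N}\bigr)+O(1/N)$, where the \emph{potential} $V$ is, up to an overall sign, the holomorphic function
\begin{equation*}
  V(z)=\frac{1}{2\pi\i}\Bigl(\Li_2\bigl(e^{-2\pi\i z}\bigr)-\Li_2\bigl(e^{2\pi\i z}\bigr)\Bigr)
\end{equation*}
on a neighbourhood of the segment $(0,1)\subset\R$. The factor $e^{NV}$ carries the exponential growth, while the subleading half-logarithm together with the normalizations is what will eventually produce the power $N^{3/2}$ and the torsion prefactor.

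Next the plan is to convert the finite sum into a contour integral. Poisson summation writes $J_N(4_1;e^{2\pi\i/N})$ as $\sum_{m\in\Z}\int e^{N V_m(z)}w_m(z)\,dz$ with $V_m(z)=V(z)+2\pi\i m z$, and one checks that only finitely many $m$ matter at the dominant order. The method of steepest descent then applies: the relevant critical point $z_0$ solves $V_m'(z_0)=0$, i.e. $\bigl(1-e^{2\pi\i z_0}\bigr)\bigl(1-e^{-2\pi\i z_0}\bigr)=1$, whose roots are $e^{2\pi\i z_0}=e^{\pm\pi\i/3}$; this is exactly the Neumann--Zagier gluing equation of the standard two-ideal-tetrahedron triangulation of $S^3\setminus4_1$, and the root giving positive volume is the shape of the regular ideal tetrahedron, hence corresponds to the complete hyperbolic structure and to the holonomy $\rho_0$. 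Substituting back identifies the critical value $N V_m(z_0)$ with $\tfrac{\cv(4_1)}{2\pi\i}N$ through the classical relation between the Bloch--Wigner dilogarithm at the shape parameter and the complex volume: its imaginary part yields $\Vol(4_1)/(2\pi)$ and its real part the Chern--Simons term, which is trivial for the figure-eight knot.

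Finally I would carry out the Gaussian expansion about $z_0$: putting $z=z_0+u/\sqrt{N}$ and integrating $e^{-\tfrac{N}{2}V_m''(z_0)u^{2}}$ contributes a factor proportional to $N^{-1/2}\bigl(-V_m''(z_0)\bigr)^{-1/2}$; for the figure-eight one computes $|V_m''(z_0)|=2\pi\sqrt{3}$, so this is a universal constant times $3^{-1/4}N^{-1/2}$. Combining it with the prefactor $w_m$ and the subleading half-logarithm collected in the first step gives the asserted shape $\bigl(\,\cdot\,\bigr)^{1/2}N^{3/2}\exp\bigl(\tfrac{\cv(4_1)}{2\pi\i}N\bigr)$, and the constant is matched by identifying, through the Neumann--Zagier symplectic/Hessian formalism, the reciprocal of $V_m''(z_0)$ --- up to elementary universal factors --- with the adjoint Reidemeister torsion $T(4_1)$ twisted by $\rho_0$; for the figure-eight this reduces to the direct check that $T(4_1)$ is, up to a unit, $1/\sqrt{-3}$, which is consistent with $\bigl(T(4_1)/2\i\bigr)^{1/2}$ being $3^{-1/4}$ up to a unit.

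\textbf{Main obstacle.} The hard part is not this formal bookkeeping but the \emph{rigorous error control}. One must deform the summation contour to a genuine steepest-descent path through $z_0$ while (i) proving that $\Re V_m$ is strictly less than $\Re V_m(z_0)$ everywhere else along it --- a global analysis of the Morse landscape of $\Re V_m$ on the relevant domain --- so that the remaining critical points and the whole tail of the sum are exponentially subdominant, and (ii) handling the endpoints $z=0,1$, where $\Li_2$ has logarithmic branch points and the Euler--Maclaurin / quantum-dilogarithm approximation of the summand degenerates, which forces one to invoke the precise asymptotics of Faddeev's quantum dilogarithm with uniform remainder estimates. Step (i) is the technical heart of the argument, and it is exactly the part that has to be redone, with a new potential and a new critical point, for the evaluation $e^{\kappa+2p\pi\i/N}$ treated in the body of this paper.
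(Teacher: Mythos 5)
The statement you are asked about is a \emph{conjecture}: the paper states it without proof, notes that it is known only for the figure-eight knot and for hyperbolic knots with at most seven crossings (Andersen--Hansen, Ohtsuki, Ohtsuki--Yokota), and in the body proves something different (the asymptotics at $e^{(\kappa+2p\pi\i)/N}$). So there is no proof in the paper to compare against, and your restriction to $\Khyp=\FE$ cannot, even if completed, establish the statement as formulated for all hyperbolic knots.

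For the figure-eight case your outline is the standard and essentially correct strategy, and it is the same machinery this paper deploys in Sections 2--5 for the shifted parameter: Habiro's formula, approximation of the summand by a (quantum) dilogarithm potential, Poisson summation to pass to an integral, and steepest descent at the geometric critical point. Your identifications check out: the critical point equation $(1-e^{2\pi\i z_0})(1-e^{-2\pi\i z_0})=1$ gives $e^{2\pi\i z_0}=e^{\pm\pi\i/3}$, $|V''(z_0)|=2\pi\sqrt{3}$, and with $T_{0}(\FE)=2/\sqrt{-3}$ one indeed gets $\bigl(T(\FE)/2\i\bigr)^{1/2}=3^{-1/4}$, consistent with the conjectured constant. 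But as a proof the proposal is incomplete precisely where you say it is: the two obstacles you flag --- the global Morse-theoretic control of $\Re V_m$ along a deformed contour so that the tails and the other critical points are exponentially subdominant, and the uniform remainder estimates for the quantum dilogarithm near the endpoints --- are not side issues but the entire technical content of the Andersen--Hansen and Ohtsuki proofs (and of Sections 2--5 and Appendices A--B here, where Lemma~\ref{lem:Poisson_Phi} and Propositions~\ref{prop:Poisson} and \ref{prop:saddle} do exactly this work). One further caution: at $q=e^{2\pi\i/N}$ the saddle is nondegenerate, so your Gaussian $N^{-1/2}$ step is appropriate there, but it is exactly this step that fails at $u=\kappa$, where $F''(\saddle_0)=0$ and the paper must use the order-two saddle point method producing $N^{-1/3}$ instead; your sketch should not be read as transferring to the setting of Theorem~\ref{thm:main} without that modification.
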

The refined volume conjecture has been proved for the figure-eight knot \cite{Andersen/Hansen:JKNOT2006}, and hyperbolic knots with as most seven crossings \cite{Ohtsuki:QT2016,Ohtsuki/Yokota:MATPC2018,Ohtsuki:INTJM62017}.
\par
We can also generalize the refined volume conjecture by replacing $2\pi\i$ in $e^{2\pi\i/N}$ with a complex number.
\par
Let $\rho_u\colon\pi_1(S^3\setminus\Khyp)\to\SL(2;\C)$ be an irreducible representation, which is a small deformation of the holonomy representation $\rho_0$.
Then it defines an incomplete hyperbolic structure of $S^3\setminus\Khyp$.
Up to conjugation, we can assume that $\rho_u$ sends the meridian (preferred longitude, respectively) of $\Khyp$ to $\begin{pmatrix}e^{u/2}&\ast\\0&e^{-u/2}\end{pmatrix}$ ($\begin{pmatrix}e^{v(u)/2}&\ast\\0&e^{-v(u)/2}\end{pmatrix}$, respectively).
Then we can define the cohomological adjoint Reidemeister torsion $T_{u}(\Khyp)$ \cite{Porti:MAMCAU1997}, and the Chern--Simons invariant $\CS_{u,v(u)}(\rho_u)$ \cite{Kirk/Klassen:COMMP1993}.
\par
The following conjecture was proposed in \cite{Murakami:JTOP2013} (see also \cite{Gukov/Murakami:FIC2008,Dimofte/Gukov:Columbia}).
\begin{conj}[Generalized volume conjecture]\label{conj:GVC}
For a hyperbolic knot $\Khyp$, there exists a neighbourhood $U\in\C$ of $0$ such that if $u\in U\setminus\pi\i\Q$, then the following asymptotic equivalence holds.
\begin{multline*}
  J_N(\Khyp;e^{(u+2\pi\i)/N})
  \\
  \underset{N\to\infty}{\sim}
  \frac{\sqrt{-\pi}}{2\sinh(u/2)}T_{u}(\Khyp)^{1/2}
  \left(\frac{N}{u+2\pi\i}\right)^{1/2}
  \exp\left(\frac{S_{u}(\Khyp)}{u+2\pi\i}N\right),
\end{multline*}
where $S_{u}(\Khyp):=\CS_{u,v(u)}(\rho_u)+u\pi\i+uv(u)/4$.
\end{conj}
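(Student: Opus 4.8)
The plan is to prove Conjecture~\ref{conj:GVC} by passing from the finite sum defining $J_N(\Khyp;e^{(u+2\pi\i)/N})$ to a finite-dimensional contour integral carrying the exponentially large parameter $N/(u+2\pi\i)$, and then running a rigorous saddle-point analysis. First I would fix an ideal triangulation $\mathcal T$ of $S^3\setminus\Khyp$ with $k$ tetrahedra (obtained, say, from a bridge presentation of $\Khyp$), write the $R$-matrix state sum for $J_N$ in a plat form, and replace each weight by Faddeev's quantum dilogarithm $\Phi_\hbar$ with $\hbar$ specialized so that $e^{2\pi\i\hbar}=e^{(u+2\pi\i)/N}$. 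A Gaussian/Fourier rearrangement followed by Poisson summation (or the Abel--Plana formula) converts the sum into an integral $\int_{\Gamma}\exp\!\bigl(\tfrac{N}{u+2\pi\i}V_{\mathcal T}(\mathbf z;u)+O(1)\bigr)\,d\mathbf z$ over a suitable contour $\Gamma\subset\C^{k}$, where the potential $V_{\mathcal T}(\cdot;u)$ is an explicit sum of dilogarithms whose critical-point equations are Thurston's gluing equations for $\mathcal T$, deformed by the meridian holonomy $e^{u}$.

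\textbf{The dominant saddle (main obstacle).} The critical points of $V_{\mathcal T}(\cdot;u)$ correspond, for generic $u$, to the $u$-deformed boundary-unipotent $\SL(2;\C)$ characters of $\pi_1(S^3\setminus\Khyp)$ carried by $\mathcal T$; among them is the geometric solution $\mathbf z_\ast(u)$ coming from the deformation $\rho_u$ of the holonomy. The crux is to show that for $u$ in a small punctured neighbourhood of $0$ the contour $\Gamma$ can be deformed, without crossing singularities of $\Phi_\hbar$, to a steepest-descent contour on which $\Re\bigl(\tfrac{1}{u+2\pi\i}V_{\mathcal T}(\mathbf z;u)\bigr)$ attains its maximum at $\mathbf z_\ast(u)$, strictly larger than at every other critical point. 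This is the step I expect to be the main obstacle: for a general hyperbolic knot there is no a priori control over the ``non-geometric'' branches, and the required global contour deformation is exactly the difficulty that leaves the ordinary volume conjecture open. My approach would be a continuity/compactness argument anchored at $u=0$, where dominance of the geometric branch is established in the known cases, combined with a convexity property of $\Re V_{\mathcal T}$ along deformations of $\Gamma$ through the space of angle structures in the spirit of Garoufalidis--Kashaev and Ohtsuki; absent such a structural input the conclusion would remain conditional on this contour statement.

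\textbf{Identifying the asymptotics.} Granting the contour statement, steepest descent gives $J_N\sim A(u)\,(N/(u+2\pi\i))^{1/2}\exp\!\bigl(\tfrac{N}{u+2\pi\i}V_{\mathcal T}(\mathbf z_\ast(u);u)\bigr)$. To identify the exponent I would use Neumann's combinatorial formula for the complex volume and its $u$-deformed versions (Kirk--Klassen, Zickert, Dimofte--Gukov), which yield $V_{\mathcal T}(\mathbf z_\ast(u);u)=\CS_{u,v(u)}(\rho_u)+u\pi\i+uv(u)/4=S_u(\Khyp)$ up to the usual $\pi^2$-type ambiguity; the ambiguity is pinned down by continuity at $u=0$ against the known value $\cv(\Khyp)$, and the correction terms $u\pi\i$ and $uv(u)/4$ record the difference between the meridian/longitude normalization hard-wired in the $R$-matrix and that used to define $\CS_{u,v(u)}$. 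Finally, the transverse Gaussian integral at $\mathbf z_\ast(u)$, together with the Jacobians accumulated in the Poisson-summation and Gaussian-reduction steps, produces the prefactor; the required algebraic input is the Hessian--torsion identity (Ohtsuki; Dimofte--Garoufalidis; Gang--Kim--Yoon) expressing $\det\operatorname{Hess}V_{\mathcal T}$ at $\mathbf z_\ast(u)$, up to an explicit monomial in the shape parameters, as the inverse of the adjoint Reidemeister torsion normalized along the meridian, while the factor $1/\bigl(2\sinh(u/2)\bigr)=1/(e^{u/2}-e^{-u/2})$ is precisely the change-of-basis term relating that normalization to Porti's $T_u(\Khyp)$. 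Matching powers of $N$ then forces $A(u)=\tfrac{\sqrt{-\pi}}{2\sinh(u/2)}T_u(\Khyp)^{1/2}$, and assembling the pieces yields the asserted asymptotic equivalence, the one genuinely open ingredient being the dominance/contour statement of the second step.
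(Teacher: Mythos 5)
The statement you are asked to prove is stated in the paper as a \emph{conjecture}: the paper offers no proof of Conjecture~\ref{conj:GVC} for a general hyperbolic knot, and explicitly records that it is known only for the figure-eight knot (and, for $0<u<\kappa$, by the author's earlier work), the body of the paper being devoted to the single boundary case $u=\kappa$ for $\FE$, where $T_u(\FE)$ blows up and the saddle degenerates to order two. Your proposal does not close this gap: the step you yourself flag as the ``main obstacle'' --- deforming the contour $\Gamma$ to a steepest-descent contour on which the geometric critical point $\mathbf z_\ast(u)$ strictly dominates all other critical points, uniformly for $u$ in a punctured neighbourhood of $0$ and for an \emph{arbitrary} hyperbolic knot --- is precisely the open content of the conjecture, and you leave it as an unproved hypothesis (``conditional on this contour statement''). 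A continuity/compactness argument ``anchored at $u=0$'' cannot supply it: dominance of the geometric branch at $u=0$ is itself unknown in general (it is the volume conjecture), and the asymptotics change character at $u=0$ (the prefactor $1/\bigl(2\sinh(u/2)\bigr)$ diverges and the power of $N$ jumps from $1/2$ to $3/2$), so the limit point you propose to anchor to does not even exhibit the behaviour you want to propagate.

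The later steps are also not available in the generality you invoke. The identification of the critical value of the state-sum potential with $S_u(\Khyp)=\CS_{u,v(u)}(\rho_u)+u\pi\i+uv(u)/4$, and the Hessian--torsion identity giving $\det\operatorname{Hess}V_{\mathcal T}$ in terms of Porti's $T_u$, are established only for particular knots or particular triangulations/state sums, not for every ideal triangulation of every hyperbolic knot complement; and the construction of the integral representation at the shifted parameter $e^{(u+2\pi\i)/N}$ with uniform error control is itself nontrivial (the present paper needs a bespoke extension of the quantum dilogarithm's domain just to treat $\FE$ at $u=\kappa$). So what you have written is a reasonable program consistent with how the known special cases are proved, but it is not a proof of the statement, and indeed no proof exists to compare it with: the conjecture remains open.
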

The generalized volume has been proved just for the figure-eight knot \cite{Murakami/Yokota:JREIA2007}.
The asymptotic equivalence in Conjecture~\ref{conj:GVC} was also proved in the case where $0<u<\kappa:=\arccosh(3/2)$ in \cite{Murakami:JTOP2013}.
\par
In the previous paper \cite{Murakami:CANJM2023}, the author proved the following theorem generalizing the result in \cite{Murakami:JTOP2013}.
\begin{thm}\label{thm:u_p}
Let $\FE$ be the figure-eight knot.
For a real number $u$ with $0<u<\kappa$ and a positive integer $p$, we have
\begin{equation*}
\begin{split}
  &J_N(\FE;e^{(u+2p\pi\i)/N})
  \\
  \underset{N\to\infty}{\sim}&
  J_{p}(\FE;e^{4N\pi^2/(u+2p\pi\i)})
  \\
  &\times
  \frac{\sqrt{-\pi}}{2\sinh(u/2)}T_{u}(\FE)^{1/2}
  \left(\frac{N}{u+2p\pi\i}\right)^{1/2}
  \exp\left(\frac{S_{u}(\FE)}{u+2p\pi\i}N\right).
\end{split}
\end{equation*}
\end{thm}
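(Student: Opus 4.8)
The plan is to start from the standard formula for the colored Jones polynomial of the figure-eight knot,
\begin{equation*}
  J_N(\FE;q)=\sum_{k=0}^{N-1}\prod_{j=1}^{k}\bigl(q^{(N-j)/2}-q^{-(N-j)/2}\bigr)\bigl(q^{(N+j)/2}-q^{-(N+j)/2}\bigr),
\end{equation*}
and to specialise $q:=e^{\xi/N}$ with $\xi:=u+2p\pi\i$. Since $q^{N}=e^{u}$ and $q^{N/2}=(-1)^{p}e^{u/2}$, the sign factors cancel in pairs and a short computation gives $J_N(\FE;e^{\xi/N})=\sum_{k=0}^{N-1}g_N(k)$ with
\begin{equation*}
  g_N(k)=\prod_{j=1}^{k}\Bigl(2\cosh u-2\cosh\tfrac{\xi j}{N}\Bigr)=(-1)^{k}\prod_{j=1}^{k}(1-e^{u}q^{-j})(1-e^{-u}q^{-j}).
\end{equation*}
From here the scheme is the familiar one of \cite{Andersen/Hansen:JKNOT2006,Ohtsuki:QT2016} and of the author's earlier work \cite{Murakami:JTOP2013}: rewrite $g_N(k)$ in terms of a quantum dilogarithm, convert $\sum_{k}g_N(k)$ into a contour integral, and evaluate it by the saddle-point method. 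The one genuinely new ingredient, responsible for the prefactor $J_p(\FE;e^{4N\pi^{2}/(u+2p\pi\i)})$, will come from the modular behaviour of the quantum dilogarithm.

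First I would write $g_N(k)$ as a ratio of infinite products of the form $(x;q^{-1})_{\infty}=\prod_{i\ge 0}(1-xq^{-1-i})$, which converge because $|q^{-1}|=e^{-u/N}<1$, and then feed in the asymptotic expansion of $(x;q^{-1})_{\infty}$ as $q^{-1}=e^{-\xi/N}\to1$. This produces
\begin{equation*}
  g_N(k)=\exp\!\Bigl(\tfrac{N}{\xi}\,\Phi_u\!\bigl(\tfrac{\xi k}{N}\bigr)+O(1)\Bigr)\times(\text{modular-dual correction}),
\end{equation*}
where $\Phi_u(z)=\Li_2(e^{u-z})+\Li_2(e^{-u-z})+(\text{elementary terms})$ is the potential function and the correction factor is governed by the modular-dual parameter $\tilde q:=e^{4\pi^{2}N/\xi}=e^{4N\pi^{2}/(u+2p\pi\i)}$ via the quantum-modularity property of the quantum dilogarithm (Faddeev--Kashaev; cf.\ \cite{Dimofte/Gukov/Lenells/Zagier:CNTP2010}). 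Next I would convert $\sum_{k=0}^{N-1}g_N(k)$ into a contour integral by Poisson summation (or the Abel--Plana formula). As $k$ runs over $\{0,1,\dots,N\}$ the quantity $\Im(\xi k/N)$ sweeps the interval $[0,2p\pi]$, and exactly $p$ of the resulting pieces --- $p$ Fourier modes, equivalently $p$ residues of the meromorphic integrand --- turn out to be of the dominant order, the others being exponentially smaller and discarded. The step I expect to be the main obstacle is to carry the modular-dual correction through these $p$ pieces and to recognise that their combined contribution factors as $J_p(\FE;\tilde q)$ times a single saddle-point integral with phase $\tfrac{N}{\xi}\Phi_u$; this amounts to identifying an accumulated product of $\tilde q$-factors with the $p$-term cyclotomic sum defining $J_p(\FE;\tilde q)$, while keeping all error terms $o(1)$ uniformly despite $|\tilde q|\to\infty$.

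For the remaining integral the classical analysis is essentially that of \cite{Murakami:JTOP2013}, since the $p$-dependence has been absorbed into $\xi$ and into the $\tilde q$-factor. I would locate the relevant critical point of $\Phi_u$: for $0<u<\kappa$ the equation $\Phi_u'(z)=0$ has a distinguished root $z_0=z_0(u)$, namely the shape parameter of the incomplete hyperbolic structure on $S^{3}\setminus\FE$ determined by $\rho_u$; one then checks that the steepest-descent contour can be pushed through $z_0$ without meeting singularities of $\Phi_u$ and that the endpoints $k\in\{0,N-1\}$ contribute negligibly. The critical value gives the exponential factor: using the identification of $\Phi_u(z_0)$ with the (suitably normalised) complex Chern--Simons invariant (the Neumann--Zagier symplectic formalism together with \cite{Kirk/Klassen:COMMP1993,Murakami:JTOP2013}), $\tfrac{N}{\xi}\Phi_u(z_0)$ becomes $\tfrac{S_u(\FE)}{u+2p\pi\i}N$ with $S_u(\FE)=\CS_{u,v(u)}(\rho_u)+u\pi\i+uv(u)/4$. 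Finally I would extract the quadratic term of the steepest-descent expansion, which contributes $\sqrt{2\pi/(-\Phi_u''(z_0))}\cdot\sqrt{N/\xi}$ together with the $O(1)$ subleading terms of the quantum-dilogarithm expansion; computing $\Phi_u''(z_0)$ and identifying the resulting constant with the adjoint Reidemeister torsion $T_u(\FE)$ twisted by $\rho_u$ (as in \cite{Murakami:JTOP2013}, building on \cite{Porti:MAMCAU1997}) turns this prefactor into $\frac{\sqrt{-\pi}}{2\sinh(u/2)}\,T_u(\FE)^{1/2}\bigl(N/(u+2p\pi\i)\bigr)^{1/2}$. Multiplying the three pieces --- the $J_p$-factor, the exponential, and the $N^{1/2}$-prefactor --- and checking that every discarded term is of smaller order than the product yields the asserted asymptotic equivalence; aside from the $J_p$-identification above, the remaining difficulties are the uniform control of the sum-to-integral passage and of the tails, and the verification that the contour deformation remains valid throughout $0<u<\kappa$.
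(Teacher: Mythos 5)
Your outline follows essentially the same route as the author's proof (which is actually carried out in the cited previous paper \cite{Murakami:CANJM2023}; the present paper only quotes Theorem~\ref{thm:u_p} and redoes the analogous argument for the degenerate case $u=\kappa$): Habiro--Le formula, splitting the $k$-sum into $p$ blocks according to $\Im(\xi k/N)$, the quasi-periodicity (modularity) of the quantum dilogarithm producing the prefactors $\beta_{p,m}$ in the dual variable $e^{4\pi^2N/\xi}$ whose sum is exactly $J_p\bigl(\FE;e^{4\pi^2N/\xi}\bigr)$, Poisson summation to pass to integrals, and an order-one saddle point evaluation identifying the critical value with $S_u(\FE)$ and the Hessian with $T_u(\FE)$. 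The plan is correct in structure and correctly locates the origin of every factor in the asserted asymptotics, though, as you acknowledge, the uniform error control and contour justifications are left as the (substantial) technical work.
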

Note that in the case of the figure-eight knot, we have
\begin{align*}
  T_{u}(\FE)
  &=
  \frac{2}{\sqrt{(2\cosh{u}+1)(2\cosh{u}-3)}},
  \\
  S_{u}(\FE)
  &=
  \Li_2\left(e^{-u-\varphi(u)}\right)
  -
  \Li_2\left(e^{-u+\varphi(u)}\right)
  +
  u\bigl(\varphi(u)+2\pi\i\bigr),
\end{align*}
where we put
\begin{equation*}
  \varphi(u)
  :=
  \log
  \left(
    \cosh{u}-\frac{1}{2}
    -
    \frac{1}{2}\sqrt{(2\cosh{u}+1)(2\cosh{u}-3)}
  \right)
\end{equation*}
and $\Li_2(z):=-\int_{0}^{z}\frac{\log(1-x)}{x}\,dx$ is the dilogarithm function.
\par
So it is impossible to extend Theorem~\ref{thm:u_p} to the case where $u=\kappa$ because $T_u(\FE)$ is not defined.
Topologically/geometrically speaking, the corresponding hyperbolic structure of the figure-eight knot complement collapses at $u=\kappa$.
\par
On the other hand, for the figure-eight knot, we have the following theorems.
\begin{thm}[\cite{Murakami:JPJGT2007}]
If $\zeta\in\C$ satisfies the inequality $|\cosh\zeta-1|<1/2$ and $|\Im\zeta|<\pi/3$, then we have
\begin{equation*}
  \lim_{N\to\infty}
  J_N(\FE;e^{\zeta/N})
  =
  \frac{1}{\Delta(\FE;e^{\zeta})},
\end{equation*}
where $\Delta(K;t)$ is the Alexander polynomial of a knot $K$.
\end{thm}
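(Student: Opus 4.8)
The plan is to work directly from Habiro's cyclotomic expansion of the colored Jones polynomial of the figure-eight knot,
\begin{equation*}
  J_N(\FE;q)
  =
  \sum_{k=0}^{N-1}
  \prod_{j=1}^{k}
  \bigl(q^{(N+j)/2}-q^{-(N+j)/2}\bigr)
  \bigl(q^{(N-j)/2}-q^{-(N-j)/2}\bigr),
\end{equation*}
and to set $q=e^{\zeta/N}$. Each factor becomes $4\sinh\!\bigl(\tfrac{(N+j)\zeta}{2N}\bigr)\sinh\!\bigl(\tfrac{(N-j)\zeta}{2N}\bigr)$, and since the two arguments sum to $\zeta$ and differ by $j\zeta/N$, the product-to-sum identity turns it into $2\cosh\zeta-2\cosh(j\zeta/N)$; hence
\begin{equation*}
  J_N\bigl(\FE;e^{\zeta/N}\bigr)=\sum_{k=0}^{N-1}a_{N,k},
  \qquad
  a_{N,k}:=\prod_{j=1}^{k}\Bigl(2\cosh\zeta-2\cosh\tfrac{j\zeta}{N}\Bigr).
\end{equation*}
For each fixed $k$, $\cosh(j\zeta/N)\to1$ as $N\to\infty$ for $1\le j\le k$, so $a_{N,k}\to(2\cosh\zeta-2)^{k}$; and as $|2\cosh\zeta-2|=2|\cosh\zeta-1|<1$, the geometric series $\sum_{k\ge0}(2\cosh\zeta-2)^{k}$ converges to $\frac{1}{1-(2\cosh\zeta-2)}=\frac{1}{3-2\cosh\zeta}$. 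Since the Alexander polynomial of the figure-eight knot is $\Delta(\FE;t)=-t+3-t^{-1}$, this equals $\frac{1}{\Delta(\FE;e^{\zeta})}$. So it remains only to interchange $\lim_{N\to\infty}$ with the sum over $k$.

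The crucial ingredient --- and the only place the two hypotheses on $\zeta$ enter --- is the uniform estimate
\begin{equation*}
  \bigl|2\cosh\zeta-2\cosh(s\zeta)\bigr|\le 2\,|\cosh\zeta-1|
  \qquad(0\le s\le1),
\end{equation*}
which gives $|a_{N,k}|\le\bigl(2|\cosh\zeta-1|\bigr)^{k}$ for all $N$ and all $k$ (setting $a_{N,k}:=0$ for $k\ge N$). The dominating sequence $k\mapsto(2|\cosh\zeta-1|)^{k}$ is summable and independent of $N$, so dominated convergence gives $\lim_{N\to\infty}J_N(\FE;e^{\zeta/N})=\sum_{k\ge0}(2\cosh\zeta-2)^{k}=\frac{1}{\Delta(\FE;e^{\zeta})}$. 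To prove the estimate I would write $\zeta=x+\i\,y$ and combine the factorisation $\cosh\zeta-\cosh(s\zeta)=-2\sinh\tfrac{(1+s)\zeta}{2}\sinh\tfrac{(1-s)\zeta}{2}$ with $|\sinh(a+\i\,b)|^{2}=\sinh^{2}a+\sin^{2}b$; this rewrites the square of the left-hand side as $Q(1+s)\,Q(1-s)$, where $Q(w):=\cosh(wx)-\cos(wy)$ is positive on $(0,2]$ and satisfies $Q(1)=|\cosh\zeta-1|$. The estimate is then precisely the concavity of $\log Q$ on $(0,2]$ (which yields $\log Q(1+s)+\log Q(1-s)\le2\log Q(1)$), and a short computation shows this concavity is equivalent to $\Re\!\bigl(\overline{\zeta}^2(\cosh(w\zeta)-1)\bigr)\ge0$, i.e.\ to $\bigl|\arg\sinh(w\zeta/2)-\arg\zeta\bigr|<\pi/4$ for $0<w\le2$.

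This last inequality is the main obstacle. The region $\{\,\zeta:|\cosh\zeta-1|<1/2\,\}$ has infinitely many connected components --- along the imaginary axis the condition reads $\cos(\Im\zeta)>1/2$, which holds on $\bigcup_{m\in\Z}(2m\pi-\pi/3,\,2m\pi+\pi/3)$ --- and imposing $|\Im\zeta|<\pi/3$ confines $\zeta$ to the bounded component containing the origin, on which $|\zeta|$ is small. There, $\arg\sinh(w\zeta/2)-\arg\zeta$ vanishes at $w=0$ and is increasing in $w$: after a short computation its $w$-derivative has the same sign as $\tfrac{y}{\sin(wy)}-\tfrac{x}{\sinh(wx)}$, which is positive because $\sin t<t<\sinh t$ for $t>0$. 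Hence it suffices to check that its value at $w=2$, namely $\arg\sinh\zeta-\arg\zeta$, is less than $\pi/4$ on that component, and this follows from $|\cosh\zeta-1|<1/2$. Granting the estimate, the termwise limit, the uniform geometric bound, dominated convergence, summation of the geometric series, and the identity $3-2\cosh\zeta=\Delta(\FE;e^{\zeta})$ combine to give $\lim_{N\to\infty}J_N(\FE;e^{\zeta/N})=1/\Delta(\FE;e^{\zeta})$; everything but the concavity estimate is formal.
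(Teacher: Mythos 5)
The paper does not prove this theorem --- it is quoted from \cite{Murakami:JPJGT2007} --- so there is no internal proof to compare against; your argument is in fact the standard route to this result (Habiro's formula, the rewriting of each summand as $\prod_{j=1}^{k}\bigl(2\cosh\zeta-2\cosh(j\zeta/N)\bigr)$, a uniform geometric bound, and Tannery's theorem), and I believe it is correct. The reductions all check out: $Q(w)=\cosh(wx)-\cos(wy)=2\bigl|\sinh(w\zeta/2)\bigr|^2$, so $\bigl|\cosh\zeta-\cosh(s\zeta)\bigr|^2=Q(1+s)Q(1-s)$ and $Q(1)=|\cosh\zeta-1|$; midpoint concavity of $\log Q$ gives the key bound; $(\log Q)''(w)=-\tfrac12\Re\bigl(\zeta^2/\sinh^2(w\zeta/2)\bigr)$, so concavity is indeed equivalent to $\bigl|\arg\bigl(\sinh(w\zeta/2)/\zeta\bigr)\bigr|\le\pi/4$; and the monotonicity computation (the derivative equals $\bigl(y\sinh(wx)-x\sin(wy)\bigr)/\bigl(2Q(w)\bigr)$, positive for $x,y>0$ by $\sin t<t<\sinh t$) correctly reduces everything to the single endpoint inequality at $w=2$. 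The only place you assert rather than prove is precisely that endpoint claim, ``$\arg\sinh\zeta-\arg\zeta<\pi/4$ follows from $|\cosh\zeta-1|<1/2$''; this does not follow from the modulus condition alone and needs the confinement to the component through $0$. To close it: on that component a Lagrange-multiplier argument on $\sinh^2(x/2)+\sin^2(y/2)=1/4$ (using $x/\sinh x<1<y/\sin y$) shows $|\zeta|\le\pi/3$, whence
\begin{equation*}
  \left|\frac{\sinh\zeta}{\zeta}-1\right|
  \le\sum_{n\ge1}\frac{|\zeta|^{2n}}{(2n+1)!}
  \le\frac{\sinh(\pi/3)}{\pi/3}-1<0.2,
\end{equation*}
so $\bigl|\arg(\sinh\zeta/\zeta)\bigr|\le\arcsin(0.2)<\pi/4$ with room to spare. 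With that supplement the proof is complete.
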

\begin{thm}[\cite{Hikami/Murakami:COMCM2008}]
If $\zeta=\kappa$, then the colored Jones polynomial $J_N(\FE;e^{\kappa/N})$ grows polynomially.
More precisely, we have
\begin{equation*}
  J_N\left(\FE;e^{\kappa/N}\right)
  \underset{N\to\infty}{\sim}
  \frac{\Gamma(1/3)}{3^{2/3}}\left(\frac{N}{\kappa}\right)^{2/3},
\end{equation*}
where $\Gamma(x)$ is the gamma function.
\end{thm}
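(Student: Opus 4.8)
The plan is to work directly from Habiro's cyclotomic expansion of the colored Jones polynomial of the figure-eight knot,
\[
  J_N(\FE;q)
  =
  \sum_{k=0}^{N-1}
  \prod_{j=1}^{k}
  \bigl(q^{(N-j)/2}-q^{-(N-j)/2}\bigr)\bigl(q^{(N+j)/2}-q^{-(N+j)/2}\bigr),
\]
and to put $q=e^{\kappa/N}$. Expanding each factor gives $q^{N}+q^{-N}-q^{j}-q^{-j}$, and since $q^{N}+q^{-N}=2\cosh\kappa=3$ by the definition $\kappa=\arccosh(3/2)$, the colored Jones polynomial becomes a finite sum of \emph{positive} real numbers,
\[
  J_N\bigl(\FE;e^{\kappa/N}\bigr)
  =
  \sum_{k=0}^{N-1}a_{N,k},
  \qquad
  a_{N,k}:=\prod_{j=1}^{k}\Bigl(3-2\cosh\tfrac{j\kappa}{N}\Bigr),
\]
positivity holding because $\cosh(j\kappa/N)<\cosh\kappa=3/2$ for $0\le j\le N-1$. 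The question is thereby reduced to a real, one-variable asymptotic analysis of this sum, with no oscillation to handle.

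The engine is a clean uniform upper bound. From $\cosh y\ge 1+y^{2}/2$ and $1-t\le e^{-t}$ one gets, for $1\le j\le N-1$,
\[
  0<3-2\cosh\tfrac{j\kappa}{N}\le 1-\bigl(\tfrac{j\kappa}{N}\bigr)^{2}\le\exp\!\Bigl(-\bigl(\tfrac{j\kappa}{N}\bigr)^{2}\Bigr),
\]
so $a_{N,k}\le\exp\!\bigl(-\tfrac{\kappa^{2}}{N^{2}}\sum_{j=1}^{k}j^{2}\bigr)\le\exp\!\bigl(-\tfrac{\kappa^{2}k^{3}}{3N^{2}}\bigr)$, using $\sum_{j=1}^{k}j^{2}\ge k^{3}/3$. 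This shows $a_{N,k}$ is negligible once $k\gg N^{2/3}$, so the sum is carried by the window where $k$ has order $N^{2/3}$. On that window the arguments $j\kappa/N$ are uniformly $O(N^{-1/3})$, so $\log\bigl(3-2\cosh y\bigr)=-y^{2}+O(y^{4})$ applies and gives, for $k=\floor{N^{2/3}x}$ with $x>0$ fixed,
\[
  \log a_{N,k}
  =
  -\frac{\kappa^{2}}{N^{2}}\sum_{j\le N^{2/3}x}j^{2}
  +O\!\Bigl(\frac{\kappa^{4}}{N^{4}}\sum_{j\le N^{2/3}x}j^{4}\Bigr)
  \longrightarrow
  -\frac{\kappa^{2}x^{3}}{3}
\]
as $N\to\infty$, the error being $O(x^{5}N^{-2/3})$.

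Next I would turn the sum into an integral by dominated convergence. Writing $N^{-2/3}J_N\bigl(\FE;e^{\kappa/N}\bigr)=\int_{0}^{N^{1/3}}a_{N,\floor{N^{2/3}x}}\,dx$, the integrand tends pointwise to $e^{-\kappa^{2}x^{3}/3}$ by the previous step, while the uniform bound yields $a_{N,\floor{N^{2/3}x}}\le\exp(-\kappa^{2}x^{3}/24)$ once $N^{2/3}x\ge 2$, an $N$-independent integrable majorant; the leftover slab $x\le 2N^{-2/3}$ contributes at most $2N^{-2/3}\to0$. Hence $N^{-2/3}J_N\bigl(\FE;e^{\kappa/N}\bigr)\to\int_{0}^{\infty}e^{-\kappa^{2}x^{3}/3}\,dx$, and the substitution $u=\kappa^{2}x^{3}/3$ evaluates this as $\tfrac13\bigl(3/\kappa^{2}\bigr)^{1/3}\Gamma(1/3)=\Gamma(1/3)/(3^{2/3}\kappa^{2/3})$. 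Multiplying back by $N^{2/3}$ yields $J_N(\FE;e^{\kappa/N})\sim\frac{\Gamma(1/3)}{3^{2/3}}\bigl(N/\kappa\bigr)^{2/3}$, which is the assertion.

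The hard part will be exactly this passage from the sum to the integral: one must check that the $O(y^{4})$ remainder in $\log(3-2\cosh y)$ is controlled \emph{uniformly} over the block of indices that carries the mass — it is, since that block only sees $y=O(N^{-1/3})$ — and that the discretization error is genuinely of lower order (here an Euler--Maclaurin estimate with an explicit remainder is an alternative to dominated convergence). The uniform bound $a_{N,k}\le\exp(-\kappa^{2}k^{3}/(3N^{2}))$ is what makes this routine, since it simultaneously furnishes the integrable majorant and shows directly that ranges such as $k\gg N^{2/3}\log N$ — in particular $k$ of order $N$, where the local expansion of $\log(3-2\cosh y)$ fails — contribute nothing in the limit. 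Heuristically, the cubic exponent $\kappa^{2}x^{3}/3$, and with it the powers $N^{2/3}$ and the factor $\Gamma(1/3)$, is the signature of the collapse of the hyperbolic structure at $u=\kappa$: the two critical points governing Theorem~\ref{thm:u_p} for $u<\kappa$ have merged, so the Gaussian saddle degenerates.
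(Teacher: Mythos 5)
Your argument is correct, and it takes a genuinely different and more elementary route than the one behind the cited result (and than the machinery this paper deploys for $\xi=\kappa+2p\pi\i$). The key observation you exploit is special to $p=0$: at $q=e^{\kappa/N}$ each Habiro summand $\prod_{j=1}^{k}\bigl(3-2\cosh(j\kappa/N)\bigr)$ is a \emph{positive real} number, so there is no oscillation and the whole problem reduces to a real Laplace-type analysis. Your uniform bound $a_{N,k}\le\exp\bigl(-\kappa^2k^3/(3N^2)\bigr)$ localizes the mass at $k\asymp N^{2/3}$, the local expansion $\log(3-2\cosh y)=-y^2+O(y^4)$ is uniform there, and dominated convergence plus the substitution $u=\kappa^2x^3/3$ produces exactly $\Gamma(1/3)/(3^{2/3}\kappa^{2/3})$; I checked the constants and they match. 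By contrast, the approach of \cite{Hikami/Murakami:COMCM2008} and of the present paper replaces the product by a quantum dilogarithm, passes from the sum to a contour integral (here via the Poisson summation formula of Proposition~\ref{prop:Poisson}), and then applies a degenerate saddle-point method of order two (Proposition~\ref{prop:saddle}); the cubic exponent $-\kappa^2x^3/3$ in your integrand is precisely the real trace of the vanishing of $F''$ at the saddle in \eqref{eq:F_derivatives}. What your method buys is brevity and complete elementarity; what it gives up is any hope of generalization: for $p\ge1$ the summands acquire phases $e^{-2pk\pi\i}\cdot(\cdots)$ evaluated at non-integer arguments, positivity is lost, and one genuinely needs the Poisson-summation/saddle-point apparatus of Sections~\ref{sec:Poisson} and \ref{sec:saddle}. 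One cosmetic point: in the dominated-convergence step you should note explicitly that $a_{N,0}=1$ bounds all $a_{N,k}$ (each factor lies in $(0,1)$), which is what you implicitly use to dispose of the slab $x\le 2N^{-2/3}$.
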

In this paper, we will extend Theorem~\ref{thm:u_p} to the case $u=\kappa$.
\begin{thm}\label{thm:main}
Let $\FE$ be the figure-eight knot, and $\xi:=\kappa+2p\pi\i$ with $\kappa:=\arccosh(3/2)$ and $p$ a positive integer.
Then we have the following asymptotic equivalence.
\begin{equation*}
  J_N\left(\FE;e^{\xi/N}\right)
  \underset{N\to\infty}{\sim}
  J_p\left(\FE;e^{4\pi^2N/\xi}\right)
  \frac{\Gamma(1/3)e^{\pi\i/6}}{3^{1/6}}
  \left(\frac{N}{\xi}\right)^{2/3}\exp\left(\frac{S_{\kappa}(\FE)}{\xi}N\right),
\end{equation*}
where $S_{\kappa}(\FE):=2\kappa\pi\i$, and we put $\xi^{1/3}:=|\xi|^{1/3}e^{\arctan(2p\pi/\kappa)\i/3}$.
\end{thm}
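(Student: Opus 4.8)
The plan is to apply the saddle-point method to an integral expression for $J_N\!\left(\FE;e^{\xi/N}\right)$ and to exploit the fact that at $u=\kappa$ the relevant saddle becomes degenerate, so that a cubic (Airy-type) estimate replaces the usual Gaussian one. I would start from the well-known formula
\begin{equation*}
  J_N(\FE;q)
  =
  \sum_{k=0}^{N-1}
  \prod_{j=1}^{k}
  \left(q^{(N+j)/2}-q^{-(N+j)/2}\right)
  \left(q^{(N-j)/2}-q^{-(N-j)/2}\right),
\end{equation*}
substitute $q=e^{\xi/N}$ so that $q^{N}=e^{\xi}$, rewrite each summand by means of the quantum dilogarithm, and then, following \cite{Murakami:CANJM2023}, apply the Poisson summation formula. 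Just as in the proof of Theorem~\ref{thm:u_p}, the near-periodicity of the summand under $k\mapsto k+N$ makes one family of the resulting terms reassemble into the factor $J_{p}\!\left(\FE;e^{4\pi^{2}N/\xi}\right)$, and the remaining term is an integral of the form $\frac{N}{\xi}\int e^{(N/\xi)\Phi(z)}g(z)\,dz$ along a suitable contour, where $\Phi$ is an explicit combination of dilogarithms and $g$ is a subexponential prefactor. Thus it suffices to analyse this integral. The value of $\Phi$ at the relevant critical point is $S_{\kappa}(\FE)$, and a direct computation from the formula for $S_u(\FE)$ shows that $\varphi(\kappa)=\log 1=0$, hence $S_{\kappa}(\FE)=\Li_2\!\left(e^{-\kappa}\right)-\Li_2\!\left(e^{-\kappa}\right)+\kappa\cdot 2\pi\i=2\kappa\pi\i$, which accounts for the exponential factor $\exp\!\left(S_{\kappa}(\FE)N/\xi\right)$.

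The new phenomenon is the behaviour of the critical points of $\Phi$. For $0<u<\kappa$ the potential has two nondegenerate critical points, and the second derivative at each of them is proportional to $\sqrt{(2\cosh u+1)(2\cosh u-3)}$; this is exactly what produces the factor $T_u(\FE)^{1/2}$ together with the Gaussian $(N/\xi)^{1/2}$ in Theorem~\ref{thm:u_p}. At $u=\kappa$ we have $2\cosh\kappa-3=0$, the two critical points coalesce into a single point $z_{0}$, and one checks that $\Phi'(z_{0})=\Phi''(z_{0})=0$ while $\Phi'''(z_{0})\neq 0$. Consequently the Gaussian approximation degenerates and the factor $N^{-1/2}$ produced by the ordinary stationary-phase estimate must be replaced by one of order $N^{-1/3}$. (Note also that $\cosh\kappa=3/2$ gives $\sinh(\kappa/2)=1/2$, so the factor $1/(2\sinh(u/2))$ from Theorem~\ref{thm:u_p} equals $1$ at $u=\kappa$, consistently with its absence from Theorem~\ref{thm:main}.)

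Next I would localise the integral near $z_{0}$, substitute $z=z_{0}+(\xi/N)^{1/3}w$, and expand $(N/\xi)\Phi(z)=(N/\xi)\Phi(z_{0})+\Phi'''(z_{0})w^{3}/6+O\!\left((\xi/N)^{1/3}\right)$, then deform the contour onto the steepest-descent path of the cubic through $z_{0}$. The integral then equals $(\xi/N)^{1/3}$ times $\bigl(1+o(1)\bigr)\int e^{\Phi'''(z_{0})w^{3}/6}\,dw$, which evaluates in terms of $\Gamma(1/3)$ via $\int_{0}^{\infty}e^{-t^{3}}\,dt=\Gamma(1/3)/3$. Multiplying by the outer factor $N/\xi$ gives $(N/\xi)^{2/3}$ times a constant depending on $\Phi'''(z_{0})$, $g(z_{0})$ and the steepest-descent phase; with the branch $\xi^{1/3}=|\xi|^{1/3}e^{\arctan(2p\pi/\kappa)\i/3}$ forced by that phase, this constant should come out to $\Gamma(1/3)e^{\pi\i/6}/3^{1/6}$. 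Combined with the factor $J_{p}\!\left(\FE;e^{4\pi^{2}N/\xi}\right)$ from the first step, this yields the asserted asymptotic equivalence.

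The step I expect to be the main obstacle is the rigorous justification of this cubic saddle-point estimate. One must verify that, after Poisson summation, the contour can be pushed onto the steepest-descent path through $z_{0}$ without meeting the poles of the quantum dilogarithm; that no other critical point survives with a comparable contribution at $u=\kappa$ (for $u<\kappa$ the second critical point is subdominant, and one needs this to persist as $u\to\kappa$); and that the error term in the cubic expansion is genuinely $o(1)$ uniformly in $N$, which is delicate precisely because the two nondegenerate saddles of the $u<\kappa$ picture have collided at $z_{0}$. A further, more routine point is to confirm that the mechanism producing the factor $J_{p}\!\left(\FE;e^{4\pi^{2}N/\xi}\right)$ in \cite{Murakami:CANJM2023} is unaffected by the degeneration; this should be so, since that mechanism uses only the periodicity of the summand and not the nondegeneracy of the saddle.
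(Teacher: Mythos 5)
Your outline follows the same route as the paper: the Habiro--Le sum, rewriting the summand via the quantum dilogarithm $T_N$, Poisson summation to replace the sum by a contour integral, reassembly of the quasi-periodic part into the factor $J_p\bigl(\FE;e^{4\pi^2N/\xi}\bigr)$, and an order-two (cubic) saddle point method at the degenerate critical point where $\Phi'=\Phi''=0$ and $\Phi^{(3)}=-2\xi^2\ne0$. Your computations of $\varphi(\kappa)=0$, $S_{\kappa}(\FE)=2\kappa\pi\i$, $2\sinh(\kappa/2)=1$, and the replacement of the Gaussian $N^{-1/2}$ by the cubic $N^{-1/3}$ scaling with $\Gamma(1/3)$ are all consistent with what the paper does in Sections~\ref{sec:sum}--\ref{sec:saddle}.

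There is, however, one genuine missing ingredient, which the paper singles out as the second of its two main difficulties and to which it devotes essentially all of Section~\ref{sec:dilog}. The saddle point is $\saddle_0=2\pi\i/\xi$, and at this point the arguments $\gamma(1-z)-p+1$ and $\gamma(1+z)-p$ of $T_N$ in \eqref{eq:fN} (with $\gamma=\xi/(2\pi\i)$, so $\Re\gamma=p$) have real parts exactly $0$ and $1$: they lie on the boundary of the strip in which the integral \eqref{eq:def_TN} converges uniformly to $\frac{N}{2\pi\i\gamma}\L_2$. The standard estimate $T_N(z)=\frac{N}{2\pi\i\gamma}\L_2(z)+O(N^{-1})$, valid only for $\nu\le\Re z\le1-\nu$ (Lemma~\ref{lem:TN_L2}), therefore cannot be invoked on any neighborhood of $\saddle_0$, and without a uniform approximation of the integrand \emph{at and around} the saddle neither the Poisson summation hypotheses nor the localization and cubic expansion you propose can be verified. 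The paper repairs this by extending $T_N$ across the boundary using the difference identities \eqref{eq:TN_1} and \eqref{eq:TN_gamma}, and then proving (Proposition~\ref{prop:TN_L2}) that the extended function still satisfies $T_N(z)=\frac{N}{2\pi\i\gamma}\L_2(z)+O(N^{-1})$ uniformly on the enlarged region $\Omega^{\ast}_{\nu}$ containing the saddle, minus the triangles $\Delta^{+}_{0,\nu}$ and $\Delta^{-}_{1,\nu}$ on which the functional identity picks up a $2\pi\i$ branch discrepancy; these excluded sets then have to be tracked through the choice of the Poisson and steepest-descent contours (the sets $\underline{\nabla}$, $\overline{\nabla}$, $\diamond_{\nu}$ and the modification of the endpoint near $z=1$ when $m=p-1$). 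Your plan should be amended to include this extension step before the saddle point analysis; the remaining items you flag (contour verification, absence of competing saddles, uniformity of the cubic error) are indeed where the bulk of the paper's work in Lemmas~\ref{lem:Poisson_Phi} and \ref{lem:saddle_Phi} lies, and your sketch of them is consistent with the paper's execution.
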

As a corollary, we obtain a similar result for $J_N\left(\FE;e^{\xi'/N}\right)$ with $\xi':=-\kappa+2p\pi\i$.
\begin{cor}\label{cor:conjugate}
We have
\begin{equation*}
  J_N\left(\FE;e^{\xi'/N}\right)
  \underset{N\to\infty}{\sim}
  J_p\left(\FE;e^{4\pi^2N/\xi'}\right)
  \frac{\Gamma(1/3)e^{\pi\i/6}}{3^{1/6}}
  \left(\frac{N}{\xi'}\right)^{2/3}
  \exp\left(\frac{S_{-\kappa}(\FE)}{\xi'}N\right),
\end{equation*}
where we put $S_{-\kappa}(\FE):=-2\kappa\pi\i$.
\end{cor}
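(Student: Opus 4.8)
The plan is to deduce Corollary~\ref{cor:conjugate} from Theorem~\ref{thm:main} by complex conjugation. Two standard symmetries of the colored Jones polynomial of $\FE$ make this work. Since $J_N(\FE;q)\in\Z[q,q^{-1}]$ has real (integer) coefficients, $\overline{J_N(\FE;q)}=J_N(\FE;\bar{q})$ for every $q\in\C^{\times}$; and since $\FE$ is amphichiral, $J_N(\FE;q)=J_N(\FE;q^{-1})$, so $\overline{J_N(\FE;q)}=J_N(\FE;1/\bar{q})$. As $\overline{e^{\xi/N}}=e^{\bar{\xi}/N}$ and $\bar{\xi}=-\xi'$, this gives
\[
  \overline{J_N\!\left(\FE;e^{\xi/N}\right)}
  =J_N\!\left(\FE;e^{-\bar{\xi}/N}\right)
  =J_N\!\left(\FE;e^{\xi'/N}\right),
\]
so the left-hand side of the corollary is the complex conjugate of the left-hand side of Theorem~\ref{thm:main}. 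It therefore suffices to conjugate the right-hand side of Theorem~\ref{thm:main} and to re-express each factor in terms of $\xi'$.

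The prefactor behaves well: by the same two symmetries,
\[
  \overline{J_p\!\left(\FE;e^{4\pi^2N/\xi}\right)}
  =J_p\!\left(\FE;e^{4\pi^2N/\bar{\xi}}\right)
  =J_p\!\left(\FE;e^{-4\pi^2N/\xi'}\right)
  =J_p\!\left(\FE;e^{4\pi^2N/\xi'}\right),
\]
which is exactly the prefactor of Corollary~\ref{cor:conjugate}. For the remaining factors I would conjugate term by term: $\overline{\Gamma(1/3)e^{\pi\i/6}/3^{1/6}}=\Gamma(1/3)e^{-\pi\i/6}/3^{1/6}$; the power $\overline{(N/\xi)^{2/3}}$ is computed from the prescribed branch $\xi^{1/3}:=|\xi|^{1/3}e^{\i\arctan(2p\pi/\kappa)/3}$ together with a suitable choice of $(\xi')^{1/3}$; and, because $S_{\kappa}(\FE)=2\kappa\pi\i$ is purely imaginary, the exponential factor $\exp\bigl(S_{\kappa}(\FE)N/\xi\bigr)$ conjugates to the exponential of a purely imaginary multiple of $N/\bar{\xi}=-N/\xi'$, which matches the exponential $\exp\bigl(S_{-\kappa}(\FE)N/\xi'\bigr)$ of the corollary. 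Reassembling these identities yields the asserted asymptotic equivalence.

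I expect the only genuinely non-formal step to be the bookkeeping of the complex phases: one must fix a branch of $(\xi')^{1/3}$ compatibly with that of $\xi^{1/3}$ --- note $\arg\xi=\arctan(2p\pi/\kappa)$ while $\arg\xi'=\pi-\arctan(2p\pi/\kappa)$ --- so that the conjugate of $(N/\xi)^{2/3}$, the factor $e^{-\pi\i/6}$, and the value $S_{-\kappa}(\FE):=-2\kappa\pi\i$ recombine into the displayed formula. This is a finite elementary verification. Alternatively, and avoiding the phase bookkeeping entirely, one can rerun the proof of Theorem~\ref{thm:main} with $\kappa$ replaced by $-\kappa$: the only arithmetic that entered there was $\cosh\kappa=3/2$, which is unchanged under $\kappa\mapsto-\kappa$, so that $e^{\xi'}=e^{-\kappa}=1/e^{\kappa}$ and the entire saddle-point analysis goes through verbatim, producing $S_{-\kappa}(\FE)=-2\kappa\pi\i$ and the claimed asymptotics.
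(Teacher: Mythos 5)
Your overall strategy --- conjugate Theorem~\ref{thm:main} using amphichirality and the integrality of the coefficients of $J_N$, then re-express each factor in terms of $\xi'$ --- is exactly the route the paper takes, and your treatment of the left-hand side and of the prefactor $J_p$ is correct. But the step you dismiss as automatic is precisely where the argument is delicate, and your stated reason for it is wrong. You claim the exponential factor ``matches'' because $S_{\kappa}(\FE)=2\kappa\pi\i$ is purely imaginary. Conjugation gives
\[
  \overline{\exp\left(\frac{2\kappa\pi\i}{\xi}N\right)}
  =\exp\left(\frac{-2\kappa\pi\i}{\overline{\xi}}N\right)
  =\exp\left(\frac{2\kappa\pi\i}{\xi'}N\right)
\]
because $\overline{\xi}=-\xi'$: the two sign changes cancel, so you obtain $+2\kappa\pi\i$ in the numerator, not $S_{-\kappa}(\FE)=-2\kappa\pi\i$. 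These are not asymptotically interchangeable; their ratio $\exp(4\kappa\pi\i N/\xi')$ satisfies $\Re(4\kappa\pi\i/\xi')=8p\kappa\pi^2/|\xi'|^2>0$ and hence grows exponentially in $N$. The paper passes from one to the other by inserting $e^{4N\pi\i}=1$, which rewrites the exponent as $(-2\kappa\pi\i-8p\pi^2)N/\xi'$, and then invoking the convention that the Chern--Simons invariant is defined only modulo integer multiples of $\pi^2$ to discard the $-8p\pi^2$; the accompanying phase is absorbed into the specific branch choice $(\xi')^{1/3}:=|\xi'|^{1/3}e^{-\arctan(2p\pi/\kappa)\i/3}\times e^{-\pi\i/3}$. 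Without this normalization step the displayed formula does not follow from conjugation, so this is the one piece of ``bookkeeping'' that cannot be waved away.

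A second caveat concerns your proposed alternative of rerunning the proof of Theorem~\ref{thm:main} with $\kappa$ replaced by $-\kappa$: it is not ``verbatim''. The quantum-dilogarithm machinery of Section~\ref{sec:dilog} is built under the standing hypothesis $\Re\gamma>0$ and $\Im\gamma<0$ for $\gamma=\xi/(2\pi\i)$, i.e.\ $\Re\xi>0$; for $\xi'=-\kappa+2p\pi\i$ one has $\Im\gamma>0$, so the contours, the excluded regions $\ell_0^{+}$, $\ell_1^{-}$, $\Delta_0^{+}$, $\Delta_1^{-}$, and the subsequent valley analysis would all have to be reworked with reversed orientations. This is exactly why the paper deduces the corollary by conjugation rather than by repeating the saddle-point argument.
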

See \S~\ref{sec:CS} for a topological interpretation of $S_{u}(\FE)$ for $|u|\le\kappa$.
It is defined to be $\CS_{u,v(u)}(\rho_{u})+u\pi\i+uv(u)/4$, where $\CS_{u,v(u)}(\rho_{u})$ is the Chern--Simons invariant of a non-Abelian representation $\rho_{u}\colon\pi_1(S^3\setminus\FE)\to\SL(2;\C)$.
\begin{rem}
Since the highest degree term of the Laurent polynomial $J_p(\FE;q)$ is $q^{p(p-1)}$, we have $J_p\left(\FE;e^{4\pi^2N/\xi}\right)\underset{N\to\infty}{\sim}e^{4p(p-1)\pi^2N/\xi}$.
So we also have
\begin{equation*}
  J_N\left(\FE;e^{\xi/N}\right)
  \underset{N\to\infty}{\sim}
  \frac{\Gamma(1/3)e^{\pi\i/6}}{3^{1/6}}
  \left(\frac{N}{\xi}\right)^{2/3}\exp\left(\frac{4p^2\pi^2}{\xi}N\right)
\end{equation*}
because $2\kappa\pi\i/\xi+4p(p-1)\pi^2/\xi=4p^2\pi^2/\xi+2\pi\i$.
A similar result holds for $\xi'$.
\end{rem}
There are two difficulties to prove Theorem~\ref{thm:main}.
\par
The first one is that when we apply the saddle point method to the integral that approximates $J_N(\FE;e^{\xi/N})$, the saddle point is of order two, that is, it looks like the saddle point of $\Re{z^3}$.
See Figure~\ref{fig:z^3_z^2}.
\begin{figure}[h]
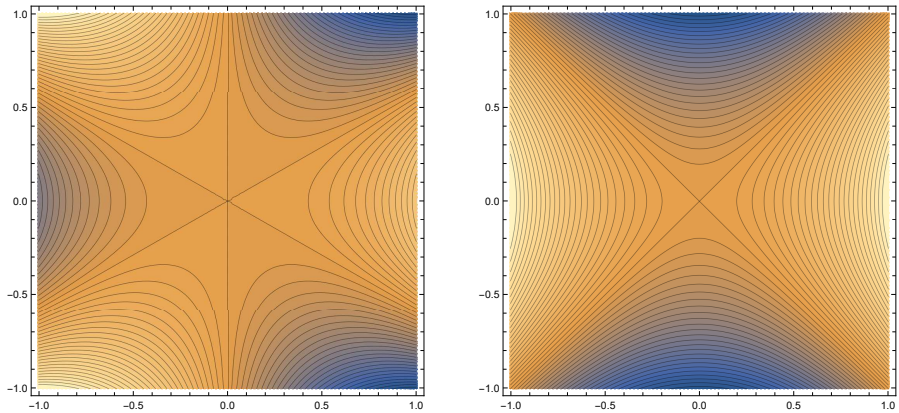

\pic{0.6}{z3}\qquad\pic{0.6}{z2}
\caption{Contour plots of the functions $\Re{z^3}$ (left) and $\Re{z^2}$ (right) around their saddle points.
The saddle point $O$ of $\Re{z^3}$ is of order two, and that of $\Re{z^2}$ is of order one.}
\label{fig:z^3_z^2}
\end{figure}
\par
To approximate the colored Jones polynomial by an integral as above, we use a quantum dilogarithm function, which converges to a function described by the dilogarithm function.
However, the second difficulty is that our saddle point is on the boundary of the region of convergence.
So we need to extend the domain of definition of the quantum dilogarithm slightly by using a functional identity.
\par
The paper is organized as follows.
In Section~\ref{sec:dilog}, we define the quantum dilogarithm and extend it as we require.
In Section~\ref{sec:sum}, we express the colored Jones polynomial as a sum of the terms described by the quantum dilogarithm.
To approximate the sum by an integral, we use the Poisson summation formula in Section~\ref{sec:Poisson}.
Then, in Section~\ref{sec:saddle} we use the saddle point method to obtain the asymptotic formula, proving Theorem~\ref{thm:main}.
Appendices~\ref{sec:Poisson_proof} and \ref{sec:saddle_proof} are devoted to proofs of the Poisson summation formula and the saddle point method, respectively.
In Appendix~\ref{sec:stevedore}, we give some computer calculations about the asymptotic behavior of  $J_N(\St;e^{(\pm\tilde{\kappa}+2\pi\i)/N})$ for the stevedore knot $\St$, where $\tilde{\kappa}:=\log{2}$.
Since we know that $e^{\pm\kappa}$ ($e^{\pm\tilde{\kappa}}$, respectively) are zeros of the Alexander polynomial of the figure-eight knot (the stevedore knot, respectively), we try to generalize Theorem~\ref{thm:main} to another knot in vain.

\section{Quantum dilogarithm}\label{sec:dilog}
In this section, we fix a complex number $\gamma$ with $\Re{\gamma}>0$ and $\Im{\gamma}<0$.
We will introduce a quantum dilogarithm \cite{Faddeev:LETMP1995}.
See also \cite{Kashaev:LETMP97,Andersen/Hansen:JKNOT2006,Ohtsuki:QT2016}.
\par
We put
\begin{equation}\label{eq:def_TN}
  T_{N}(z)
  :=
  \frac{1}{4}\int_{\Rpath}\frac{e^{(2z-1)x}}{x\sinh(x)\sinh(\gamma x/N)}\,dx
\end{equation}
for an integer $N>|\gamma|/\pi$, where $\Rpath:=(-\infty,-1]\cup\{w\in\C\mid|w|=1,\Im{w}\ge0\}\cup[1,\infty)$ with orientation from $-\infty$ to $\infty$.
Note that $\Rpath$ avoids the poles of the integrand.
We can prove that the integral above converges if $-\frac{\Re\gamma}{2N}<\Re{z}<1+\frac{\Re\gamma}{2N}$.
\begin{lem}\label{lem:TN_convergence}
The integral in the right-hand side of \eqref{eq:def_TN} converges if $-\frac{\Re\gamma}{2N}<\Re{z}<1+\frac{\Re\gamma}{2N}$.
\end{lem}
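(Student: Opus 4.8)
The plan is to analyze the integrand
\[
  f(x)
  :=
  \frac{e^{(2z-1)x}}{x\sinh(x)\sinh(\gamma x/N)}
\]
on the contour $\Rpath$ and show that the only possible source of divergence is at the two tails $(-\infty,-1]$ and $[1,\infty)$; the semicircular arc $\{|w|=1,\Im w\ge 0\}$ is compact and the integrand is continuous there (the path was chosen to avoid the poles at $x=0$, $x=k\pi\i$, $x=kN\pi\i/\gamma$), so it contributes a finite amount regardless of $z$. Thus convergence of the whole integral is equivalent to convergence of the two tail integrals, and I would treat those by estimating $|f(x)|$ as $|x|\to\infty$ along the real axis.

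First I would record the asymptotics of the hyperbolic factors. For real $x$ with $|x|$ large, $|\sinh(x)|$ behaves like $\tfrac12 e^{|x|}$, and $|\sinh(\gamma x/N)|$ behaves like $\tfrac12 e^{(\Re\gamma)|x|/N}$ (this uses $\Re\gamma>0$, so that $\Re(\gamma x/N)\to\pm\infty$ with the same sign as $x$, and $|\sinh w|\ge \tfrac12 e^{|\Re w|} - \tfrac12$ in general). Meanwhile $|e^{(2z-1)x}| = e^{(2\Re z - 1)x}$. Hence, for $x\to+\infty$,
\[
  |f(x)|
  \;\asymp\;
  \frac{e^{(2\Re z-1)x}}{x\cdot e^{x}\cdot e^{(\Re\gamma/N)x}}
  \;=\;
  \frac{1}{x}\,e^{\left(2\Re z - 2 - \Re\gamma/N\right)x},
\]
which is integrable on $[1,\infty)$ precisely when the exponent $2\Re z - 2 - \Re\gamma/N < 0$, i.e. $\Re z < 1 + \tfrac{\Re\gamma}{2N}$ (the borderline case gives $\int^\infty dx/x$, which diverges, so the inequality is strict). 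Symmetrically, for $x\to-\infty$ write $x=-t$ with $t\to+\infty$; then $|f(x)|\asymp \tfrac1t e^{(2-2\Re z - \Re\gamma/N)(-x)} = \tfrac1t e^{-(2-2\Re z-\Re\gamma/N)t}$, which is integrable exactly when $2 - 2\Re z - \Re\gamma/N > 0$, i.e. $\Re z > -\tfrac{\Re\gamma}{2N}$. Combining the two tail conditions gives the stated strip $-\tfrac{\Re\gamma}{2N} < \Re z < 1 + \tfrac{\Re\gamma}{2N}$, and conversely if $\Re z$ violates either bound the corresponding tail integral of $|f|$ diverges and, since $f$ has constant sign near that tail (the exponential dominates), the integral itself diverges.

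The main obstacle is making the "$\asymp$" estimates rigorous and uniform: one needs genuine two-sided bounds on $|\sinh(\gamma x/N)|$ for complex argument $\gamma x/N$ (not just real), since $\gamma$ is complex. The clean way is to use $|\sinh w|^2 = \sinh^2(\Re w) + \sin^2(\Im w)$, which gives $|\sinh(\gamma x/N)| \ge |\sinh(\Re(\gamma x/N))|$, yielding the needed lower bound $|\sinh(\gamma x/N)|\ge \tfrac12 e^{(\Re\gamma/N)|x|} - \tfrac12$ for the convergence direction; for the divergence direction one instead needs an upper bound away from the zeros, but since we only care about the tails $|x|\ge 1$ and the zeros of $\sinh(\gamma x/N)$ on the real axis are isolated and (for $\Im\gamma\ne 0$) in fact $x=0$ is the only real zero, the factor $1/|\sinh(\gamma x/N)|$ is bounded below by a positive constant on the tails, so $|f(x)|\ge c\,\tfrac1{|x|}e^{(2\Re z - 2 - \Re\gamma/N)|x|}$ there. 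The factor $\Im\gamma<0$ is not actually needed for this lemma — it is imposed globally in the section — so I would simply not invoke it. Everything else is elementary calculus on the two tails plus continuity on the compact arc.
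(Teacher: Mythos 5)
Your proposal is correct and follows essentially the same route as the paper: the paper's proof likewise reduces to the tail behavior of the integrand via the asymptotics $\sinh(as)\sim\pm e^{\pm as}/2$ as $s\to\pm\infty$ for $\Re a>0$, obtaining the decay rates $e^{(2\Re z-2-\Re\gamma/N)x}$ and $e^{(2\Re z+\Re\gamma/N)x}$ on the two tails. Your additional remarks (the two-sided bound $|\sinh w|^2=\sinh^2(\Re w)+\sin^2(\Im w)$ and the converse divergence statement) only make the argument more careful than the paper's.
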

\begin{proof}
First note that
\begin{equation*}
  \sinh(as)\underset{s\to\infty}{\sim}\frac{e^{as}}{2},
  \quad\text{and}\quad
  \sinh(as)\underset{s\to-\infty}{\sim}\frac{-e^{-as}}{2},
\end{equation*}
for a complex number $a$ with $\Re{a}>0$.
So we have
\begin{align*}
  \frac{e^{(2z-1)x}}{x\sinh(x)\sinh(\gamma x/N)}
  &\underset{x\to\infty}{\sim}
  \frac{4}{x}\exp\bigl((2z-2-\gamma/N)x\bigr),
  \\
  \frac{e^{(2z-1)x}}{x\sinh(x)\sinh(\gamma x/N)}
  &\underset{x\to-\infty}{\sim}
  -\frac{4}{x}\exp\bigl((2z+\gamma/N)x\bigr)
\end{align*}
since $\Re{\gamma}>0$.
\par
Therefore if $-\frac{\Re\gamma}{2N}<\Re{z}<1+\frac{\Re\gamma}{2N}$ the integral converges.
\end{proof}
Thus $T_{N}(z)$ is a holomorphic function in the region $\left\{z\in\C\mid-\frac{\Re\gamma}{2N}<\Re{z}<1+\frac{\Re\gamma}{2N}\right\}$.
\par
We will study properties of $T_{N}(z)$.
\par
First of all we introduce three related functions:
\begin{defn}
For a complex number $z$ with $0<\Re{z}<1$, we put
\begin{align*}
  \L_0(z)
  &:=
  \int_{\Rpath}\frac{e^{(2z-1)x}}{\sinh(x)}\,dx,
  \\
  \L_1(z)
  &:=
  -\frac{1}{2}\int_{\Rpath}\frac{e^{(2z-1)x}}{x\sinh(x)}\,dx,
  \\
  \L_2(z)
  &:=
  \frac{\pi\i}{2}\int_{\Rpath}\frac{e^{(2z-1)x}}{x^2\sinh(x)}\,dx.
\end{align*}
\end{defn}
In a similar way to the proof of Lemma~\ref{lem:TN_convergence}, we can prove that the three integral above converge if $0<\Re{z}<1$.
The functions above can be expressed in terms of well-known functions.
\begin{lem}[{\cite[Lemma~2.5]{Murakami/Tran:2021}}]\label{lem:L0_L1_L2}
We have the following formulas.
\begin{align}
  \L_0(z)
  &=
  \frac{-2\pi\i}{1-e^{-2\pi\i z}},
  \label{eq:L_0}
  \\
  \L_1(z)
  &=
  \begin{cases}
    \log\left(1-e^{2\pi\i z}\right)&\quad\text{if $\Im{z}\ge0$},
    \\
    \pi\i(2z-1)+\log\left(1-e^{-2\pi\i z}\right)&\quad\text{if $\Im{z}<0$},
  \end{cases}
  \label{eq:L_1}
  \\
  \L_2(z)
  &=
  \begin{cases}
    \Li_2\left(e^{2\pi\i z}\right)&\quad\text{if $\Im{z}\ge0$},
    \\
    \pi^2\left(2z^2-2z+\frac{1}{3}\right)-\Li_2\left(e^{-2\pi\i z}\right)
    &\quad\text{if $\Im{z}<0$}.
  \end{cases}
  \label{eq:L_2}
\end{align}
Here the branch cuts of $\log$ and $\Li_2$ are $(-\infty,0]$ and $[1,\infty)$, respectively.
\end{lem}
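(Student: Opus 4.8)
The plan is to settle $\L_0$ by a periodicity-plus-residue argument, and then to recognize $\L_1$ and $\L_2$ as iterated antiderivatives of $\L_0$, pinning the constants of integration at the single convenient point $z=\tfrac12$. For $\L_0$: write $\mu:=2z-1$, so $-1<\Re\mu<1$ on the strip $0<\Re{z}<1$, and note that the integrand $f(x):=e^{\mu x}/\sinh x$ satisfies $f(x+\pi\i)=-e^{\mu\pi\i}f(x)$ because $\sinh(x+\pi\i)=-\sinh x$. Integrating $f$ around the rectangle with lower side $\Rpath$, upper side $\Rpath+\pi\i$, and vertical sides $\Re{x}=\pm R$, and letting $R\to\infty$ (the vertical contributions die for exactly the reason in the proof of Lemma~\ref{lem:TN_convergence}, namely $-1<\Re\mu<1$), the only enclosed pole is $x=\pi\i$ — the semicircular part of $\Rpath$ passes \emph{above} the origin, so $x=0$ lies outside the rectangle — where $\Res_{x=\pi\i}f=-e^{\mu\pi\i}$. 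Since $\int_{\Rpath+\pi\i}f=-e^{\mu\pi\i}\int_{\Rpath}f=-e^{\mu\pi\i}\L_0(z)$, the residue theorem yields $(1+e^{\mu\pi\i})\L_0(z)=-2\pi\i\,e^{\mu\pi\i}$, which simplifies at once to \eqref{eq:L_0}.

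Next, differentiating under the integral sign — legitimate on compact subsets of the strip by the exponential decay exhibited in Lemma~\ref{lem:TN_convergence} — and using $\partial_z e^{(2z-1)x}=2x\,e^{(2z-1)x}$, one gets
\begin{equation*}
  \L_1'(z)=-\L_0(z),\qquad\qquad \L_2'(z)=-2\pi\i\,\L_1(z).
\end{equation*}
Now, with the branch cuts of $\log$ and $\Li_2$ taken to be $(-\infty,0]$ and $[1,\infty)$, each of the four functions on the right-hand sides of \eqref{eq:L_1} and \eqref{eq:L_2} is holomorphic on the whole strip $0<\Re{z}<1$ (the cuts are reached only when $\Re{z}\in\Z$), and, using $\tfrac{d}{dw}\Li_2(w)=-\log(1-w)/w$, a direct computation shows each satisfies the same first-order differential equation as the corresponding $\L_j$; hence each differs from it by an additive constant.

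It remains to evaluate at $z=\tfrac12$. Using the Mittag--Leffler expansion $1/\sinh x=1/x+\sum_{n\ge1}(-1)^n 2x/(x^2+n^2\pi^2)$, the elementary facts $\int_{\Rpath}x^{-2}\,dx=\int_{\Rpath}x^{-3}\,dx=0$, and $\int_{\Rpath}\frac{2\,dx}{x(x^2+n^2\pi^2)}=-\tfrac{2\i}{n^2\pi}$ (a principal value plus a small upper semicircle), termwise integration gives
\begin{equation*}
  \L_1(\tfrac12)=-\tfrac12\!\int_{\Rpath}\!\frac{dx}{x\sinh x}=\log 2,\qquad
  \L_2(\tfrac12)=\tfrac{\pi\i}{2}\!\int_{\Rpath}\!\frac{dx}{x^2\sinh x}=-\tfrac{\pi^2}{12},
\end{equation*}
which match $\log(1-e^{\pi\i})=\log 2$ and $\Li_2(-1)=-\pi^2/12$. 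Therefore all the constants vanish, proving \eqref{eq:L_1} and \eqref{eq:L_2}; finally, the two displayed forms in each case are exchanged by $\log(ab)=\log a+\log b$ (which holds on the strip with no $2\pi\i$ correction) and by the dilogarithm inversion formula $\Li_2(w)+\Li_2(1/w)=-\tfrac{\pi^2}{6}-\tfrac12\log^2(-w)$.

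Nothing here is deep, and the main obstacle is bookkeeping. In the first step one must be careful about which poles the deformed contour actually encloses (the semicircular detour of $\Rpath$ puts $x=0$ outside and $x=\pi\i$ inside the rectangle). In the last step the evaluation at $z=\tfrac12$ is the most computational point: one must justify termwise integration of the Mittag--Leffler series along $\Rpath$ and correctly handle the double and triple poles at the origin — the vanishing of $\int_{\Rpath}x^{-2}\,dx$ and $\int_{\Rpath}x^{-3}\,dx$, and the $-\pi\i\,\Res$ contributed by each simple-pole summand. Everything else is the standard ``equal derivatives on a connected domain plus one common value'' uniqueness argument.
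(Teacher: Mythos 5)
Your argument is correct, and it is necessarily a different route from the paper's, because the paper gives no proof at all here: it defers to \cite[Lemma~2.5]{Murakami/Tran:2021}. The standard argument for identities of this type (and, as far as I can tell, the one in the cited reference) pushes the contour $\Rpath$ upward (for $\Im z\ge0$) or downward (for $\Im z<0$) across the poles $x=k\pi\i$ and sums the residues $e^{2\pi\i kz}/(k\pi\i)$, etc., recovering the Taylor series $-\sum_k e^{2\pi\i kz}/k=\log(1-e^{2\pi\i z})$ and $\sum_k e^{2\pi\i kz}/k^2=\Li_2(e^{2\pi\i z})$ directly; the case distinction on $\Im z$ then arises from which half-plane the shifted integrals decay in. You instead close only a single period-rectangle to get $\L_0$ in closed form, observe $\L_1'=-\L_0$ and $\L_2'=-2\pi\i\L_1$ (consistent with Lemma~\ref{lem:der_L1_L2}), and pin the integration constants at $z=\tfrac12$; the $\Im z<0$ forms then follow from the branch identity $1-e^{2\pi\i z}=e^{\pi\i(2z-1)}(1-e^{-2\pi\i z})$ with no $2\pi\i$ correction on $0<\Re z<1$ (this is exactly the $\lfloor\Re z\rfloor=0$ case of the computation in Lemma~\ref{lem:L2_Li2}) and from \eqref{eq:dilog_inversion}. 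I checked the residue bookkeeping (only $x=\pi\i$ is enclosed, $\Res_{x=\pi\i}f=-e^{\mu\pi\i}$), the resulting simplification to \eqref{eq:L_0}, and both evaluations at $z=\tfrac12$ ($\sum(-1)^{n-1}2/n$ and $\sum(-1)^n/n^2$); all are right. Your approach trades an infinite residue sum for two first-order ODEs plus one explicit value, which is arguably cleaner, at the cost of the special-value computation.

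The one place you should tighten is the termwise integration at $z=\tfrac12$, which you flag but do not carry out: the series of integrals $\sum_n(-1)^n\tfrac2n$ is only conditionally convergent, so Fubini/dominated convergence does not apply as stated. It can be repaired by integrating the symmetric partial sums over $\Rpath\cap\{|x|\le R\}$ and controlling both tails (the tail of $1/(x\sinh x)$ is $O(e^{-R})$ and the tail of the summed integrals is $O(1/R)$ by the alternating-series estimate), or by grouping consecutive terms into an absolutely integrable series. Alternatively you can avoid the special value entirely: both $\L_1(z)$ and $\log(1-e^{2\pi\i z})$ tend to $0$ as $\Im z\to+\infty$ with $\Re z$ fixed in $(0,1)$ (Riemann--Lebesgue on the straight parts of $\Rpath$ and dominated convergence on the semicircle), which pins the constant with no series manipulation; the constant for $\L_2$ then follows the same way since $\Li_2(e^{2\pi\i z})\to0$ as well.
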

The proof is similar to that of \cite[Lemma~2.5]{Murakami/Tran:2021}, and so we omit it.
\par
The function $\L_0(z)$ can be extended to the whole complex plane $\C$ except for integers.
The functions $\L_1(z)$ and $\L_2(z)$ can be extended to holomorphic functions on $\C\setminus\bigl((-\infty,0]\cup[1,\infty)\bigr)$ as follows.
\begin{defn}\label{defn:L1_L2}
For a complex number $z$ in $\C\setminus\bigl((-\infty,0]\cup[1,\infty)\bigr)$, we put
\begin{align}
  \L_1(z)
  &=
  \begin{cases}
    \log\left(1-e^{2\pi\i z}\right)&\quad\text{if $\Im{z}\ge0$},
    \\
    \pi\i(2z-1)+\log\left(1-e^{-2\pi\i z}\right)&\quad\text{if $\Im{z}<0$},
  \end{cases}
  \label{eq:L1_log}
  \\
  \L_2(z)
  &=
  \begin{cases}
    \Li_2\left(e^{2\pi\i z}\right)&\quad\text{if $\Im{z}\ge0$},
    \\
    \pi^2\left(2z^2-2z+\frac{1}{3}\right)-\Li_2\left(e^{-2\pi\i z}\right)
    &\quad\text{if $\Im{z}<0$}.
  \end{cases}
  \label{eq:L2_dilog}
\end{align}
\end{defn}
\begin{lem}\label{lem:L2_Li2}
When $\Im{z}<0$, the functions $\L_1(z)$ and $\L_2(z)$ can also be written as
\begin{align*}
  \L_1(z)
  =&
  \log\left(1-e^{2\pi\i z}\right)+2\floor{\Re{z}}\pi\i,
  \\
  \L_2(z)
  =&
  \Li_2\left(e^{2\pi\i z}\right)
  -2\pi^2\floor{\Re{z}}\left(\floor{\Re{z}}-2z+1\right),
\end{align*}
where $\floor{x}$ is the greatest integer that does not exceed $x$.
\end{lem}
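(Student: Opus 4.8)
The plan is to read both identities off the classical reflection formulas for $\log$ and for the dilogarithm, the only genuine work being the bookkeeping of branches via $\floor{\Re z}$. Write $z=x+\i y$ with $y<0$. The right-hand sides are unambiguous only for $x\notin\Z$ — they jump across the rays below the integers, where $e^{2\pi\i z}\in[1,\infty)$ lands on the branch cuts of $\log$ and of $\Li_2$ — so I would prove the identities for such $z$. A preliminary computation, obtained by writing $-e^{\mp 2\pi\i z}=e^{\i\pi\mp 2\pi\i z}$ and reducing the imaginary part into $(-\pi,\pi]$ (the correction being exactly the term with $\floor{\Re z}$), gives
\[
  \log\bigl(-e^{-2\pi\i z}\bigr)=\pi\i(1-2z)+2\pi\i\floor{\Re z},
  \qquad
  \log\bigl(-e^{2\pi\i z}\bigr)=\pi\i(2z-1)-2\pi\i\floor{\Re z}.
\]

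For $\L_1$: start from the clause $\L_1(z)=\pi\i(2z-1)+\log\bigl(1-e^{-2\pi\i z}\bigr)$ of Definition~\ref{defn:L1_L2}. A short computation shows that $\L_1(z)$ and $\log\bigl(1-e^{2\pi\i z}\bigr)$ have the same derivative $2\pi\i/\bigl(1-e^{-2\pi\i z}\bigr)$, so $\L_1(z)-\log\bigl(1-e^{2\pi\i z}\bigr)$ is constant on each strip $\{\,k<\Re z<k+1,\ y<0\,\}$. Evaluating at $z=k+\tfrac12+\i y$, where $e^{2\pi\i z}=-e^{-2\pi y}$ and $e^{-2\pi\i z}=-e^{2\pi y}$ are negative reals so that every logarithm is of a positive real, and using $1+e^{-2\pi y}=e^{-2\pi y}\bigl(1+e^{2\pi y}\bigr)$, one finds this constant equals $2\pi\i k=2\pi\i\floor{\Re z}$. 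This is the first formula.

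For $\L_2$: apply the dilogarithm inversion formula $\Li_2(w)+\Li_2(1/w)=-\tfrac{\pi^2}{6}-\tfrac12\log^2(-w)$, valid on $\C\setminus[0,\infty)$, with $w=e^{2\pi\i z}$, which lies off $[0,\infty)$ precisely because $x\notin\Z$. Feeding in the value of $\log\bigl(-e^{2\pi\i z}\bigr)$ above and solving for $\Li_2\bigl(e^{-2\pi\i z}\bigr)$ yields $\Li_2\bigl(e^{-2\pi\i z}\bigr)=-\tfrac{\pi^2}{6}+\tfrac{\pi^2}{2}\bigl(2z-1-2\floor{\Re z}\bigr)^2-\Li_2\bigl(e^{2\pi\i z}\bigr)$. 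Substituting this into the clause $\L_2(z)=\pi^2\bigl(2z^2-2z+\tfrac13\bigr)-\Li_2\bigl(e^{-2\pi\i z}\bigr)$ of Definition~\ref{defn:L1_L2} and expanding the square, the polynomial terms collapse (the constants cancel since $\tfrac13+\tfrac16-\tfrac12=0$), leaving exactly $\Li_2\bigl(e^{2\pi\i z}\bigr)-2\pi^2\floor{\Re z}\bigl(\floor{\Re z}-2z+1\bigr)$, which is the second formula.

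The main obstacle is the branch bookkeeping in the displayed logarithm identities: one must pin down the sign and the coefficient of $\floor{\Re z}$ in $\log\bigl(-e^{\pm2\pi\i z}\bigr)$. Once that is settled, both identities drop out of routine algebra together with the standard $\log$- and $\Li_2$-reflection formulas.
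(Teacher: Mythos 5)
Your proof is correct and follows essentially the same route as the paper: both identities come down to the reflection formulas together with the branch bookkeeping $\log\bigl(-e^{\pm2\pi\i z}\bigr)=\pm\pi\i(2z-1)\mp2\pi\i\floor{\Re z}$, and your $\L_2$ computation is identical to the paper's application of \eqref{eq:dilog_inversion}. The only (cosmetic) difference is in $\L_1$, where you pin down the constant $2\pi\i\floor{\Re z}$ by matching derivatives on each strip and evaluating at $\Re z=k+\tfrac12$, whereas the paper tracks $\arg\bigl(1-e^{2\pi\i z}\bigr)$ directly; your explicit restriction to $\Re z\notin\Z$ is also a point the paper leaves implicit.
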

\begin{proof}
For $\L_1(z)$, we have
\begin{align*}
  &\log\left(1-e^{-2\pi\i z}\right)
  \\
  =&
  \log\left[\left(1-e^{2\pi\i z}\right)e^{-2\pi\i z+\pi\i}\right]
  \\
  =&
  \log\left(1-e^{2\pi\i z}\right)
  -2\pi\i z+\pi\i+2\floor{\Re{z}}\pi\i.
\end{align*}
The last equality follows because
\begin{itemize}
\item
if $0<\Re{z}-\floor{\Re{z}}<1/2$, then $-\pi<\arg\left(1-e^{2\pi\i z}\right)<0$,
\item
if $1/2\le\Re{z}-\floor{\Re{z}}<1$, then $0\le\arg\left(1-e^{2\pi\i z}\right)<\pi$,
\end{itemize}
and so the imaginary part of the rightmost side is between $-\pi$ and $\pi$.
Thus we obtain $\L_1(z)=\log\left(1-e^{2\pi\i z}\right)+2\floor{\Re{z}}\pi\i$ from \eqref{eq:L_2}.
\par
For $L_2(z)$, from the well known formula
\begin{equation}\label{eq:dilog_inversion}
  \Li_2(w^{-1})
  =
  -\Li_2(w)
  -\frac{\pi^2}{6}
  -\frac{1}{2}\bigl(\log(-w)\bigr)^2,
\end{equation}
we have
\begin{align*}
  &\Li_2(e^{-2\pi\i z})
  \\
  =&
  -\Li_2(e^{2\pi\i z})
  -\frac{\pi^2}{6}
  -\frac{1}{2}\bigl(2\pi\i z-(2\pi\floor{\Re{z}}+\pi)\i)\bigr)^2
  \\
  =&
  -\Li_2(e^{2\pi\i z})
  +\pi^2\left(2z^2-2z+\frac{1}{3}\right)
  +2\pi^2\floor{\Re{z}}^2-4\pi^2\floor{\Re{z}}z+2\pi^2\floor{\Re{z}}
\end{align*}
and the result follows.
\end{proof}
As a corollary, we have
\begin{cor}\label{cor:L1_L2_Im_negative}
If $\Im{z}<0$, then we have $\L_1(z+1)-\L_1(z)=2\pi\i$ and $\L_2(z+1)-\L_2(z)=4\pi^2z$.
\end{cor}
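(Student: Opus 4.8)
The plan is to observe that translating $z$ by $1$ moves neither $z$ nor $z+1$ out of the open lower half‑plane, and leaves the exponentials $e^{\pm 2\pi\i z}$ unchanged, so that in the formulas of Definition~\ref{defn:L1_L2} only the elementary polynomial pieces can contribute to the differences $\L_1(z+1)-\L_1(z)$ and $\L_2(z+1)-\L_2(z)$. Concretely, if $\Im z<0$ then $\Im(z+1)<0$ as well, and since $z\notin\R$ both $z$ and $z+1$ lie in $\C\setminus\bigl((-\infty,0]\cup[1,\infty)\bigr)$, so the right‑hand sides of \eqref{eq:L1_log} and \eqref{eq:L2_dilog} are legitimately evaluated at either point by their $\Im<0$ branches.

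Next I would dispose of the branch bookkeeping. Because $\lvert e^{-2\pi\i z}\rvert=e^{2\pi\Im z}<1$, the point $e^{-2\pi\i z}=e^{-2\pi\i(z+1)}$ stays strictly inside the unit disk, hence away from the cut $[1,\infty)$ of $\Li_2$, and $1-e^{-2\pi\i z}$ has positive real part, hence away from the cut $(-\infty,0]$ of $\log$; so $\log\bigl(1-e^{-2\pi\i(z+1)}\bigr)=\log\bigl(1-e^{-2\pi\i z}\bigr)$ and $\Li_2\bigl(e^{-2\pi\i(z+1)}\bigr)=\Li_2\bigl(e^{-2\pi\i z}\bigr)$ with no ambiguity. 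Substituting into \eqref{eq:L1_log} and \eqref{eq:L2_dilog}, the $\log$‑terms and the $\Li_2$‑terms (and the constant $\pi^2/3$) cancel, leaving
\begin{align*}
  \L_1(z+1)-\L_1(z)
  &=\pi\i\bigl(2(z+1)-1\bigr)-\pi\i(2z-1)=2\pi\i,
  \\
  \L_2(z+1)-\L_2(z)
  &=\pi^2\bigl(2(z+1)^2-2(z+1)\bigr)-\pi^2\bigl(2z^2-2z\bigr)=4\pi^2 z,
\end{align*}
which is the assertion.

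Alternatively one can read the same conclusion directly off Lemma~\ref{lem:L2_Li2}: there $\L_1$ and $\L_2$ differ from $\log\bigl(1-e^{2\pi\i z}\bigr)$ and $\Li_2\bigl(e^{2\pi\i z}\bigr)$ — both $1$‑periodic in $z$ — by terms depending only on $\floor{\Re z}$ and $z$, and $\floor{\Re(z+1)}=\floor{\Re z}+1$; inserting this shift and simplifying the resulting polynomials in $\floor{\Re z}$ and $z$ again yields $2\pi\i$ and $4\pi^2 z$. I do not expect a genuine obstacle here: the only subtle point is the branch placement of $\log$ and $\Li_2$, and that is automatic once $\Im z<0$ forces $\lvert e^{-2\pi\i z}\rvert<1$, exactly as used above.
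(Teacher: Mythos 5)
Your proof is correct and matches the paper's (implicit) argument: the corollary is stated there as an immediate consequence of Lemma~\ref{lem:L2_Li2}, which is exactly your alternative route, and your primary computation straight from the $\Im{z}<0$ branches of Definition~\ref{defn:L1_L2} is the same one-line cancellation of the $1$-periodic $\log$ and $\Li_2$ terms. The branch check via $\lvert e^{-2\pi\i z}\rvert<1$ is the right justification and nothing is missing.
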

\begin{lem}\label{lem:der_L1_L2}
The derivatives of $\L_i(z)$ \rm{(}$i=1,2$\rm{)} are given as follows:
\begin{align}
  \frac{d}{d\,z}\L_2(z)
  &=
  -2\pi\i\L_1(z),\label{eq:der_L2}
  \\
  \frac{d}{d\,z}\L_1(z)
  &=
  -\L_0(z)
  =
  \frac{2\pi\i}{1-e^{-2\pi\i z}}.\label{eq:der_L1}
\end{align}
\end{lem}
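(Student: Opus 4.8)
The plan is to establish both identities first on the vertical strip $0<\Re z<1$ by working with the integral representations, and then to propagate them to the whole domain $\C\setminus\bigl((-\infty,0]\cup[1,\infty)\bigr)$ by the identity theorem.

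On the strip, I would differentiate under the integral sign in the definitions of $\L_1(z)$ and $\L_2(z)$. Since $\frac{\partial}{\partial z}e^{(2z-1)x}=2x\,e^{(2z-1)x}$, the factor $x$ cancels one power of $x$ in each denominator, giving
\begin{align*}
  \frac{d}{dz}\L_1(z)
  &= -\int_{\Rpath}\frac{e^{(2z-1)x}}{\sinh x}\,dx = -\L_0(z),\\
  \frac{d}{dz}\L_2(z)
  &= \pi\i\int_{\Rpath}\frac{e^{(2z-1)x}}{x\sinh x}\,dx = -2\pi\i\,\L_1(z),
\end{align*}
where in the second line we used $\int_{\Rpath}\frac{e^{(2z-1)x}}{x\sinh x}\,dx=-2\L_1(z)$, and the evaluation $-\L_0(z)=\frac{2\pi\i}{1-e^{-2\pi\i z}}$ is just \eqref{eq:L_0}. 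The interchange of $d/dz$ with $\int_{\Rpath}$ has to be justified: on the compact arc $\{\,|w|=1,\ \Im w\ge 0\,\}$ it is automatic, and on the two half-lines $(-\infty,-1]$ and $[1,\infty)$ the integrand together with its $z$-derivative decays exponentially, uniformly for $z$ in compact subsets of $\{0<\Re z<1\}$, by exactly the estimates used in the proof of Lemma~\ref{lem:TN_convergence}. Hence the standard theorem on differentiation under the integral sign applies.

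To upgrade the identities to all of $\C\setminus\bigl((-\infty,0]\cup[1,\infty)\bigr)$, I would note that $\L_0$ is holomorphic on this set (it extends to $\C$ minus the integers, as remarked after Lemma~\ref{lem:L0_L1_L2}) and that $\L_1,\L_2$ are holomorphic there by Definition~\ref{defn:L1_L2}. Since this domain is connected and contains the strip $\{0<\Re z<1\}$ on which the two identities already hold, the identity theorem yields them on the whole domain. As an alternative to the entire argument, one may instead differentiate the closed-form expressions of Definition~\ref{defn:L1_L2} directly, using $\Li_2'(w)=-\log(1-w)/w$ and $(\log(1-w))'=-1/(1-w)$ and treating the cases $\Im z\ge 0$ and $\Im z<0$ separately; in each case it is a one-line chain-rule computation.

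I do not expect a genuine obstacle here. The only step that requires a moment's care is the justification of differentiation under the integral sign, and that is immediate from the exponential-decay bounds already established for Lemma~\ref{lem:TN_convergence}.
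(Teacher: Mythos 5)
Your proposal is correct, but your primary route differs from the paper's. The paper's proof is a one-liner: since the extended $\L_1$ and $\L_2$ are \emph{defined} by the closed forms in Definition~\ref{defn:L1_L2}, it simply differentiates those expressions using $\frac{d}{dw}\Li_2(w)=-\frac{\log(1-w)}{w}$ and the chain rule --- exactly the alternative you sketch in your last paragraph. Your main argument instead differentiates the integral representations on the strip $0<\Re z<1$ (where the factor $2x$ from $\partial_z e^{(2z-1)x}$ cancels a power of $x$ in the denominator, turning $\L_2'$ into $-2\pi\i\L_1$ and $\L_1'$ into $-\L_0$) and then extends by the identity theorem. Both computations check out, and your justification of differentiation under the integral sign via the exponential-decay estimates of Lemma~\ref{lem:TN_convergence} is adequate. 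The trade-off: the closed-form route is immediate and applies verbatim on the whole extended domain, whereas the integral route is more faithful to the original contour-integral definitions but costs you the interchange-of-limits argument plus the analytic-continuation step (which in any case leans on the holomorphy of the extended $\L_1,\L_2$ on the connected domain $\C\setminus\bigl((-\infty,0]\cup[1,\infty)\bigr)$, the same input the closed-form route uses). Either way the lemma is proved.
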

\begin{proof}
The first equality follows from the well-known equality $\frac{d}{d\,w}\Li_2(w)=-\frac{\log(1-w)}{w}$.
The second one also follows easily.
\end{proof}
Now we will show three identities expressing the difference $T_N(z+a)-T_N(z)$ in terms of $\L_1$.
\par
The first one is as follows:
\begin{lem}\label{lem:TN_1}
If $|\Re{z}|<\frac{\Re{\gamma}}{2N}$, then we have
\begin{equation}\label{eq:TN_1}
  T_{N}(z)-T_{N}(z+1)
  =
  \L_1\left(\frac{N}{\gamma}z+\frac{1}{2}\right).
\end{equation}
\end{lem}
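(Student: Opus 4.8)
The plan is to prove the identity \eqref{eq:TN_1} by directly comparing the integral representations of $T_N(z)$, $T_N(z+1)$, and $\L_1$. From the definition \eqref{eq:def_TN}, the difference $T_N(z)-T_N(z+1)$ is
\begin{equation*}
  \frac{1}{4}\int_{\Rpath}\frac{e^{(2z-1)x}-e^{(2z+1)x}}{x\sinh(x)\sinh(\gamma x/N)}\,dx
  =
  \frac{1}{4}\int_{\Rpath}\frac{e^{2zx}\bigl(e^{-x}-e^{x}\bigr)}{x\sinh(x)\sinh(\gamma x/N)}\,dx
  =
  -\frac{1}{2}\int_{\Rpath}\frac{e^{2zx}}{x\sinh(\gamma x/N)}\,dx,
\end{equation*}
since $e^{-x}-e^{x}=-2\sinh(x)$ cancels the $\sinh(x)$ in the denominator. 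So the whole point is to identify the remaining one-variable integral with $\L_1$ evaluated at the right argument.

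First I would perform the substitution $x = (N/\gamma)y$ in the last integral, which sends $dx = (N/\gamma)\,dy$ and $1/x = (\gamma/N)(1/y)$, so the factor $N/\gamma$ cancels and we get $-\tfrac12\int_{\Gamma}\frac{e^{2(N/\gamma)zy}}{y\sinh(y)}\,dy$ over the image contour $\Gamma$ of $\Rpath$. Writing $w := (N/\gamma)z + \tfrac12$, one checks that $2(N/\gamma)z = 2w-1$, so the integrand becomes exactly $\frac{e^{(2w-1)y}}{y\sinh(y)}$, which is $-2$ times the integrand defining $\L_1(w)$ in the Definition of $\L_0,\L_1,\L_2$. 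Hence, up to the contour issue, $T_N(z)-T_N(z+1) = \L_1\bigl((N/\gamma)z+\tfrac12\bigr)$, which is \eqref{eq:TN_1}.

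The two things that need care are the contour and the convergence/range of validity. The substitution $x\mapsto(N/\gamma)y$ multiplies the unit semicircle part of $\Rpath$ by $N/\gamma$, so $\Gamma$ is not literally $\Rpath$; since $\Re(N/\gamma)$ and $\Im(N/\gamma)$ have opposite signs to those of $\gamma$ (recall $\Re\gamma>0$, $\Im\gamma<0$), I would check that the integrand $\frac{e^{(2w-1)y}}{y\sinh(y)}$ is holomorphic in the region between $\Gamma$ and $\Rpath$ (the only poles are at $y\in\pi\i\Z$, and one must verify none lie in the swept region for $N>|\gamma|/\pi$), and that the integrand decays at infinity along both contours in the relevant horizontal direction, so that Cauchy's theorem lets us deform $\Gamma$ back to $\Rpath$. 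This is where the hypothesis $|\Re{z}|<\frac{\Re\gamma}{2N}$ enters: it is exactly the condition making $0<\Re{w}<1$ (equivalently $-\tfrac12<\Re((N/\gamma)z)<\tfrac12$), which is the convergence range for the $\L_1$ integral and also keeps both $z$ and $z+1$ in the strip of Lemma~\ref{lem:TN_convergence}. I expect the contour-deformation bookkeeping — confirming which poles are enclosed and that the arcs at infinity vanish after the complex scaling — to be the main obstacle; the algebraic cancellation itself is immediate.
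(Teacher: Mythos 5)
Your proposal is correct and follows essentially the same route as the paper: the algebraic cancellation of $\sinh(x)$, the substitution $y=\gamma x/N$, and the deformation of the image contour back to $\Rpath$ using the absence of poles of $\frac{1}{y\sinh y}$ in the swept region. (One trivial slip: the image contour is $\Rpath$ scaled by $\gamma/N$, not by $N/\gamma$; this does not affect the argument.)
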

\begin{rem}
Note that since $-\frac{\Re\gamma}{2N}<\Re{z}<\frac{\Re\gamma}{2N}$ and $1-\frac{\Re\gamma}{2N}<\Re(z+1)<1+\frac{\Re\gamma}{2N}$, both $z$ and $z+1$ are in the domain of $T_N$.
\par
We will check that $Nz/\gamma+1/2$ is in $\C\setminus\bigl((-\infty,0]\cup[1,\infty)\bigr)$, the domain of $\L_1$.
\par
If not, then $\frac{N}{\gamma}z+\frac{1}{2}=s$ for $s\le0$ or $s\ge1$.
Thus, we have $z=\frac{\gamma}{N}s$ with $|s|\ge\frac{1}{2}$, which implies $|\Re{z}|\ge\frac{\Re\gamma}{2N}$, a contradiction.
\end{rem}
\begin{proof}
By definition, we have
\begin{align*}
  T_{N}(z)-T_{N}(z+1)
  =&
  \frac{1}{4}\int_{\Rpath}\frac{e^{(2z-1)x}-e^{(2z+1)x}}{x\sinh{x}\sinh(\gamma x/N)}\,dx
  \\
  =&
  -\frac{1}{2}\int_{\Rpath}\frac{e^{2zx}}{x\sinh(\gamma x/N)}\,dx
  \\
  &(\text{$y:=\gamma x/N$})
  \\
  =&
  -\frac{1}{2}\int_{\Rpath'}\frac{e^{2Nzy/\gamma}}{y\sinh{y}}\,dy
  =
  -\frac{1}{2}\int_{\Rpath}\frac{e^{2Nzy/\gamma}}{y\sinh{y}}\,dy
  =
  \L_1\left(\frac{N}{\gamma}z+\frac{1}{2}\right),
\end{align*}
where $\Rpath'$ is obtained from $\Rpath$ by multiplying $\gamma/N$.
The last equality follows since there are no poles of $\frac{1}{y\sinh{y}}$, that is, integer multiples of $\pi\i$ between $\Rpath$ and $\Rpath'$.
\end{proof}
Here is the second one:
\begin{lem}\label{lem:TN_gamma}
If $0<\Re{z}<1$, then we have
\begin{equation}\label{eq:TN_gamma}
  T_{N}\left(z-\frac{\gamma}{2N}\right)-T_{N}\left(z+\frac{\gamma}{2N}\right)
  =
  \L_1(z).
\end{equation}
\end{lem}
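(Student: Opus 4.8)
The plan is to proceed exactly as in the proof of Lemma~\ref{lem:TN_1}, namely by writing the difference as a single integral over $\Rpath$ and then deforming the contour so as to recognize one of the functions $\L_0,\L_1,\L_2$. First I would expand the definition \eqref{eq:def_TN} to get
\begin{equation*}
  T_{N}\left(z-\frac{\gamma}{2N}\right)-T_{N}\left(z+\frac{\gamma}{2N}\right)
  =
  \frac{1}{4}\int_{\Rpath}
  \frac{e^{(2z-1)x}\bigl(e^{-\gamma x/N}-e^{\gamma x/N}\bigr)}{x\sinh(x)\sinh(\gamma x/N)}\,dx,
\end{equation*}
and then observe that $e^{-\gamma x/N}-e^{\gamma x/N}=-2\sinh(\gamma x/N)$, so the troublesome factor $\sinh(\gamma x/N)$ in the denominator cancels. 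This leaves
\begin{equation*}
  T_{N}\left(z-\frac{\gamma}{2N}\right)-T_{N}\left(z+\frac{\gamma}{2N}\right)
  =
  -\frac{1}{2}\int_{\Rpath}\frac{e^{(2z-1)x}}{x\sinh(x)}\,dx
  =
  \L_1(z),
\end{equation*}
where the last equality is just the definition of $\L_1$ for $0<\Re{z}<1$.

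Before writing that chain of equalities I would insert a short remark, parallel to the one after Lemma~\ref{lem:TN_1}, checking that all the quantities in question lie in the appropriate domains. Since $0<\Re{z}<1$, the argument $z$ is certainly in the domain $\C\setminus\bigl((-\infty,0]\cup[1,\infty)\bigr)$ of $\L_1$; and because $\Re{\gamma}>0$, the two shifted points satisfy $-\frac{\Re\gamma}{2N}<\Re\bigl(z-\frac{\gamma}{2N}\bigr)$ and $\Re\bigl(z+\frac{\gamma}{2N}\bigr)<1+\frac{\Re\gamma}{2N}$, so both $z\pm\frac{\gamma}{2N}$ lie in the domain of $T_N$ given by Lemma~\ref{lem:TN_convergence}. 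One should also note that the interior cancellation is legitimate termwise on $\Rpath$ because each of the two integrals $\int_{\Rpath}\frac{e^{(2z\mp\gamma/N-1)x}}{x\sinh(x)\sinh(\gamma x/N)}\,dx$ converges separately under these hypotheses (again by Lemma~\ref{lem:TN_convergence}), so we may split and recombine the integrand freely.

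I do not expect a genuine obstacle here: unlike Lemma~\ref{lem:TN_1}, there is no contour deformation at all, since the substitution that was needed there is replaced by the exact algebraic identity $e^{-\gamma x/N}-e^{\gamma x/N}=-2\sinh(\gamma x/N)$. The only point requiring a little care is purely bookkeeping, namely confirming that the resulting integral $-\frac{1}{2}\int_{\Rpath}\frac{e^{(2z-1)x}}{x\sinh(x)}\,dx$ is exactly the integral defining $\L_1(z)$ in the region $0<\Re{z}<1$ — which it is, by inspection of the displayed formula for $\L_1$ in the Definition preceding Lemma~\ref{lem:L0_L1_L2}. So the proof is a two-line computation once the domain remark is in place.
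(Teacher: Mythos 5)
Your proposal is correct and coincides with the paper's own proof: the paper likewise combines the two integrals into one, cancels the $\sinh(\gamma x/N)$ factor via $e^{-\gamma x/N}-e^{\gamma x/N}=-2\sinh(\gamma x/N)$, and recognizes the remaining integral as the definition of $\L_1(z)$. The extra domain remarks you add are harmless and consistent with the remarks the paper makes elsewhere.
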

\begin{proof}
From the definition, we have
\begin{align*}
  T_{N}\left(z-\frac{\gamma}{2N}\right)
  -
  T_{N}\left(z+\frac{\gamma}{2N}\right)
  =&
  \frac{1}{4}
  \int_{\Rpath}\frac{e^{(2z-\gamma/N-1)x}-e^{(2z+\gamma/N-1)x}}{x\sinh{x}\sinh(\gamma x/N)}\,dx
  \\
  =&
  -\frac{1}{2}\int_{\Rpath}\frac{e^{(2z-1)x}}{x\sinh{x}}\,dx
  =
  \L_1(z).
\end{align*}
\end{proof}
The third one is a little tricky.
\begin{lem}\label{lem:TN_gamma_1}
If $|\Re{z}|<\frac{\Re\gamma}{N}$, then we have
\begin{multline}\label{eq:TN_gamma_1}
  T_{N}\left(z+1-\frac{\gamma}{2N}\right)-T_{N}\left(z+\frac{\gamma}{2N}\right)
  \\
  =
  \begin{cases}
    \L_1(z)-\L_1\left(\frac{N}{\gamma}z\right)
    &\quad\text{if $\Re{z}\ge0$ and $z\ne0$},
    \\
    \L_1(z+1)-\L_1\left(\frac{N}{\gamma}z+1\right)
    &\quad\text{if $\Re{z}<0$},
    \\
    \log\left(\frac{\gamma}{N}\right)
    &\quad\text{if $z=0$}.
  \end{cases}
\end{multline}
\end{lem}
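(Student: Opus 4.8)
The plan is to derive \eqref{eq:TN_gamma_1} by composing the two difference formulas already proved, \eqref{eq:TN_1} in Lemma~\ref{lem:TN_1} and \eqref{eq:TN_gamma} in Lemma~\ref{lem:TN_gamma}. To pass from the argument $z+\frac{\gamma}{2N}$ to the argument $z+1-\frac{\gamma}{2N}$ I would go through a third, auxiliary argument; the two natural choices $z-\frac{\gamma}{2N}$ and $z+1+\frac{\gamma}{2N}$ are admissible for the cited lemmas on the half-planes $\Re{z}>0$ and $\Re{z}<0$ respectively, and this is what accounts for the first two cases. The value at $z=0$ is then recovered by a limiting argument.

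First suppose $\Re{z}>0$. Lemma~\ref{lem:TN_gamma} applied to $z$ gives $T_N(z-\frac{\gamma}{2N})-T_N(z+\frac{\gamma}{2N})=\L_1(z)$, and Lemma~\ref{lem:TN_1} applied to $z-\frac{\gamma}{2N}$ gives
\begin{equation*}
  T_N\left(z-\frac{\gamma}{2N}\right)-T_N\left(z+1-\frac{\gamma}{2N}\right)
  =\L_1\left(\frac{N}{\gamma}\left(z-\frac{\gamma}{2N}\right)+\frac12\right)
  =\L_1\left(\frac{N}{\gamma}z\right).
\end{equation*}
Subtracting the first identity from the second cancels $T_N(z-\frac{\gamma}{2N})$ and yields the first case $T_N(z+1-\frac{\gamma}{2N})-T_N(z+\frac{\gamma}{2N})=\L_1(z)-\L_1(\frac{N}{\gamma}z)$. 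Here one must check that the hypothesis $0<\Re{z}<1$ of Lemma~\ref{lem:TN_gamma} and the hypothesis $\bigl|\Re(z-\frac{\gamma}{2N})\bigr|<\frac{\Re\gamma}{2N}$ of Lemma~\ref{lem:TN_1} both hold --- they do precisely when $0<\Re{z}<\frac{\Re\gamma}{N}$ --- and that $z$ and $\frac{N}{\gamma}z$ stay off the cuts $(-\infty,0]\cup[1,\infty)$ of $\L_1$, for which one uses $\Re\gamma>0$ and $\Im\gamma<0$. The boundary subcase $\Re{z}=0$, $z\ne0$ then follows by continuity: both sides are holomorphic near such a point and agree on the open set $\{0<\Re{z}<\frac{\Re\gamma}{N}\}$, which has that point in its closure.

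For $\Re{z}<0$ I would route through $z+1+\frac{\gamma}{2N}$ instead: Lemma~\ref{lem:TN_1} applied to $z+\frac{\gamma}{2N}$ gives $T_N(z+\frac{\gamma}{2N})-T_N(z+1+\frac{\gamma}{2N})=\L_1(\frac{N}{\gamma}z+1)$, and Lemma~\ref{lem:TN_gamma} applied to $z+1$ gives $T_N(z+1-\frac{\gamma}{2N})-T_N(z+1+\frac{\gamma}{2N})=\L_1(z+1)$; subtracting gives the second case $T_N(z+1-\frac{\gamma}{2N})-T_N(z+\frac{\gamma}{2N})=\L_1(z+1)-\L_1(\frac{N}{\gamma}z+1)$, the ranges matching because $-\frac{\Re\gamma}{N}<\Re{z}<0$ is exactly what makes $z+\frac{\gamma}{2N}$ admissible for Lemma~\ref{lem:TN_1} and $z+1$ admissible for Lemma~\ref{lem:TN_gamma}. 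For $z=0$ the left-hand side of \eqref{eq:TN_gamma_1} is holomorphic near $0$, so I would let $z\to0$ along the positive real axis inside the first-case region; there $\L_1(w)=\log(1-e^{2\pi\i w})$ is the correct branch for both $w=z$ and $w=\frac{N}{\gamma}z$ (the latter because $\Im(\frac{N}{\gamma}z)\ge0$ when $z>0$), and since $1-e^{2\pi\i w}\sim-2\pi\i w$ as $w\to0$ we get
\begin{equation*}
  \L_1(z)-\L_1\left(\frac{N}{\gamma}z\right)
  \longrightarrow
  \log(-2\pi\i z)-\log\left(-2\pi\i\frac{N}{\gamma}z\right)
  =\log\left(\frac{\gamma}{N}\right),
\end{equation*}
where the two logarithms combine without crossing a branch because $\frac{\gamma}{N}$ lies in the open fourth quadrant.

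The step I expect to be the real work is the branch bookkeeping: in each case one has to verify that the arguments $z$, $z+1$, $\frac{N}{\gamma}z$, $\frac{N}{\gamma}z+1$ lie in the domain $\C\setminus\bigl((-\infty,0]\cup[1,\infty)\bigr)$ of $\L_1$ and to identify which clause of Definition~\ref{defn:L1_L2} is active, so that the $\L_1$'s produced by Lemmas~\ref{lem:TN_1} and \ref{lem:TN_gamma} really are the ones written in \eqref{eq:TN_gamma_1}. Once the auxiliary point is chosen so that the two cited lemmas apply, the remaining manipulation is the formal cancellation displayed above.
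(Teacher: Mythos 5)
Your proposal matches the paper's proof: the same two decompositions through the auxiliary points $z-\frac{\gamma}{2N}$ (for $\Re z>0$) and $z+1+\frac{\gamma}{2N}$ (for $\Re z<0$) via Lemmas~\ref{lem:TN_1} and \ref{lem:TN_gamma}, with the boundary cases $\Re z=0$ and $z=0$ handled by the same limiting/continuity considerations (the paper uses one-sided limits and l'H\^opital where you use the identity theorem and the asymptotic $1-e^{2\pi\i w}\sim-2\pi\i w$, which are equivalent). The branch bookkeeping you flag as the real work is indeed what the paper's accompanying remark carries out, and your checks are consistent with it.
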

\begin{rem}
If $|\Re{z}|<\frac{\Re\gamma}{N}$, then $1-\frac{3\Re\gamma}{2N}<\Re\bigl(z+1-\frac{\gamma}{2N}\bigr)<1+\frac{\Re\gamma}{2N}$ and $-\frac{\Re\gamma}{2N}<\Re\bigl(z+\frac{\gamma}{2N}\bigr)<\frac{3\Re\gamma}{2N}$, and so both $z+1-\frac{\gamma}{2N}$ and $z+\frac{\gamma}{2N}$ are in the domain of $T_N$.
\par
We will check that the arguments in the right-hand side are in $\C\setminus\bigl((-\infty,0]\cup[1,\infty)\bigr)$, the domain of $\L_1$.
\begin{itemize}
\item
The case where $0\le\Re{z}$ and $z\ne0$.
Since $\Re{z}<\Re\gamma/N$, $z$ is in the domain of $\L_1$ if $N$ is sufficiently large.
Suppose for a contradiction that $\frac{N}{\gamma}z$ is not in the domain of $\L_1$.
Then we see that $\frac{N}{\gamma}z\in(-\infty,0]\cup[1,\infty)$ and so $\frac{N}{\gamma}z=s$ for $s\ge1$ or $s\le0$.
If $s\le0$, then $\Re{z}=s\Re\gamma/N\le0$ and so $z=s=0$, which is a contradiction.
If $s\ge1$, then $\Re{z}\ge s\Re\gamma/N\ge\Re\gamma/N$, which is also a contradiction.
\item
The case where $\Re{z}<0$.
Then $z+1$ is in the domain of $\L_1$ because $1-\Re\gamma/N<\Re(z+1)<1$.
Suppose for a contradiction that $Nz/\gamma+1$ is not in the domain of $\L_1$.
Then $\frac{N}{\gamma}z+1=s$ for $s\le0$ or $s\ge1$.
Thus $\Re{z}=\Re\bigl((s-1)\gamma/N\bigr)$ and so we have $\Re{z}\le-\Re\gamma/N$, which is impossible.
\end{itemize}
Thus, we conclude that the arguments in the right-hand side are in the domain of $\L_1$.
\end{rem}
\begin{proof}
We first assume that $\Re{z}>0$.
Then from Lemmas~\ref{lem:TN_gamma} and \ref{lem:TN_1} we have
\begin{align*}
  T_{N}\left(z-\frac{\gamma}{2N}\right)-T_{N}\left(z+\frac{\gamma}{2N}\right)
  &=
  \L_1(z),
  \\
  T_{N}\left(z+1-\frac{\gamma}{2N}\right)-T_{N}\left(z-\frac{\gamma}{2N}\right)
  &=
  -\L_1\left(\frac{N}{\gamma}z\right)
\end{align*}
and the equality follows.
Note that $-\frac{\Re\gamma}{2N}<\Re\bigl(z-\frac{\gamma}{2N}\bigr)<\frac{\Re\gamma}{2N}$ and so we can apply Lemma~\ref{lem:TN_1} to the second equality.
Similarly, if $\Re{z}<0$, we have
\begin{align*}
  T_{N}\left(z+1-\frac{\gamma}{2N}\right)-T_{N}\left(z+1+\frac{\gamma}{2N}\right)
  &=
  \L_1(z+1),
  \\
  T_{N}\left(z+1+\frac{\gamma}{2N}\right)-T_{N}\left(z+\frac{\gamma}{2N}\right)
  &=
  -\L_1\left(\frac{N}{\gamma}z+1\right)
\end{align*}
and the equality also holds.
\par
When $\Re{z}=0$, put $z:=y\i$ for $y\in\R\setminus\{0\}$ and consider the limit
\begin{equation*}
  \lim_{\varepsilon\to0}
  \left(
    T_{N}\left(y\i+1-\frac{\gamma}{2N}+\varepsilon\right)
    -
    T_{N}\left(y\i+\frac{\gamma}{2N}+\varepsilon\right)
  \right).
\end{equation*}
Since $T_{N}$ is a holomorphic function in $-\frac{\Re\gamma}{2N}<\Re{z}<1+\frac{\Re\gamma}{2N}$, the limit above coincides with the left-hand side of \eqref{eq:TN_1}.
From the result above, considering the limit from the right, we have
\begin{align*}
  &T_{N}\left(y\i+1-\frac{\gamma}{2N}\right)
  -
  T_{N}\left(y\i+\frac{\gamma}{2N}\right)
  \\
  =&
  \lim_{\varepsilon\searrow0}
  \left(
    T_{N}\left(y\i+1-\frac{\gamma}{2N}+\varepsilon\right)
    -
    T_{N}\left(y\i+\frac{\gamma}{2N}+\varepsilon\right)
  \right)
  \\
  =&
  \lim_{\varepsilon\searrow0}
  \left(
    \L_1(y\i+\varepsilon)-\L_1\left(\frac{N}{\gamma}(y\i+\varepsilon)\right)
  \right)
  \\
  =&
  \L_1(y\i)-\L_1\left(\frac{N}{\gamma}(y\i)\right)
\end{align*}
if $y\ne0$, because we extend $\L_1(z)$ to $\C\setminus\bigl((-\infty,0]\cup[1,\infty)\bigr)$.
Let us confirm that the limit from the left gives the same answer.
We have
\begin{align*}
  &\lim_{\varepsilon\nearrow0}
  \left(
    \L_1(y\i+\varepsilon+1)-\L_1\left(\frac{N}{\gamma}(y\i+\varepsilon)+1\right)
  \right)
  \\
  =&
  \L_1(y\i+1)-\L_1\left(\frac{N}{\gamma}y\i+1\right),
\end{align*}
which coincides with $\L_1(y\i)-\L_1\left(\frac{N}{\gamma}(y\i)\right)$ if $y\ne0$ from Lemma~\ref{lem:L2_Li2}, noting that $\Im\bigl(\frac{N}{\gamma}y\i\bigr)=\frac{Ny\Re\gamma}{|\gamma|^2}$ has the same sign as $y$.
\par
Now, we consider the case where $z=0$.
Since $\Im\gamma<0$, we have $\Im\frac{N}{\gamma}\varepsilon>0$ for $\varepsilon>0$.
Thus, we have
\begin{equation}\label{eq:TN_gamma_1_proof}
\begin{split}
  &\lim_{\varepsilon\searrow0}
  \left(
    \L_1(\varepsilon)-\L_1\left(\frac{N}{\gamma}\varepsilon\right)
  \right)
  \\
  =&
  \lim_{\varepsilon\searrow0}
  \left(
    \log\left(1-e^{2\pi\i\varepsilon}\right)
    -
    \log\left(1-e^{2N\varepsilon\pi\i/\gamma}\right)
  \right).
\end{split}
\end{equation}
Since we have $\lim_{\varepsilon\searrow0}\arg\left(1-e^{2\pi\i\varepsilon}\right)=-\pi/2$, and $-\pi<\arg\left(1-e^{2N\varepsilon\pi\i/\gamma}\right)<0$ because $\Im\left(1-e^{2N\varepsilon\pi\i/\gamma}\right)<0$, \eqref{eq:TN_gamma_1_proof} turns out to be
\begin{equation*}
  \lim_{\varepsilon\searrow0}
  \log\frac{1-e^{2\pi\i\varepsilon}}{1-e^{2N\varepsilon\pi\i/\gamma}}
  =
  \log\left(\frac{\gamma}{N}\right)
\end{equation*}
by l'H{\^o}pital's rule,
\par
Just for safety, we will check the other limit $\lim_{\varepsilon\nearrow0}\left(\L_1(\varepsilon+1)-\L_1\left(\frac{N}{\gamma}\varepsilon+1\right)\right)$.
Since $\Im(N\varepsilon/\gamma+1)<0$ when $\varepsilon<0$, from Lemma~\ref{lem:L2_Li2}
\begin{align*}
  &\lim_{\varepsilon\nearrow0}
  \left(
    \L_1(\varepsilon+1)-\L_1\left(\frac{N}{\gamma}\varepsilon+1\right)
  \right)
  \\
  =&
  \lim_{\varepsilon\nearrow0}
  \left(
    \log\left(1-e^{2\pi\i\varepsilon}\right)
    -
    \log\left(1-e^{2N\varepsilon\pi\i/\gamma}\right)
    -
    2\pi\i\left\lfloor\Re\left(\frac{N}{\gamma}\varepsilon\right)+1\right\rfloor
  \right)
  \\
  =&
  \lim_{\varepsilon\nearrow0}
  \log\frac{1-e^{2\pi\i\varepsilon}}{1-e^{2N\varepsilon\pi\i/\gamma}}
  =
  \log\left(\frac{\gamma}{N}\right),
\end{align*}
where the second equality follows since $\lim_{\varepsilon\nearrow0}\arg\left(1-e^{2\pi\i\varepsilon}\right)=\pi/2$, $0<\arg\left(1-e^{2N\varepsilon\pi\i/\gamma}\right)<\pi$ because $\Im\left(1-e^{2N\varepsilon\pi\i/\gamma}\right)>0$, and $\lim_{\varepsilon\nearrow0}\left\lfloor\Re(N\varepsilon/\gamma)+1\right\rfloor=0$.
\par
This completes the proof.
\end{proof}
We use Lemma~\ref{lem:TN_1} to extend the function $T_{N}$ to the region
\begin{equation}\label{eq:ell01}
  \{z\in\C\mid-1<\Re{z}<2\}\setminus\left(\ell_0^{+}\cup\ell_1^{-}\right),
\end{equation}
where
\begin{align*}
  \ell_0^{+}
  &:=
  \left\{
    z\in\C\Biggm|
    \text{$z=s\gamma$ with $-\frac{1}{\Re\gamma}<s\le-\frac{1}{2N}$}
  \right\},
  \\
  \ell_1^{-}
  &:=
  \left\{
    z\in\C\Biggm|
    \text{$z=1+s\gamma$ with $\frac{1}{2N}\le s<\frac{1}{\Re\gamma}$}
  \right\}.
\end{align*}
See Figure~\ref{fig:ell01}.
\begin{figure}[h]
\pic{0.3}{ell01}
\caption{The region \eqref{eq:ell01} is between the two thick dotted lines minus the two red lines $\ell_{0}^{+}$ and $\ell_{1}^{-}$.}
\label{fig:ell01}
\end{figure}
Note that $T_N$ is already defined for $z$ with $-\frac{\gamma}{2N}<\Re{z}<1+\frac{\gamma}{2N}$.
\par
If $-1<\Re{z}\le-\frac{\Re\gamma}{2N}$, then we use \eqref{eq:TN_1} to define
\begin{equation}\label{eq:TN_negative}
  T_{N}(z)
  :=
  T_{N}(z+1)
  +
  \L_1\left(\frac{N}{\gamma}z+\frac{1}{2}\right),
\end{equation}
noting that $z+1$ is in the domain of $T_{N}$.
For the argument of $\L_1$, see Remark~\ref{rem:domain_TN} below.
Similarly, if $1+\frac{\Re\gamma}{2N}\le\Re{z}<2$, we define
\begin{equation}\label{eq:TN_positive}
  T_{N}(z)
  :=
  T_{N}(z-1)
  -
  \L_1\left(\frac{N}{\gamma}(z-1)+\frac{1}{2}\right),
\end{equation}
noting that $z-1$ is in the domain of $T_{N}$.
For the argument of $\L_1$, see Remark~\ref{rem:domain_TN} below.
\begin{rem}\label{rem:domain_TN}
Recall that $\L_1(z)$ is defined except for $z\in(-\infty,0]\cup[1,\infty)$.
Therefore $Nz/\gamma+1/2$ and $N(z-1)/\gamma+1/2$ are in the domain of $\L_1$ unless
\begin{itemize}
\item $-1<\Re{z}\le-\frac{\Re\gamma}{2N}$ and $\frac{N}{\gamma}z+\frac{1}{2}=s$ for $s\in(-\infty,0]\cup[1,\infty)$, or
\item $1+\frac{\Re\gamma}{2N}\le\Re{z}<2$ and $\frac{N}{\gamma}(z-1)+\frac{1}{2}=t$ for $t\in(-\infty,0)\cup(1,\infty)$.
\end{itemize}
This is equivalent to
\begin{itemize}
\item
$-1<\Re{z}\le-\frac{\Re\gamma}{2N}$ and $z=s'\gamma$ with $|s'|\ge\frac{1}{2N}$, or
\item
$1+\frac{\Re\gamma}{2N}\le\Re{z}<2$ and $z=1+t'\gamma$ with $|t'|\ge\frac{1}{2N}$.
\end{itemize}
Since $\Re\gamma>0$, the condition above turns out to be $z\in\ell_{0}^{+}$ or $z\in\ell_{1}^{-}$.
\end{rem}
We will also use $T_{N}(z)$ to denote the function extended by using \eqref{eq:TN_negative} and \eqref{eq:TN_positive}.
Then we have
\begin{lem}\label{lem:TN_ell01}
The function $T_{N}(z)$ extended as above also satisfies \eqref{eq:TN_1} for any $z$ in the region
\begin{equation}\label{eq:ell00}
  \{z\in\C\mid-1<\Re{z}<1\}\setminus\left(\ell_0^{+}\cup\ell_0^{-}\right)
\end{equation}
with $\ell_0^{-}:=\bigl\{z\in\C\bigm|\text{$z=s\gamma$ with $\frac{1}{2N}\le s<\frac{1}{\Re\gamma}$}\bigr\}$.
See Figure~\ref{fig:ell00}.
\begin{figure}[h]
\pic{0.3}{ell00}
\caption{The region \eqref{eq:ell00} is between the two thick dotted lines minus the two red lines $\ell_0^{+}$ and $\ell_0^{-}$.}
\label{fig:ell00}
\end{figure}
\end{lem}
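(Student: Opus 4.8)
The plan is to establish \eqref{eq:TN_1} on the region \eqref{eq:ell00} by cutting that region into three vertical slices according to $\Re{z}$, and on each slice reading off the identity directly from whichever formula defines $T_{N}$ there. Concretely, the slices are $-1<\Re{z}\le-\frac{\Re\gamma}{2N}$ (minus $\ell_0^{+}$), then $-\frac{\Re\gamma}{2N}<\Re{z}<\frac{\Re\gamma}{2N}$, and finally $\frac{\Re\gamma}{2N}\le\Re{z}<1$ (minus $\ell_0^{-}$); these are disjoint and exhaust $\{-1<\Re{z}<1\}$, and by construction $\ell_0^{+}$ meets only the first slice and $\ell_0^{-}$ meets only the third, so removing $\ell_0^{+}\cup\ell_0^{-}$ from the whole strip is the correct exclusion.

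On the middle slice there is nothing to prove: $z$ lies in the original domain of $T_{N}$ and $|\Re{z}|<\frac{\Re\gamma}{2N}$, so \eqref{eq:TN_1} is exactly the statement of Lemma~\ref{lem:TN_1}. On the left slice, since $z\notin\ell_0^{+}$ the value $T_{N}(z)$ is defined by \eqref{eq:TN_negative} as $T_{N}(z+1)+\L_1\bigl(\frac{N}{\gamma}z+\frac12\bigr)$, and because $0<\Re(z+1)\le1-\frac{\Re\gamma}{2N}$ the point $z+1$ lies in the original strip, so this is literally \eqref{eq:TN_1} after rearranging. On the right slice, $z$ itself still lies in the original strip (so $T_{N}(z)$ is the original holomorphic function), while $1+\frac{\Re\gamma}{2N}\le\Re(z+1)<2$, so $T_{N}(z+1)$ is given by \eqref{eq:TN_positive} with $z$ replaced by $z+1$, namely $T_{N}(z+1)=T_{N}(z)-\L_1\bigl(\frac{N}{\gamma}z+\frac12\bigr)$; rearranging gives \eqref{eq:TN_1} again.

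The only point that needs care — and I expect this to be pure bookkeeping rather than any analytic difficulty — is checking that the arguments of $\L_1$ and the domains of the extension formulas match the excluded loci. On the left slice one must know $\frac{N}{\gamma}z+\frac12$ lies in the domain of $\L_1$, which by Remark~\ref{rem:domain_TN} is equivalent to $z\notin\ell_0^{+}$. On the right slice one must know both that $\frac{N}{\gamma}z+\frac12$ is in the domain of $\L_1$ and that $z+1$ lies in the extension region \eqref{eq:ell01}, i.e.\ $z+1\notin\ell_1^{-}$; the second of these is equivalent to $z\notin\ell_0^{-}$ (since $z+1\in\ell_1^{-}$ means $z=s\gamma$ with $\frac{1}{2N}\le s<\frac{1}{\Re\gamma}$), and the first then follows from Remark~\ref{rem:domain_TN} applied to $z+1$. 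Assembling the three slices completes the proof.
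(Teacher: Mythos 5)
Your proof is correct and follows essentially the same route as the paper: the same three-way split by $\Re{z}$, with the middle slice handled by Lemma~\ref{lem:TN_1} and the outer slices read off from the defining formulas \eqref{eq:TN_negative} and \eqref{eq:TN_positive} (applied to $z+1$ on the right slice). The domain bookkeeping you carry out explicitly is exactly what the paper delegates to Remark~\ref{rem:domain_TN} and the remark following the lemma.
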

\begin{rem}
As in Remark~\ref{rem:domain_TN}, $Nz/\gamma+1/2$ is in the domain of $\L_1$ unless $z\in\ell_0^{+}\cup\ell_0^{-}$.
\end{rem}
\begin{proof}
If $-\frac{\Re\gamma}{2N}<\Re{z}<\frac{\Re\gamma}{2N}$, then \eqref{eq:TN_1} is proved in Lemma~\ref{lem:TN_1}.
If $-1<\Re{z}\le-\frac{\Re\gamma}{2N}$ ($\frac{\Re\gamma}{2N}\le\Re{z}<1$, respectively), then we define $T_{N}$ by \eqref{eq:TN_negative} (\eqref{eq:TN_positive}, respectively) so that \eqref{eq:TN_1} holds.
\end{proof}
\begin{lem}
The function $T_{N}(z)$ defined as above is holomorphic in the region \eqref{eq:ell01}.
\end{lem}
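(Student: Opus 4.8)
The plan is to present the extended $T_N$ on the region \eqref{eq:ell01} as a patchwork of three holomorphic functions living on overlapping open strips, the compatibility on the overlaps being exactly Lemma~\ref{lem:TN_1}. Write $\Omega_0:=\{z\in\C\mid-\frac{\Re\gamma}{2N}<\Re z<1+\frac{\Re\gamma}{2N}\}$, on which $T_N$ is holomorphic by Lemma~\ref{lem:TN_convergence} (differentiating \eqref{eq:def_TN} under the integral sign). Put
\[
  f_{-}(z):=T_N(z+1)+\L_1\!\left(\frac{N}{\gamma}z+\frac12\right),
  \qquad
  f_{+}(z):=T_N(z-1)-\L_1\!\left(\frac{N}{\gamma}(z-1)+\frac12\right),
\]
which are the formulas \eqref{eq:TN_negative} and \eqref{eq:TN_positive} used to define the extension; I will regard $f_{-}$ and $f_{+}$ as defined on the full open strips $\Omega_{-}:=\{-1<\Re z<\frac{\Re\gamma}{2N}\}\setminus\ell_0^{+}$ and $\Omega_{+}:=\{1-\frac{\Re\gamma}{2N}<\Re z<2\}\setminus\ell_1^{-}$ (not merely on the half-open strips appearing in \eqref{eq:TN_negative}, \eqref{eq:TN_positive}).

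First I would check that $f_{-}$ is holomorphic on $\Omega_{-}$. For $z\in\Omega_{-}$ one has $0<\Re(z+1)<1+\frac{\Re\gamma}{2N}$, so $z+1\in\Omega_0$ and $T_N(z+1)$ is holomorphic; and by Remark~\ref{rem:domain_TN} (for $-1<\Re z\le-\frac{\Re\gamma}{2N}$) together with the remark following Lemma~\ref{lem:TN_1} (for $|\Re z|<\frac{\Re\gamma}{2N}$), the argument $\frac{N}{\gamma}z+\frac12$ stays in $\C\setminus\bigl((-\infty,0]\cup[1,\infty)\bigr)$, so the $\L_1$-term is holomorphic on $\Omega_{-}$ as well. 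The same computation with $w=z-1$ in place of $z$ shows that $f_{+}$ is holomorphic on $\Omega_{+}$.

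The key step is compatibility on overlaps. On $\Omega_0\cap\Omega_{-}=\{-\frac{\Re\gamma}{2N}<\Re z<\frac{\Re\gamma}{2N}\}$, Lemma~\ref{lem:TN_1} gives $T_N(z)=T_N(z+1)+\L_1(\frac{N}{\gamma}z+\frac12)=f_{-}(z)$; applying Lemma~\ref{lem:TN_1} to $w=z-1$ gives $T_N(z)=f_{+}(z)$ on $\Omega_0\cap\Omega_{+}=\{1-\frac{\Re\gamma}{2N}<\Re z<1+\frac{\Re\gamma}{2N}\}$; and since each $\Omega_{\pm}$ is connected, the identity theorem forces $f_{-}=f_{+}$ on $\Omega_{-}\cap\Omega_{+}$ as well, should that intersection be nonempty. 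Hence $T_N|_{\Omega_0}$, $f_{-}$, $f_{+}$ patch to a single holomorphic function on $\Omega_0\cup\Omega_{-}\cup\Omega_{+}$, which by \eqref{eq:TN_negative} and \eqref{eq:TN_positive} is the extended $T_N$. Finally one checks that $\ell_0^{+}$ (whose points have $\Re z\le-\frac{\Re\gamma}{2N}$) meets neither $\Omega_0$ nor $\Omega_{+}$, and $\ell_1^{-}$ (whose points have $\Re z\ge1+\frac{\Re\gamma}{2N}$) meets neither $\Omega_0$ nor $\Omega_{-}$, so that $\Omega_0\cup\Omega_{-}\cup\Omega_{+}$ is precisely $\{-1<\Re z<2\}\setminus(\ell_0^{+}\cup\ell_1^{-})$, i.e.\ the region \eqref{eq:ell01}. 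There is no genuine obstacle here: the only thing that could break the patching is a failure of the three functions to agree on an overlap, and that is supplied verbatim by Lemma~\ref{lem:TN_1}; the remaining work is the bookkeeping that the two deleted half-lines each meet only the strip from which they have been removed.
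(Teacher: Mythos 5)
Your proposal is correct and takes essentially the same route as the paper: both observe that the extension formulas \eqref{eq:TN_negative} and \eqref{eq:TN_positive} define holomorphic functions on open neighborhoods of the gluing lines $\Re{z}=-\frac{\Re\gamma}{2N}$ and $\Re{z}=1+\frac{\Re\gamma}{2N}$, and that they agree there with the integral definition \eqref{eq:def_TN} precisely by Lemma~\ref{lem:TN_1}. The only difference is cosmetic — you patch over full open strips where the paper checks agreement on a small disk about each boundary point — and your hedge about $\Omega_{-}\cap\Omega_{+}$ is harmless since that intersection is empty for the relevant $N$.
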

\begin{proof}
From the original definition \eqref{eq:def_TN}, we see that $T_{N}(z)$ is holomorphic in the region $\left\{z\in\C\mid-\frac{\Re\gamma}{2N}<\Re{z}<1+\frac{\Re\gamma}{2N}\right\}$.
Therefore from the definition using \eqref{eq:TN_negative} and \eqref{eq:TN_positive}, $T_{N}(z)$ is holomorphic in the disjoint strips
\begin{equation*}
  \left\{z\in\C\mid-1<\Re{z}<-\frac{\Re\gamma}{2N}\right\}
  \sqcup
  \left\{z\in\C\mid1+\frac{\Re\gamma}{2N}<\Re{z}<2\right\}.
\end{equation*}
\par
So, we need to confirm that $T_{N}(z)$ is holomorphic for $z$ with $\Re{z}=-\frac{\Re\gamma}{2N}$ or $1+\frac{\Re\gamma}{2N}$.
\par
Let $B$ be an open disk centered at $z$ ($\Re{z}=-\frac{\Re\gamma}{2N}$) with radius less than $\frac{\Re\gamma}{2N}$.
Then for $w\in B$ with $\Re{w}\le-\frac{\Re\gamma}{2N}$, we have
\begin{equation*}
  T_{N}(w)
  =
  T_{N}(w+1)
  +
  \L_1\left(\frac{N}{\gamma}w+\frac{1}{2}\right)
\end{equation*}
from \eqref{eq:TN_negative}.
On the other hand, for $w\in B$ with $\Re{w}>-\frac{\Re\gamma}{2N}$, $T_{N}(w)$ is defined by using \eqref{eq:def_TN}.
However, from Lemma~\ref{lem:TN_1} this coincides with $T_{N}(w+1)+\L_1\left(\frac{N}{\gamma}w+\frac{1}{2}\right)$.
Therefore $T_{N}$ is holomorphic in this case.
\par
Similarly, we can prove the holomorphicity of $T_{N}$ for the other case.
\end{proof}
\par
Let $\Omega$ be the region defined as
\begin{equation}\label{eq:Omega}
  \Omega
  :=
  \left\{z\in\C\Bigm|-1+\frac{\Re\gamma}{2N}<\Re{z}<2-\frac{\Re\gamma}{2N}\right\}
  \setminus
  \bigl(\Delta_{0}^{+}\cup\Delta_{1}^{+}\bigr)
\end{equation}
where we put
\begin{align*}
  \Delta_{0}^{+}
  &:=
  \left\{
    z\in\C\Bigm|
    \text{$-1+\frac{\Re\gamma}{2N}<\Re{z}\le0$,
    $\Im{z}\ge0$, and $\Im\left(\frac{z}{\gamma}\right)\le0$}
  \right\},
  \\
  \Delta_{1}^{-}
  &:=
  \left\{
    z\in\C\Bigm|
    \text{$1\le\Re{z}<2-\frac{\Re\gamma}{2N}$,
    $\Im{z}\le0$, and $\Im\left(\frac{z-1}{\gamma}\right)\ge0$}
  \right\}.
\end{align*}
See Figure~\ref{fig:Omega}.
\begin{figure}[h]
\pic{0.3}{Omega}
\caption{The yellow region is $\Omega$.
The green triangles are $\Delta_{0}^{+}$ and $\Delta_{1}^{-}$.}
\label{fig:Omega}
\end{figure}
Note that $\Omega$ is contained in the region \eqref{eq:ell01} because $\ell_{0}^{+}\cap\{z\in\C\mid-1+\Re\gamma/(2N)<\Re{z}<2-\Re\gamma/(2N)\}$ ($\ell_{1}^{-}\cap\{z\in\C\mid-1+\Re\gamma/(2N)<\Re{z}<2-\Re\gamma/(2N)\}$, respectively) is on the upper side of $\Delta_{0}^{+}$ (the lower side of $\Delta_{1}^{-}$, respectively).
\begin{lem}\label{lem:TN_gamma_Omega}
The function $T_{N}(z)$ extended by using \eqref{eq:TN_1} satisfies \eqref{eq:TN_gamma} for $z\in\Omega$.
\end{lem}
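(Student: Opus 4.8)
The plan is to deduce the identity from Lemma~\ref{lem:TN_gamma}, which already gives \eqref{eq:TN_gamma} on the vertical strip $0<\Re{z}<1$, by shifting the two arguments $z\mp\gamma/(2N)$ by $\pm1$ with the extended functional equation \eqref{eq:TN_1} (valid on the region \eqref{eq:ell00} by Lemma~\ref{lem:TN_ell01}) and then measuring the resulting discrepancy through the quasi-periodicity of $\L_1$. Since the triangles $\Delta_0^{+}$ and $\Delta_1^{-}$ lie in $\{\Re{z}\le0\}$ and $\{\Re{z}\ge1\}$ respectively, I would split $\Omega$ into the three open pieces $\{0<\Re{z}<1\}$, $\{-1+\Re\gamma/(2N)<\Re{z}<0\}\setminus\Delta_0^{+}$, and $\{1<\Re{z}<2-\Re\gamma/(2N)\}\setminus\Delta_1^{-}$, recovering the boundary lines $\Re{z}=0$ and $\Re{z}=1$ at the end by continuity, since both sides of \eqref{eq:TN_gamma} are holomorphic in neighbourhoods of them. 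The first piece is precisely Lemma~\ref{lem:TN_gamma}, and the third is entirely parallel to the second, so the work concentrates on the middle piece.

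Fix $z$ with $-1+\Re\gamma/(2N)<\Re{z}<0$ and $z\notin\Delta_0^{+}$, and set $w_{\pm}:=z\pm\gamma/(2N)$. A direct verification, using $\Re{z}<0$ and $z\notin\Delta_0^{+}$, shows that $w_{-}$, $w_{+}$, $w_{-}+1$, $w_{+}+1$ all lie in the extended domain \eqref{eq:ell01} of $T_{N}$ and that $z$, $z+1$, $\tfrac{N}{\gamma}z$, $\tfrac{N}{\gamma}z+1$ all lie in $\C\setminus\bigl((-\infty,0]\cup[1,\infty)\bigr)$. Then, from the decomposition
\begin{align*}
  T_{N}(w_{-})-T_{N}(w_{+})
  =&\bigl(T_{N}(w_{-})-T_{N}(w_{-}+1)\bigr)
  +\bigl(T_{N}(w_{-}+1)-T_{N}(w_{+}+1)\bigr)
  \\
  &+\bigl(T_{N}(w_{+}+1)-T_{N}(w_{+})\bigr),
\end{align*}
applying \eqref{eq:TN_1} to the first and third brackets (which give $\L_1(\tfrac{N}{\gamma}z)$ and $-\L_1(\tfrac{N}{\gamma}z+1)$, since $\tfrac{N}{\gamma}w_{\mp}+\tfrac12=\tfrac{N}{\gamma}z\mp\tfrac12+\tfrac12$) and Lemma~\ref{lem:TN_gamma} to the middle bracket (legitimate because $0<\Re(z+1)<1$), I obtain $T_{N}(w_{-})-T_{N}(w_{+})=\L_1(z+1)+\L_1(\tfrac{N}{\gamma}z)-\L_1(\tfrac{N}{\gamma}z+1)$.

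It then remains to check the identity $\L_1(z+1)-\L_1(z)=\L_1(\tfrac{N}{\gamma}z+1)-\L_1(\tfrac{N}{\gamma}z)$. From Definition~\ref{defn:L1_L2}, whenever $w$ and $w+1$ lie in the domain of $\L_1$ one has $\L_1(w+1)-\L_1(w)=0$ if $\Im{w}\ge0$ and $\L_1(w+1)-\L_1(w)=2\pi\i$ if $\Im{w}<0$, the latter being Corollary~\ref{cor:L1_L2_Im_negative}; so the identity holds as soon as $\Im{z}$ and $\Im(\tfrac{N}{\gamma}z)$ have the same sign. Writing $\Im(\tfrac{N}{\gamma}z)=\tfrac{N}{|\gamma|^{2}}\bigl(\Re\gamma\,\Im{z}-\Im\gamma\,\Re{z}\bigr)$ and using $\Re{z}<0$, $\Re\gamma>0$, $\Im\gamma<0$: if $\Im{z}<0$ then both summands are negative; if $\Im{z}>0$ then the hypothesis $z\notin\Delta_0^{+}$ reads precisely $\Im(z/\gamma)>0$; and the case $\Im{z}=0$ with $\Re{z}<0$ is already excluded because that portion of the negative real axis lies in $\Delta_0^{+}$. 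The mirror computation for the third piece shifts by $-1$ instead, with $\Delta_1^{-}$ playing the role of $\Delta_0^{+}$ and the sign condition now read off from $\Im\bigl((z-1)/\gamma\bigr)$.

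I expect the main obstacle to be bookkeeping rather than any single idea: at every shift one has to confirm that the arguments of $T_{N}$ avoid the slits $\ell_0^{\pm}$ and $\ell_1^{-}$, that the arguments of $\L_1$ avoid $(-\infty,0]\cup[1,\infty)$, and that these domain constraints, together with the sign computation for $\Im(\tfrac{N}{\gamma}z)$, are exactly what the deletion of the two triangles encodes. As an alternative route one could instead observe that both sides of \eqref{eq:TN_gamma} are holomorphic on the connected region $\Omega$ and agree on the open strip $0<\Re{z}<1$, so that the identity theorem applies; but establishing holomorphicity of the left-hand side on all of $\Omega$ still requires the same domain verifications.
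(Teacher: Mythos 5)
Your argument is correct, and it reaches the paper's proof through a genuinely cleaner organization, so a comparison is worth recording. The paper proves the lemma by a six-way case analysis on $\Re{z}$ (the sub-intervals $(-\frac{\Re\gamma}{2N},0)$, $\{0\}$, $\{-\frac{\Re\gamma}{2N}\}$, $(-1+\frac{\Re\gamma}{2N},-\frac{\Re\gamma}{2N})$, $[1,1+\frac{\Re\gamma}{2N})$, and $[1+\frac{\Re\gamma}{2N},2-\frac{\Re\gamma}{2N})$), invoking in the various cases \eqref{eq:TN_negative}, \eqref{eq:TN_positive}, Lemma~\ref{lem:TN_gamma_1} (whose own proof needs one-sided limits and l'H{\^o}pital's rule), and \eqref{eq:TN_gamma}; every case lands on the same expression $\L_1(z\pm1)+\L_1\bigl(\frac{N}{\gamma}(z-\epsilon)\bigr)-\L_1\bigl(\frac{N}{\gamma}(z-\epsilon)+1\bigr)$ (with $\epsilon\in\{0,1\}$) followed by the same sign analysis you perform. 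Your single three-term telescoping $T_N(w_-)-T_N(w_-\pm1)$, $T_N(w_-\pm1)-T_N(w_+\pm1)$, $T_N(w_+\pm1)-T_N(w_+)$, justified uniformly by Lemma~\ref{lem:TN_ell01} and Lemma~\ref{lem:TN_gamma}, collapses those cases into one computation per half-strip and bypasses Lemma~\ref{lem:TN_gamma_1} entirely; the boundary lines $\Re{z}=0$ and $\Re{z}=1$ are then absorbed by the identity theorem rather than by hand. The price is that you must verify holomorphy of both sides near those lines (which the paper has already supplied for $T_N$, and which for $\L_1$ only fails on the cuts, avoided since $0\in\Delta_0^{+}$ and $1\in\Delta_1^{-}$). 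Both proofs ultimately rest on the identical observation that the deleted triangles are exactly the locus where $\Im{z}$ and $\Im(z/\gamma)$ (resp.\ $\Im\bigl((z-1)/\gamma\bigr)$) have opposite signs, so that the $2\pi\i$ jumps of $\L_1$ under $z\mapsto z+1$ cancel. Your bookkeeping of the slits $\ell_0^{\pm}$ and of the domain of $\L_1$ checks out; the only point to make explicit in a final write-up is that the extended $T_N$ agrees with the integral definition on the middle bracket's arguments, so Lemma~\ref{lem:TN_gamma} indeed applies there.
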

\begin{rem}\label{rem:Omega}
The left-hand side of \eqref{eq:TN_gamma} is defined for $z$ such that $z\pm\frac{\gamma}{2N}$ is in the region \eqref{eq:ell01}, that is, $z\pm\frac{\gamma}{2N}\not\in\ell_0^{+}\cup\ell_1^{-}$.
This is equivalent to $z$ is not on the two rays $\{s\gamma\in\C\mid s\le0\}\cup\{1+s\gamma\in\C\mid s\ge0\}$.
Note that the ray $\{s\gamma\in\C\mid s\le0\}$ includes the upper edge of $\Delta_{0}^{+}$, and that the ray $\{1+s\gamma\in\C\mid s\ge0\}$ includes the lower edge of $\Delta_{1}^{-}$.
The right-hand side of \eqref{eq:TN_gamma} is defined unless $z\in(-\infty,0]\cup[1,\infty)$.
\end{rem}
\begin{proof}
We need to prove \eqref{eq:TN_gamma} for $z$ with $-1+\frac{\Re\gamma}{2N}<\Re{z}\le0$ or $1\le\Re{z}<2-\frac{\Re\gamma}{2N}$.
\par
If $-\frac{\Re\gamma}{2N}<\Re{z}<0$, from \eqref{eq:TN_gamma_1}, we have
\begin{align*}
  &T_{N}\left(z-\frac{\gamma}{2N}\right)
  -
  T_{N}\left(z+\frac{\gamma}{2N}\right)
  \\
  =&
  T_{N}\left(z-\frac{\gamma}{2N}\right)
  -
  T_{N}\left(z+1-\frac{\gamma}{2N}\right)
  +
  \L_1(z+1)
  -
  \L_1\left(\frac{N}{\gamma}z+1\right)
  \\
  =&
  \L_1\left(\frac{N}{\gamma}\left(z-\frac{\gamma}{2N}\right)+\frac{1}{2}\right)
  +
  \L_1(z+1)
  -
  \L_1\left(\frac{N}{\gamma}z+1\right)
  \\
  =&
  \L_1\left(\frac{N}{\gamma}z\right)
  +
  \L_1(z+1)
  -
  \L_1\left(\frac{N}{\gamma}z+1\right),
\end{align*}
where we use Lemma~\ref{lem:TN_ell01} for $z-\frac{\gamma}{2N}$ at the second equality.
If $\Im{z}\ge0$, then $\Im(z/\gamma)>0$ from \eqref{eq:Omega}.
So we have $\L_1(z+1)=\L_1(z)$ and $\L_1(Nz/\gamma+1)=\L_1(Nz/\gamma)$ from \eqref{eq:L1_log}, which implies \eqref{eq:TN_gamma}.
If $\Im{z}<0$, then we have $\Im(Nz/\gamma+1)=\frac{N}{|\gamma|^2}(\Re\gamma\Im{z}-\Im\gamma\Re{z})<0$.
So we have $\L_1(z+1)=\L_1(z)+2\pi\i$ and $\L_1(Nz/\gamma+1)=\L_1(Nz/\gamma)+2\pi\i$ from Corollary~\ref{cor:L1_L2_Im_negative}, proving \eqref{eq:TN_gamma}.
\par
If $\Re{z}=0$, then noting that $0$ is not included in $\Omega$, similarly we have
\begin{align*}
  &T_{N}\left(z-\frac{\gamma}{2N}\right)
  -
  T_{N}\left(z+\frac{\gamma}{2N}\right)
  \\
  =&
  T_{N}\left(z-\frac{\gamma}{2N}\right)
  -
  T_{N}\left(z+1-\frac{\gamma}{2N}\right)
  +
  \L_1(z)
  -
  \L_1\left(\frac{N}{\gamma}z\right)
  \\
  =&
  \L_1\left(\frac{N}{\gamma}\left(z-\frac{\gamma}{2N}\right)+\frac{1}{2}\right)
  +
  \L_1(z)
  -
  \L_1\left(\frac{N}{\gamma}z\right)
  \\
  =&
  \L_1(z).
\end{align*}
\par
If $\Re{z}=-\frac{\Re\gamma}{2N}$, then we have $\Re\bigl(z+\frac{\gamma}{2N}\bigr)=0$ and $-1<\Re\bigl(z-\frac{\gamma}{2N}\bigr)=-\frac{\Re\gamma}{N}<-\frac{\Re\gamma}{2N}$.
Therefore from \eqref{eq:TN_negative} we have
\begin{equation}\label{eq:Rez_gamma/2N}
\begin{split}
  &T_{N}\left(z-\frac{\gamma}{2N}\right)
  -
  T_{N}\left(z+\frac{\gamma}{2N}\right)
  \\
  =&
  T_{N}\left(z-\frac{\gamma}{2N}+1\right)
  -
  T_{N}\left(z+\frac{\gamma}{2N}\right)
  +
  \L_1\left(\frac{N}{\gamma}\left(z-\frac{\gamma}{2N}\right)+\frac{1}{2}\right)
  \\
  =&
  T_{N}\left(z-\frac{\gamma}{2N}+1\right)
  -
  T_{N}\left(z+\frac{\gamma}{2N}\right)
  +
  \L_1\left(\frac{N}{\gamma}z\right)
  \\
  =&
  \L_1(z+1)-\L_1\left(\frac{N}{\gamma}z+1\right)+\L_1\left(\frac{N}{\gamma}z\right),
\end{split}
\end{equation}
where the last equality follows from Lemma~\ref{lem:TN_gamma_1} since $\Re{z}<0$.
If $\Im{z}\ge0$, then $\Im(z/\gamma)>0$, and so \eqref{eq:Rez_gamma/2N} turns out to be $\L_1(z)$.
If $\Im{z}<0$, then $\Im(z/\gamma)<0$.
Therefore \eqref{eq:Rez_gamma/2N} equals
\begin{equation*}
  \log(1-e^{2\pi\i z})-2\pi\i
  =
  \L_1(z)
\end{equation*}
from Lemma~\ref{lem:L2_Li2} and Corollary~\ref{cor:L1_L2_Im_negative}.
\par
We consider the case where $-1+\frac{\Re\gamma}{2N}<\Re{z}<-\frac{\Re\gamma}{2N}$.
Note that $-1<\Re\bigl(z\pm\frac{\gamma}{2N}\bigr)<0$.
Therefore from \eqref{eq:TN_negative} we have
\begin{equation*}
\begin{split}
  &T_{N}\left(z-\frac{\gamma}{2N}\right)
  -
  T_{N}\left(z+\frac{\gamma}{2N}\right)
  \\
  =&
  T_{N}\left(z-\frac{\gamma}{2N}+1\right)
  -
  T_{N}\left(z+\frac{\gamma}{2N}+1\right)
  \\
  &+
  \L_1\left(\frac{N}{\gamma}\left(z-\frac{\gamma}{2N}\right)+\frac{1}{2}\right)
  -
  \L_1\left(\frac{N}{\gamma}\left(z+\frac{\gamma}{2N}\right)+\frac{1}{2}\right)
  \\
  =&
  \L_1(z+1)
  +
  \L_1\left(\frac{N}{\gamma}z\right)
  -
  \L_1\left(\frac{N}{\gamma}z+1\right),
\end{split}
\end{equation*}
where we use \eqref{eq:TN_gamma} because $0<\Re(z+1)<1$.
By the same reason as above, this equals $\L_1(z)$.
\par
If $1\le\Re{z}<1+\frac{\Re\gamma}{2N}$, then from \eqref{eq:TN_positive}, we have
\begin{align*}
  &T_{N}\left(z-\frac{\gamma}{2N}\right)
  -
  T_{N}\left(z+\frac{\gamma}{2N}\right)
  \\
  =&
  T_{N}\left(z-\frac{\gamma}{2N}\right)
  -
  T_{N}\left(z+\frac{\gamma}{2N}-1\right)
  +
  \L_1\left(\frac{N}{\gamma}\left(z+\frac{\gamma}{2N}-1\right)+\frac{1}{2}\right)
  \\
  =&
  \L_1(z-1)-\L_1\left(\frac{N}{\gamma}(z-1)\right)
  +
  \L_1\left(\frac{N}{\gamma}(z-1)+1\right),
\end{align*}
where we use \eqref{eq:TN_gamma_1} for $z-1$ at the second identity, noting that $1\not\in\Omega$.
If $\Im{z}\ge0$, then $\Im\bigl((z-1)/\gamma\bigr)=\frac{1}{|\gamma|^2}\bigl(\Im\gamma(1-\Re{z})+\Re\gamma\Im{z}\bigr)\ge0$, and so the last line equals $\L_1(z)$.
If $\Im{z}<0$, then $\Im\bigl((z-1)/\gamma\bigr)<0$ from the definition of $\Delta_{1}^{-}$, and so we have $\L_1(z-1)=\L_1(z)-2\pi\i$ and $\L_1\bigl(N(z-1)/\gamma+1\bigr)=\L_1\bigl(N(z-1)/\gamma\bigr)+2\pi\i$ from Corollary~\ref{cor:L1_L2_Im_negative}, which implies \eqref{eq:TN_gamma}.
\par
Lastly, we consider the case where $1+\frac{\Re\gamma}{2N}\le\Re{z}<2-\frac{\Re\gamma}{2N}$.
Since $1\le\Re\bigl(z\pm\frac{\gamma}{2N}\bigr)<2$, from \eqref{eq:TN_positive}, we have
\begin{equation}\label{eq:Rez_positive}
\begin{split}
  &T_{N}\left(z-\frac{\gamma}{2N}\right)
  -
  T_{N}\left(z+\frac{\gamma}{2N}\right)
  \\
  =&
  T_{N}\left(z-\frac{\gamma}{2N}-1\right)
  -
  T_{N}\left(z+\frac{\gamma}{2N}-1\right)
  \\
  &
  -
  \L_1\left(\frac{N}{\gamma}\left(z-\frac{\gamma}{2N}-1\right)+\frac{1}{2}\right)
  +
  \L_1\left(\frac{N}{\gamma}\left(z+\frac{\gamma}{2N}-1\right)+\frac{1}{2}\right)
  \\
  =&
  \L_1(z-1)
  -
  \L_1\left(\frac{N}{\gamma}(z-1)\right)
  +
  \L_1\left(\frac{N}{\gamma}(z-1)+1\right),
\end{split}
\end{equation}
where we use \eqref{eq:TN_gamma} at the last equality.
If $\Im{z}\ge0$, then we have $\Im\bigl((z-1)/\gamma\bigr)=\frac{1}{|\gamma|^2}\bigl(\Im\gamma(1-\Re{z})+\Re\gamma\Im{z}\bigr)>0$ since $\Re{z}>1$.
So \eqref{eq:Rez_positive} equals $\log\big(1-e^{2\pi\i z}\bigr)=\L_1(z)$.
If $\Im{z}<0$, then $\Im\bigl((z-1)/\gamma\bigr)<0$.
Therefore \eqref{eq:Rez_positive} becomes
\begin{equation*}
  \log\bigl(1-e^{2\pi\i z}\bigr)
  +2\pi\i
  =
  \L_1(z)
\end{equation*}
from Lemma~\ref{lem:L2_Li2}.
\par
The proof is complete.
\end{proof}
\begin{rem}
Even if $z\in\Int\Delta_{0}^{+}\cup\Int\Delta_{1}^{-}$, where $\Int$ means the interior, both sides of \eqref{eq:TN_gamma} are defined from Remark~\ref{rem:Omega}.
However, if $z\in\Int\Delta_{0}^{+}$, then from the proof above, we have
\begin{align*}
  &T_{N}\left(z-\frac{\gamma}{2N}\right)
  -
  T_{N}\left(z+\frac{\gamma}{2N}\right)
  \\
  =&
  \L_1(z+1)+\L_1\left(\frac{N}{\gamma}z\right)-\L_1\left(\frac{N}{\gamma}z+1\right)
  \\
  =&
  \L_1(z)-2\pi\i,
\end{align*}
where the second equality follows from Corollary~\ref{cor:L1_L2_Im_negative} since $\Im{z}>0$ and $\Im\bigl(\frac{N}{\gamma}\bigr)<0$.
Similarly, if $z\in\Int\Delta_{1}^{-}$, we have
\begin{align*}
  &T_{N}\left(z-\frac{\gamma}{2N}\right)
  -
  T_{N}\left(z+\frac{\gamma}{2N}\right)
  \\
  =&
  \L_1(z-1)-\L_1\left(\frac{N}{\gamma}(z-1)\right)+\L_1\left(\frac{N}{\gamma}(z-1)+1\right)
  \\
  =&
  \L_1(z)-2\pi\i
\end{align*}
since $\Im{z}<0$ and $\Im\bigl(\frac{N}{\gamma}(z-1)\bigr)>0$.
\end{rem}
\par
For a real number $0<\nu<1/2$ and a positive real number $M$, we put
\begin{equation}\label{eq:Omega_ast}
  \Omega^{\ast}_{\nu}
  :=
  \left\{z\in\C\Bigm|-1+\nu\le\Re{z}\le2-\nu, |\Im{z}|\le M\right\}
  \setminus
  \bigl(\Delta^{+}_{0,\nu}\cup\Delta^{-}_{1,\nu}\bigr)
\end{equation}
where we put
\begin{align*}
  \Delta^{+}_{0,\nu}
  &:=
  \left\{
    z\in\C\Bigm|
    \text{$-1+\nu\le\Re{z}<\nu$,
    $\Im{z}>-\nu$,  and $\Im\left(\frac{z-\nu}{\gamma}\right)<0$}
  \right\},
  \\
  \Delta^{-}_{1,\nu}
  &:=
  \left\{
    z\in\C\Bigm|
    \text{$1-\nu<\Re{z}\le2-\nu$,
    $\Im{z}<\nu$, and $\Im\left(\frac{z-1+\nu}{\gamma}\right)>0$}
  \right\}.
\end{align*}
Note that $\Omega^{\ast}_{\nu}\subset\Omega$ if $N>\frac{\Re\gamma}{2\nu}$.
Note also that $\Delta^{+}_{0,\nu}\cap\Delta^{-}_{1,\nu}=\emptyset$ since $\nu<1/2$.
See Figure~\ref{fig:Omega_ast}.
\begin{figure}[h]
\pic{0.3}{Omega_ast}
\caption{The yellow region is $\Omega^{\ast}_{\nu}$.
The green triangles are $\Delta^{+}_{0,\nu}$ and $\Delta^{-}_{1,\nu}$.}
\label{fig:Omega_ast}
\end{figure}
\par
We can prove that $T_{N}(z)$ uniformly converges to $\frac{N}{2\pi\i\gamma}\L_2(z)$ in $\Omega^{\ast}_{\nu}$.
To do that, we prepare several lemmas.
\begin{lem}\label{lem:L2_L1_gamma}
Let $\nu$ and $M$ be positive real numbers with $0<\nu<1/2$.
Then, we have
\begin{equation*}
  \frac{N}{2\pi\i\gamma}\L_2\left(z-\frac{\gamma}{2N}\right)
  -
  \frac{N}{2\pi\i\gamma}\L_2\left(z+\frac{\gamma}{2N}\right)
  =
  \L_1(z)
  +
  O(N^{-2})
\end{equation*}
as $N\to\infty$ for $z$ in the region
\begin{equation}\label{eq:L2_L1_approx}
  \{z\in\C\mid-1+\nu\le\Re{z}\le2-\nu,|\Im{z}|\le M\}
  \setminus\left(\square^{-}_{\nu}\cup\square^{+}_{\nu}\right),
\end{equation}
where
\begin{align*}
  \square^{-}_{\nu}
  &:=
  \{z\in\C\mid-1+\nu\le\Re{z}\le\nu,|\Im{z}|\le\nu\},
  \\
  \square^{+}_{\nu}
  &:=
  \{z\in\C\mid1-\nu\le\Re{z}\le2-\nu,|\Im{z}|\le\nu\}.
\end{align*}
This means that there exists a constant $c>0$ that does not depend on $z$ such that
\begin{equation*}
  \left|
    \frac{N}{2\pi\i\gamma}\L_2\left(z-\frac{\gamma}{2N}\right)
    -
    \frac{N}{2\pi\i\gamma}\L_2\left(z+\frac{\gamma}{2N}\right)
    -
    \L_1(z)
  \right|
  <
  \frac{c}{N^2}
\end{equation*}
for sufficiently large $N$.
\end{lem}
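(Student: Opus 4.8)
The plan is to Taylor-expand $\L_2$ about $z$ and use the cancellation of the even-order terms in the symmetric difference $\L_2(z-\tfrac{\gamma}{2N})-\L_2(z+\tfrac{\gamma}{2N})$: the leading term will be $-2\cdot\tfrac{\gamma}{2N}\L_2'(z)$, and after multiplication by $\tfrac{N}{2\pi\i\gamma}$ this becomes $-\tfrac{1}{2\pi\i}\L_2'(z)=\L_1(z)$ by \eqref{eq:der_L2}, while the first surviving correction is of order $(\gamma/N)^3$, hence $O(N^{-2})$ after the same multiplication. The only real work is to make the Taylor remainder uniform in $z$ over the region \eqref{eq:L2_L1_approx}, which amounts to producing a $z$-independent bound for $\L_2$ on small disks around the points of that region.

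\textbf{Step 1: a uniform disk of holomorphy.} Recall from Definition~\ref{defn:L1_L2} (and the remark preceding it) that $\L_2$ extends to a single-valued holomorphic function on $D:=\C\setminus\bigl((-\infty,0]\cup[1,\infty)\bigr)$. First I would show that there is $\rho=\rho(\nu)>0$, e.g.\ $\rho=\nu/4$, such that the closed disk $\overline{D(z,\rho)}$ lies in $D$ for every $z$ in the region \eqref{eq:L2_L1_approx}. A short case check gives $\operatorname{dist}\bigl(z,(-\infty,0]\cup[1,\infty)\bigr)>\nu$ there: if $\Re{z}>\nu$ the nearest point of $(-\infty,0]$ is $0$, so the distance to it is $|z|\ge\Re{z}>\nu$; if $-1+\nu\le\Re{z}\le\nu$ then, since $z\notin\square^{-}_{\nu}$, we have $|\Im{z}|>\nu$ and the distance to $(-\infty,0]$ is again $>\nu$; and in both cases the distance to $[1,\infty)$ is $\ge 1-\nu>\nu$ because $\Re{z}\le\nu<\tfrac12$; the range $1-\nu\le\Re{z}\le2-\nu$ is handled symmetrically using $z\notin\square^{+}_{\nu}$ and $\Re{z}\le2-\nu$. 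Hence, for $z$ in \eqref{eq:L2_L1_approx}, every $w$ with $|w-z|\le\rho$ satisfies $\operatorname{dist}(w,\text{cuts})>\nu-\rho\ge\nu/2$, so $\overline{D(z,\rho)}$ is contained in the compact set $K_{\nu}:=\{w\in\C\mid\operatorname{dist}(w,\text{cuts})\ge\nu/2,\ -2\le\Re{w}\le 3,\ |\Im{w}|\le M+1\}\subset D$. Therefore $C:=\sup_{K_{\nu}}|\L_2|<\infty$, a constant depending only on $\nu$ and $M$ (not on $z$, $N$, or $\gamma$).

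\textbf{Step 2: Cauchy estimate and conclusion.} Assume $N>2|\gamma|/\nu$, so $h:=\gamma/(2N)$ satisfies $|h|<\rho$, and (enlarging $N$ if needed) $|h|/\rho\le\tfrac12$. For $z$ in \eqref{eq:L2_L1_approx}, Cauchy's integral formula on $|w-z|=\rho$ gives
\begin{equation*}
  \L_2(z-h)-\L_2(z+h)
  =\frac{1}{2\pi\i}\oint_{|w-z|=\rho}\frac{-2h\,\L_2(w)}{(w-z)^2-h^2}\,dw .
\end{equation*}
Expanding $\bigl((w-z)^2-h^2\bigr)^{-1}=\sum_{k\ge0}h^{2k}(w-z)^{-2k-2}$ and integrating term by term, the $k=0$ term equals $-2h\,\L_2'(z)$, and the tail $k\ge1$ is bounded in absolute value by $2C\sum_{k\ge1}(|h|/\rho)^{2k+1}\le\dfrac{4C}{\rho^{3}}\,|h|^{3}=O(N^{-3})$, uniformly in $z$. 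Thus $\L_2(z-h)-\L_2(z+h)=-2h\,\L_2'(z)+O(N^{-3})$. Multiplying by $\tfrac{N}{2\pi\i\gamma}$, using $-2h\cdot\tfrac{N}{2\pi\i\gamma}=-\tfrac{1}{2\pi\i}$ and $\L_2'(z)=-2\pi\i\,\L_1(z)$ from \eqref{eq:der_L2}, yields
\begin{equation*}
  \frac{N}{2\pi\i\gamma}\L_2\!\left(z-\frac{\gamma}{2N}\right)
  -\frac{N}{2\pi\i\gamma}\L_2\!\left(z+\frac{\gamma}{2N}\right)
  =\L_1(z)+O(N^{-2})
\end{equation*}
uniformly for $z$ in \eqref{eq:L2_L1_approx}, which is the assertion (the implied constant $c$ depending only on $\nu$, $M$, and $\gamma$).

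\textbf{Main obstacle.} The delicate point is Step~1: one must check that the precise shape of the excised squares $\square^{-}_{\nu}$, $\square^{+}_{\nu}$ keeps the \emph{entire} disk $\overline{D(z,\rho)}$ — not merely $z$ itself — bounded away from the branch cuts of $\L_2$, so that the sup-bound $C$ can be taken independent of $z$ and $N$. Once that is in place, everything else is a routine Cauchy/Taylor estimate.
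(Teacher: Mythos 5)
Your proof is correct, but it takes a genuinely different route from the paper's. The paper expands $\L_2\bigl(z\pm\tfrac{\gamma}{2N}\bigr)$ in an explicit Taylor series to all orders, expresses the third and higher derivatives through $\csc^2(\pi z)$ and its derivatives, and then invokes a separate combinatorial lemma (Lemma~\ref{lem:der_csc}, proved by induction on the coefficients $a_{m,j}$) to bound the tail of the series by a convergent geometric-type sum in terms of the maximum of $|\cos|$ and the minimum $l>0$ of $|\sin(\pi z)|$ on the closure of \eqref{eq:L2_L1_approx}. You replace all of that with a soft argument: a uniform disk of holomorphy of radius $\rho=\nu/4$ around each point of the region, a sup bound $C$ for $\L_2$ on a fixed compact subset of $\C\setminus\bigl((-\infty,0]\cup[1,\infty)\bigr)$, and the Cauchy integral representation of the symmetric difference, whose tail is a geometric series bounded by $O(|h|^3)$. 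Both proofs ultimately rest on the same geometric fact — that deleting $\square^{\pm}_{\nu}$ keeps the region uniformly away from the branch points $0$ and $1$ — but yours avoids Lemma~\ref{lem:der_csc} entirely and needs only $\L_2'=-2\pi\i\L_1$, at the cost of less explicit constants. One small bookkeeping slip: in Step~1 the sentence ``in both cases the distance to $[1,\infty)$ is $\ge1-\nu$ because $\Re z\le\nu$'' does not literally cover the case $\Re z>\nu$ (which allows $\Re z$ close to $1$); the clean split is by whether $\Re z$ lies below $\nu$, between $\nu$ and $1-\nu$, or above $1-\nu$, and in the middle range the distance to each cut is trivially $>\nu$. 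This is easily repaired and does not affect the validity of the argument.
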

\begin{proof}
Note that if $z$ is in the region \eqref{eq:L2_L1_approx}, then $z\pm\frac{\gamma}{2N}$ is also in the same region, assuming that  $N$ is large enough.
Note also that $\L_1$ and $\L_2$ are holomorphic there.
\par
Since $\L'_2(z)=-2\pi\i\L_1(z)$, $\L''_2(z)=\frac{4\pi^2}{1-e^{-2\pi\i z}}$ and $\L^{(3)}_2(z)=2\pi^3\i\csc^2(\pi z)$ ($\csc{x}=1/\sin{x}$ is the cosecant of $x$, as you may know) from Lemma~\ref{lem:der_L1_L2}, we have
\begin{align*}
  \L_2\left(z\pm\frac{\gamma}{2N}\right)
  =&
  \L_2(z)
  \mp
  2\pi\i\L_1(z)\times\frac{\gamma}{2N}
  +
  \frac{2\pi^2}{1-e^{-2\pi\i z}}\times\frac{\gamma^2}{4N^2}
  \\
  &\pm
  \frac{\pi^3\i}{3\sin^2(\pi z)}\times\frac{\gamma^3}{8N^3}
  +
  \sum_{j=4}^{\infty}\frac{2\pi^3\i}{j!}\frac{d^{j-3}\,\csc^2(\pi z)}{d\,z^{j-3}}
  \times\left(\pm\frac{\gamma}{2N}\right)^{j}
\end{align*}
if $N$ is sufficiently large so that $z\pm\frac{\gamma}{2N}$ is contained in the region \eqref{eq:L2_L1_approx}.
So we have
\begin{equation}\label{eq:L2_L1_gamma_csc}
\begin{split}
  &\frac{N}{2\pi\i\gamma}\L_2\left(z-\frac{\gamma}{2N}\right)
  -
  \frac{N}{2\pi\i\gamma}\L_2\left(z+\frac{\gamma}{2N}\right)
  \\
  =&
  \L_1(z)
  -
  \sum_{k=1}^{\infty}\frac{\pi^2}{(2k+1)!}\frac{d^{2k-2}\,\csc^2(\pi z)}{d\,z^{2k-2}}
  \left(\frac{\gamma}{2N}\right)^{2k}.
\end{split}
\end{equation}
\par
From Lemma~\ref{lem:der_csc} below, we have
\begin{align*}
  \sin^{2k}(\pi z)\frac{d^{2k-2}\,\csc^2(\pi z)}{d\,z^{2k-2}}
  =
  2\pi^{2k-2}\sum_{j=0}^{k-1}a_{2k-2,2j}\cos(2j\pi z)
\end{align*}
with $a_{2k-2,2j}>0$ for $j=0,1,\dots,k-1$ and $\sum_{j=0}^{k-1}a_{2k-2,2j}=(2k-1)!/2$.
Letting $L$ be the maximum of $|\cos(z)|$ in the closure of \eqref{eq:L2_L1_approx}, we have
\begin{align*}
  &\left|
    \sin^{2k}(\pi z)
    \frac{\pi^2}{(2k+1)!}\frac{d^{2k-2}\,\csc^2(\pi z)}{d\,z^{2k-2}}
    \left(\frac{\gamma}{2N}\right)^{2k}
  \right|
  \\
  \le&
  \frac{\pi^2}{(2k+1)!}\times
  2\pi^{2k-2} L
  \times k\times\frac{(2k-1)!}{2}
  \left(\frac{|\gamma|}{2N}\right)^{2k}
  =
  \frac{L}{2(2k+1)}
  \left(\frac{\pi|\gamma|}{2N}\right)^{2k}.
\end{align*}
Let $l$ be the minimum of $|\sin(\pi z)|$ in the closure of the region \eqref{eq:L2_L1_approx}.
Since the closure is compact and does not contain the poles of $\sin(\pi z)$, we conclude that $l>0$.
So we have
\begin{align*}
  &N^2
  \left|
    \sum_{k=1}^{\infty}
    \frac{\pi^2}{(2k+1)!}\frac{d^{2k-2}\,\csc^2(\pi z)}{d\,z^{2k-2}}
    \left(\frac{\gamma}{2N}\right)^{2k}
  \right|
  \\
  <&
  \frac{L\pi^2|\gamma|^2}{8l^2}
  \sum_{k=1}^{\infty}
  \frac{1}{2k+1}\left(\frac{\pi|\gamma|}{2lN}\right)^{2k-2},
\end{align*}
which converges if $N>\frac{\pi|\gamma|}{2l}$.
\par
Therefore the right hand side of \eqref{eq:L2_L1_gamma_csc} turns out to be $\L_1(z)+O(N^{-2})$, completing the proof.
\end{proof}
\begin{lem}\label{lem:der_csc}
Let $m$ be a positive integer.
The $m$-th derivative of $\csc^2(\pi z)$ can be expressed as
\begin{equation*}
  \frac{d^{m}\,\csc^2(\pi z)}{d\,z^{m}}
  =
  2(-\pi)^{m}\csc^{m+2}(\pi z)P_m(z),
\end{equation*}
where $P_m(z)$ is of the form
\begin{equation*}
  P_m(z)=\sum_{\substack{0\le j\le m\\ j\equiv m\pmod{2}}}a_{m,j}\cos(j\pi z)
\end{equation*}
with
\begin{enumerate}
\item
$a_{m,j}>0$ for $0\le j\le m$ and $j\equiv m\pmod{2}$,
\item
$\sum_{0\le j\le m,j\equiv m\pmod{2}}a_{m,j}=\frac{(m+1)!}{2}$, and
\item
$a_{m,m}=1$.
\end{enumerate}
\end{lem}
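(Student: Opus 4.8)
The plan is to prove the statement by induction on $m$, differentiating the claimed closed form and regrouping the result with the elementary product-to-sum identities. The base case $m=1$ is immediate: a direct computation gives $\frac{d}{d\,z}\csc^2(\pi z)=-2\pi\cos(\pi z)\csc^3(\pi z)$, so $P_1(z)=\cos(\pi z)$, and conditions (i)--(iii) hold trivially with $a_{1,1}=1$.

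For the inductive step, assume the formula holds for $m$. Differentiating $2(-\pi)^m\csc^{m+2}(\pi z)P_m(z)$ and using $\frac{d}{d\,z}\csc(\pi z)=-\pi\csc(\pi z)\cot(\pi z)$ together with $\cot(\pi z)=\cos(\pi z)/\sin(\pi z)$, we obtain
\begin{equation*}
  \frac{d^{m+1}\,\csc^2(\pi z)}{d\,z^{m+1}}
  =
  2(-\pi)^m\csc^{m+3}(\pi z)
  \Bigl(-(m+2)\pi\cos(\pi z)P_m(z)+\sin(\pi z)P_m'(z)\Bigr).
\end{equation*}
Substituting $P_m(z)=\sum_j a_{m,j}\cos(j\pi z)$ and $P_m'(z)=-\pi\sum_j j\,a_{m,j}\sin(j\pi z)$, and applying $\cos(\pi z)\cos(j\pi z)=\tfrac12[\cos((j-1)\pi z)+\cos((j+1)\pi z)]$ together with $\sin(\pi z)\sin(j\pi z)=\tfrac12[\cos((j-1)\pi z)-\cos((j+1)\pi z)]$, the parenthesised factor collapses to $-\tfrac{\pi}{2}\sum_j a_{m,j}\bigl((m+2+j)\cos((j-1)\pi z)+(m+2-j)\cos((j+1)\pi z)\bigr)$, so that
\begin{equation*}
  P_{m+1}(z)=\tfrac12\sum_j a_{m,j}\Bigl((m+2+j)\cos((j-1)\pi z)+(m+2-j)\cos((j+1)\pi z)\Bigr).
\end{equation*}
Since $j\equiv m\pmod2$ forces $j\pm1\equiv m+1\pmod2$, and $0\le j\le m$ forces the frequencies $|j\pm1|$ to lie in $\{0,1,\dots,m+1\}$, this has the asserted shape.

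The three properties then follow from this recursion. For (i): $a_{m,j}=0$ unless $0\le j\le m$, so every surviving term has $m+2+j>0$ and $m+2-j\ge2>0$; hence each coefficient of $\cos(k\pi z)$ in $P_{m+1}$ is a sum of nonnegative contributions, and it is strictly positive because at least one of $a_{m,k-1},a_{m,k+1}$ is positive (for the correct parity and range). For (ii): evaluating at $z=0$ gives $\sum_k a_{m+1,k}=P_{m+1}(0)=\tfrac12\sum_j a_{m,j}\bigl((m+2+j)+(m+2-j)\bigr)=(m+2)\sum_j a_{m,j}=(m+2)\cdot\tfrac{(m+1)!}{2}=\tfrac{(m+2)!}{2}$. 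For (iii): the frequency $m+1$ can only arise from the $\cos((j+1)\pi z)$ term with $j=m$, contributing $\tfrac12(m+2-m)a_{m,m}=a_{m,m}=1$, while $\cos((j-1)\pi z)$ never reaches $j-1=m+1$.

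The one delicate point—the main, if modest, obstacle—is the bookkeeping of the low-frequency terms: when $j=0$ (possible for $m$ even) the term $\cos((j-1)\pi z)$ folds, via $\cos(-x)=\cos x$, onto $\cos(\pi z)$ and coincides with $\cos((j+1)\pi z)$, and likewise several $j$'s may feed the same $\cos(k\pi z)$ for small $k$. Such collisions only merge positive contributions, so (i) is unaffected; and since they are invisible at $z=0$ and do not reach the top frequency, (ii) and (iii) are unaffected as well. It therefore suffices to carry the evenness of $\cos$ and the parity constraint $j\equiv m\pmod2$ carefully through the computation.
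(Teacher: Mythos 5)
Your proof is correct and follows essentially the same route as the paper's: induction on $m$, differentiation of the closed form, and the product-to-sum identities, leading to the same recursion $2a_{m+1,k}=(m+k+3)a_{m,k+1}+(m-k+3)a_{m,k-1}$ (with the folding at $j=0$ handled as you describe). Your evaluation at $z=0$ to obtain (ii) is a slightly neater packaging of the paper's direct summation of coefficients, but it is the same argument in substance.
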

\begin{proof}
First of all, recall that $\csc'(x)=-\cos(x)\csc^2(x)$.
\par
We proceed by induction on $m$.
\par
For $m=1$, since $\frac{d}{d\,z}\csc^2(\pi z)=2\csc(\pi z)\times\bigl(-\pi\cos(\pi z)\csc^2(\pi z)\bigr)=-2\pi\csc^3(\pi z)\cos(\pi z)$, we have $P_1(z)=\cos(\pi z)$, which agrees with (i), (ii), and (iii).
\par
Suppose that the lemma is true for $m$.
We calculate the $(m+1)$-st derivative by using the inductive hypothesis for $P_m(z)$.
We have
\begin{align*}
  &
  \frac{d^{m+1}\,\csc^2(\pi z)}{d\,z^{m+1}}
  \\
  =&
  2(-\pi)^m\
  \frac{d}{d\,z}
  \left(
    \csc^{m+2}(\pi z)P_m(z)
  \right)
  \\
  =&
  2(-\pi)^{m}
  \left(
    (m+2)\csc^{m+1}(\pi z)\bigl(-\pi\cos(\pi z)\csc^2(\pi z)\bigr)P_m(z)
    +
    \csc^{m+2}(\pi z)P'_{m}(z)
  \right)
  \\
  =&
  2(-\pi)^{m}\csc^{m+3}(\pi z)
  \bigl[
    -(m+2)\pi\cos(\pi z)P_m(z)
    +
    \sin(\pi z)P'_{m}(z)
  \bigr]
  \\
  =&
  2(-\pi)^{m}\csc^{m+3}(\pi z)
  \left[
    -(m+2)\pi\cos(\pi z)
    \sum_{\substack{0\le j\le m\\ j\equiv m\pmod{2}}}a_{m,j}\cos(j\pi z)
  \right.
  \\
  &-
  \left.
    \sin(\pi z)
    \sum_{\substack{0\le j\le m\\ j\equiv m\pmod{2}}}j\pi a_{m,j}\sin(j\pi z)
  \right]
  \\
  =&
  2(-\pi)^{m+1}\csc^{m+3}(\pi z)
  \left[
    (m+2)
    \sum_{\substack{0\le j\le m\\ j\equiv m\pmod{2}}}a_{m,j}\cos(\pi z)\cos(j\pi z)
  \right.
  \\
  &+
  \left.
    \sum_{\substack{0\le j\le m\\ j\equiv m\pmod{2}}}ja_{m,j}\sin(\pi z)\sin(j\pi z)
  \right].
\end{align*}
Now, we will calculate the terms inside the square brackets.
We write $x:=\pi z$.
From the product-sum identities, we have
\begin{align*}
  \sin(x)\sin(jx)
  &=
  \frac{1}{2}\cos\bigl((j-1)x\bigr)-\frac{1}{2}\cos\bigl((j+1)x\bigr),
  \\
  \cos(x)\cos(jx)
  &=
  \frac{1}{2}\cos\bigl((j-1)x\bigr)+\frac{1}{2}\cos\bigl((j+1)x\bigr).
\end{align*}
So we have
\begin{equation}\label{eq:sin_cos}
\begin{split}
  &(m+2)
  \sum_{\substack{0\le j\le m\\ j\equiv m\pmod{2}}}a_{m,j}\cos(x)\cos(jx)
  +
  \sum_{\substack{0\le j\le m\\ j\equiv m\pmod{2}}}ja_{m,j}\sin(x)\sin(jx)
  \\
  =&
  \frac{1}{2}
  \sum_{\substack{0\le j\le m\\ j\equiv m\pmod{2}}}(m+2)a_{m,j}
  \left(\cos\bigl((j-1)x\bigr)+\cos\bigl((j+1)x\bigr)\right)
  \\
  &+
  \frac{1}{2}
  \sum_{\substack{0\le j\le m\\ j\equiv m\pmod{2}}}ja_{m,j}
  \left(\cos\bigl((j-1)x\bigr)-\cos\bigl((j+1)x\bigr)\right)
  \\
  =&
  \frac{1}{2}
  \sum_{\substack{0\le j\le m\\ j\equiv m\pmod{2}}}
  \bigl(
    (m+j+2)a_{m,j}\cos\bigl((j-1)x\bigr)
    +
    (m-j+2)a_{m,j}\cos\bigl((j+1)x\bigr)
  \bigr)
  \\
  =&
  \frac{1}{2}
  \sum_{\substack{-1\le k\le m-1\\ k\equiv m+1\pmod{2}}}(m+k+3)a_{m,k+1}\cos(kx)
  \\
  &+
  \frac{1}{2}
  \sum_{\substack{1\le k\le m+1\\ k\equiv m+1\pmod{2}}}(m-k+3)a_{m,k-1}\cos(kx)
  \\
  =&
  \begin{cases}
    &\frac{1}{2}
    \sum_{\substack{0\le k\le m-1\\ k\equiv m+1\pmod{2}}}
    \left((m+k+3)a_{m,k+1}+(m-k+3)a_{m,k-1}\right)\cos(kx)
    \\
    &\quad+
    a_{m,m}\cos\bigl((m+1)x\bigr)
    \hfill\text{(if $m$ is odd)},
    \\[5mm]
    &\frac{1}{2}
    \sum_{\substack{0\le k\le m-1\\ k\equiv m+1\pmod{2}}}
    \left((m+k+3)a_{m,k+1}+(m-k+3)a_{m,k-1}\right)\cos(kx)
    \\
    &\quad+
    a_{m,m}\cos\bigl((m+1)x\bigr)
    +
    \frac{1}{2}(m+2)a_{m,0}\cos(x)
    \hfill\text{(if $m$ is even)}.
  \end{cases}
\end{split}
\end{equation}
Therefore we obtain the following recursive formula for $a_{m,k}$:
\begin{equation*}
  2a_{m+1,k}
  =
  \begin{cases}
    (m+k+3)a_{m,k+1}+(m-k+3)a_{m,k-1}
    &
    \text{if $k\ne1$},
    \\
    (m+k+3)a_{m,2}+2(m-k+3)a_{m,0}
    &
    \text{if $k=1$}.
  \end{cases}
\end{equation*}
Note that this also holds for $k=0$ and $k=m+1$ by putting $a_{m,-1}=a_{m,m+2}=0$.
Then, (i) follows since $m-k+3\ge3$, (iii) follows since $a_{m+1,m+1}=1$, and (ii) follows since the sum of the coefficients in the third expression of \eqref{eq:sin_cos} equals
\begin{align*}
  &\frac{1}{2}
  \sum_{\substack{0\le j\le m\\j\equiv m\pmod{2}}}
  \left((m+j+2)a_{m,j}+(m-j+2)a_{m,j}\right)
  \\
  =&
  (m+2)\sum_{\substack{0\le j\le m\\j\equiv m\pmod{2}}}a_{m,j}.
\end{align*}
\par
This completes the proof.
\end{proof}
For a real number $\nu>0$, we define the region
\begin{multline*}
  \bowtie_{\nu}
  :=
  \left\{
    z\in\C\Bigm|
    \Im{z}\ge0,\Im\left(\frac{z-\nu}{\gamma}\right)\le0
  \right\}
  \\
  \bigcup
  \left\{
    z\in\C\Bigm|
    \Im{z}\le0,\Im\left(\frac{z+\nu}{\gamma}\right)\ge0
  \right\}.
\end{multline*}
See Figure~\ref{fig:bowtie}.
\begin{figure}[h]
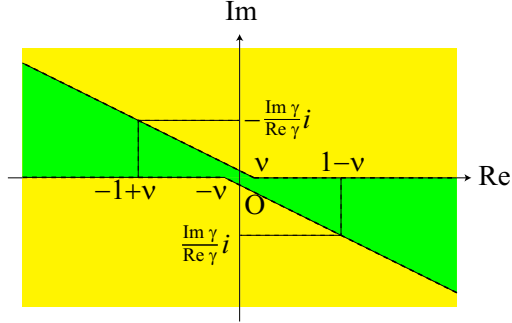

\pic{0.3}{bowtie}
\caption{The green region is $\bowtie_{\nu}$.}
\label{fig:bowtie}
\end{figure}
\begin{lem}\label{lem:L2_L1}
There exist positive real numbers $c$ and $\varepsilon$ such that
\begin{equation*}
  \left|
    \frac{N}{2\pi\i\gamma}\L_2(z)
    -
    \frac{N}{2\pi\i\gamma}\L_2(z+1)
    -
    \L_1\left(\frac{N}{\gamma}z+\frac{1}{2}\right)
  \right|
  <
  ce^{-\varepsilon N}
\end{equation*}
for any $z$ in the region $\C\setminus\bowtie_{\nu}$ if $N$ is sufficiently large.
\end{lem}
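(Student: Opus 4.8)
The plan is to compare the two sides directly using the explicit formulas \eqref{eq:L1_log}--\eqref{eq:L2_dilog}, after making one geometric observation: the region $\C\setminus\bowtie_{\nu}$ is exactly the set on which $\Im(z/\gamma)$ is bounded away from $0$, with sign equal to that of $\Im{z}$. Write $\beta:=\Im(1/\gamma)=-\Im\gamma/|\gamma|^2$, which is positive because $\Im\gamma<0$. Since $\Im\bigl((z\mp\nu)/\gamma\bigr)=\Im(z/\gamma)\mp\nu\beta$, the definition of $\bowtie_{\nu}$ shows that every $z\in\C\setminus\bowtie_{\nu}$ with $\Im{z}\ge0$ must satisfy $\Im((z-\nu)/\gamma)>0$, hence $\Im(z/\gamma)>\nu\beta$, and every $z\in\C\setminus\bowtie_{\nu}$ with $\Im{z}\le0$ must satisfy $\Im(z/\gamma)<-\nu\beta$. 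In particular $\C\setminus\bowtie_{\nu}$ contains no real point, so $z$ and $z+1$ lie in the domain of $\L_2$ and $w:=\frac{N}{\gamma}z+\frac12$ (whose imaginary part is $N\Im(z/\gamma)$, of absolute value at least $N\nu\beta$) lies in the domain of $\L_1$; thus the two cases $\Im{z}>0$ and $\Im{z}<0$ exhaust the region.

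\textbf{Case $\Im{z}>0$.} Then $\Im(z+1)=\Im{z}>0$, so by \eqref{eq:L2_dilog} both $\L_2(z)$ and $\L_2(z+1)$ equal $\Li_2(e^{2\pi\i z})$, whence $\frac{N}{2\pi\i\gamma}\bigl(\L_2(z)-\L_2(z+1)\bigr)=0$. Meanwhile $\Im{w}>N\nu\beta>0$, so \eqref{eq:L1_log} gives $\L_1(w)=\log(1-e^{2\pi\i w})$ with $|e^{2\pi\i w}|=e^{-2\pi\Im{w}}<e^{-2\pi N\nu\beta}$; therefore the quantity inside the absolute value in the statement equals $-\log\bigl(1-e^{2\pi\i w}\bigr)$.

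\textbf{Case $\Im{z}<0$.} By Corollary~\ref{cor:L1_L2_Im_negative} we have $\L_2(z)-\L_2(z+1)=-4\pi^2z$, so $\frac{N}{2\pi\i\gamma}\bigl(\L_2(z)-\L_2(z+1)\bigr)=\frac{2\pi\i N}{\gamma}z$. Now $\Im{w}<-N\nu\beta<0$, so \eqref{eq:L1_log} gives $\L_1(w)=\pi\i(2w-1)+\log(1-e^{-2\pi\i w})$; since $\pi\i(2w-1)=\frac{2\pi\i N}{\gamma}z$ and $|e^{-2\pi\i w}|=e^{2\pi\Im{w}}<e^{-2\pi N\nu\beta}$, the quantity inside the absolute value equals $-\log\bigl(1-e^{-2\pi\i w}\bigr)$.

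\textbf{Conclusion.} In both cases the quantity to be bounded is $-\log(1-\zeta)$ with $|\zeta|<e^{-2\pi N\nu\beta}$; for $N$ large enough that $e^{-2\pi N\nu\beta}\le1/2$ this has modulus at most $2|\zeta|<2e^{-2\pi N\nu\beta}$, using $|\log(1-\zeta)|\le2|\zeta|$ for $|\zeta|\le1/2$. Taking any $\varepsilon$ with $0<\varepsilon<2\pi\nu\beta$ and a suitable constant $c$ (absorbing the factor $2$ and covering the finitely many small $N$) yields the stated inequality, uniformly in $z\in\C\setminus\bowtie_{\nu}$ since $\beta$ depends only on $\gamma$ and $\nu$ is fixed. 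The substance of the argument is entirely in the first paragraph — recognizing that $\bowtie_{\nu}$ is precisely the set outside of which $w=\frac N\gamma z+\frac12$ runs off to $\pm\i\infty$ linearly in $N$, so that the non-$\Li_2$ remainder terms in the formula for $\L_1$ are exponentially small; the only other point needing care is tracking which branch of \eqref{eq:L1_log} applies, i.e. the sign of $\Im{w}$, which equals the sign of $\Im{z}$ throughout $\C\setminus\bowtie_{\nu}$.
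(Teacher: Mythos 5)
Your proposal is correct and follows essentially the same route as the paper's proof: split into $\Im z>0$ and $\Im z<0$, observe that outside $\bowtie_\nu$ one has $|\Im(z/\gamma)|>\nu\beta$ with $\beta=-\Im\gamma/|\gamma|^2>0$, note that the $\L_2$-difference cancels exactly against the non-logarithmic part of $\L_1\bigl(\frac{N}{\gamma}z+\frac12\bigr)$ in each case, and bound the remaining $\log(1-\zeta)$ term by a constant times $|\zeta|<e^{-2\pi N\nu\beta}$. The only cosmetic difference is that you use $|\log(1-\zeta)|\le 2|\zeta|$ where the paper uses the geometric-series bound $e^{\Re a}/(1-e^{\Re a})$.
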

\begin{rem}
The left-hand side is defined unless $\Im{z}=0$ or $z=s\gamma$ ($|s|\ge\frac{1}{2N}$).
Therefore if $z\notin\bowtie_{\nu}$, then the left-hand side is defined.
\end{rem}
\begin{proof}
Note that $\Im{z}\ne0$ if $z\notin\bowtie_{\nu}$.
\par
First, suppose that $\Im{z}>0$.
\par
Since $\L_2(z)=\L_2(z+1)$ from \eqref{eq:L_2}, we will prove that $\left|\L_1\bigl(\frac{N}{\gamma}z+\frac{1}{2}\bigr)\right|<ce^{-\varepsilon N}$ for some $c>0$ and $\varepsilon>0$.
Note that $\Im(z/\gamma)>\Im(\nu/\gamma)=-\nu\Im\gamma/|\gamma|^2>0$ because $z\not\in\bowtie_{\nu}$.
So we have
\begin{equation*}
  \L_1\left(\frac{N}{\gamma}z+\frac{1}{2}\right)
  =
  \log\left(1+e^{2N\pi\i z/\gamma}\right)
\end{equation*}
from \eqref{eq:L_1}.
Now since $\log(1+x)=\sum_{k=1}^{\infty}(-1)^{k-1}x^{k}/k$ for $|x|<1$, one has
\begin{equation*}
  \bigl|\log\left(1+e^{a}\right)\bigr|
  \le
  \sum_{k=1}^{\infty}\frac{e^{k\Re{a}}}{k}
  <
  \sum_{k=1}^{\infty}e^{k\Re{a}}
  =
  \frac{e^{\Re{a}}}{1-e^{\Re{a}}}
\end{equation*}
if $\Re{a}<0$.
Since $\Re(2N\pi\i z/\gamma)=-2N\pi\Im(z/\gamma)<2N\pi\nu\Im\gamma/|\gamma|^2<0$, we have
\begin{align*}
  \left|
    \log\left(1+e^{2N\pi\i z/\gamma}\right)
  \right|
  <
  \frac{e^{2N\pi\nu\Im\gamma/|\gamma|^2}}{1-e^{2N\pi\nu\Im\gamma/|\gamma|^2}}
  <
  ce^{-\varepsilon N}
\end{align*}
where we put $\varepsilon:=-2\pi\nu\Im\gamma/|\gamma|^2>0$ and $c:=\frac{1}{1-e^{-\varepsilon}}>0$.
\par
Next, suppose that $\Im{z}<0$.
\par
From Corollary~\ref{cor:L1_L2_Im_negative}, we have
\begin{equation*}
  \frac{N}{2\pi\i\gamma}\L_2(z)-\frac{N}{2\pi\i\gamma}\L_2(z+1)
  =
  \frac{2N\pi\i z}{\gamma}.
\end{equation*}
Since $z\not\in\bowtie_{\nu}$, we have $\Im(z/\gamma)<-\Im(\nu/\gamma)=\nu\Im\gamma/|\gamma|^2<0$.
Thus from \eqref{eq:L_1} we obtain
\begin{align*}
  \L_1\left(\frac{N}{\gamma}z+\frac{1}{2}\right)
  =
  \log\left(1+e^{-2N\pi\i z/\gamma}\right)
  +
  \frac{2N\pi\i z}{\gamma}.
\end{align*}
Since $\Re(-2N\pi\i z/\gamma)=2N\pi\Im(z/\gamma)<2N\pi\nu\Im\gamma/|\gamma|^2$, we finally have
\begin{align*}
  &\left|
    \frac{N}{2\pi\i\gamma}\L_2(z)
    -
    \frac{N}{2\pi\i\gamma}\L_2(z+1)
    -
    \L_1\left(\frac{N}{\gamma}z+\frac{1}{2}\right)
  \right|
  \\
  =&
  \left|\log\left(1+e^{-2N\pi\i z/\gamma}\right)\right|
  <
  ce^{-\varepsilon N}
\end{align*}
as above, completing the proof.
\end{proof}
The following lemma is similar to \cite[Lemma~2.4]{Murakami:CANJM2023} and the proof is omitted.
\begin{lem}\label{lem:TN_L2}
Let $\nu$ and $M$ be positive real numbers.
Then, there exists a constant $c>0$ such that
\begin{equation*}
  \left|
    T_{N}(z)-\frac{N}{2\pi\i\gamma}\L_2(z)
  \right|
  =
  \frac{c}{N}
\end{equation*}
for $z$ in the region $\{z\in\C\mid\nu\le\Re{z}\le1-\nu,|\Im{z}|\le M\}$ if $N$ is sufficiently large, where $c$ does not depend on $z$.
\end{lem}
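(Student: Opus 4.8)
The plan is to compare the two integral representations directly. Fix $\nu$ and $M$ as in the statement. For $z$ with $\nu\le\Re{z}\le1-\nu$, the integral \eqref{eq:def_TN} for $T_{N}(z)$ and the integral defining $\L_2(z)$ are both absolutely convergent over the common contour $\Rpath$, and since
\[
  \frac{N}{2\pi\i\gamma}\L_2(z)=\frac{N}{4\gamma}\int_{\Rpath}\frac{e^{(2z-1)x}}{x^{2}\sinh{x}}\,dx,
\]
subtracting gives
\[
  T_{N}(z)-\frac{N}{2\pi\i\gamma}\L_2(z)
  =\frac{1}{4}\int_{\Rpath}\frac{e^{(2z-1)x}}{x\sinh{x}}\,g\!\left(\frac{\gamma x}{N}\right)dx,
  \qquad g(w):=\frac{1}{\sinh{w}}-\frac{1}{w}.
\]
The analytic input I would use is that $g$ is holomorphic on $\{|w|<\pi\}$ with $g(0)=0$, so $g(w)/w$ extends holomorphically across $0$ and $C_{0}:=\sup_{|w|\le1}|g(w)/w|<\infty$. (In fact $g(w)=-w/6+O(w^{3})$, which would even pin down the leading term as $-\frac{\gamma}{24N}\L_0(z)$, but only the $O(1/N)$ bound is needed.) Note also that on $\Rpath$ the point $\gamma x/N$ never meets a pole $k\pi\i$ of $1/\sinh$: on the two rays $\Re(\gamma x/N)\neq0$, and on the arc $|\gamma x/N|\le|\gamma|/N<\pi$ because $N>|\gamma|/\pi$.

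I would then split $\Rpath$ into the piece $P_{N}:=\{x\in\Rpath\mid|x|\le N/|\gamma|\}$, which contains the semicircular arc once $N\ge|\gamma|$, and the two tails $\{x\in\Rpath\mid|x|>N/|\gamma|\}$. On $P_{N}$ we have $|\gamma x/N|\le1$, hence $|g(\gamma x/N)|\le C_{0}|\gamma|\,|x|/N$, so the corresponding part of the difference is bounded by
\[
  \frac{C_{0}|\gamma|}{4N}\int_{\Rpath}\left|\frac{e^{(2z-1)x}}{\sinh{x}}\right|\,|dx|.
\]
By the same kind of estimate as in the proof of Lemma~\ref{lem:TN_convergence} (using $\nu\le\Re{z}\le1-\nu$ on the rays and compactness of the arc together with $|\Im{z}|\le M$), this last integral is finite and bounded by a constant depending only on $\nu$, $M$, and $\gamma$; hence this contribution is $O(1/N)$ uniformly on the strip.

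For the tails I would combine $\bigl|e^{(2z-1)x}/(x\sinh{x})\bigr|\le C_{1}e^{-2\nu|x|}/|x|$ for $|x|\ge1$ (again from $\nu\le\Re{z}\le1-\nu$) with $|g(\gamma x/N)|\le|1/\sinh(\gamma x/N)|+N/(|\gamma|\,|x|)$. When $|x|>N/|\gamma|$ one has $\Re(\gamma x/N)=\Re\gamma\,|x|/N>\Re\gamma/|\gamma|$, so $|1/\sinh(\gamma x/N)|\le1/\sinh(\Re\gamma/|\gamma|)$ and $N/(|\gamma|\,|x|)<1$; thus $|g(\gamma x/N)|$ is bounded by a constant there, and each tail is $\le C_{2}\int_{N/|\gamma|}^{\infty}e^{-2\nu x}\,dx=O\bigl(e^{-2\nu N/|\gamma|}\bigr)$, which is exponentially small and uniform in $z$. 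Adding the three contributions yields $\bigl|T_{N}(z)-\frac{N}{2\pi\i\gamma}\L_2(z)\bigr|\le c/N$ for large $N$ with $c$ independent of $z$ in the strip. There is no essential obstacle here; the only point requiring care — and the reason the argument is only sketched, in analogy with \cite[Lemma~2.4]{Murakami:CANJM2023} — is that every constant above must be taken uniform over $\{\,\nu\le\Re{z}\le1-\nu,\ |\Im{z}|\le M\,\}$.
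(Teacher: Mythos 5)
Your argument is correct and is essentially the standard comparison the paper points to (the proof is omitted here in favour of a citation to \cite[Lemma~2.4]{Murakami:CANJM2023}): write $\frac{1}{\sinh(\gamma x/N)}-\frac{N}{\gamma x}=g(\gamma x/N)$ with $g$ holomorphic and vanishing at $0$, bound $|g(\gamma x/N)|$ by $C|\gamma||x|/N$ on the bounded part of $\Rpath$ and by a constant on the exponentially suppressed tails, and use $\nu\le\Re z\le1-\nu$, $|\Im z|\le M$ for uniformity. The only cosmetic remark is that the lemma's ``$=\frac{c}{N}$'' should of course be read as ``$\le\frac{c}{N}$'', which is exactly what you prove; your identification of the leading term $-\frac{\gamma}{24N}\L_0(z)$ is a correct bonus.
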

\par
Now, we can prove the following proposition.
\begin{prop}\label{prop:TN_L2}
Suppose that $\nu<1/4$.
We have
\begin{equation*}
  T_{N}(z)
  =
  \frac{N}{2\pi\i\gamma}\L_2(z)
  +
  O(N^{-1})
\end{equation*}
as $N\to\infty$ in the region $\Omega^{\ast}_{\nu}$.
\end{prop}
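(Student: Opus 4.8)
The plan is to propagate the interior estimate of Lemma~\ref{lem:TN_L2}, which is valid on the central strip $\nu\le\Re z\le1-\nu$, out to the whole of $\Omega^{\ast}_{\nu}$ using the two functional equations — Lemma~\ref{lem:TN_gamma_Omega} for $T_{N}$ and Lemma~\ref{lem:L2_L1_gamma} for $\tfrac{N}{2\pi\i\gamma}\L_{2}$ — transporting the estimate along ``$\gamma$-staircases'' with step $\gamma/N$.

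Write $E_{N}(z):=T_{N}(z)-\tfrac{N}{2\pi\i\gamma}\L_{2}(z)$. By Lemma~\ref{lem:TN_L2} (used with $M'$ in place of $M$, where $M'>M$ is fixed below) we have $E_{N}(z)=O(N^{-1})$ uniformly on $\{z\mid\nu\le\Re z\le1-\nu,\ |\Im z|\le M'\}$. Next, writing the identities of Lemma~\ref{lem:TN_gamma_Omega} and Lemma~\ref{lem:L2_L1_gamma} at a midpoint $w$ (with arguments $w\mp\gamma/(2N)$) and subtracting, the common term $\L_{1}(w)$ cancels and one obtains
\begin{equation*}
  E_{N}\!\left(w-\tfrac{\gamma}{2N}\right)-E_{N}\!\left(w+\tfrac{\gamma}{2N}\right)=O(N^{-2}),
\end{equation*}
uniformly for $w\in\Omega$ with $|\Im w|\le M'$ and $w\notin\square^{-}_{\nu'}\cup\square^{+}_{\nu'}$, where $\nu'\in(0,\nu)$ is fixed below; equivalently $E_{N}(z)-E_{N}(z+\gamma/N)=O(N^{-2})$ uniformly over such $z$. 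It is essential here that $w\in\Omega$, so that Lemma~\ref{lem:TN_gamma_Omega} supplies the clean identity rather than the one with the $2\pi\i$ correction described in the Remark following it.

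Now fix $z\in\Omega^{\ast}_{\nu}$. If $\nu\le\Re z\le1-\nu$ there is nothing to prove; the case $\Re z>1-\nu$ is symmetric (walk in the $-\gamma$ direction, using that $\Delta^{-}_{1,\nu}$ is excised), so assume $\Re z<\nu$. Set $z_{k}:=z+k\gamma/N$ for $k=0,1,\dots,K$, where $K:=\bigl\lceil(\nu+\delta-\Re z)\,N/\Re\gamma\bigr\rceil$ with $\delta>0$ small and fixed so that $\nu+\delta<1-\nu$; then $z_{K}$ lies in the central strip and $K=O(N)$ uniformly in $z$. Telescoping the per-step estimate along $z_{0},z_{1},\dots,z_{K}$ yields
\begin{equation*}
  E_{N}(z)=E_{N}(z_{K})+K\cdot O(N^{-2})=O(N^{-1}),
\end{equation*}
\emph{provided the whole staircase lies in the region where the per-step estimate holds}. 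This is the crux, and it is precisely what the excision of $\Delta^{+}_{0,\nu}$ secures: translation by a positive real multiple of $\gamma$ leaves $\Im\!\bigl((\,\cdot\,-\nu)/\gamma\bigr)$ fixed and strictly decreases $\Im(\,\cdot\,)$, so the hypothesis $z\notin\Delta^{+}_{0,\nu}$ — that is, $\Im z\le-\nu$ or $\Im((z-\nu)/\gamma)\ge0$ — is inherited by every $z_{k}$. In the first alternative $\Im z_{k}\le-\nu$ for all $k$; in the second, $\Im(z_{k}/\gamma)>0$ for all $k$ and moreover $\Re z_{k}\ge\nu$ already holds by the time $\Im z_{k}$ reaches $0$. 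From this one checks that the staircase stays out of $\Delta_{0}^{+}\cup\Delta_{1}^{-}$ (so the midpoints stay in $\Omega$), out of the rays $\ell_{0}^{+}\cup\ell_{1}^{-}$, out of the squares $\square^{-}_{\nu'}\cup\square^{+}_{\nu'}$ as soon as $\nu'<\nu|\Im\gamma|/(\Re\gamma+|\Im\gamma|)$, and inside $\{|\Im z|\le M'\}$ with $M':=M+(1+\delta)|\Im\gamma|/\Re\gamma$; it also remains in the common domain of holomorphy of $T_{N}$ and $\L_{2}$.

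The main obstacle is exactly this geometric bookkeeping: if the staircase strayed into $\Delta_{0}^{+}$, the per-step error would gain an $O(1)$ term by the Remark after Lemma~\ref{lem:TN_gamma_Omega} and summing $O(N)$ of them would ruin the estimate, so the shapes of $\Delta^{+}_{0,\nu}$, $\Delta^{-}_{1,\nu}$, and $\square^{\pm}_{\nu'}$ must be checked to prevent this. One also tracks the sign of $\Im z_{k}$ throughout, since $T_{N}$ and $\L_{2}$ are defined piecewise (Definition~\ref{defn:L1_L2} and \eqref{eq:TN_negative} and \eqref{eq:TN_positive}), but this is routine. Balancing $K=O(N)$ steps against a per-step error of order $N^{-2}$ is what produces the claimed $O(N^{-1})$.
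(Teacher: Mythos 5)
Your argument is correct, and its engine — telescoping the step-$\gamma/N$ identity of Lemma~\ref{lem:TN_gamma_Omega} against the $O(N^{-2})$ estimate of Lemma~\ref{lem:L2_L1_gamma}, with the excised triangles guaranteeing that a $+\gamma$-staircase never re-enters $\Delta_0^{+}$ (since $\Im\bigl((\cdot-\nu)/\gamma\bigr)$ is invariant and $\Im(\cdot)$ only decreases along it) — is exactly the paper's mechanism for the bands $-\nu\le\Re{z}<\nu$ and $1-\nu<\Re{z}\le1+\nu$, where the paper takes $m=\lfloor 2N\nu/\Re\gamma\rfloor+1$ steps. The difference is in the outer strips $-1+\nu\le\Re{z}<-\nu$ and $1+\nu<\Re{z}\le2-\nu$: there the paper does not walk at all, but instead applies the period-one relations \eqref{eq:TN_negative} and \eqref{eq:TN_positive} in a single step and controls the resulting $\L_1\bigl(\frac{N}{\gamma}z+\frac12\bigr)$ term by Lemma~\ref{lem:L2_L1}, which gives an exponentially small error off $\bowtie_{\nu}$; you instead extend the staircase across the whole strip, taking $O(N)$ steps of size $O(N^{-2})$. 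Both yield $O(N^{-1})$; the paper's route localizes the delicate geometric bookkeeping to the two narrow bands and gets a better (exponential) error on the outer strips, while yours is more uniform but makes the verification that the entire long staircase avoids $\Delta_0^{+}\cup\Delta_1^{-}$, $\ell_0^{+}\cup\ell_1^{-}$, and $\square^{\pm}_{\nu'}$ the load-bearing step — which you do carry out correctly, including the right threshold $\nu'<\nu|\Im\gamma|/(\Re\gamma+|\Im\gamma|)$ and the enlargement of $M$ to absorb the vertical drift $(1+\delta)|\Im\gamma|/\Re\gamma$.
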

\begin{proof}
We need to prove the proposition for $z$ with $-1+\nu\le\Re{z}<\nu$ or $1-\nu<\Re{z}\le2-\nu$.
\par
If $z\in\Omega^{\ast}_{\nu}$ and $-1+\nu\le\Re{z}<-\nu$, we use \eqref{eq:TN_negative}.
We have
\begin{align*}
  &\left|
    T_{N}(z)-\frac{N}{2\pi\i\gamma}\L_2(z)
  \right|
  =
  \left|
    T_{N}(z+1)
    +
    \L_1\left(\frac{N}{\gamma}z+\frac{1}{2}\right)
    -
    \frac{N}{2\pi\i\gamma}\L_2(z)
  \right|
  \\
  \le&
  \left|
    T_{N}(z+1)-\frac{N}{2\pi\i\gamma}\L_2(z+1)
  \right|
  \\
  &+
  \left|
    \L_1\left(\frac{N}{\gamma}z+\frac{1}{2}\right)
    -
    \frac{N}{2\pi\i\gamma}\L_2(z)
    +
    \frac{N}{2\pi\i\gamma}\L_2(z+1)
  \right|
  =
  O(1/N),
\end{align*}
where we apply Lemmas~\ref{lem:TN_L2} and \ref{lem:L2_L1}, noting that we can apply Lemma~\ref{lem:L2_L1} because $z\notin\bowtie_{\nu}$.
\par
Similarly, if $z\in\Omega^{\ast}_{\nu}$ and $1+\nu<\Re{z}\le2-\nu$, using \eqref{eq:TN_positive}, we have
\begin{align*}
  &\left|
    T_{N}(z)-\frac{N}{2\pi\i\gamma}\L_2(z)
  \right|
  =
  \left|
    T_{N}(z-1)
    -
    \L_1\left(\frac{N}{\gamma}(z-1)+\frac{1}{2}\right)
    -
    \frac{N}{2\pi\i\gamma}\L_2(z)
  \right|
  \\
  \le&
  \left|
    T_{N}(z-1)-\frac{N}{2\pi\i\gamma}\L_2(z-1)
  \right|
  \\
  &+
  \left|
    -\L_1\left(\frac{N}{\gamma}(z-1)+\frac{1}{2}\right)
    -
    \frac{N}{2\pi\i\gamma}\L_2(z)
    +
    \frac{N}{2\pi\i\gamma}\L_2(z-1)
  \right|
  =
  O(1/N),
\end{align*}
noting that we can apply Lemma~\ref{lem:L2_L1} because $z-1\notin\bowtie_{\nu}$.
\par
If $z\in\Omega^{\ast}_{\nu}$ and $-\nu\le\Re{z}<\nu$, we put $m:=\left\lfloor\frac{2N\nu}{\Re\gamma}\right\rfloor+1$.
From Lemma~\ref{lem:TN_gamma_Omega} we have
\begin{equation*}
\begin{split}
  T_{N}(z)
  =&
  T_{N}\left(z+\frac{\gamma}{N}\right)
  +
  \L_1\left(z+\frac{\gamma}{2N}\right)
  \\
  =&
  T_{N}\left(z+\frac{2\gamma}{N}\right)
  +
  \L_1\left(z+\frac{3\gamma}{2N}\right)
  +
  \L_1\left(z+\frac{\gamma}{2N}\right)
  \\
  =&\cdots
  =
  T_{N}\left(z+\frac{m\gamma}{N}\right)
  +
  \sum_{j=1}^{m}\L_1\left(z+\frac{(2j-1)\gamma}{2N}\right).
\end{split}
\end{equation*}
Now since $m\le\frac{2N\nu}{\Re\gamma}+1<m+1$, we have $\nu<\Re\bigl(z+\frac{m\gamma}{N}\bigr)<3\nu+\frac{\Re\gamma}{N}<1-\nu$ if $N>\frac{\Re\gamma}{1-4\nu}$, and so we can apply Lemma~\ref{lem:TN_L2} to $z+\frac{m\gamma}{N}$.
We have
\begin{align*}
  &\left|
    T_{N}(z)-\frac{N}{2\pi\i\gamma}\L_2(z)
  \right|
  \\
  =&
  \left|
    T_{N}\left(z+\frac{m\gamma}{N}\right)
    -
    \frac{N}{2\pi\i\gamma}\L_2(z)
    +
    \sum_{j=1}^{m}\L_1\left(z+\frac{(2j-1)\gamma}{2N}\right)
  \right|
  \\
  \le&
  \left|
    T_{N}\left(z+\frac{m\gamma}{N}\right)
    -
    \frac{N}{2\pi\i\gamma}\L_2\left(z+\frac{m\gamma}{N}\right)
  \right|
  \\
  &+
  \left|
    \frac{N}{2\pi\i\gamma}\L_2\left(z+\frac{m\gamma}{N}\right)
    -
    \frac{N}{2\pi\i\gamma}\L_2(z)
    +
    \sum_{j=1}^{m}\L_1\left(z+\frac{(2j-1)\gamma}{2N}\right)
  \right|
  \\
  =&
  \left|
    \frac{N}{2\pi\i\gamma}\L_2\left(z+\frac{m\gamma}{N}\right)
    -
    \frac{N}{2\pi\i\gamma}\L_2(z)
    +
    \sum_{j=1}^{m}\L_1\left(z+\frac{(2j-1)\gamma}{2N}\right)
  \right|
  +O(1/N).
\end{align*}
Since we have
\begin{equation*}
  \L_2\left(z+\frac{m\gamma}{N}\right)-\L_2(z)
  =
  \sum_{j=1}^{m}
  \left(
    \L_2\left(z+\frac{j\gamma}{N}\right)
    -
    \L_2\left(z+\frac{(j-1)\gamma}{N}\right)
  \right),
\end{equation*}
we have
\begin{equation}\label{eq:sum_L2_L1}
\begin{split}
  &\left|
    \frac{N}{2\pi\i\gamma}\L_2\left(z+\frac{m\gamma}{N}\right)
    -
    \frac{N}{2\pi\i\gamma}\L_2(z)
    +
    \sum_{j=1}^{m}\L_1\left(z+\frac{(2j-1)\gamma}{2N}\right)
  \right|
  \\
  \le&
  \sum_{j=1}^{m}
  \left|
    \frac{N}{2\pi\i\gamma}\L_2\left(z+\frac{j\gamma}{N}\right)
    -
    \frac{N}{2\pi\i\gamma}\L_2\left(z+\frac{(j-1)\gamma}{N}\right)
  \right.
  \\
  &
  \phantom{\sum_{j=1}^{m}}\hfill
  \left.
    +
    \L_1\left(z+\frac{(2j-1)\gamma}{2N}\right)
  \right|.
\end{split}
\end{equation}
We use Lemma~\ref{lem:L2_L1_gamma} to conclude that each summand of the right-hand side of \eqref{eq:sum_L2_L1} is less than $\frac{c}{N^2}$ for $c>0$.
Note that $c$ is independent of $j$.
Since $m=\left\lfloor\frac{2N\nu}{\Re\gamma}\right\rfloor+1$, the right-hand side of \eqref{eq:sum_L2_L1} is less than
\begin{equation*}
  \frac{mc}{N^2}
  \le
  \left(\frac{2N\nu}{\Re\gamma}+1\right)
  \frac{c}{N^2}
  \le
  \frac{c'}{N}
\end{equation*}
if we put $c':=\left(\frac{2c\nu}{\Re\gamma}+1\right)$.
\par
If $1-\nu<\Re{z}\le1+\nu$, from Lemma~\ref{lem:TN_gamma_Omega} we have
\begin{equation*}
\begin{split}
  T_{N}(z)
  =&
  T_{N}\left(z-\frac{\gamma}{N}\right)
  -
  \L_1\left(z-\frac{\gamma}{2N}\right)
  \\
  =&
  T_{N}\left(z-\frac{2\gamma}{N}\right)
  -
  \L_1\left(z-\frac{3\gamma}{2N}\right)
  -
  \L_1\left(z-\frac{\gamma}{2N}\right)
  \\
  =&\cdots
  =
  T_{N}\left(z-\frac{m\gamma}{N}\right)
  -
  \sum_{j=1}^{m}\L_1\left(z-\frac{(2j-1)\gamma}{2N}\right),
\end{split}
\end{equation*}
where we put $m:=\left\lfloor\frac{2N\nu}{\Re\gamma}\right\rfloor+1$ as before.
Since $\nu<\Re\bigl(z-\frac{m\gamma}{N}\bigr)<1-\nu$ as before, we can prove the proposition similarly.
\end{proof}
\section{The colored Jones polynomial}\label{sec:sum}
In this section, we show several results following \cite{Murakami:CANJM2023}.
\par
First of all we recall the following formula due to Habiro \cite[P.~36 (1)]{Habiro:SURIK2000} and Le  \cite[1.2.2 Example, P.~129]{Le:TOPOA2003} (see also \cite[Theorem~5.1]{Masbaum:ALGGT12003}).
\begin{equation}\label{eq:Habiro_Le}
\begin{split}
  J_N(\FE;q)
  &=
  \sum_{k=0}^{N-1}
  \prod_{l=1}^{k}
  \left(q^{(N-l)/2}-q^{-(N-l)/2}\right)
  \left(q^{(N+l)/2}-q^{-(N+l)/2}\right)
  \\
  &=
  \sum_{k=0}^{N-1}
  q^{-kN}
  \prod_{l=1}^{k}
  \left(1-q^{N-l}\right)
  \left(1-q^{N+l}\right).
\end{split}
\end{equation}
\par
For a positive integer $p$ we put $\xi:=\kappa+2p\pi\i$, where $\kappa:=\arccosh(3/2)$.
We will study the asymptotic behavior of
\begin{equation*}
  J_N\left(\FE;e^{\xi/N}\right)
  =
  \sum_{k=0}^{N-1}
  \prod_{l=1}^{k}
  e^{-k\xi}
  \left(1-e^{(N-l)\xi/N}\right)\left(1-e^{(N+l)\xi/N}\right)
\end{equation*}
as $N\to\infty$.
\par
Now, we can express $J_N(\FE;e^{\xi/N})$ in terms of $T_N$, putting $\gamma:=\frac{\xi}{2\pi\i}$, as in the same way as \cite[Section~3 (3.2)]{Murakami:CANJM2023}.
We have
\begin{multline}\label{eq:J_FN}
  J_N(\FE;e^{\xi/N})
  \\
  =
  \left(1-e^{-4pN\pi^2/\xi}\right)
  \sum_{m=0}^{p-1}
  \left(
    \beta_{p,m}
    \sum_{mN/p<k\le(m+1)N/p}
    \exp\left(Nf_N\left(\frac{2k+1}{2N}-\frac{m}{\gamma}\right)\right)
  \right)
\end{multline}
since $2\sinh(\kappa/2)=1$, where we put
\begin{align}
  \beta_{p,m}
  &:=
  e^{-4mpN\pi^2/\xi}
  \prod_{j=1}^{m}
  \left(1-e^{4(p-j)N\pi^2/\xi}\right)
  \left(1-e^{4(p+j)N\pi^2/\xi}\right),
  \label{eq:beta_pm}
  \\
  f_N(z)
  &:=
  \frac{1}{N}T_N\bigl(\gamma(1-z)-p+1\bigr)
  -
  \frac{1}{N}T_N\bigl(\gamma(1+z)-p\bigr)
  -\kappa z-\frac{2p\pi\i}{\gamma}.
  \label{eq:fN}
\end{align}
\begin{lem}\label{lem:domain_fN}
The function $f_N$ is defined in the region
\begin{equation*}
  \Theta_0
  :=
  \left\{
    z\in\C
    \Bigm|
    -\frac{1}{p}+\frac{1}{2N}<\frac{\Im(\xi z)}{2p\pi}<\frac{2}{p}-\frac{1}{2N}
  \right\}
  \setminus
  \bigl(
    \underline{\nabla}_{0}^{+}
    \cup
    \underline{\nabla}_{0}^{-}
    \cup
    \overline{\nabla}_{0}^{+}
    \cup
    \overline{\nabla}_{0}^{-}
  \bigr),
\end{equation*}
where we put
\begin{align*}
  \underline{\nabla}_{0}^{+}
  :=&
  \left\{
    z\in\C\Bigm|
    -\frac{1}{p}+\frac{1}{2N}<\frac{\Im(\xi z)}{2p\pi}\le0,
    \Re(\xi z)\le\kappa,
    \Im{z}\le-\frac{2p\pi\kappa}{|\xi|^2}
  \right\},
  \\
  \underline{\nabla}_{0}^{-}
  :=&
  \left\{
    z\in\C\Bigm|
    \frac{1}{p}\le\frac{\Im(\xi z)}{2p\pi}<\frac{2}{p}-\frac{1}{2N},
    \Re(\xi z)\ge\kappa,
    \Im{z}\ge\frac{2(1-p)\pi\kappa}{|\xi|^2}
  \right\},
  \\
  \overline{\nabla}_{0}^{+}
  :=&
  \left\{
    z\in\C\Bigm|
    -\frac{1}{p}+\frac{1}{2N}<\frac{\Im(\xi z)}{2p\pi}\le0,
    \Re(\xi z)\le-\kappa,
    \Im{z}\le\frac{2p\pi\kappa}{|\xi|^2}
  \right\},
  \\
  \overline{\nabla}_{0}^{-}
  :=&
  \left\{
    z\in\C\Bigm|
    \frac{1}{p}\le\frac{\Im(\xi z)}{2p\pi}<\frac{2}{p}-\frac{1}{2N},
    \Re(\xi z)\ge-\kappa,
    \Im{z}\ge\frac{2(p+1)\pi\kappa}{|\xi|^2}
  \right\}.
\end{align*}
See Figure~\ref{fig:domain_fN}, where we put
\begin{align*}
  \overline{K}
  &:=
  \{z\in\C\mid z=\frac{2\pi\i}{\xi}t-1,t\in\R\}=\{z\in\C\mid\Re(\xi z)=-\kappa\},
  \\
  \underline{K}
  &:=
  \{z\in\C\mid z=\frac{2\pi\i}{\xi}t+1,t\in\R\}=\{z\in\C\mid\Re(\xi z)=\kappa\},
  \\
  L_{s}
  &:=
  \{z\in\C\mid\Im(\xi z)=2s\pi\}.
\end{align*}
\begin{figure}[h]
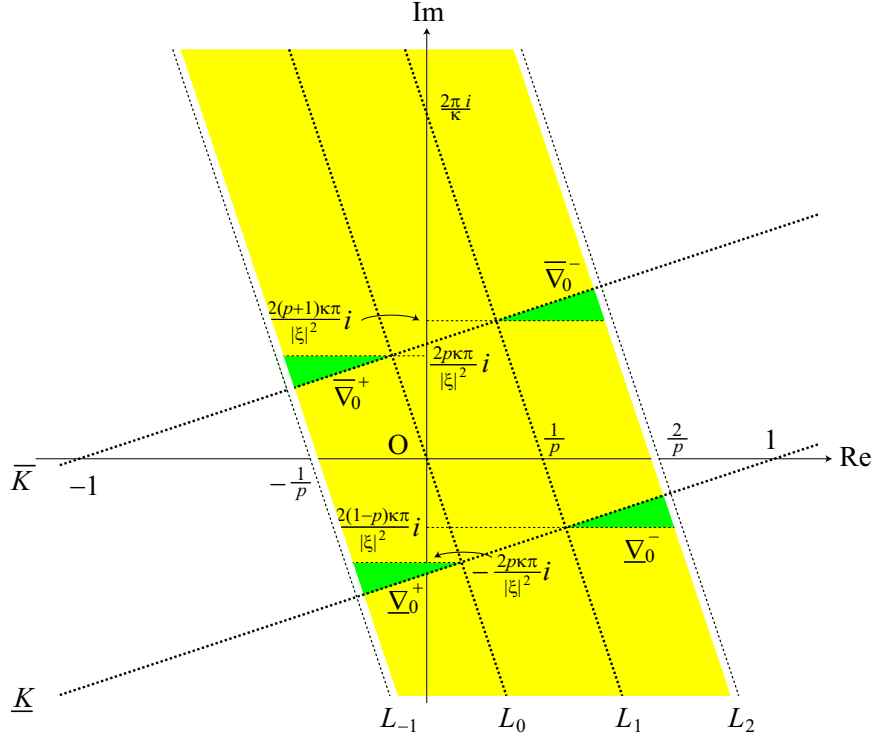

\pic{0.3}{domain_fN}
\caption{The function $f_N$ is defined in the yellow region $\Theta_0$.
The green triangles are $\underline{\nabla}_{0}^{+}$, $\underline{\nabla}_{0}^{-}$, $\overline{\nabla}_{0}^{+}$, and $\overline{\nabla}_{0}^{-}$.}
\label{fig:domain_fN}
\end{figure}
\end{lem}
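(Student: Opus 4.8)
The plan is to read the domain of $f_N$ off directly from \eqref{eq:fN}. Since $-\kappa z$ and $-2p\pi\i/\gamma$ are entire in $z$, the ``domain of $f_N$'' is the set of $z$ for which both arguments
\[
  w_1:=\gamma(1-z)-p+1
  \qquad\text{and}\qquad
  w_2:=\gamma(1+z)-p
\]
of the extended function $T_N$ lie in the region $\Omega$ of \eqref{eq:Omega}, on which $T_N$ is defined and satisfies \eqref{eq:TN_gamma}. So the whole statement amounts to transporting the description of $\Omega$ through the two affine maps $z\mapsto w_1$ and $z\mapsto w_2$.

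First I would record the elementary dictionary attached to $\gamma=\xi/(2\pi\i)$. From $\xi=\kappa+2p\pi\i$ we get $\gamma=p-\kappa\i/(2\pi)$, hence $\Re\gamma=p>0$, $\Im\gamma=-\kappa/(2\pi)<0$ (so $\gamma$ is as required in Section~\ref{sec:dilog}), $|\gamma|^2=|\xi|^2/(4\pi^2)$, and $\Im(k/\gamma)=2k\pi\kappa/|\xi|^2$ for $k\in\R$. Writing $\gamma z=-\i\xi z/(2\pi)$ gives $\Re(\gamma z)=\Im(\xi z)/(2\pi)$ and $\Im(\gamma z)=-\Re(\xi z)/(2\pi)$, whence
\begin{align*}
  \Re w_1&=1-\tfrac{\Im(\xi z)}{2\pi}, & \Im w_1&=\tfrac{\Re(\xi z)-\kappa}{2\pi},\\
  \Re w_2&=\tfrac{\Im(\xi z)}{2\pi}, & \Im w_2&=-\tfrac{\Re(\xi z)+\kappa}{2\pi};
\end{align*}
and, expanding $w_1/\gamma=(1-z)-(p-1)/\gamma$, $(w_1-1)/\gamma=(1-z)-p/\gamma$, $w_2/\gamma=(1+z)-p/\gamma$ and $(w_2-1)/\gamma=(1+z)-(p+1)/\gamma$, one likewise obtains closed forms for $\Im(w_j/\gamma)$ and $\Im((w_j-1)/\gamma)$ that are affine in $\Im z$.

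The second step is to substitute these into $\Omega=\bigl\{-1+\tfrac{\Re\gamma}{2N}<\Re w<2-\tfrac{\Re\gamma}{2N}\bigr\}\setminus(\Delta_0^+\cup\Delta_1^-)$. Because $\Re w_1+\Re w_2=1$, the strip condition for $w_1$ and the one for $w_2$ coincide, and (dividing through by $p$) both become $-\tfrac1p+\tfrac1{2N}<\tfrac{\Im(\xi z)}{2p\pi}<\tfrac2p-\tfrac1{2N}$, the ambient strip of $\Theta_0$. Matching the three inequalities that cut out $\Delta_0^+$ and $\Delta_1^-$ against the formulas above then yields $w_1\in\Delta_0^+\Leftrightarrow z\in\underline{\nabla}_0^-$, $w_1\in\Delta_1^-\Leftrightarrow z\in\underline{\nabla}_0^+$, $w_2\in\Delta_0^+\Leftrightarrow z\in\overline{\nabla}_0^+$ and $w_2\in\Delta_1^-\Leftrightarrow z\in\overline{\nabla}_0^-$; here one also uses that $\overline{K}$, $\underline{K}$ and $L_s$ are precisely the level sets $\Re(\xi z)=-\kappa$, $\Re(\xi z)=\kappa$ and $\Im(\xi z)=2s\pi$. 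Deleting the union of the four triangles from the strip reproduces $\Theta_0$, proving the lemma.

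The computation is routine; the only point needing care is the sign bookkeeping, since $z\mapsto w_1$ has linear part $-\gamma$ and is orientation-reversing while $z\mapsto w_2$ has linear part $\gamma$ and is orientation-preserving, so a ``$+$'' triangle of one map corresponds to a ``$-$'' triangle of the other and several of the inequalities reverse. I would organise the four translations as a small table to keep them straight; beyond that there is no conceptual obstacle.
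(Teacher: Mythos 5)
Your proposal is correct and follows essentially the same route as the paper: the paper's proof likewise recalls that the extended $T_N$ is defined on $\Omega$, computes $\Re\gamma=p$, $\Im\gamma=-\kappa/(2\pi)$ and $\Re\bigl(\gamma(1\pm z)\bigr)=p\pm\Im(\xi z)/(2\pi)$, $\Im\bigl(\gamma(1\pm z)\bigr)=-\kappa/(2\pi)\mp\Re(\xi z)/(2\pi)$, observes that the two strip conditions coincide and reduce to $-\tfrac1p+\tfrac1{2N}<\tfrac{\Im(\xi z)}{2p\pi}<\tfrac2p-\tfrac1{2N}$, and records the same four equivalences between $\Delta_0^{+},\Delta_1^{-}$ and the triangles $\underline{\nabla}_0^{\mp},\overline{\nabla}_0^{\pm}$. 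Your sign bookkeeping and the correspondence table match the paper exactly.
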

\begin{proof}
Recall that the the function $T_N$ is defined in $\Omega$ (see \eqref{eq:Omega}).
\par
Since $\gamma=\frac{\xi}{2\pi\i}=\frac{\kappa+2p\pi\i}{2\pi\i}$, we have $\Re\gamma=p$, $\Im\gamma=-\frac{\kappa}{2\pi}$, and
\begin{align*}
  \Re\bigl(\gamma(1\pm z)\bigr)
  &=
  p\pm\frac{\Im(\xi z)}{2\pi},
  \\
  \Im\bigl(\gamma(1\pm z)\bigr)
  &=
  -\frac{\kappa}{2\pi}\mp\frac{\Re(\xi z)}{2\pi}.
\end{align*}
Therefore we have
\begin{align*}
  &-1+\frac{p}{2N}<\Re\bigl(\gamma(1-z)-p+1\bigr)<2-\frac{p}{2N}
  \\
  \Leftrightarrow&
  -\frac{1}{p}+\frac{1}{2N}<\frac{\Im(\xi z)}{2p\pi}<\frac{2}{p}-\frac{1}{2N}
  \\
  \Leftrightarrow&
  -1+\frac{p}{2N}<\Re\bigl(\gamma(1+z)-p\bigr)<2-\frac{p}{2N}.
\end{align*}
\par
We can also see that the condition $\gamma(1-z)-p+1\in\Delta_{0}^{+}$ is equivalent to $z\in\underline{\nabla}_{0}^{-}$, that the condition $\gamma(1-z)-p+1\in\Delta_{1}^{-}$ is equivalent to $z\in\underline{\nabla}_{0}^{+}$, that the condition $\gamma(1+z)-p\in\Delta_{0}^{+}$ is equivalent to $z\in\overline{\nabla}_{0}^{+}$, and that the condition $\gamma(1+z)-p\in\Delta_{1}^{-}$ is equivalent to $z\in\overline{\nabla}_{0}^{-}$.
\end{proof}
\par
We would like to approximate $f_N(z)$ by using $\L_2$.
From Proposition~\ref{prop:TN_L2} and \eqref{eq:fN}, the series of functions $\{f_N(z)\}$ converges uniformly to
\begin{equation*}
  F(z)
  :=
  \frac{1}{\xi}
  \left(
    \L_2\left(\frac{\xi(1-z)}{2\pi\i}-p+1\right)
    -
    \L_2\left(\frac{\xi(1+z)}{2\pi\i}-p\right)
  \right)
  -\kappa z+\frac{4p\pi^2}{\xi}
\end{equation*}
in the following region:
\begin{multline}\label{eq:convergence_F}
  \left\{
    z\in\C\Bigm|
    -1+\frac{\nu}{p}\le\frac{\Im(\xi z)}{2p\pi}\le\frac{2}{p}-\frac{\nu}{p},
    \left|\Re(\xi z)\right|\le2M\pi-\kappa
  \right\}
  \\
  \setminus
  \left(
    \underline{\nabla}_{0,\nu}^{+}
    \cup
    \underline{\nabla}_{0,\nu}^{-}
    \cup
    \overline{\nabla}_{0,\nu}^{+}
    \cup
    \overline{\nabla}_{0,\nu}^{-}
  \right),
\end{multline}
where we put
\begin{align*}
  \underline{\nabla}_{0,\nu}^{+}
  &:=
  \left\{
    z\in\C\Bigm|
    -\frac{1}{p}+\frac{\nu}{p}\le\frac{\Im(\xi z)}{2p\pi}<\frac{\nu}{p},
    \Re(\xi z)<\kappa+2\pi\nu,
  \right.
  \\
  &\qquad\qquad\phantom{z\in\C\Bigm|}
  \left.
    \Im{z}<-\frac{2(p-\nu)\pi\kappa}{|\xi|^2}
  \right\},
  \\
  \underline{\nabla}_{0,\nu}^{-}
  &:=
  \left\{
    z\in\C\Bigm|
    \frac{1}{p}-\frac{\nu}{p}<\frac{\Im(\xi z)}{2p\pi}\le\frac{2}{p}-\frac{\nu}{p},
    \Re(\xi z)\ge\kappa-2\pi\nu,
  \right.
  \\
  &\qquad\qquad\phantom{z\in\C\Bigm|}
  \left.
    \Im{z}>\frac{2(1-p-\nu)\pi\kappa}{|\xi|^2}
  \right\},
  \\
  \overline{\nabla}_{0,\nu}^{+}
  &:=
  \left\{
    z\in\C\Bigm|
    -\frac{1}{p}+\frac{\nu}{p}\le\frac{\Im(\xi z)}{2p\pi}<\frac{\nu}{p},
    \Re(\xi z)<-\kappa+2\pi\nu,
  \right.
  \\
  &\qquad\qquad\phantom{z\in\C\Bigm|}
  \left.
    \Im{z}<\frac{2(p+\nu)\pi\kappa}{|\xi|^2}
  \right\},
  \\
  \overline{\nabla}_{0,\nu}^{-}
  &:=
  \left\{
    z\in\C\Bigm|
    \frac{1}{p}-\frac{\nu}{p}<\frac{\Im(\xi z)}{2p\pi}\le\frac{2}{p}-\frac{\nu}{p},
    \Re(\xi z)>-\kappa-2\pi\nu,
  \right.
  \\
  &\qquad\qquad\phantom{z\in\C\Bigm|}
  \left.
    \Im{z}>\frac{2(p+1-\nu)\pi\kappa}{|\xi|^2}
  \right\}.
\end{align*}
\begin{lem}\label{lem:convergence_fN}
The series of functions $\{f_N(z)\}$ uniformly converges to $F(z)$ in the region \eqref{eq:convergence_F}.
\end{lem}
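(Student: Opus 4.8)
The plan is to derive the lemma directly from Proposition~\ref{prop:TN_L2}, applied separately to the two arguments of $T_N$ occurring in the definition~\eqref{eq:fN} of $f_N$. Write
\begin{equation*}
  w_1:=\gamma(1-z)-p+1,\qquad w_2:=\gamma(1+z)-p,
\end{equation*}
so that $f_N(z)=\frac{1}{N}T_N(w_1)-\frac{1}{N}T_N(w_2)-\kappa z-\frac{2p\pi\i}{\gamma}$. Recall from the proof of Lemma~\ref{lem:domain_fN} that $\Re\gamma=p$ and $\Im\gamma=-\kappa/(2\pi)$, whence $\Re(w_1)=1-\frac{\Im(\xi z)}{2\pi}$, $\Im(w_1)=\frac{\Re(\xi z)-\kappa}{2\pi}$, $\Re(w_2)=\frac{\Im(\xi z)}{2\pi}$, and $\Im(w_2)=-\frac{\Re(\xi z)+\kappa}{2\pi}$.

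The first step is to check that, for $N$ large enough that $\Omega^{\ast}_{\nu}\subset\Omega$, the region~\eqref{eq:convergence_F} coincides with the set of $z$ for which \emph{both} $w_1$ and $w_2$ lie in $\Omega^{\ast}_{\nu}$ (see \eqref{eq:Omega_ast}), with the same $\nu$ and $M$. Indeed the slab condition $-1+\nu\le\Re(w_i)\le 2-\nu$ is, for both $i=1,2$, equivalent to $-\frac{1}{p}+\frac{\nu}{p}\le\frac{\Im(\xi z)}{2p\pi}\le\frac{2}{p}-\frac{\nu}{p}$; the bounds $|\Im(w_1)|\le M$ and $|\Im(w_2)|\le M$ together give $|\Re(\xi z)|\le 2M\pi-\kappa$; and, exactly as in the proof of Lemma~\ref{lem:domain_fN} but using the thickened triangles $\Delta^{+}_{0,\nu}$, $\Delta^{-}_{1,\nu}$ of \eqref{eq:Omega_ast} in place of $\Delta^{+}_{0}$, $\Delta^{-}_{1}$, the four membership conditions $w_1\in\Delta^{+}_{0,\nu}$, $w_1\in\Delta^{-}_{1,\nu}$, $w_2\in\Delta^{+}_{0,\nu}$, $w_2\in\Delta^{-}_{1,\nu}$ translate respectively into $z\in\underline{\nabla}_{0,\nu}^{-}$, $z\in\underline{\nabla}_{0,\nu}^{+}$, $z\in\overline{\nabla}_{0,\nu}^{+}$, $z\in\overline{\nabla}_{0,\nu}^{-}$, which are precisely the four sets removed in \eqref{eq:convergence_F}.

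Granting this, Proposition~\ref{prop:TN_L2} (with $\nu<1/4$) gives $T_N(w_i)=\frac{N}{2\pi\i\gamma}\L_2(w_i)+O(N^{-1})$ uniformly for $z$ in \eqref{eq:convergence_F}, so that $\frac{1}{N}T_N(w_i)=\frac{1}{2\pi\i\gamma}\L_2(w_i)+O(N^{-2})$. Substituting into the formula for $f_N$ and using $\gamma=\xi/(2\pi\i)$, hence $\frac{1}{2\pi\i\gamma}=\frac{1}{\xi}$, $\gamma(1\pm z)=\frac{\xi(1\pm z)}{2\pi\i}$, and $-\frac{2p\pi\i}{\gamma}=\frac{4p\pi^2}{\xi}$, one reads off $f_N(z)=F(z)+O(N^{-2})$ uniformly on \eqref{eq:convergence_F}, which yields the asserted uniform convergence (in fact with rate $O(N^{-2})$).

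The only laborious ingredient is the region bookkeeping of the first step: one must verify carefully that \eqref{eq:convergence_F} is the common preimage of $\Omega^{\ast}_{\nu}$ under both affine maps $z\mapsto\gamma(1-z)-p+1$ and $z\mapsto\gamma(1+z)-p$, matching the four excised triangles $\underline{\nabla}_{0,\nu}^{\pm}$, $\overline{\nabla}_{0,\nu}^{\pm}$ with $\Delta^{+}_{0,\nu}$ and $\Delta^{-}_{1,\nu}$ under the appropriate map. This is, however, a routine $\nu$-thickening of the computation already carried out in the proof of Lemma~\ref{lem:domain_fN}; the only substantive input is Proposition~\ref{prop:TN_L2}, together with the extra power of $N$ supplied by the prefactor $1/N$ in \eqref{eq:fN}.
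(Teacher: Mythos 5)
Your proposal is correct and follows essentially the same route as the paper: translate the region \eqref{eq:convergence_F} into the condition that both arguments $\gamma(1-z)-p+1$ and $\gamma(1+z)-p$ lie in $\Omega^{\ast}_{\nu}$ (matching the four excised trapezoids with $\Delta^{+}_{0,\nu}$ and $\Delta^{-}_{1,\nu}$ exactly as in Lemma~\ref{lem:domain_fN}), then apply Proposition~\ref{prop:TN_L2}. The only addition is your explicit observation that the prefactor $1/N$ upgrades the error to $O(N^{-2})$, which is a harmless refinement.
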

\begin{proof}
In a way similar to the proof of Lemma~\ref{lem:domain_fN}, we have
\begin{align*}
  &-1+\nu<\Re\bigl(\gamma(1-z)-p+1\bigr)<2-\nu
  \\
  \Leftrightarrow\quad&
  -\frac{1}{p}+\frac{\nu}{p}<\frac{\Im(\xi z)}{2p\pi}<\frac{2}{p}-\frac{\nu}{p}
  \\
  \Leftrightarrow\quad&
  -1+\nu<\Re\bigl(\gamma(1+z)-p\bigr)<2-\nu,
\end{align*}
\begin{align*}
  \left|\Im\bigl(\gamma(1\pm z)\bigr)\right|\le M
  \quad\Leftrightarrow\quad
  \kappa-2M\pi\le\mp\Re(\xi z)\le\kappa+2M\pi,
\end{align*}
and
\begin{align*}
  \gamma(1-z)-p+1\in\Delta^{+}_{0,\nu}
  \quad\Leftrightarrow\quad&
  z\in\underline{\nabla}_{0,\nu}^{-},
  \\
  \gamma(1-z)-p+1\in\Delta^{-}_{1,\nu}
  \quad\Leftrightarrow\quad&
  z\in\underline{\nabla}_{0,\nu}^{+},
  \\
  \gamma(1+z)-p\in\Delta^{+}_{0,\nu}
  \quad\Leftrightarrow\quad&
  z\in\overline{\nabla}_{0,\nu}^{+},
  \\
  \gamma(1+z)-p\in\Delta^{-}_{1,\nu}
  \quad\Leftrightarrow\quad&
  z\in\overline{\nabla}_{0,\nu}^{-}.
\end{align*}
Then, the lemma follows from Proposition~\ref{prop:TN_L2}.
\end{proof}
\par
We can express $F(z)$ in terms of $\Li_2$ for certain cases.
\begin{lem}\label{lem:F_Li2}
If $z$ is in between $\overline{K}$ and $\underline{K}$, or between $L_0$ and $L_1$, then we have
\begin{equation}\label{eq:F_K_L0_L1}
  F(z)
  =
  \frac{1}{\xi}\Li_2\bigl(e^{-\xi(1+z)}\bigr)
  -
  \frac{1}{\xi}\Li_2\bigl(e^{-\xi(1-z)}\bigr)
  +\kappa z-2\pi\i.
\end{equation}
Moreover, if $z$ is between $L_0$ and $L_1$, we also have
\begin{equation}\label{eq:F_L0_L1}
  F(z)
  =
  \frac{1}{\xi}\Li_2\bigl(e^{\xi(1-z)}\bigr)
  -
  \frac{1}{\xi}\Li_2\bigl(e^{\xi(1+z)}\bigr)
  -\kappa z+\frac{4p\pi^2}{\xi}.
\end{equation}
\end{lem}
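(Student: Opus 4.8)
The plan is to unfold the definition of $F(z)$ into the two branch pieces of $\L_2$ recorded in Definition~\ref{defn:L1_L2}, decide in each of the two stated regions which branch applies, and then simplify the resulting polynomial/logarithmic corrections. Write $w_1:=\gamma(1-z)-p+1$ and $w_2:=\gamma(1+z)-p$ with $\gamma=\xi/(2\pi\i)$, so that $\Re\gamma=p$ and $\Im\gamma=-\kappa/(2\pi)$. As in the proof of Lemma~\ref{lem:domain_fN} one has $\Re w_1=1-\Im(\xi z)/(2\pi)$, $\Im w_1=(\Re(\xi z)-\kappa)/(2\pi)$, $\Re w_2=\Im(\xi z)/(2\pi)$, $\Im w_2=-(\Re(\xi z)+\kappa)/(2\pi)$, and, since $p\in\Z$, $e^{\pm 2\pi\i w_1}=e^{\pm\xi(1-z)}$ and $e^{\pm 2\pi\i w_2}=e^{\pm\xi(1+z)}$. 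I would also record at the outset the identity $-2\pi\i\,\xi=4p\pi^2-2\kappa\pi\i$ (since $-2\pi\i\cdot 2p\pi\i=4p\pi^2$); it is what makes the constants collapse.

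First I would treat the region between $\overline{K}$ and $\underline{K}$, i.e. $-\kappa<\Re(\xi z)<\kappa$. There $\Im w_1<0$ and $\Im w_2<0$, so the lower line of \eqref{eq:L2_dilog} applies to both: $\L_2(w_i)=\pi^2(2w_i^2-2w_i+1/3)-\Li_2(e^{-2\pi\i w_i})$. Substituting into the definition of $F$, the dilogarithm parts produce $\frac1\xi\Li_2\bigl(e^{-\xi(1+z)}\bigr)-\frac1\xi\Li_2\bigl(e^{-\xi(1-z)}\bigr)$ directly, and the polynomial parts contribute $\frac{\pi^2}{\xi}\bigl(2(w_1^2-w_2^2)-2(w_1-w_2)\bigr)$. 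Using $w_1-w_2=1-2\gamma z$ and $w_1+w_2=2\gamma-2p+1$ gives $2(w_1^2-w_2^2)-2(w_1-w_2)=4(1-2\gamma z)(\gamma-p)=4(1-2\gamma z)\bigl(-\tfrac{\kappa\i}{2\pi}\bigr)$; substituting $2\gamma z=\xi z/(\pi\i)$ and combining with the terms $-\kappa z+4p\pi^2/\xi$ already present in $F$, the recorded identity for $-2\pi\i\,\xi$ reduces everything to $\kappa z-2\pi\i$, which is \eqref{eq:F_K_L0_L1}.

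Next I would handle the region between $L_0$ and $L_1$, i.e. $0<\Im(\xi z)<2\pi$, so that $0<\Re w_1<1$ and $0<\Re w_2<1$. Whether $\Im w_i\ge 0$ or $\Im w_i<0$, Definition~\ref{defn:L1_L2} together with Lemma~\ref{lem:L2_Li2} (in which $\floor{\Re w_i}=0$) gives $\L_2(w_i)=\Li_2\bigl(e^{2\pi\i w_i}\bigr)$, and substituting yields \eqref{eq:F_L0_L1} immediately. To pass from \eqref{eq:F_L0_L1} to \eqref{eq:F_K_L0_L1} I would apply the inversion formula \eqref{eq:dilog_inversion} to $\Li_2\bigl(e^{\xi(1-z)}\bigr)$ and $\Li_2\bigl(e^{\xi(1+z)}\bigr)$; the essential bookkeeping is that for $0<\Im(\xi z)<2\pi$ the principal logarithms are $\log\bigl(-e^{\xi(1-z)}\bigr)=\xi(1-z)+(1-2p)\pi\i$ and $\log\bigl(-e^{\xi(1+z)}\bigr)=\xi(1+z)-(2p+1)\pi\i$, whose squares differ by $4\kappa(\pi\i-\xi z)$, after which the constants again collapse via $-2\pi\i\,\xi=4p\pi^2-2\kappa\pi\i$ to give exactly $\kappa z-2\pi\i$. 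Consistency of the two derivations of \eqref{eq:F_K_L0_L1} on the overlap $\{-\kappa<\Re(\xi z)<\kappa\}\cap\{0<\Im(\xi z)<2\pi\}$ is automatic since $\L_2$ is holomorphic there and both computations reproduce the same holomorphic function.

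I expect the main obstacle to be this branch bookkeeping in the last step: correctly pinning down the principal logarithms $\log\bigl(-e^{\xi(1\pm z)}\bigr)$ from the sole constraint $0<\Im(\xi z)<2\pi$, and then checking that the quadratic-in-$z$ and constant contributions coming both from the $\L_2$-polynomials and from the $\log^2$ terms cancel down to precisely $\kappa z-2\pi\i$. The remaining manipulations are routine, if somewhat lengthy, algebra using $\xi=\kappa+2p\pi\i$ and $\gamma-p=-\kappa\i/(2\pi)$.
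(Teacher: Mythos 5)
Your proposal is correct and follows essentially the same route as the paper: the lower branch of \eqref{eq:L2_dilog} in the strip between $\overline{K}$ and $\underline{K}$, the observation that $0<\Re w_i<1$ together with Lemma~\ref{lem:L2_Li2} in the strip between $L_0$ and $L_1$, and the inversion formula \eqref{eq:dilog_inversion} with the same branch bookkeeping (your $\log\bigl(-e^{\xi(1\pm z)}\bigr)$ are exactly the negatives of the paper's $\log\bigl(-e^{-\xi(1\pm z)}\bigr)$, so the squares agree). The algebra you supply for the polynomial corrections, which the paper leaves implicit, checks out.
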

\begin{proof}
Since $\Im\frac{\xi(1\pm z)}{2\pi\i}=\frac{-1}{2\pi}\bigl(\kappa\pm\Re(\xi z)\bigr)$, we see that $\Im\frac{\xi(1+z)}{2\pi\i}<0$ and $\Im\frac{\xi(1-z)}{2\pi\i}<0$ if $z$ is between $\overline{K}$ and $\underline{K}$.
Thus, in this case we have \eqref{eq:F_K_L0_L1} from \eqref{eq:L2_dilog}.
\par
Next, we consider the case where $z$ is between $L_0$ and $L_1$, that is, where $0<\Im(\xi z)<2\pi$.
\par
We have $\Re\frac{\xi(1-z)}{2\pi\i}-p+1=1-\frac{\Im(\xi z)}{2\pi}$ and $\Re\frac{\xi(1+z)}{2\pi\i}-p=\frac{\Im(\xi z)}{2\pi}$, both of which are between $0$ and $1$.
So, from Lemmas~\ref{lem:L0_L1_L2} and \ref{lem:L2_Li2} we have \eqref{eq:F_L0_L1}.
\par
Now, we will show that \eqref{eq:F_K_L0_L1} also holds in this case.
\par
From \eqref{eq:dilog_inversion}, we have
\begin{align*}
  \Li_2\left(e^{\xi(1-z)}\right)
  =&
  -\Li_2\left(e^{-\xi(1-z)}\right)
  -\frac{\pi^2}{6}
  -\frac{1}{2}\left(\log\bigl(-e^{-\xi(1-z)}\bigr)\right)^2
  \\
  =&
  -\Li_2\left(e^{-\xi(1-z)}\right)
  -\frac{\pi^2}{6}
  -\frac{1}{2}\left(-\xi(1-z)+(2p-1)\pi\i\right)^2
\end{align*}
since $\Im\xi(1-z)=2p\pi-\Im(\xi z)$, which is between $2(p-1)\pi$ and $2p\pi$ when $0<\Im(\xi z)<2\pi$, that is, when $z$ is between $L_0$ and $L_1$.
Similarly we have
\begin{align*}
  \Li_2\left(e^{\xi(1+z)}\right)
  =&
  -\Li_2\left(e^{-\xi(1+z)}\right)
  -\frac{\pi^2}{6}
  -\frac{1}{2}\left(\log\bigl(-e^{-\xi(1+z)}\bigr)\right)^2
  \\
  =&
  -\Li_2\left(e^{-\xi(1+z)}\right)
  -\frac{\pi^2}{6}
  -\frac{1}{2}\left(-\xi(1+z)+(2p+1)\pi\i\right)^2
\end{align*}
since $\Im\xi(1+z)=2p\pi+\Im(\xi z)$, which is between $2p\pi$ and $2(p+1)\pi$.
Thus, from \eqref{eq:F_L0_L1}, we obtain \eqref{eq:F_K_L0_L1}, completing the proof.
\end{proof}
\par
The derivatives of $F(z)$ are given as follows from Lemma~\ref{lem:der_L1_L2}:
\begin{align}
  F'(z)
  =&
  \L_1\left(\frac{\xi(1-z)}{2\pi\i}-p+1\right)
  +
  \L_1\left(\frac{\xi(1+z)}{2\pi\i}-p\right)
  -\kappa,
  \label{eq:F'}
  \\
  F''(z)
  =&
  \frac{\xi\left(e^{-\xi z}-e^{\xi z}\right)}{3-e^{\xi z}-e^{-\xi z}},
  \label{eq:F''}
  \\
  F^{(3)}(z)
  =&
  \frac{\xi^2(4-3(e^{\xi z}+e^{-\xi z}))}{(3-e^{\xi z}-e^{-\xi z})^2}.
  \label{eq:F'''}
\end{align}
If $z$ is between $\overline{K}$ and $\underline{K}$, or between $L_0$ and $L_1$, we have
\begin{equation}\label{eq:F'_log}
\begin{split}
  F'(z)
  =&
  \log(1-e^{-\kappa-\xi z})
  +
  \log(1-e^{-\kappa+\xi z})
  +\kappa
  \\
  =&
  \log\left(3-e^{\xi z}-e^{-\xi z}\right)
\end{split}
\end{equation}
from Lemma~\ref{lem:F_Li2}, where the second equality follows from the same reason as \cite[Equation~(4.2)]{Murakami:CANJM2023}.
\par
Put $\saddle_{0}:=\frac{2\pi\i}{\xi}=\frac{2\pi}{|\xi|^2}(2p\pi+\kappa\i)$.
Since $\Re(\xi\saddle_0)=0$ and $\Im(\xi\saddle_0)=2\pi$, we conclude that $\saddle_0$ is on $L_1$ and between $\overline{K}$ and $\underline{K}$.
From \eqref{eq:F_K_L0_L1}, \eqref{eq:F'_log}, \eqref{eq:F''}, and \eqref{eq:F'''} we have
\begin{equation}\label{eq:F_derivatives}
\begin{split}
  F(\saddle_0)
  =&
  \frac{4p\pi^2}{\xi},
  \\
  F'(\saddle_{0})
  =&0,
  \\
  F''(\saddle_{0})
  =&
  0,
  \\
  F^{(3)}(\saddle_{0})
  =&
  -2\xi^2.
\end{split}
\end{equation}
\section{Poisson summation formula}\label{sec:Poisson}
In \eqref{eq:J_FN}, we put $\varphi_{m,N}(z):=f_N(z-2m\pi\i/\xi)$ ($m=0,1,2,\dots,p-1$) so that
\begin{multline}\label{eq:J_phiN}
  J_N(\FE;e^{\xi/N})
  \\
  =
  \left(1-e^{-4pN\pi^2/\xi}\right)
  \sum_{m=0}^{p-1}
  \left(
    \beta_{p,m}
    \sum_{mN/p<k\le(m+1)N/p}
    \exp\left(N\varphi_{m,N}\left(\frac{2k+1}{2N}\right)\right)
  \right).
\end{multline}
Note that the function $\varphi_{m,N}(z)$ is defined in the region
\begin{equation*}
  \Theta_m
  :=
  \left\{
    z\in\C
    \Bigm|
    -\frac{1}{p}+\frac{1}{2N}<\frac{\Im(\xi z)}{2p\pi}<\frac{2}{p}-\frac{1}{2N}
  \right\}
  \setminus
  \bigl(
    \underline{\nabla}_{m}^{+}
    \cup
    \underline{\nabla}_{m}^{-}
    \cup
    \overline{\nabla}_{m}^{+}
    \cup
    \overline{\nabla}_{m}^{-}
  \bigr)
\end{equation*}
from Lemma~\ref{lem:domain_fN}, where we put
\begin{align*}
  \underline{\nabla}_{m}^{+}
  :=&
  \left\{
    z\in\C\Bigm|
    \frac{m-1}{p}+\frac{1}{2N}<\frac{\Im(\xi z)}{2p\pi}\le\frac{m}{p},
  \right.
  \\
  &\qquad\qquad\phantom{z\in\C\Bigm|}
  \left.
    \Re(\xi z)\le\kappa,
    \Im{z}\le\frac{2(m-p)\pi\kappa}{|\xi|^2}
  \right\},
  \\
  \underline{\nabla}_{m}^{-}
  :=&
  \left\{
    z\in\C\Bigm|
    \frac{m+1}{p}\le\frac{\Im(\xi z)}{2p\pi}<\frac{m+2}{p}-\frac{1}{2N},
  \right.
  \\
  &\qquad\qquad\phantom{z\in\C\Bigm|}
  \left.
    \Re(\xi z)\ge\kappa,
    \Im{z}\ge\frac{2(m-p+1)\pi\kappa}{|\xi|^2}
  \right\},
  \\
  \overline{\nabla}_{m}^{+}
  :=&
  \left\{
    z\in\C\Bigm|
    \frac{m-1}{p}+\frac{1}{2N}<\frac{\Im(\xi z)}{2p\pi}\le\frac{m}{p},
  \right.
  \\
  &\qquad\qquad\phantom{z\in\C\Bigm|}
  \left.
    \Re(\xi z)\le-\kappa,
    \Im{z}\le\frac{2(m+p)\pi\kappa}{|\xi|^2}
  \right\},
  \\
  \overline{\nabla}_{m}^{-}
  :=&
  \left\{
    z\in\C\Bigm|
    \frac{m+1}{p}\le\frac{\Im(\xi z)}{2p\pi}<\frac{m+2}{p}-\frac{1}{2N},
  \right.
  \\
  &\qquad\qquad\phantom{z\in\C\Bigm|}
  \left.
    \Re(\xi z)\ge-\kappa,
    \Im{z}\ge\frac{2(m+p+1)\pi\kappa}{|\xi|^2}
  \right\}.
\end{align*}
\par
We would like to show that the sum 
\begin{equation*}
  \sum_{mN/p<k\le(m+1)N/p}
  \exp\left(N\varphi_{m,N}\left(\frac{2k+1}{2N}\right)\right)
\end{equation*}
is approximated by the integral
\begin{equation*}
  N\int_{m/p}^{(m+1)/p}e^{N\varphi_{m,N}(z)}\,dz.
\end{equation*}
To do that, we use the following proposition, known as the Poisson summation formula.
\begin{prop}\label{prop:Poisson}
Let $a$ and $b$ be real numbers with $a<b$, and $\{\psi_N(z)\}_{N=1,2,3,\dots}$ be a series of holomorphic functions in a domain $D\subset\C$ containing the closed interval $[a,b]$.
We assume that $\psi_N(z)$ uniformly converges to a holomorphic function $\psi(z)$ in $D$.
We also assume that $\Re{\psi(a)}<0$ and $\Re{\psi(b)}<0$.
\par
Putting $R_{+}:=\{z\in D\mid\Im{z}\ge0,\Re{\psi(z)}<2\pi\Im{z}\}$ and $R_{-}:=\{z\in D\mid\Im{z}\le0,\Re{\psi(z)}<-2\pi\Im{z}\}$, we also assume that there are paths $C_{\pm}$ connecting $a$ and $b$ such that $C_{\pm}\subset R_{\pm}$ and that $C_{\pm}$ is homotopic to $[a,b]$ in $D$ with $a$ and $b$ fixed.
\par
Then we have
\begin{equation*}
  \frac{1}{N}\sum_{a\le k/N\le b}e^{N\psi_N(k/N)}
  =
  \int_{a}^{b}e^{N\psi_N(z)}\,dz+O(e^{-\varepsilon N})
\end{equation*}
for some $\varepsilon>0$ independent of $N$.
\end{prop}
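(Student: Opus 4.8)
The plan is to prove the identity by a residue computation, which is the contour-integral incarnation of the Poisson summation formula for a finite range of summation. The key object is the kernel $\frac{1}{1-e^{2\pi\i Nz}}$: it is holomorphic away from the points $z=k/N$ ($k\in\Z$), where it has simple poles with residue $\frac{-1}{2\pi\i N}$. Consider the positively (counterclockwise) oriented closed curve that runs along $C_{-}$ from $a$ to $b$, then along $C_{+}$ from $b$ to $a$, joined near $a$ and near $b$ by short segments crossing the real axis. Since $R_{+}\subset\{\Im z\ge 0\}$ and $R_{-}\subset\{\Im z\le 0\}$, the lattice points it encloses are exactly the $k/N$ with $a<k/N<b$, and the region it bounds lies in $D$ (this uses the homotopy hypotheses on $C_{\pm}$). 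Applying the residue theorem, with both $C_{\pm}$ oriented from $a$ to $b$, therefore gives
\begin{equation*}
  \frac{1}{N}\sum_{a<k/N<b}e^{N\psi_N(k/N)}
  =
  \int_{C_{+}}\frac{e^{N\psi_N(z)}}{1-e^{2\pi\i Nz}}\,dz
  -
  \int_{C_{-}}\frac{e^{N\psi_N(z)}}{1-e^{2\pi\i Nz}}\,dz
  +
  E_N,
\end{equation*}
where $E_N$ gathers the two connector integrals. Since $\Re\psi(a)<0$, $\Re\psi(b)<0$ and $\psi_N\to\psi$ uniformly, both $E_N$ and the difference between this sum and $\sum_{a\le k/N\le b}e^{N\psi_N(k/N)}$ (a bounded number of boundary terms) are $O\bigl(e^{-\varepsilon N}\bigr)$.

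Next I would estimate the two contour integrals. On $C_{-}\subset R_{-}$ one has $\Im z\le 0$, so for large $N$ the quantity $\bigl|1-e^{2\pi\i Nz}\bigr|$ is comparable to $e^{-2\pi N\Im z}$, and the integrand is bounded by a constant times $e^{N(\Re\psi_N(z)+2\pi\Im z)}$. Replacing $\Re\psi_N$ by $\Re\psi+o(1)$ via uniform convergence, and using the defining inequality $\Re\psi(z)<-2\pi\Im z$ of $R_{-}$ together with compactness of $C_{-}$, this exponent has strictly negative supremum, so $\int_{C_{-}}\frac{e^{N\psi_N(z)}}{1-e^{2\pi\i Nz}}\,dz=O\bigl(e^{-\varepsilon N}\bigr)$. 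On $C_{+}\subset R_{+}$, write $\frac{1}{1-e^{2\pi\i Nz}}=1+\frac{e^{2\pi\i Nz}}{1-e^{2\pi\i Nz}}$. The summand $1$ contributes $\int_{C_{+}}e^{N\psi_N(z)}\,dz$, which equals $\int_{a}^{b}e^{N\psi_N(z)}\,dz$ because $e^{N\psi_N}$ is holomorphic in $D$ and $C_{+}$ is homotopic rel endpoints to $[a,b]$. The remaining integrand is bounded by a constant times $e^{N(\Re\psi_N(z)-2\pi\Im z)}$, since $\Im z\ge 0$ makes $\bigl|e^{2\pi\i Nz}/(1-e^{2\pi\i Nz})\bigr|$ comparable to $e^{-2\pi N\Im z}$; by the defining inequality $\Re\psi(z)<2\pi\Im z$ of $R_{+}$ and compactness of $C_{+}$, this contribution is again $O\bigl(e^{-\varepsilon N}\bigr)$. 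Assembling the pieces yields $\frac{1}{N}\sum_{a\le k/N\le b}e^{N\psi_N(k/N)}=\int_{a}^{b}e^{N\psi_N(z)}\,dz+O\bigl(e^{-\varepsilon N}\bigr)$.

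I expect the genuinely delicate part to be the treatment near the endpoints $a$ and $b$. The kernel has poles exactly at the lattice points $k/N$ on the real axis, so the closed curve above --- and especially its connector segments near $a$ and $b$ --- must be pushed off the real axis, into the upper half-plane along $C_{+}$ and into the lower half-plane along $C_{-}$, and one has to verify that these small perturbations remain inside $R_{\pm}$. This is precisely where the hypotheses $\Re\psi(a)<0$ and $\Re\psi(b)<0$ enter: they force $\Re\psi<\pm 2\pi\Im z$ to hold throughout a full neighbourhood of $a$ and of $b$ for $|\Im z|$ small, so the contour can be routed off the axis there while staying in $R_{\pm}$, and the contribution of the thin slivers of $C_{\pm}$ with $|\Im z|\lesssim 1/N$ remains exponentially small because $\Re\psi_N<0$ there. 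A secondary point to watch is uniformity: all the exponential estimates must hold uniformly along the compact curves $C_{\pm}$, which is exactly what the uniform convergence $\psi_N\to\psi$ provides.
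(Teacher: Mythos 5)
Your argument is correct in substance, but it follows a genuinely different route from the paper. The paper (Appendix~A, following Ohtsuki) multiplies the summand by a smooth cutoff $\beta$ supported near $[a,b]$, applies the classical Fourier-analytic Poisson summation formula $\sum_k\Psi_N(k)=\sum_l\hat\Psi_N(l)$, keeps the $l=0$ mode as the desired integral, and kills the $l\ne0$ modes by integrating by parts twice and then shifting the contour of $\int\tilde{B}_N(s)e^{N(\psi_N(s)-2\pi\i ls)}\,ds$ onto $C_{\mp}$, where the factor $e^{\mp2\pi lN\Im z}$ combined with the defining inequality of $R_{\mp}$ gives exponential decay. You instead run the Abel--Plana-type residue argument with the kernel $1/(1-e^{2\pi\i Nz})$: the sum appears as a sum of residues of $e^{N\psi_N(z)}/(1-e^{2\pi\i Nz})$ inside the loop $C_{-}\cup(-C_{+})$, the ``$1$'' in the splitting $1/(1-e^{2\pi\i Nz})=1+e^{2\pi\i Nz}/(1-e^{2\pi\i Nz})$ on $C_{+}$ produces $\int_a^b e^{N\psi_N}\,dz$ by Cauchy's theorem, and the remaining kernels are controlled on $C_{\pm}$ by exactly the same inequalities $\Re\psi(z)<\pm2\pi\Im{z}$. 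The geometric input (the paths $C_{\pm}\subset R_{\pm}$ homotopic to $[a,b]$) plays the identical role in both proofs; what your version buys is the elimination of the cutoff function, the Schwartz-class verification, the double integration by parts, and the summation over $l$, at the cost of concentrating all the delicacy at the endpoints, where the kernel has poles on the real axis at the lattice points $k/N$ (possibly at $a$ and $b$ themselves). You have correctly identified that this is the place requiring care, and your remedy is the right one: indent the loop so that it crosses the real axis at non-pole points near $a$ and $b$, use $\Re\psi(a)<0$, $\Re\psi(b)<0$ and uniform convergence to make the indentation arcs, the connector segments, and the at most two discarded boundary terms of the sum all $O(e^{-\varepsilon N})$, and keep the rescaled contour $Nz$ at bounded distance from $\Z$ so that the kernel bounds $|1-e^{2\pi\i Nz}|^{-1}\le Ce^{2\pi N\Im z}$ (on $C_-$) and $|e^{2\pi\i Nz}/(1-e^{2\pi\i Nz})|\le Ce^{-2\pi N\Im z}$ (on $C_+$) hold uniformly. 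A fully written version should also record the winding-number bookkeeping — that the loop winds once around each pole strictly between the crossing points and zero times around every other pole of the kernel lying in $D$ (which follows from the half-plane containments of $C_{\pm}$ together with the null-homotopy of the loop in $D$) — but this is routine.
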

A proof, which is essentially the same as that of \cite[Proposition~4.2]{Ohtsuki:QT2016}, is given in Appendix~\ref{sec:Poisson_proof}.
\par
To apply Proposition~\ref{prop:Poisson} to the function $\psi_N(z):=\varphi_{m,N}(z)-\varphi_{m,N}(\saddle_m)$, we will study properties of $\varphi_{m,N}(z)$.
\par
From Lemma~\ref{lem:convergence_fN}, the series of functions $\{\varphi_{m,N}(z)\}$ uniformly converges to the function $\Phi_{m}(z):=F(z-2m\pi\i/\xi)$ in the region $\Theta^{\ast}_{m,\nu}$ defined as
\begin{equation}\label{eq:convergence_Phi}
\begin{split}
  &\Theta^{\ast}_{m,\nu}
  \\
  :=&
  \{z\in\C\mid2(m-1+\nu)\pi\le\Im(\xi z)\le2(m+2-\nu)\pi,|\Re(\xi z)|\le2M\pi-\kappa\}
  \\
  &\quad\setminus
  \left(
    \underline{\nabla}^{+}_{m,\nu}
    \cup
    \underline{\nabla}^{-}_{m,\nu}
    \cup
    \overline{\nabla}^{+}_{m,\nu}
    \cup
    \overline{\nabla}^{-}_{m,\nu}
  \right),
\end{split}
\end{equation}
where we put
\begin{align*}
  \underline{\nabla}^{+}_{m,\nu}
  &:=
  \{z\in\C\mid2(m-1+\nu)\pi\le\Im(\xi z)<2(m+\nu)\pi,
  \\
  &\qquad\qquad\phantom{z\in\C\mid}
  \Re(\xi z)<\kappa+2\pi\nu,
  \Im{z}<2(\nu-p+m)\pi\kappa/|\xi|^2\},
  \\
  \underline{\nabla}^{-}_{m,\nu}
  &:=
  \{z\in\C\mid2(m+1-\nu)\pi<\Im(\xi z)\le2(m+2-\nu)\pi,
  \\
  &\qquad\qquad\phantom{z\in\C\mid}
  \Re(\xi z)\ge\kappa-2\pi\nu,
  \Im{z}>2(1-p+m-\nu)\pi\kappa/|\xi|^2\},
  \\
  \overline{\nabla}^{+}_{m,\nu}
  &:=
  \{z\in\C\mid2(m-1+\nu)\pi\le\Im(\xi z)<2(m+\nu)\pi,
  \\
  &\qquad\qquad\phantom{z\in\C\mid}
  \Re(\xi z)<-\kappa+2\pi\nu,
  \Im{z}<2(\nu+p+m)\pi\kappa/|\xi|^2\},
  \\
  \overline{\nabla}^{-}_{m,\nu}
  &:=
  \{z\in\C\mid2(m+1-\nu)\pi<\Im(\xi z)\le2(m+2-\nu)\pi,
  \\
  &\qquad\qquad\phantom{z\in\C\mid}
  \Re(\xi z)>-\kappa-2\pi\nu,
  \Im{z}>2(p+m+1-\nu)\pi\kappa/|\xi|^2\},
\end{align*}
and we always assume that $M$ is sufficiently large.
From \eqref{eq:F'}--\eqref{eq:F'''}, we have
\begin{align}
  \Phi'_{m}(z)
  =&
  \L_1\left(\frac{\xi(1-z)}{2\pi\i}+m-p+1\right)
  +
  \L_1\left(\frac{\xi(1+z)}{2\pi\i}-m-p\right)
  -\kappa,
  \label{eq:Phi'}
  \\
  \Phi''_{m}(z)
  =&
  \frac{\xi\left(e^{-\xi z}-e^{\xi z}\right)}{3-e^{\xi z}-e^{-\xi z}},
  \label{eq:Phi''}
  \\
  \Phi^{(3)}_{m}(z)
  =&
  \frac{\xi^2(4-3(e^{\xi z}+e^{-\xi z}))}{(3-e^{\xi z}-e^{-\xi z})^2}.
  \label{eq:Phi'''}
\end{align}
Since $z-\frac{2m\pi\i}{\xi}$ is between $L_0$ and $L_1$ ($\overline{K}$ and $\underline{K}$, respectively) if and only if $z$ is between $L_m$ and $L_{m+1}$ ($\overline{K}$ and $\underline{K}$, respectively), from \eqref{eq:F'_log} we have
\begin{equation}\label{eq:Phi'_log}
  \Phi'_{m}(z)
  =
  \log\left(3-e^{\xi z}-e^{-\xi z}\right)
\end{equation}
when $z$ is between $\overline{K}$ and $\underline{K}$, or $L_{m}$ and $L_{m+1}$.
\par
We also put $\saddle_m:=\saddle_0+\frac{2m\pi\i}{\xi}=\frac{2(m+1)\pi\i}{\xi}=\frac{2(m+1)\pi}{|\xi|^2}(2p\pi+\kappa\i)$ so that
\begin{equation}\label{eq:Phi_derivatives}
\begin{split}
  \Phi_m(\saddle_{m})
  =&
  \frac{4p\pi^2}{\xi},
  \\
  \Phi'_m(\saddle_{m})
  =&0,
  \\
  \Phi''_m(\saddle_{m})
  =&
  0,
  \\
  \Phi^{(3)}_m(\saddle_{m})
  =&
  -2\xi^2
\end{split}
\end{equation}
from \eqref{eq:F_derivatives}.
Since $\Re(\xi\saddle_m)=0$ and $\Im(\xi\saddle_m)=2(m+1)\pi$, we see that $\saddle_m$ is between $\overline{K}$ and $\underline{K}$ and on the line $L_{m+1}$.
See Figure~\ref{fig:domain_Phi}.
\begin{figure}[h]
  \pic{0.3}{domain_Phi}
\caption{The yellow region is $\Theta^{\ast}_{m,\nu}$.
         The blue point is $\saddle_m$.
         The green trapezoids are $\underline{\nabla}_{m,\nu}^{+}$,
         $\underline{\nabla}_{m,\nu}^{-}$,$\overline{\nabla}_{m,\nu}^{+}$,
         and $\overline{\nabla}_{m,\nu}^{-}$.}
\label{fig:domain_Phi}
\end{figure}
\par
Let $I_{s}$ be the vertical line $\Re{z}=s/p$ for $s\in\R$.
\par
For a small number $\chi>0$, let $\Xi_{m,\chi}$ be the pentagonal region defined as follows.
\begin{align*}
  \Xi_{m,\chi}
  :=
  \Bigl\{z\in\C\Bigm|
  &\frac{m-\chi}{p}<\Re{z}<\frac{m+1+\chi}{p},
  -\frac{2(m+1)\kappa\pi}{|\xi|^2}<\Im{z}<\frac{(p+m)\kappa}{2p^2\pi},
  \\
  &\Im(\xi z)+\frac{(2\chi+1)|\xi|^2}{2(m+1)\kappa}\Im{z}>2(m-\chi)\pi
  \Bigr\}
\end{align*}
when $m<p-1$, and
\begin{align*}
  \Xi_{p-1,\chi}
  :=
  \Bigl\{z\in\C\Bigm|
  &\frac{p-1-\chi}{p}<\Re{z}<\frac{p+\chi}{p},
  -\frac{2p\kappa\pi}{|\xi|^2}<\Im{z}<\frac{(2p-1)\kappa}{2p^2\pi},
  \\
  &\Im(\xi z)+\frac{(2\chi+1)|\xi|^2}{2p\kappa}\Im{z}>2(p-1-\chi)\pi
  \Bigr\}
  \setminus\diamond_{\nu},
\end{align*}
where we put
\begin{equation*}
  \diamond_{\nu}
  :=
  \{z\in\C\mid\Re{z}<1+\chi/p\}
  \cap
  \underline{\nabla}_{p-1,\nu}^{-}.
\end{equation*}
Note that $\Xi_{m,\chi}$ ($m<p-1$) is surrounded by $I_{m-\chi}$, $I_{m+1+\chi}$, $\overline{H}$, $\underline{H}$, and $J$, where $\overline{H}$ and $\underline{H}$ are the horizontal lines $\Im{z}=\frac{(p+m)\kappa}{2p^2\pi}$ and $\Im{z}=-\Im\saddle_m$, respectively, and $J$ is  the line connecting $(m-\chi)/p$ and $L_{m+1/2}\cap\underline{H}$, which is given as
\begin{equation}\label{eq:J}
  J:=\{z\in\C\mid\Im(\xi z)+\frac{(2\chi+1)|\xi|^2}{2(m+1)\kappa}\Im{z}>2(m-\chi)\pi\}.
\end{equation}
See the left picture of Figure~\ref{fig:pentagon}.
\begin{figure}[h]
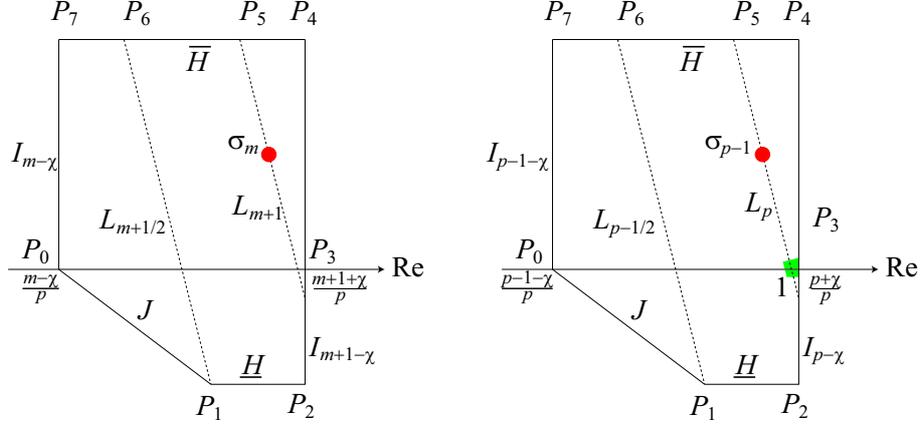

  \pic{0.3}{pentagon}
  \qquad
  \pic{0.3}{pentagon_p_1}
\caption{The region $\Xi_{m,\chi}$ when $m<p-1$ (left) and the region $\Xi_{p-1,\chi}$ (right), where the green quadrilateral indicates $\diamond_{\nu}$.
Precisely speaking, the points $P_0$ and $P_7$ should be a little more to the right than indicated, and the points $P_2$, $P_3$, and $P_4$ should be a little more to the left than indicated.}
\label{fig:pentagon}
\end{figure}
The right picture of Figure~\ref{fig:pentagon} indicates $\Xi_{p-1,\chi}$, where $\diamond_{\nu}$ is indicated by the green quadrilateral.
Note that it is a neighborhood of the point $1$.
\begin{lem}
If $\nu>0$ is sufficiently small, then we can choose $\chi>0$ so that $\Xi_{m,\chi}$ is included in $\Theta^{\ast}_{m,\nu}$ for $m=0,1,2,\dots,p-1$.
\end{lem}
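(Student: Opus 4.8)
The plan is to verify the defining inequalities of $\Theta^{\ast}_{m,\nu}$ one by one on $\Xi_{m,\chi}$, working in both the $z$-coordinate and the $\xi z$-coordinate. Writing $\xi=\kappa+2p\pi\i$ one has $\Re(\xi z)=\kappa\Re z-2p\pi\Im z$ and $\Im(\xi z)=\kappa\Im z+2p\pi\Re z$; the single quantitative input used repeatedly is $|\xi|^{2}=\kappa^{2}+4p^{2}\pi^{2}>4p^{2}\pi^{2}$. Since $\kappa$ and $p$ are fixed and $m$ runs over the finite set $\{0,1,\dots,p-1\}$, every threshold of the form ``valid for $\nu$, and then $\chi$, small enough'' produced below can be imposed simultaneously, so it suffices to produce them one at a time.

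First I would check the two ambient conditions. The edge $J$, whose defining inequality involves $c:=\frac{(2\chi+1)|\xi|^{2}}{2(m+1)\kappa}>0$, forces $\Im(\xi z)>2(m-\chi)\pi$ on $\Xi_{m,\chi}$, which is $\ge 2(m-1+\nu)\pi$ once $\chi\le 1-\nu$; and on the top edge $\overline{H}$ (for $m=p-1$, the horizontal line $\Im z=\frac{(2p-1)\kappa}{2p^{2}\pi}$) one gets $\Im(\xi z)\le\frac{(p+m)\kappa^{2}}{2p^{2}\pi}+2(m+1+\chi)\pi$, so the upper strip bound $\Im(\xi z)\le 2(m+2-\nu)\pi$ reduces to $\frac{(p+m)\kappa^{2}}{4p^{2}\pi^{2}}\le 1-\nu-\chi$, which holds for small $\nu,\chi$ because the left-hand side is a small fixed number. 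The bound $|\Re(\xi z)|\le 2M\pi-\kappa$ is immediate since $\Xi_{m,\chi}$ is bounded and $M$ is assumed large.

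Then I would dispose of the four removed trapezoids. For $\overline{\nabla}^{-}_{m,\nu}$ it suffices that on $\Xi_{m,\chi}$ one has $\Im z\le\frac{(p+m)\kappa}{2p^{2}\pi}$, which lies strictly below the trapezoid's floor $\frac{2(p+m+1-\nu)\pi\kappa}{|\xi|^{2}}$ because that inequality is $(p+m)\kappa^{2}\le 4p^{2}\pi^{2}(1-\nu)$, valid for every $\nu<1/2$. The sets $\underline{\nabla}^{+}_{m,\nu}$ and $\overline{\nabla}^{+}_{m,\nu}$ lie in the sub-strip $\Im(\xi z)<2(m+\nu)\pi$, so only the part of $\Xi_{m,\chi}$ in that sub-strip is at issue; there the edge $J$ gives $\Im z>-\frac{4(\chi+\nu)(m+1)\pi\kappa}{(2\chi+1)|\xi|^{2}}$, while $\Re z>\frac{m-\chi}{p}$ together with $\Im(\xi z)<2(m+\nu)\pi$ gives $\Im z<\frac{2(\nu+\chi)\pi}{\kappa}$, whence $\Re(\xi z)>\frac{\kappa(m-\chi)}{p}-\frac{4p\pi^{2}(\nu+\chi)}{\kappa}$. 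The first bound keeps that part out of $\underline{\nabla}^{+}_{m,\nu}$, whose ceiling $\frac{2(\nu-p+m)\pi\kappa}{|\xi|^{2}}$ is negative (as $m\le p-1$) and lies below it once $\frac{2(\chi+\nu)(m+1)}{2\chi+1}<p-m-\nu$; the second keeps it out of $\overline{\nabla}^{+}_{m,\nu}$, whose ceiling $-\kappa+2\pi\nu$ is below $\frac{\kappa(m-\chi)}{p}-\frac{4p\pi^{2}(\nu+\chi)}{\kappa}$ because the latter tends to a non-negative limit and the former to $-\kappa<0$. Finally, for $\underline{\nabla}^{-}_{m,\nu}$: when $m<p-1$ the right edge gives $\Re z<\frac{m+1+\chi}{p}<1$, and combined with the trapezoid's condition $\Re(\xi z)\ge\kappa-2\pi\nu$ this forces $\Im z<\frac{\nu}{p}-\frac{\kappa(p-m-1-\chi)}{2p^{2}\pi}$, whereas the trapezoid also demands $\Im z>\frac{2(1-p+m-\nu)\pi\kappa}{|\xi|^{2}}$; since $p-m-1\ge 1$, these two are incompatible for $\nu,\chi$ small, the gap being governed in the limit $\chi,\nu\to0$ by the strict inequality $|\xi|^{2}>4p^{2}\pi^{2}$. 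When $m=p-1$ no estimate is needed: the whole pentagon already lies in $\{\Re z<1+\chi/p\}$, so $\diamond_{\nu}$ is exactly the intersection of that pentagon with $\underline{\nabla}^{-}_{p-1,\nu}$, and subtracting $\diamond_{\nu}$ removes precisely the possible overlap.

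I expect the work to be almost entirely bookkeeping — tracking which of the edges $I_{m-\chi}$, $I_{m+1+\chi}$, $\overline{H}$, $\underline{H}$, $J$ (and, for $m=p-1$, the cut $\diamond_{\nu}$) excludes which of the four removed regions, and checking that each ``$\nu,\chi$ small'' threshold depends only on $\kappa$ and $p$. The one place where something could in principle fail is the trapezoid $\underline{\nabla}^{-}_{m,\nu}$ for $m<p-1$, and it is precisely there that the harmless strict inequality $|\xi|^{2}=\kappa^{2}+4p^{2}\pi^{2}>4p^{2}\pi^{2}$ supplies the room needed to absorb the $O(\nu+\chi)$ errors.
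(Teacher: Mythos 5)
Your proposal is correct and follows essentially the same route as the paper: one checks the strip bounds of $\Theta^{\ast}_{m,\nu}$ at the extreme corners of $\Xi_{m,\chi}$ and then excludes each of the four trapezoids by elementary coordinate comparisons yielding ``$\nu,\chi$ small'' thresholds depending only on $\kappa$, $p$, $m$, with the $m=p-1$ case handled by the excision of $\diamond_{\nu}$. The only slip is attributing the lower bound $\Im(\xi z)>2(m-\chi)\pi$ entirely to the edge $J$: for $\Im{z}>0$ that inequality only gives $\Im(\xi z)>2(m-\chi)\pi-c\,\Im{z}$, and one must instead use $\Re{z}>\frac{m-\chi}{p}$ there, but this is cosmetic and does not affect the argument.
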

\begin{proof}
First of all, $\Xi_{m,\chi}$ is in the rectangle surrounded by $I_{m-\chi}$, $I_{m+1+\chi}$, $\overline{H}$, and $\underline{H}$, with bottom-left vertex $v_{1}:=\frac{m-\chi}{p}-\frac{2(m+1)\kappa\pi}{|\xi|^2}\i$ and top-right vertex $v_{2}:=\frac{m+1+\chi}{p}+\frac{(p+m)\kappa}{2p^2\pi}\i$.
The vertices $v_{1}$ and $v_{2}$ are on the lines $L_{\Im(\xi v_{1})/(2\pi)}$ and $v_{2}$ $L_{\Im(\xi v_{\rm2})/(2\pi)}$, respectively.
Since we have
\begin{align*}
  \frac{\Im(\xi v_{1})}{2\pi}-(m-1+\nu)
  =&
  (1-\chi-\nu)
  -
  \frac{(m+1)\kappa^2}{|\xi|^2}
  \\
  \intertext{and}
  (m+2-\nu)-\frac{\Im(\xi v_{2})}{2\pi}
  =&
  (1-\chi-\nu)-\frac{(p+m)\kappa^2}{4p^2\pi^2},
\end{align*}
if
\begin{equation}\label{eq:Upsilon_L}
  \nu+\chi
  <
  \min
  \left\{
    1-(m+1)\left(\frac{\kappa}{|\xi|}\right)^2,
    1-(p+m)\left(\frac{\kappa}{2p\pi}\right)^2
  \right\}
  =
  1-(p+m)\left(\frac{\kappa}{2p\pi}\right)^2,
\end{equation}
then $\Xi_{m,\chi}$ is between the lines $L_{m-1+\nu}$ and $L_{m+2-\nu}$ for $m=0,1,2,\dots,p-1$.
\par
So it remains to show that $\Xi_{m,\chi}$ excludes $\overline{\nabla}^{+}_{m,\nu}$, $\overline{\nabla}^{-}_{m,\nu}$, $\underline{\nabla}^{+}_{m,\nu}$, and $\underline{\nabla}^{-}_{m,\nu}$.
\begin{itemize}
\item
The real part of the bottom right corner of $\overline{\nabla}^{+}_{m,\nu}$ is $\frac{\kappa(2\pi\nu-\kappa)+4(m+\nu)p\pi^2}{|\xi|^2}$, which is smaller than $(m-\chi)/p$ if
\begin{equation}\label{eq:Upsilon_o+}
  2p\pi(\kappa+2p\pi)\nu+|\xi|^2\chi<(p+m)\kappa^2.
\end{equation}
So the trapezoid $\overline{\nabla}^{+}_{m,\nu}$ is to the right of $I_{m-\chi}$ if \eqref{eq:Upsilon_o+} holds.
\item
The difference between the imaginary parts of the bottom line of $\overline{\nabla}^{-}_{m,\nu}$ and $\overline{H}$ is
\begin{equation*}
  \frac{2(p+m+1-\nu)\kappa\pi}{|\xi|^2}
  -
  \frac{(p+m)\kappa}{2p^2\pi}
  =
  \frac{\kappa\bigl(4p^2(1-\nu)\pi^2-(p+m)\kappa^2\bigr)}{2p^2\pi|\xi|^2},
\end{equation*}
which is positive if
\begin{equation}\label{eq:Upsilon_o-}
  \nu<1-(p+m)\left(\frac{\kappa}{2p\pi}\right)^2.
\end{equation}
So, we conclude that $\overline{\nabla}^{-}_{m,\nu}$ is outside of $\Xi_{m,\chi}$ if \eqref{eq:Upsilon_o-} holds.
\item
To obtain a condition that $\underline{\nabla}^{+}_{m,\nu}$ is below $J$, it is enough to find a condition that the top right corner $z_0$ of the trapezoid is below $J$, since $L_{m+\nu}$ is steeper than $J$.
Since $\Im{z_0}=2(\nu-p+m)\kappa\pi/|\xi|^2$ and $z_0$ is on $L_{m+\nu}$, the condition is
\begin{equation*}
  2(m+\nu)\pi+\frac{(2\chi+1)|\xi|^2}{2(m+1)\kappa}\times\frac{2(\nu-p+m)\kappa\pi}{|\xi|^2}
  <
  2(m-\chi)\pi
\end{equation*}
from \eqref{eq:J}.
Therefore, if
\begin{equation}\label{eq:Upsilon_u+}
  2\nu\chi+(2m+3)\nu+2(2m-p+1)\chi<p-m,
\end{equation}
the trapezoid $\underline{\nabla}^{+}_{m,\nu}$ is out of $\Xi_{m,\chi}$.
\item
The real part of the top left corner of $\underline{\nabla}^{-}_{m,\nu}$ is $\frac{\kappa(\kappa-2\pi\nu)+4(m+1-\nu)p\pi^2}{|\xi|^2}$, which is bigger than $(m+1+\chi)/p$
if
\begin{equation}\label{eq:Upsilon_u-}
  2p\pi(\kappa+2p\pi)\nu+|\xi|^2\chi<(p-m-1)\kappa^2.
\end{equation}
So the trapezoid $\underline{\nabla}^{-}_{m,\nu}$ is outside of $\Xi_{m,\chi}$ if \eqref{eq:Upsilon_u-} holds.
\end{itemize}
From \eqref{eq:Upsilon_o+}--\eqref{eq:Upsilon_u-}, we conclude that if $m<p-1$ and
\begin{equation*}
  \nu<
  \min
  \left\{
    \frac{(p+m)\kappa}{2p\pi(\kappa+2p\pi)},
    1-(p+m)\left(\frac{\kappa}{2p\pi}\right)^2,
    \frac{p-m}{2m+3},
    \frac{(p-m-1)\kappa^2}{2p\pi(\kappa+2p\pi)}
  \right\},
\end{equation*}
then we can choose $\chi>0$ so that $\Xi_{m,\chi}$ is included in $\Theta^{\ast}_{m,\nu}$.
\par
If $m=p-1$, then the real part of the top left corner of $\underline{\nabla}^{-}_{p-1,\nu}$ is $1-\frac{2\pi\nu(\kappa+2p\pi)}{|\xi|^2}$, which is slightly to the left of $1+\chi/p$.
Its imaginary part is $\frac{2\pi\nu(2p\pi-\kappa)}{|\xi|^2}$, which is slightly above the real axis.
The bottom left corner of $\underline{\nabla}^{-}_{p-1,\nu}$ is $1-\frac{4p\nu\pi^2}{|\xi|^2}$, which is slightly smaller than $1$.
Its imaginary part is $-\frac{-2\nu\kappa\pi}{|\xi|^2}$, which is below the real axis.
So if we exclude $\underline{\nabla}^{-}_{p-1,\nu}$, the rest is included in $\Theta^{\ast}_{p-1,\nu}$.
See the right picture of Figure~\ref{fig:pentagon}.
\par
The proof is complete.
\end{proof}
\par
We will show that the assumption of Proposition~\ref{prop:Poisson} holds for the function $\psi_N(z):=\varphi_{m,N}(z)-\varphi_{m,N}(\saddle_m)$, the domain $D:=\Xi_{m,\chi}$, and the numbers $a:=m/p$ and $b:=(m+1)/p$, with small $\chi>0$.
Note that the series of functions $\{\psi_{N}(z)\}:=\{\varphi_{m,N}(z)-\varphi_{m,N}(\saddle_m)\}$ uniformly converges to $\psi(z):=\Phi_m(z)-\Phi_m(\saddle_m)$ in $\Xi_{m,\chi}$ for sufficiently small $\chi>0$.
\par
From now we will study properties of $\Phi_{m}(z)$ in the region $\Xi_{m,\chi}$ as if $\chi=0$, taking care of the case where $\chi>0$ if necessary.
\par
Let $P_0,P_1,\dots,P_7$ be points defined as follows, which are already indicated in Figure~\ref{fig:pentagon}.
\begin{align*}
  P_0&:=I_m\cap\text{ real axis},
  \\
  P_1&:=L_{m+1/2}\cap\underline{H},
  \\
  P_2&:=I_{m+1}\cap\underline{H},
  \\
  P_3&:=I_{m+1}\cap\text{ real axis},
  \\
  P_4&:=I_{m+1}\cap\overline{H},
  \\
  P_5&:=L_{m+1}\cap\overline{H},
  \\
  P_6&:=L_{m+1/2}\cap\overline{H},
  \\
  P_7&:=I_{m}\cap\overline{H}.
\end{align*}
Their coordinates are given as follows.
\begin{align*}
  P_0&:=\frac{m}{p},
  \\
  P_1&:=\frac{m+1/2}{p}+\frac{\Im\saddle_m}{2p\pi}\overline{\xi},
  \\
  P_2&:=\frac{m+1}{p}-\Im\saddle_m\i,
  \\
  P_3&:=\frac{m+1}{p},
  \\
  P_4&:=\frac{m+1}{p}+\frac{(p+m)\kappa}{2p^2\pi}\i,
  \\
  P_5&:=\frac{m+1}{p}-\frac{(p+m)\kappa}{4p^3\pi^2}\overline{\xi},
  \\
  P_6&:=\frac{m+1/2}{p}-\frac{(p+m)\kappa}{4p^3\pi^2}\overline{\xi},
  \\
  P_7&:=\frac{m}{p}+\frac{(p+m)\kappa}{2p^2\pi}\i.
\end{align*}
\begin{lem}\label{lem:P}
We have the following inequalities:
\begin{equation*}
  \Re{P_6}<\Re{P_{1}}<\Re{P_5}.
\end{equation*}
\end{lem}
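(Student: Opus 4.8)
The plan is to substitute the explicit coordinates of $P_1$, $P_5$, and $P_6$ and reduce the two inequalities to elementary estimates in $\kappa$, $p$, and $m$. First I would record that $\overline{\xi}=\kappa-2p\pi\i$, so $\Re\overline{\xi}=\kappa$, and that $\saddle_m=\dfrac{2(m+1)\pi}{|\xi|^2}(2p\pi+\kappa\i)$ gives $\Im\saddle_m=\dfrac{2(m+1)\pi\kappa}{|\xi|^2}$. Taking real parts of the coordinates listed just before the lemma then yields
\[
  \Re{P_1}=\frac{m+1/2}{p}+\frac{(m+1)\kappa^2}{p|\xi|^2},\quad
  \Re{P_5}=\frac{m+1}{p}-\frac{(p+m)\kappa^2}{4p^3\pi^2},\quad
  \Re{P_6}=\frac{m+1/2}{p}-\frac{(p+m)\kappa^2}{4p^3\pi^2}.
\]

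The inequality $\Re{P_6}<\Re{P_1}$ is then immediate: both expressions have the same leading term $\frac{m+1/2}{p}$, while $\Re{P_6}$ subtracts a positive quantity and $\Re{P_1}$ adds one. For $\Re{P_1}<\Re{P_5}$, I would clear denominators and move terms to see that it is equivalent to
\[
  \frac{(m+1)\kappa^2}{|\xi|^2}+\frac{(p+m)\kappa^2}{4p^2\pi^2}<\frac12 .
\]
Since $|\xi|^2=\kappa^2+4p^2\pi^2>4p^2\pi^2$, the left-hand side is at most $\dfrac{(p+2m+1)\kappa^2}{4p^2\pi^2}$, and because $0\le m\le p-1$ we have $p+2m+1\le 3p-1<3p$, so the left-hand side is $<\dfrac{3\kappa^2}{4p\pi^2}\le\dfrac{3\kappa^2}{4\pi^2}$. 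Finally, $\kappa=\arccosh(3/2)<1$ because $\cosh 1>3/2$, hence $\kappa^2<1$ and $\dfrac{3\kappa^2}{4\pi^2}<\dfrac12$, which finishes the proof.

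There is no genuine obstacle here; the only points requiring care are computing the three real parts correctly from the coordinates and using the two easy inputs $|\xi|^2>4p^2\pi^2$ and $\kappa<1$. If one prefers to avoid the numerical bound $\kappa<1$, one can instead note $\cosh\kappa=3/2$ forces $e^{\kappa}\le 3$, hence $\kappa\le\log 3<2\pi/\sqrt{6}$, giving the same conclusion $\frac{3\kappa^2}{4\pi^2}<\frac12$.
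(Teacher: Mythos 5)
Your proposal is correct and follows essentially the same route as the paper: compute the three real parts from the listed coordinates, observe $\Re P_6<\Re P_1$ is immediate, and reduce $\Re P_1<\Re P_5$ to an elementary bound using $|\xi|^2>4p^2\pi^2$, $m\le p-1$, and $\kappa<1$. (You even state the key fact $\kappa=\arccosh(3/2)<1$ correctly, where the paper's proof contains a sign typo ``$\kappa>1$''.)
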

\begin{proof}
It is clear that $\Re{P_{6}}<\Re{P_{1}}$, and so we will show the other inequality.
\par
Since $\Im\saddle_m=\frac{2(m+1)\kappa\pi}{|\xi|^2}$ and $\kappa>1$, we have
\begin{align*}
  \Re{P_{5}}-\Re{P_{1}}
  =&
  \frac{m+1}{p}-\frac{(p+m)\kappa^2}{4p^3\pi^2}
  -
  \left(
    \frac{2m+1}{2p}
    +
    \frac{(m+1)\kappa^2}{p|\xi|^2}
  \right)
  \\
  >&
  \frac{1}{2p}
  -
  \frac{p+m}{4p^3\pi^2}
  -
  \frac{m+1}{4p^3\pi^2}
  \ge
  \frac{1}{2p}-\frac{3p-1}{4p^3\pi^2}
  >
  \frac{1}{2p}-\frac{3}{4p^2\pi^2}
  >0,
\end{align*}
proving the inequality $\Re{P_5}>\Re{P_{1}}$.
\end{proof}
\par
We put
\begin{align*}
  W^{+}_m
  &:=
  \{z\in\Xi_{m,\chi}\mid\Re\Phi_m(z)>\Re\Phi_m(\saddle_m)\},
  \\
  W^{-}_m
  &:=
  \{z\in\Xi_{m,\chi}\mid\Re\Phi_m(z)<\Re\Phi_m(\saddle_m)\}
\end{align*}
for $m=0,1,2,\dots,p-1$.
Recall that in this case, $R_{\pm}$ in Proposition~\ref{prop:Poisson} becomes
\begin{align*}
  R_{+}
  &:=
  \{z\in\Xi_{m,\chi}\mid\Im{z}\ge0,\Re\Phi_m(z)-\Re\Phi_m(\saddle)<2\pi\Im{z}\},
  \\
  R_{-}
  &:=
  \{z\in\Xi_{m,\chi}\mid\Im{z}\le0,\Re\Phi_m(z)-\Re\Phi_m(\saddle)<-2\pi\Im{z}\}.
\end{align*}
\par
In fact, we will show the following lemma, whose proof will be given later.
\begin{lem}\label{lem:Poisson_Phi}
The following hold for $m=0,1,\dots,p-2$.
\begin{enumerate}
\item
The points $m/p$ and $(m+1)/p$ are in $W^{-}_m$,
\item
There is a path $C_{+}$ in $R_{+}$ connecting $m/p$ and $(m+1)/p$,
\item
There is a path $C_{-}$ in $R_{-}$ connecting $m/p$ and $(m+1)/p$.
\end{enumerate}
When $m=p-1$, there exists $\delta>0$ such that the following hold.
\begin{enumerate}
\item
The points $1-1/p$ and $1-\delta$ are in $W^{-}_{p-1}$,
\item
There is a path $C_{+}$ in $R_{+}$ connecting $1-1/p$ and $1-\delta$,
\item
There is a path $C_{-}$ in $R_{-}$ connecting $1-1/p$ and $1-\delta$.
\end{enumerate}
\end{lem}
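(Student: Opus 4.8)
The plan is to understand where $\Re\Phi_m$ is large and small near the segment $[m/p,(m+1)/p]$ and near the saddle $\saddle_m$, and then to build $C_{\pm}$ explicitly. I would split the argument into three stages: (a) locate the two endpoints relative to the critical value $\Re\Phi_m(\saddle_m)=\Re\bigl(4p\pi^2/\xi\bigr)=4p\pi^2\kappa/|\xi|^2$; (b) describe the level set $\{\,z\in\Xi_{m,\chi}\mid\Re\Phi_m(z)=\Re\Phi_m(\saddle_m)\,\}$, hence $W^{\pm}_m$ and $R_{\pm}$; (c) thread polygonal paths through the valleys. Throughout, the essential global handle on $\Re\Phi_m$ is the identity $\Re\Phi_m'(z)=\log\bigl|3-e^{\xi z}-e^{-\xi z}\bigr|$, valid by \eqref{eq:Phi'_log} on the part of $\Xi_{m,\chi}$ lying between $\overline{K},\underline{K}$ or between $L_m,L_{m+1}$.

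\textbf{Stage (a).} The endpoints $m/p$ and $(m+1)/p$ lie between $\overline{K}$ and $\underline{K}$, since $\xi\bigl(j/p-2m\pi\i/\xi\bigr)=j\kappa/p+2(j-m)\pi\i$ has real part $j\kappa/p\in[0,\kappa]$ for $j=m,m+1$ (and $m+1<p$). Hence \eqref{eq:F_K_L0_L1} applies; because $e^{-\xi(1\pm z)}$ then collapse to the real numbers $e^{-\kappa(1\pm j/p)}\in(0,1)$, both $\Li_2$-values are real and
\[
  \Re\Phi_m(j/p)
  =\frac{\kappa}{|\xi|^2}\Bigl(\Li_2\bigl(e^{-\kappa(1+j/p)}\bigr)-\Li_2\bigl(e^{-\kappa(1-j/p)}\bigr)\Bigr)
  +\kappa\,\Re\!\left(\frac{j}{p}-\frac{2m\pi\i}{\xi}\right).
\]
Comparing this with $4p\pi^2\kappa/|\xi|^2$, and using that $\Li_2$ is increasing on $(0,1)$ (so the difference of dilogarithms is negative for $j\ne0$), reduces item (i) to an elementary inequality. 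Heuristically the same fact is visible from Stage (b): the direction from $\saddle_m$ to $(m+1)/p$ is $\arg(\kappa/\xi)=-\arg\xi$, the direction to $m/p$ is a rotation of it, and both lie inside valley sectors — but since the endpoints are only at distance $O(\kappa/|\xi|)$ from $\saddle_m$ this is only a heuristic, and the rigorous proof is the dilogarithm computation above (equivalently, integrating $\Re\Phi_m'=\log|3-e^{\xi z}-e^{-\xi z}|$ along the segment joining $\saddle_m$ to the endpoint and checking its sign). For $m=p-1$ the right endpoint $1$ sits on $\underline{K}$, where $\Phi_{p-1}$ inherits the logarithmic singularity that makes $T_u(\FE)$ undefined at $u=\kappa$ (indeed $\Phi_{p-1}'(t)=\log(3-e^{\xi t}-e^{-\xi t})\to-\infty$ as $t\to1^-$, so $\Re\Phi_{p-1}$ decreases steeply there); so I would replace $1$ by $1-\delta$ with $\delta>0$ small, obtain $\Re\Phi_{p-1}(1-\delta)<\Re\Phi_{p-1}(\saddle_{p-1})$ by the same computation and continuity, and shrink $\delta$ further so that $1-\delta\notin\diamond_\nu$.

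\textbf{Stages (b) and (c).} By \eqref{eq:Phi_derivatives}, $\Phi_m(z)-\Phi_m(\saddle_m)=-\tfrac{\xi^2}{3}(z-\saddle_m)^3+O\bigl((z-\saddle_m)^4\bigr)$, so $\saddle_m$ is an order-two saddle: near it the level curve consists of six arcs leaving $\saddle_m$ at the equally spaced angles $\tfrac{\pi}{6}-\tfrac23\arg\xi+\tfrac{k\pi}{3}$ ($k=0,\dots,5$), cutting a neighbourhood of $\saddle_m$ into three ``ridge'' sectors (lying in $W^{+}_m$) alternating with three ``valley'' sectors (lying in $W^{-}_m$). To get the global picture inside $\Xi_{m,\chi}$ I would compute the sign of $\Re\Phi_m'=\log|3-e^{\xi z}-e^{-\xi z}|$ along the real axis and along the boundary pieces $I_m$, $I_{m+1}$, $\overline{H}$, $\underline{H}$, $J$, using the corner inequalities of Lemma~\ref{lem:P}; this locates the component of $W^{+}_m$ surrounding $\saddle_m$ and shows that $m/p$ and $(m+1)/p$ lie in two cyclically adjacent valley sectors. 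Now $\saddle_m$ has $\Im\saddle_m>0$ and $\Re\bigl(\Phi_m(\saddle_m)-\Phi_m(\saddle_m)\bigr)=0<2\pi\Im\saddle_m$, so $\saddle_m$ is an interior point of $R_{+}$, and $R_{+}$ strictly contains $\{\Im z\ge0\}\cap W^{-}_m$; hence a deformation of the segment $[m/p,(m+1)/p]$ pushed up through $\saddle_m$ — running from the valley sector containing $m/p$, through $\saddle_m$, to the valley sector containing $(m+1)/p$ — can be kept inside $R_{+}\cap\Xi_{m,\chi}$, giving $C_{+}$. For $C_{-}$, the ridge component around $\saddle_m$ lies in $\{\Im z>0\}$, while in the lower half-plane $R_{-}$ strictly contains $\{\Im z\le0\}\cap W^{-}_m$; so a deformation of the segment dipping a little below the real axis connects $m/p$ and $(m+1)/p$ inside $R_{-}\cap\Xi_{m,\chi}$, the dip being kept shallow enough to stay above $\underline{H}$. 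For $m=p-1$ the same two constructions apply with $(m+1)/p$ replaced by $1-\delta$, with the extra check that both deformations avoid $\diamond_\nu$.

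\textbf{Main obstacle.} The crux is the path construction in Stage (c), and it is harder than in the usual volume-conjecture arguments for two reasons. First, the saddle is of \emph{order two}, so the level curve through $\saddle_m$ has three valleys and three ridges rather than the generic single pair, and one must verify \emph{globally} inside $\Xi_{m,\chi}$ — not merely from the cubic jet — that the two endpoints sit in valleys joinable through $R_{+}$ and, separately, through $R_{-}$; this forces the careful bookkeeping of the sign of $\Re\Phi_m'=\log|3-e^{\xi z}-e^{-\xi z}|$ along every boundary piece of the pentagon. Second, the endpoint inequalities in item (i) are \emph{tight} (the endpoints lie within $O(\kappa/|\xi|)$ of $\saddle_m$), so they cannot be read off cheaply and genuinely require the explicit dilogarithm evaluation. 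On top of this, the endpoints sit exactly on the lines $L_m,L_{m+1}$ — the boundary of the convergence region of the quantum dilogarithm — which is precisely why the slight enlargement of the domain in Section~\ref{sec:dilog} and, for $m=p-1$, the retreat to $1-\delta$ away from the singular point $1$, are needed.
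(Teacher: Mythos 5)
Your overall architecture --- cubic jet at $\saddle_m$, three valleys and three ridges, endpoints located via the dilogarithm formula, $C_{+}$ threaded upward through the saddle --- is sound, and for items (i) and (ii) it is essentially the paper's route (the paper establishes (i) by the sign of the directional derivative $\Re\bigl(e^{\alpha\i}\Phi_m'\bigr)$ along explicit segments emanating from $\saddle_m$, i.e.\ your parenthetical alternative, and takes $C_{+}=\overline{P_0P_{70}\saddle_mP_3}$). The genuine gap is your construction of $C_{-}$. You assert that the ridge component around $\saddle_m$ lies in $\{\Im z>0\}$ and that consequently a shallow dip below the real axis stays in $R_{-}$. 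Both claims fail. One of the three ridge sectors of the order-two saddle points down-and-left from $\saddle_m$ (the bisector $G$ with $t<0$ in Lemma~\ref{lem:G}), and since $\Im\saddle_m=2(m+1)\pi\kappa/|\xi|^2$ is small this ridge crosses the real axis inside $(m/p,(m+1)/p)$. Concretely, for $p=1$, $m=0$ one computes from \eqref{eq:F_K_L0_L1} that $\Re F(0.95)\approx0.9946$ while $\Re F(\saddle_0)=4\pi^2\kappa/|\xi|^2\approx0.9404$; so the point $0.95$ of the segment lies in $W^{+}_0$, and a point $0.95-\varepsilon\i$ with $\varepsilon$ small violates $\Re\Phi_0(z)-\Re\Phi_0(\saddle_0)<-2\pi\Im{z}$. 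Thus the segment cannot be pushed into $R_{-}$ by a perturbation that is shallow in your sense, and your stated justification --- that the dipped path lies in $\{\Im z\le0\}\cap W^{-}_m$ --- is false: the lower route is not contained in $W^{-}_m$ at all, and membership in $R_{-}$ has to be earned from the extra margin $-2\pi\Im{z}$.

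This is exactly where the paper works hardest: it takes $C_{-}=\overline{P_0P_1P_2P_3}$, descending to depth $\Im{z}=-\Im\saddle_m$, and proves $\overline{P_1P_2},\overline{P_2P_3}\subset R_{-}$ not by showing they lie in $W^{-}_m$ but by the quantitative gradient bound $-\pi<\partial\Re\Phi_m/\partial y<0$ from Lemma~\ref{lem:hexagon_derivative}: descending from the polygonal curve $\overline{Q\saddle_mP_3}$ (where $\Re\Phi_m\le\Re\Phi_m(\saddle_m)$) by at most $2\Im\saddle_m$ raises $\Re\Phi_m$ by strictly less than $2\pi\Im\saddle_m=-2\pi\Im{z}$. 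Some version of this bound --- or an explicit global estimate of $\max_{[m/p,(m+1)/p]}\Re\Phi_m-\Re\Phi_m(\saddle_m)$ --- is indispensable and is absent from your proposal; without it item (iii) does not follow. A secondary, repairable issue: in Stage (a) you leave the endpoint inequality unproved, and for $j=m+1$ it reduces to $\Li_2\bigl(e^{-\kappa(1-s)}\bigr)-\Li_2\bigl(e^{-\kappa(1+s)}\bigr)>s\kappa^2$ at $s=(m+1)/p$, whose two sides agree to first order at $s=0$ (because $(1-e^{-\kappa})^2=e^{-\kappa}$), so it is not ``elementary''; it does follow because the derivative of the difference is $-\kappa\log\bigl(3-2\cosh(\kappa s)\bigr)>0$ for $0<s<1$, which is the same computation as the paper's Lemma~\ref{lem:P3P5}.
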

Note that since $\Xi_{m,\chi}$ is simply connected, both $C_{+}$ and $C_{-}$ are homotopic to the segment $[m/p,(m+1)/p]$ ($[1-1/p,1-\delta]$ if $m=p-1$) in $\Xi_{m,\chi}$ keeping the boundary points fixed.
\par
See Figure~\ref{fig:Phi}.
\begin{figure}[h]
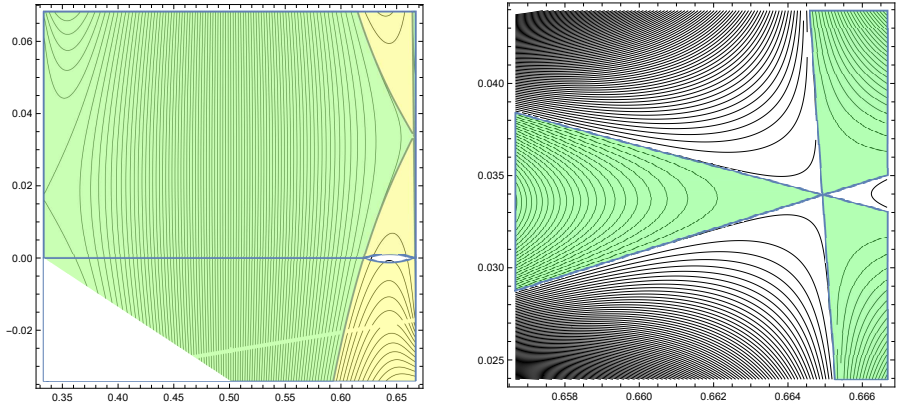

\pic{0.6}{Phi}\qquad\pic{0.6}{Phi_saddle}
\caption{The left picture shows a contour plot of $\Re\Phi_1(z)$ in $\Xi_{1,0}$ ($p=3$), where $R_{\pm}$ are indicated by yellow and green and $W_1^{-}$ is indicated by dark green.
The right picture shows a contour plot of $\Re\Phi_1(z)$ in a neighborhood of $\saddle_1$, where $W_1^{-}$ is indicated by green.}
\label{fig:Phi}
\end{figure}
\par
To prove the lemma above, we study the behavior of $\Re\Phi_m$ in $\Xi_{m,0}$ more precisely.
\par
We divide $\Xi_{m,0}$ into six parts by the three lines $L_{m+1}$, $L_{m+1/2}$, and $K_{\saddle}$, where we put
\begin{equation*}
  K_{\saddle}
  :
  \Re(\xi z)=0.
\end{equation*}
We can see that $\saddle_m$ is just the intersection of $L_{m+1}$ and $K_{\saddle}$.
\par
We also introduce the following four points
\begin{align*}
  P_{34}&:=I_{m+1}\cap K_{\saddle},
  \\
  P_{70}&:=I_m\cap K_{\saddle}
\end{align*}
with coordinates
\begin{align*}
  P_{34}&:=\frac{(m+1)\overline{\xi}\i}{2p^2\pi}=\frac{m+1}{p}+\frac{(m+1)\kappa}{2p^2\pi}\i,
  \\
  P_{70}&:=\frac{m\overline{\xi}\i}{2p^2\pi}=\frac{m}{p}+\frac{m\kappa}{2p^2\pi}\i.
\end{align*}
Note that $P_{34}$ is between $P_3$ and $P_4$ (when $p=1$, $P_{34}$ coincides with $P_4$), and that $P_{70}$ is between $P_7$ and $P_0$ (when $m=0$, $P_{70}$ coincides with $P_0$).
\par
As in the proof of Lemma~5.2 in \cite{Murakami:CANJM2023}, we can prove the following lemma.
\begin{lem}\label{lem:hexagon_derivative}
Write $z=x+y\i$ for $z\in\Xi_{m,\chi}$ with $x,y\in\R$.
Then we have
\begin{itemize}
\item
$\frac{\partial\,\Re\Phi_m}{\partial\,y}(z)>0$ if and only if
\begin{align*}
  &\text{$\Re(\xi z)>0$ and $2k\pi<\Im(\xi z)<(2k+1)\pi$
  for some integer $k$},
  \\
  &\quad
  \text{or $\Re(\xi z)<0$ and $(2l-1)\pi<\Im(\xi z)<2l\pi$
  for some integer $l$}
\end{align*}
\item
$\frac{\partial\,\Re\Phi_m}{\partial\,y}(z)<0$ if and only if
\begin{align*}
  &\text{$\Re(\xi z)<0$ and $2k\pi<\Im(\xi z)<(2k+1)\pi$
  for some integer $k$},
  \\
  &\quad
  \text{or $\Re(\xi z)>0$ and $(2l-1)\pi<\Im(\xi z)<2l\pi$
  for some integer $l$.}
\end{align*}
\end{itemize}
See Figure~\ref{fig:derivative}.
\begin{figure}[h]
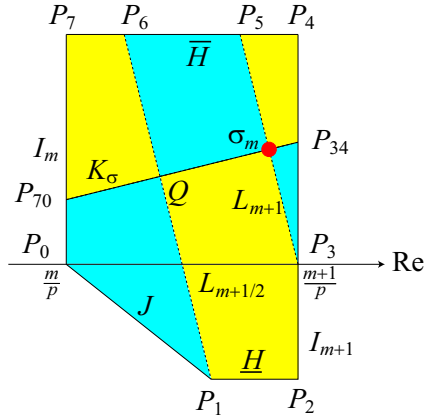

  \pic{0.3}{derivative}
\caption{In the cyan (yellow, respectively) region, $\Re\Phi_m(z)$ is increasing (decreasing, respectively) with respect to $\Im{z}$.}
\label{fig:derivative}
\end{figure}
\end{lem}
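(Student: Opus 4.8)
The plan is to reduce the statement to the sign of $\Im\Phi_m'(z)$. Since $\Phi_m$ is holomorphic on $\Xi_{m,\chi}$, writing $z=x+y\i$ the Cauchy--Riemann equations give
\begin{equation*}
  \frac{\partial\,\Re\Phi_m}{\partial\,y}(z)
  =
  \Re\bigl(\i\,\Phi_m'(z)\bigr)
  =
  -\Im\Phi_m'(z),
\end{equation*}
so the two displayed equivalences become $\Im\Phi_m'(z)<0$ and $\Im\Phi_m'(z)>0$ respectively. First I would check that every point of $\Xi_{m,\chi}$ lies between $\overline{K}$ and $\underline{K}$, or between $L_m$ and $L_{m+1}$: by construction $\Xi_{m,\chi}$ sits to the right of $\overline{K}$, the portion of it lying to the right of $\underline{K}$ is caught inside the strip between $L_m$ and $L_{m+1}$ when $m<p-1$, and when $m=p-1$ the neighbourhood $\diamond_{\nu}$ of the corner $\underline{K}\cap L_{p}$ (the point $1$) has been removed. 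Hence \eqref{eq:Phi'_log} applies throughout $\Xi_{m,\chi}$, and one checks in the same way that $3-e^{\xi z}-e^{-\xi z}=3-2\cosh(\xi z)$ never meets the cut $(-\infty,0]$ on $\Xi_{m,\chi}$ (on $K_{\saddle}$ it equals $3-2\cos\Im(\xi z)\ge1$, and on the lines $\Im(\xi z)\in\pi\Z$ meeting $\Xi_{m,\chi}$ it stays positive since there $|\Re(\xi z)|<\kappa$), so $\log$ is the principal branch and
\begin{equation*}
  \Im\Phi_m'(z)=\arg\bigl(3-2\cosh(\xi z)\bigr)\in(-\pi,\pi).
\end{equation*}

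The remaining step is an elementary trigonometric computation. Writing $\xi z=a+b\i$ one has
\begin{equation*}
  3-2\cosh(\xi z)
  =
  \bigl(3-2\cosh a\cos b\bigr)-2\i\sinh a\sin b,
\end{equation*}
so $\Im\bigl(3-2\cosh(\xi z)\bigr)=-2\sinh a\sin b$, and since the argument lies in $(-\pi,\pi)$ its sign is that of this imaginary part. Therefore, for $z$ with $\Re(\xi z)\ne0$ and $\Im(\xi z)\notin\pi\Z$,
\begin{equation*}
  \frac{\partial\,\Re\Phi_m}{\partial\,y}(z)>0
  \iff
  \sinh\bigl(\Re(\xi z)\bigr)\sin\bigl(\Im(\xi z)\bigr)>0 ,
\end{equation*}
and $\sinh(\Re(\xi z))>0\iff\Re(\xi z)>0$, while $\sin(\Im(\xi z))>0$ exactly when $2k\pi<\Im(\xi z)<(2k+1)\pi$ for some integer $k$ and $\sin(\Im(\xi z))<0$ exactly when $(2l-1)\pi<\Im(\xi z)<2l\pi$ for some integer $l$. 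Combining the two sign possibilities gives the first stated equivalence, and the same computation with the reversed inequality gives the second. (On $K_{\saddle}$ and on the lines $\Im(\xi z)\in\pi\Z$ one gets $\Im\Phi_m'(z)=0$, which is why those lines separate the cyan and yellow regions in Figure~\ref{fig:derivative}.)

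The part that must be carried out with care — exactly as in \cite[Lemma~5.2]{Murakami:CANJM2023} — is the geometric verification used in the first paragraph: using the coordinates of $P_0,\dots,P_7$, the slope of $J$, and the excision of $\diamond_{\nu}$, and treating $\chi>0$ as a small perturbation of $\chi=0$, one confirms that $\Xi_{m,\chi}$ avoids both the half-line $\{\Re(\xi z)=\kappa,\ \Im(\xi z)\ge2(m+1)\pi\}$ and all zeros $\xi z=\pm\kappa+2k\pi\i$ of $3-2\cosh(\xi z)$, so that \eqref{eq:Phi'_log} and the principal-branch identification are legitimate on the whole region. In any boundary situation where \eqref{eq:Phi'_log} is not directly available, one expands $\Phi_m'(z)$ through \eqref{eq:Phi'} using the branch formulas of Lemma~\ref{lem:L2_Li2} and Corollary~\ref{cor:L1_L2_Im_negative}; the extra multiples of $2\pi\i$ cancel and one recovers $\Im\Phi_m'(z)$ with the same sign, so the conclusion is unaffected. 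I expect this geometric bookkeeping to be the only real obstacle; the analytic core is the short identity $\Im\Phi_m'(z)=\arg(3-2\cosh(\xi z))$ together with the formula for $\Im(3-2\cosh(\xi z))$.
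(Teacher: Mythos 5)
Your proof is correct and follows essentially the same route as the paper: reduce $\frac{\partial\,\Re\Phi_m}{\partial\,y}$ to $-\arg\bigl(3-2\cosh(\xi z)\bigr)$ via \eqref{eq:Phi'_log} and read off the sign from $\Im\bigl(3-2\cosh(\xi z)\bigr)=-2\sinh\bigl(\Re(\xi z)\bigr)\sin\bigl(\Im(\xi z)\bigr)$. The additional care you take about the validity of \eqref{eq:Phi'_log} on all of $\Xi_{m,\chi}$ and about the branch of the logarithm is left implicit in the paper's proof.
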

\begin{proof}
From \eqref{eq:Phi'_log}, we have
\begin{equation*}
  \frac{\partial\,\Re\Phi_m(z)}{\partial\,y}
  =
  -\arg\bigl(3-2\cosh(\xi z)\bigr).
\end{equation*}
The right-hand side is positive (negative, respectively) if and only if $\Im\bigl(3-2\cosh(\xi z)\bigr)$ is negative (positive, respectively).
Since $\Im\bigl(3-2\cosh(\xi z)\bigr)=-2\sinh\bigl(\Re(\xi z)\bigr)\sin\bigl(\Im(\xi z)\bigr)$, $\frac{\partial\,\Re\Phi_m(z)}{\partial\,y}$ is positive (negative, respectively) if and only if $\Re(\xi z)>0$ and $2k\pi<\Im(\xi z)<(2k+1)\pi$ for some integer $k$, or $\Re(\xi z)<0$ and $(2l-1)\pi<\Im(\xi z)<2l\pi$ for some integer $l$ ($\Re(\xi z)<0$ and $2k\pi<\Im(\xi z)<(2k+1)\pi$ for some integer $k$, or $\Re(\xi z)>0$ and $(2l-1)\pi<\Im(\xi z)<2l\pi$ for some integer $l$, respectively).
The proof is complete.
\end{proof}
\begin{lem}\label{lem:P60P34}
Let $z$ be a point on the segment $\overline{P_{70}P_{34}}$.
If $z\ne\saddle_m$ is between $\saddle_m$ and $P_{70}$, then $z\in W^{-}_m$.
Moreover, if $z\ne\saddle_m$ is between $\saddle_m$ and $P_{34}$, then $z\in W^{+}_m$.
\end{lem}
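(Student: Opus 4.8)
The plan is to exploit the fact that the segment $\overline{P_{70}P_{34}}$ lies on the line $K_{\saddle}$, where $\Phi_m'$ is given by the explicit logarithmic formula \eqref{eq:Phi'_log}, and along which $\Re\Phi_m$ turns out to be strictly monotone with its unique stationary point at $\saddle_m$. First I would observe that, since $P_{70}$ and $P_{34}$ are by definition intersections with $K_{\saddle}$ and have coordinates $P_{70}=\frac{m\overline{\xi}\i}{2p^2\pi}$ and $P_{34}=\frac{(m+1)\overline{\xi}\i}{2p^2\pi}$, they are positive real multiples of $\overline{\xi}\i$; hence the whole segment is a sub-interval of the ray $\{\lambda\overline{\xi}\i\mid\lambda>0\}$, which is exactly $K_{\saddle}=\{z\mid\Re(\xi z)=0\}$. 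Since $K_{\saddle}$ runs strictly between $\overline{K}$ and $\underline{K}$, formula \eqref{eq:Phi'_log} is valid everywhere on this segment.

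Next I would parametrise $K_{\saddle}$ by $z(t):=\frac{\overline{\xi}\i}{|\xi|^2}\,t$ for $t\in\R$, so that $\xi z(t)=\i t$ and $\cosh(\xi z(t))=\cos t$. Then \eqref{eq:Phi'_log} gives $\Phi_m'(z(t))=\log\!\bigl(3-2\cos t\bigr)$, which is a real number in $[0,\log 5]$, vanishing exactly when $t\in 2\pi\Z$. By the chain rule, together with $\Re(\overline{\xi}\i)=2p\pi$, one gets
\[
  \frac{d}{dt}\Re\Phi_m\bigl(z(t)\bigr)
  =
  \Re\!\Bigl(\Phi_m'\bigl(z(t)\bigr)\,\frac{\overline{\xi}\i}{|\xi|^2}\Bigr)
  =
  \frac{2p\pi}{|\xi|^2}\,\log\!\bigl(3-2\cos t\bigr),
\]
which is nonnegative and strictly positive unless $t\in 2\pi\Z$. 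The endpoints $P_{70}$ and $P_{34}$ correspond to $t=\frac{m|\xi|^2}{2p^2\pi}$ and $t=\frac{(m+1)|\xi|^2}{2p^2\pi}$ respectively, while $\saddle_m$, for which $\xi\saddle_m=2(m+1)\pi\i$, corresponds to $t=2(m+1)\pi$.

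The one point requiring verification is that $2(m+1)\pi$ is the only multiple of $2\pi$ strictly inside the parameter interval $\bigl(\frac{m|\xi|^2}{2p^2\pi},\frac{(m+1)|\xi|^2}{2p^2\pi}\bigr)$; since $|\xi|^2=\kappa^2+4p^2\pi^2$, this reduces to the elementary inequalities $\kappa^2 m<4p^2\pi^2$ and $\kappa^2(m+1)<4p^2\pi^2$, both of which hold because $m<p$ and $\kappa=\arccosh(3/2)<2\pi$. Granting this, $t\mapsto\Re\Phi_m(z(t))$ is strictly increasing on the whole parameter interval, since its derivative is positive off the single value $t=2(m+1)\pi$. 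Consequently, for $z$ between $\saddle_m$ and $P_{70}$ one has $t<2(m+1)\pi$, hence $\Re\Phi_m(z)<\Re\Phi_m(\saddle_m)$, i.e.\ $z\in W^{-}_m$; and for $z$ between $\saddle_m$ and $P_{34}$ one has $t>2(m+1)\pi$, hence $\Re\Phi_m(z)>\Re\Phi_m(\saddle_m)$, i.e.\ $z\in W^{+}_m$, which is the assertion. I do not expect a genuine obstacle here; the only remaining bookkeeping is to check, from the explicit coordinates of $P_0,P_3,P_4,P_7,P_{70},P_{34}$, that the segment $\overline{P_{70}P_{34}}$ is contained in $\Xi_{m,\chi}$, so that $\Phi_m$ is defined and holomorphic along it.
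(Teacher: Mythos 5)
Your proof is correct and follows essentially the same route as the paper's: parametrize the segment along $K_{\saddle}$ so that $\xi z$ is purely imaginary, use \eqref{eq:Phi'_log} to compute $\frac{d}{dt}\Re\Phi_m=\frac{2p\pi}{|\xi|^2}\log(3-2\cos t)\ge0$ with equality exactly at the saddle, and conclude strict monotonicity on either side of $\saddle_m$. The only difference is cosmetic (your parameter is $2\pi$ times the paper's), and your explicit check that $2(m+1)\pi$ is the unique zero of the derivative in the parameter interval is a detail the paper asserts without verification.
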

\begin{proof}
The segment $\overline{P_{70}P_{34}}\subset K_{\saddle}$ is parametrized as $\frac{2\pi\i}{\xi}t$ ($m+\frac{m\kappa^2}{4p^2\pi^2}\le t\le m+1+\frac{(m+1)\kappa^2}{4p^2\pi^2}$).
From \eqref{eq:Phi'_log} we have
\begin{align*}
  \frac{d}{d\,t}\Re\Phi_m\left(\frac{2\pi\i}{\xi}t\right)
  =&
  \Re\left(
    \frac{2\pi\i}{\xi}\log\left(3-2\cosh(2\pi\i t)\right)
  \right)
  \\
  =&
  \frac{4p\pi^2}{|\xi|^2}\log(3-2\cos{2\pi t})\ge0,
\end{align*}
and the equality holds only when $t=m+1$, that is, $z=\saddle_m$.
So we conclude that if $z\in\overline{P_{70}\saddle_m}\setminus\{\saddle_m\}$, then $\Re\Phi_m(z)<\Re\Phi_m(\saddle_m)$, and that if $z\in\overline{\saddle_m P_{34}}\setminus\{\saddle_m\}$, then $\Re\Phi_m(z)>\Re\Phi_m(\saddle_m)$ as required.
\end{proof}
We can prove a similar result for $\overline{P_3P_5}$.
\begin{lem}\label{lem:P3P5}
Let $z$ be a point on the segment $\overline{P_3P_5}$.
If $z\ne\saddle_m$ is between $\saddle_m$ and $P_{3}$, then $z\in W^{-}_m$.
Moreover, if $z\ne\saddle_m$ is between $\saddle_m$ and $P_{5}$, then $z\in{W}^{+}_m$.
\end{lem}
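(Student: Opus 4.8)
The plan is to follow the proof of Lemma~\ref{lem:P60P34} almost verbatim, with the line $K_{\saddle}$ there replaced by $L_{m+1}$; the remark "we can prove a similar result" preceding the lemma already signals this.

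First I would record that $\overline{P_3P_5}$ is a chord of the line $L_{m+1}$ passing through $\saddle_m$. Indeed $\Im(\xi P_3)=\Im\bigl(\xi\tfrac{m+1}{p}\bigr)=2(m+1)\pi$, and since $\xi\overline{\xi}=|\xi|^2$ is real we also get $\Im(\xi P_5)=\Im(\xi P_3)=2(m+1)\pi$; hence $\overline{P_3P_5}\subset L_{m+1}$, which contains $\saddle_m$ as well, and comparing first imaginary and then real parts shows that $\saddle_m$ lies strictly between $P_3$ and $P_5$. I would then verify that $\Re(\xi z)\in(-\kappa,\kappa]$ along the whole chord, with equality only at $P_3$ and only when $m=p-1$: this follows from $\Re(\xi P_3)=\tfrac{(m+1)\kappa}{p}\le\kappa$, the estimate $|\Re(\xi P_5)|=\tfrac{(p-1)\kappa}{p}+\tfrac{(p+m)\kappa^3}{4p^3\pi^2}<\kappa$ (using $\kappa<1$, whence $(p+m)\kappa^2<2p<4p^2\pi^2$), and linearity of $\Re(\xi z)$ along the chord. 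Consequently the open segment lies between $\overline{K}$ and $\underline{K}$, so \eqref{eq:Phi'_log} applies and $\Phi_m'(z)=\log\bigl(3-e^{\xi z}-e^{-\xi z}\bigr)$ there; moreover for $m<p-1$ the convexity of $\Xi_{m,\chi}$ and the positions of $P_3,P_5$ on its boundary give $\overline{P_3P_5}\subset\Xi_{m,\chi}$.

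Next I would parametrize the chord by $z(s):=\saddle_m+s\overline{\xi}$ with $s\in\R$, so that $\xi z(s)=2(m+1)\pi\i+s|\xi|^2$ and $3-e^{\xi z(s)}-e^{-\xi z(s)}=3-2\cosh(s|\xi|^2)$, a real number lying in $(0,1]$ on the relevant range of $s$ and equal to $1$ exactly at $s=0$ (that is, at $\saddle_m$). Hence $\Phi_m'(z(s))=\log\bigl(3-2\cosh(s|\xi|^2)\bigr)$ is real and non-positive, and
\[
  \frac{d}{ds}\Re\Phi_m(z(s))=\Re\bigl(\overline{\xi}\,\Phi_m'(z(s))\bigr)=\kappa\log\bigl(3-2\cosh(s|\xi|^2)\bigr)\le0,
\]
with equality only at $s=0$; thus $s\mapsto\Re\Phi_m(z(s))$ is strictly decreasing. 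From $\Im z(s)=\tfrac{2(m+1)\kappa\pi}{|\xi|^2}-2p\pi s$ one finds that $P_3$ corresponds to $s=s_3:=\tfrac{(m+1)\kappa}{p|\xi|^2}>0$ and $P_5$ to $s=s_5:=s_3-\tfrac{(p+m)\kappa}{4p^3\pi^2}<0$ (again using $m+1\le p+m$). Therefore a point $z\ne\saddle_m$ between $\saddle_m$ and $P_3$ has $s\in(0,s_3]$, so $\Re\Phi_m(z)<\Re\Phi_m(\saddle_m)$ and $z\in W^-_m$; and a point $z\ne\saddle_m$ between $\saddle_m$ and $P_5$ has $s\in[s_5,0)$, so $\Re\Phi_m(z)>\Re\Phi_m(\saddle_m)$ and $z\in W^+_m$, as claimed.

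I do not anticipate a genuine obstacle: the computation is structurally identical to that of Lemma~\ref{lem:P60P34}. The only delicate point is the exceptional case $m=p-1$, where the chord meets $\underline{K}$ at $P_3=1$ so that \eqref{eq:Phi'_log} degenerates at that single endpoint, and a short arc of $\overline{P_3P_5}$ near $P_3$ lies inside $\diamond_\nu$, hence outside $\Xi_{p-1,\chi}$; since the monotonicity computation is unaffected, the conclusion holds on the part of the chord lying in $\Xi_{p-1,\chi}$, which is all that is used in the proof of Lemma~\ref{lem:Poisson_Phi}.
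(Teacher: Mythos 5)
Your proof is correct and follows essentially the same route as the paper's: parametrize the chord $\overline{P_3P_5}\subset L_{m+1}$ in the direction $\overline{\xi}$, observe via \eqref{eq:Phi'_log} that $\Phi_m'$ there is the logarithm of the real number $3-2\cosh(\Re(\xi z))\in(0,1]$, and deduce strict monotonicity of $\Re\Phi_m$ along the chord with the extremum exactly at $\saddle_m$. Your extra checks (that the chord stays between $\overline{K}$ and $\underline{K}$, and the degenerate endpoint behaviour when $m=p-1$) are sound refinements of details the paper leaves implicit.
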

\begin{proof}
A point on $\overline{P_3P_5}$ is parametrized as $\frac{m+1}{p}-\frac{(p+m)\kappa}{4p^3\pi^2}\overline{\xi}t$ ($0\le t\le1$).
From \eqref{eq:Phi'_log} we have
\begin{align*}
  &\frac{d}{dt}\Re\Phi_m\left(\frac{m+1}{p}-\frac{(p+m)\kappa}{4p^3\pi^2}\overline{\xi}t\right)
  \\
  =&
  -\Re
  \left[
    \frac{(p+m)\kappa}{4p^3\pi^2}\overline{\xi}
    \log
    \left(
      3-2\cosh\left(\frac{(m+1)\xi}{p}-\frac{(p+m)\kappa}{4p^3\pi^2}|\xi|^2t\right)
    \right)
  \right]
  \\
  =&
  \frac{-(p+m)\kappa^2}{4p^3\pi^2}
  \log
  \left(
    3-2\cosh\left(\frac{(m+1)\kappa}{p}-\frac{(p+m)\kappa}{4p^3\pi^2}|\xi|^2t\right)
  \right)
  \ge0,
\end{align*}
where the equality holds when $t=\frac{4(m+1)p^2\pi^2}{(p+m)|\xi|^2}$, which shows that $\Re\Phi_m(z)<\Re\Phi_m(\saddle_m)$ if $z\in\overline{P_3\saddle_m}\setminus\{\saddle_m\}$ and that $\Re\Phi_m(z)>\Re\Phi_m(\saddle_m)$ if $z\in\overline{\saddle_mP_5}\setminus\{\saddle_m\}$, completing the proof.
\end{proof}
So far we have found two directions $\overrightarrow{\saddle_{m}P_{70}}$ and $\overrightarrow{\saddle_{m}P_{3}}$ that go down valleys, and two directions $\overrightarrow{\saddle_{m}P_{34}}$ and $\overrightarrow{\saddle_{m}P_{5}}$ that go up hills.
Since the function $\Phi_m(z)$ is of the form $\Phi_m(\saddle_{m})-\frac{1}{3}\xi^2z^3+\cdots$ from \eqref{eq:Phi_derivatives}, that is, $\saddle_m$ is a saddle point of order two, there should be another pair of valley and hill.
\begin{lem}\label{lem:G}
Let $G$ be the line segment in $\Xi_{m,0}$ that bisects the angle $\angle P_{34}\saddle_{m}P_{5}$.
If $z\in G\setminus\{\saddle_m\}$ is on the same side of $P_{34}$ and $P_5$, then $z\in W^{-}_m$.
If $z\in G\setminus\{\saddle_m\}$ is on the opposite side of $P_{34}$ and $P_5$, and close enough to $\saddle_m$, then $z\in W^{+}_m$.
\end{lem}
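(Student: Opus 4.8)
The plan is to parametrise $G$ explicitly and to treat its two rays separately, using the third–order Taylor expansion of $\Phi_m$ at $\saddle_m$ for the assertion about $W^{+}_m$ and a direct estimate of $\tfrac{d}{d\tau}\Re\Phi_m$, based on \eqref{eq:Phi'_log}, for the assertion about $W^{-}_m$. The coordinates of $\saddle_m$, $P_{34}$ and $P_5$ show that $\overrightarrow{\saddle_m P_{34}}$ is a positive multiple of $\i\overline{\xi}$ and $\overrightarrow{\saddle_m P_5}$ a positive multiple of $-\overline{\xi}$; these directions are orthogonal, so the bisector is the line $z(\tau):=\saddle_m+\tau(\i-1)\overline{\xi}$ ($\tau\in\R$), with the ray $\tau>0$ lying on the same side of $\saddle_m$ as $P_{34}$ and $P_5$ and the ray $\tau<0$ the opposite one. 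Since $\xi z(\tau)=2(m+1)\pi\i+|\xi|^{2}\tau(\i-1)$, putting $\sigma:=|\xi|^{2}\tau$ gives $\Re(\xi z(\tau))=-\sigma$, $\Im(\xi z(\tau))=2(m+1)\pi+\sigma$, hence $3-2\cosh(\xi z(\tau))=3-2\cos\sigma\cosh\sigma+2\i\sin\sigma\sinh\sigma$. The ray $\tau>0$ cannot pass $I_{m+1}$ (resp.\ $I_{p}$ when $m=p-1$), so it meets $\Xi_{m,0}$ only for $0<\sigma\le\tfrac{\kappa^{2}}{2\pi-\kappa}<\kappa$; there $z(\tau)$ stays between $\overline{K}$ and $\underline{K}$, so \eqref{eq:Phi'_log} applies.

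For the assertion about $W^{+}_m$ we expand at $\saddle_m$. By \eqref{eq:Phi_derivatives}, $\Phi'_m(\saddle_m)=\Phi''_m(\saddle_m)=0$ and $\Phi^{(3)}_m(\saddle_m)=-2\xi^{2}$, so $\Phi_m(z)-\Phi_m(\saddle_m)=-\tfrac{\xi^{2}}{3}(z-\saddle_m)^{3}+O(|z-\saddle_m|^{4})$. On the ray $\tau<0$ one has $(z(\tau)-\saddle_m)^{3}=\tau^{3}(\i-1)^{3}\overline{\xi}^{3}=2\tau^{3}(1+\i)\overline{\xi}^{3}$, because $(\i-1)^{3}=2(1+\i)$, and using $\xi^{2}\overline{\xi}^{3}=|\xi|^{4}\overline{\xi}$,
\begin{equation*}
  \Re\Bigl(-\frac{\xi^{2}}{3}(z(\tau)-\saddle_m)^{3}\Bigr)
  =-\frac{2\tau^{3}}{3}|\xi|^{4}\Re\bigl((1+\i)\overline{\xi}\bigr)
  =-\frac{2\tau^{3}}{3}|\xi|^{4}(2p\pi+\kappa)>0
\end{equation*}
for $\tau<0$. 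Hence $z(\tau)\in W^{+}_m$ for $\tau<0$ with $|\tau|$ small; the same computation with $\tau>0$ already gives $z(\tau)\in W^{-}_m$ near $\saddle_m$.

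For the assertion about $W^{-}_m$, let $\tau_{+}>0$ be the value at which the ray $\tau>0$ of $G$ leaves $\Xi_{m,0}$, and put $\sigma_{\max}:=|\xi|^{2}\tau_{+}\le\tfrac{\kappa^{2}}{2\pi-\kappa}$. Since $\Phi'_m(\saddle_m)=0$ it suffices to show $\tfrac{d}{d\tau}\Re\Phi_m(z(\tau))<0$ for $0<\tau\le\tau_{+}$. Using \eqref{eq:Phi'_log} and $(\i-1)\overline{\xi}=(2p\pi-\kappa)+(2p\pi+\kappa)\i$,
\begin{align*}
  \frac{d}{d\tau}\Re\Phi_m(z(\tau))
  &=\Re\Bigl((\i-1)\overline{\xi}\,\log\bigl(3-2\cos\sigma\cosh\sigma+2\i\sin\sigma\sinh\sigma\bigr)\Bigr)\\
  &=(2p\pi-\kappa)\log\bigl|3-2\cos\sigma\cosh\sigma+2\i\sin\sigma\sinh\sigma\bigr|\\
  &\qquad-(2p\pi+\kappa)\arg\bigl(3-2\cos\sigma\cosh\sigma+2\i\sin\sigma\sinh\sigma\bigr).
\end{align*}
For $0<\sigma<\kappa$ the complex number $3-2\cos\sigma\cosh\sigma+2\i\sin\sigma\sinh\sigma$ has real part $>1$ and imaginary part $>0$, so its modulus is $>1$ and its argument lies in $(0,\pi/2)$; since $2p\pi+\kappa>2p\pi-\kappa>0$, it is enough to prove
\begin{equation*}
  \log\bigl|3-2\cos\sigma\cosh\sigma+2\i\sin\sigma\sinh\sigma\bigr|<\arg\bigl(3-2\cos\sigma\cosh\sigma+2\i\sin\sigma\sinh\sigma\bigr)
\end{equation*}
for $0<\sigma\le\sigma_{\max}$, and then to integrate, obtaining $\Re\Phi_m(z(\tau))<\Re\Phi_m(\saddle_m)$ — that is, $z(\tau)\in W^{-}_m$ — on $(0,\tau_{+}]$.

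I expect this last inequality to be the main obstacle. Unlike the segments of Lemmas~\ref{lem:P60P34} and \ref{lem:P3P5}, which lie on $K_{\saddle}$ or on $L_{m+1}$ where $3-2\cosh(\xi z)$ is real, along $G$ this quantity is genuinely complex, so one must compare a log–modulus against an argument rather than reading off the sign of a real logarithm. The comparison is nonetheless within reach because $\sigma_{\max}\le\tfrac{\kappa^{2}}{2\pi-\kappa}<\tfrac14$ is small: as $\sigma\to0$ one has $1-\cos\sigma\cosh\sigma=O(\sigma^{4})$ whereas $\sin\sigma\sinh\sigma=\sigma^{2}+O(\sigma^{6})$, so the left–hand side is $O(\sigma^{4})$ and the right–hand side is $\sim 2\sigma^{2}$, and elementary bounds on $\cos\sigma\cosh\sigma$ and $\sin\sigma\sinh\sigma$ on $[0,\sigma_{\max}]$ finish the estimate. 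When $m=p-1$ one checks additionally that the ray $\tau>0$ of $G$ avoids $\diamond_{\nu}$ — it stays above the real axis, whereas $\diamond_{\nu}$ clusters at the point $1$ on the real axis — so the argument applies there as well.
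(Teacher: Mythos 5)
Your proof is correct, and it follows the paper's skeleton — the same parametrization of the bisector as $\saddle_m+\tau(\i-1)\overline{\xi}$ (the paper writes $\saddle_m+(\i-1)t/\xi$, i.e.\ $t=|\xi|^2\tau=\sigma$), the same exit bound $\sigma\le\kappa^2/(2\pi-\kappa)$, and the same starting point \eqref{eq:Phi'_log} — but the decisive estimates are done differently. For the uphill side the paper shows $\frac{d^2}{dt^2}\Re\Phi_m>0$ for small $t<0$, whereas you read the sign off the cubic Taylor term $-\frac{2\tau^3}{3}|\xi|^4(2p\pi+\kappa)$ coming from $\Phi_m^{(3)}(\saddle_m)=-2\xi^2$; your route is arguably cleaner for a statement that is only local. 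For the downhill ray the paper differentiates once more and analyzes the sign of $\Re\lambda$ and $\Im\lambda$ for the rational function $\lambda(t)=\frac{e^{-(\i-1)t}-e^{(\i-1)t}}{3-e^{(\i-1)t}-e^{-(\i-1)t}}$ (reducing to the positivity of $3\cos t-2\cosh t$ on the range in question), which keeps everything real; you instead work with the first derivative and must compare a log-modulus against an argument, $(2p\pi-\kappa)\log|W|<(2p\pi+\kappa)\arg W$. You correctly identify this comparison as the crux and only sketch it, but the sketch does close: $\log|W|\le\frac12(|W|^2-1)\le 3\sigma^4$ while $\arg W=\arctan\frac{2\sin\sigma\sinh\sigma}{3-2\cos\sigma\cosh\sigma}\ge 1.9\sigma^2$ on $[0,\sigma_{\max}]$ with $\sigma_{\max}<1/4$, so the inequality holds with a wide margin. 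Your remaining checks (the ray stays between $\overline{K}$ and $\underline{K}$ so \eqref{eq:Phi'_log} applies, and avoids $\diamond_\nu$ when $m=p-1$ because its imaginary part stays at least $\Im\saddle_{p-1}$) are also right. So the proposal is sound; the trade-off is that the paper's second-derivative computation avoids any argument-versus-modulus comparison at the cost of one more differentiation, while yours stops at the first derivative at the cost of a (still elementary) two-sided estimate.
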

\begin{proof}
Since the vector $\overrightarrow{\saddle_{m}P_{34}}$ has the same direction as $\i/\xi$ and the vector $\overrightarrow{\saddle_{m}P_{5}}$ has the same direction as $-1/\xi$, the bisector is parametrized as $\saddle_m+\frac{(\i-1)t}{\xi}$ with $t\in\R$.
Note that if $t>0$, it goes to top right, and that if $t<0$, it goes to bottom left.
\par
From \eqref{eq:Phi'_log}, we have
\begin{equation*}
\begin{split}
  &\frac{d}{dt}\Re\Phi_m\left(\saddle_m+\frac{\i-1}{\xi}t\right)
  \\
  =&
  \Re\left(
    \frac{\i-1}{\xi}\times
    \log\left(3-2\cosh\bigl((\i-1)t\bigr)\right)
  \right)
\end{split}
\end{equation*}
and so $\frac{d}{dt}\Re\Phi_m\left(\saddle_m+\frac{\i-1}{\xi}t\right)=0$ when $t=0$.
Thus, it is sufficient to show that the second derivative of $\Re\Phi_m(\saddle_m+(\i-1)t/\xi)$ is positive when $t<0$ and $|t|$ is small, and that it is negative when $t>0$ and $\saddle_m+\frac{(\i-1)t}{\xi}\in\Xi_{m,0}$.
\par
From \eqref{eq:Phi''}, we have
\begin{align*}
  &\frac{d^2}{dt^2}\Re\Phi_m\left(\saddle_m+\frac{\i-1}{\xi}t\right)
  \\
  =&
  \Re\left(
    \frac{(\i-1)^2}{\xi^2}\times\frac{\xi(e^{-(\i-1)t}-e^{(\i-1)t})}{3-e^{(\i-1)t}-e^{-(\i-1)t}}
  \right)
  \\
  =&
  \frac{1}{|\xi|^2}
  \Re\bigl((-4p\pi-2\kappa\i)\lambda(t)\bigr)
  \\
  =&
  \frac{1}{|\xi|^2}
  \bigl(-4p\pi\Re\lambda(t)+2\kappa\Im\lambda(t)\bigr),
\end{align*}
where we put $\lambda(t):=\frac{e^{-(\i-1)t})-e^{(\i-1)t}}{3-e^{(\i-1)t}-e^{-(\i-1)t}}$.
We have
\begin{align*}
  \lambda(t)
  =&
  \frac{2\sinh{t}\cos{t}-2\i\cosh{t}\sin{t}}{3-2\cosh{t}\cos{t}+2\i\sinh{t}\sin{t}}
  \\
  =&
  \frac{
  2(\sinh{t}\cos{t}-\i\cosh{t}\sin{t})}{(3-2\cosh{t}\cos{t})^2+4\sinh^2{t}\sin^2{t}}
  \\
  &\times
  (3-2\cosh{t}\cos{t}-2\i\sinh{t}\sin{t})
  \\
  =&
  \frac{2\sinh{t}(3\cos{t}-2\cosh{t})+2\i\sin{t}(2\cos{t}-3\cosh{t})}
       {(3-2\cosh{t}\cos{t})^2+4\sinh^2{t}\sin^2{t}}.
\end{align*}
\par
Therefore if $t$ is negative and $|t|$ is small enough, then $\Re\lambda(t)<0$ and $\Im\lambda(t)>0$, and so in this case $\frac{d^2}{dt^2}\Re\Phi_m\left(\saddle_m+\frac{\i-1}{\xi}t\right)>0$.
\par
Next, we consider the case where $t>0$.
\par
Since $\Re\left(\saddle_m+\frac{(\i-1)t}{\xi}\right)=\frac{1}{|\xi|^2}(4(m+1)p\pi^2+(2p\pi-\kappa)t)$, a point in $G$ that is between $\saddle_m$ and $I_{m+1}$ is parametrized as $\saddle_m+\frac{(\i-1)t}{\xi}$ with $0<t<\frac{(m+1)\kappa^2}{p(2p\pi-\kappa)}$.
Since $\frac{(m+1)\kappa^2}{p(2p\pi-\kappa)}\le\frac{\kappa^2}{2p\pi-\kappa}\le\frac{\kappa^2}{2\pi-\kappa}$, it is sufficient to prove $\frac{d^2}{dt^2}\Re\Phi_m\left(\saddle_m+\frac{\i-1}{\xi}t\right)<0$ for $0<t<\frac{\kappa^2}{2\pi-\kappa}$.
\par
Since $3\cos\left(\frac{\kappa^2}{2\pi-\kappa}\right)-2\cosh\left(\frac{\kappa^2}{2\pi-\kappa}\right)=0.924...$, and the function $3\cos{t}-2\cosh{t}$ is monotonically decreasing when $t>0$, we see that $\Re\lambda(t)>0$ for $0<t<\frac{\kappa^2}{2\pi-\kappa}$.
We can easily see that $\Im\lambda(t)<0$ for $t>0$, and so we conclude that $\frac{d^2}{dt^2}\Re\Phi\left(\saddle_m+\frac{\i-1}{\xi}t\right)<0$.
\par
This completes the proof.
\end{proof}
\begin{rem}\label{rem:G}
The imaginary part of the intersection of $G$ with $I_{m+1}$ is $\frac{(m+1)\kappa}{p(2p\pi-\kappa)}$, which is smaller than the imaginary part of $\overline{H}$ when $p>1$.
This is because
\begin{align*}
  \frac{(p+m)\kappa}{2p^2\pi}
  -
  \frac{(m+1)\kappa}{p(2p\pi-\kappa)}
  =&
  \frac{\bigl(2p\pi(p-1)-(p+m)\kappa\bigr)\kappa}{2p^2\pi(2p\pi-\kappa)}
  \\
  >&
  \frac{\bigl(2p\pi(p-1)-(2p-1)\bigr)\kappa}{2p^2\pi(2p\pi-\kappa)}
  =
  \frac{\bigl((2p\pi-1)(p-1)-p)\bigr)\kappa}{2p^2\pi(2p\pi-\kappa)},
\end{align*}
which is positive when $p>1$, where we use the inequalities $\kappa<1$ and $m\le p-1$.
So $G$ intersects with the segment $\overline{P_4P_{34}}$.
\par
If $p=1$, $G$ intersects with the segment $\overline{P_4P_5}$.
\par
Note that $G$ does not intersect with $L_{m+1/2}$ in $\Xi_{m,0}$.
This is because the intersection between $G$ and $L_{m+1/2}$ is $\frac{\pi+(2m+1)\pi\i}{\xi}$ whose imaginary part is less than $-\Im\saddle_m$.
\end{rem}
There are more line segments that are included in $W^{-}_m$.
\begin{lem}\label{lem:segments_W-m}
The line segments $\overline{P_{6}P_{1}}$, $\overline{P_{0}P_{70}}$, and $\overline{P_{0}P_{1}}$ are in $W^{-}_m$.
\end{lem}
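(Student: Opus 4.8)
The plan is to prove $\Re\Phi_m(z)<\Re\Phi_m(\saddle_m)$ on each of the three segments, exploiting that all three lie between $L_m$ and $L_{m+1}$ and (weakly) to the right of $K_{\saddle}$, so that \eqref{eq:Phi'_log} gives $\Phi'_m(z)=\log\bigl(3-e^{\xi z}-e^{-\xi z}\bigr)$ there, and that by Lemma~\ref{lem:hexagon_derivative} one has $\partial\Re\Phi_m/\partial y>0$ at every point strictly between $L_m$ and $L_{m+1/2}$ and strictly to the right of $K_{\saddle}$; recall also $\Re\Phi_m(\saddle_m)=4p\pi^2\kappa/|\xi|^2$ from \eqref{eq:Phi_derivatives}. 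First I would dispose of $\overline{P_0P_{70}}$: it lies on the vertical line $I_m$, running upward from the real point $P_0$ to $P_{70}\in K_{\saddle}$, and on its interior $\Re(\xi z)>0$ and $2m\pi<\Im(\xi z)<(2m+1)\pi$, so $\Re\Phi_m$ is strictly increasing in $\Im z$ along it by Lemma~\ref{lem:hexagon_derivative}; since $P_{70}\in W^-_m$ by Lemma~\ref{lem:P60P34}, it follows that $\overline{P_0P_{70}}\subset W^-_m$, and in particular $P_0\in W^-_m$.

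The central new input is that $P_1\in W^-_m$. Put $Q:=K_{\saddle}\cap L_{m+1/2}=(2m+1)\pi\i/\xi$; one checks $Q\in\Xi_{m,0}$ and $Q\in\overline{P_{70}\saddle_m}$, so $\Re\Phi_m(Q)<\Re\Phi_m(\saddle_m)$ by Lemma~\ref{lem:P60P34}. I would then evaluate $\Re\Phi_m(P_1)-\Re\Phi_m(\saddle_m)=\Re\int_{\saddle_m}^{P_1}\Phi'_m(z)\,dz$ along the broken path running from $\saddle_m$ to $Q$ inside $K_{\saddle}$ (with $z=\i t/\xi$, $\Phi'_m=\log(3-2\cos t)$) and then from $Q$ to $P_1$ inside $L_{m+1/2}$ (with $z=(r+(2m+1)\pi\i)/\xi$, $\Phi'_m=\log(3+2\cosh r)$). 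Using $\Re(\i/\xi)=2p\pi/|\xi|^2$, $\Re(1/\xi)=\kappa/|\xi|^2$ and the classical evaluation $\int_0^{\pi}\log(3+2\cos t)\,dt=\pi\log\tfrac{3+\sqrt5}{2}=\pi\kappa$ (note $e^{\kappa}=\tfrac{3+\sqrt5}{2}$ since $\cosh\kappa=3/2$), this becomes
\begin{equation*}
  \Re\Phi_m(P_1)-\Re\Phi_m(\saddle_m)
  =\frac{\kappa}{|\xi|^2}\Bigl(-2p\pi^2+\int_0^{\Re(\xi P_1)}\log(3+2\cosh r)\,dr\Bigr),
\end{equation*}
and since $\Re(\xi P_1)=(4m+3)\kappa/(2p)<2\kappa$, the crude estimate $\int_0^{2\kappa}\log(3+2\cosh r)\,dr<2\kappa\log(3+2\cosh 2\kappa)=2\kappa\log10<2\pi^2\le2p\pi^2$ makes the parenthesis negative, so $P_1\in W^-_m$. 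For $\overline{P_6P_1}\subset L_{m+1/2}$, the parametrization $z=(r+(2m+1)\pi\i)/\xi$ gives $\tfrac{d}{dr}\Re\Phi_m=\tfrac{\kappa}{|\xi|^2}\log(3+2\cosh r)>0$, so $\Re\Phi_m$ is strictly increasing in $\Re(\xi z)$ along $L_{m+1/2}$; as $\Re(\xi P_6)<0<\Re(\xi P_1)$, its maximum over $\overline{P_6P_1}$ is attained at $P_1\in W^-_m$, whence $\overline{P_6P_1}\subset W^-_m$.

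The remaining segment $\overline{P_0P_1}$ (the edge $J$) is the main obstacle. Along $J$ the quantity $\Im(\xi z)$ increases monotonically from $2m\pi$ at $P_0$ to $(2m+1)\pi$ at $P_1$ while $\Re(\xi z)\ge m\kappa/p\ge0$, so \eqref{eq:Phi'_log} applies, and parametrizing $J$ by $s$ one finds $\tfrac{d}{ds}\Re\Phi_m(z(s))=\tfrac{\mu}{2p\pi}\log\bigl|3-2\cosh(\xi z(s))\bigr|+\arg\bigl(3-2\cosh(\xi z(s))\bigr)$, where $\mu>0$ is the slope constant of $J$ and the argument lies in $[-\pi,0]$. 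This derivative is $\le0$ at $P_0$ (where $\bigl|3-2\cosh(\xi z)\bigr|\le1$) and $>0$ at $P_1$ (where $3-2\cosh(\xi z)\ge5$); the crux is to show that it changes sign at most once on $J$, necessarily from negative to positive, so that $\Re\Phi_m$ attains its maximum over $\overline{P_0P_1}$ at one of the endpoints $P_0,P_1$, both already shown to lie in $W^-_m$. (Alternatively, one may split $\overline{P_0P_1}$ at its meeting point with $I_{m+1/2}$ and, using Lemma~\ref{lem:hexagon_derivative}, project each half vertically upward onto the real axis, respectively onto $\overline{P_6P_1}$, reducing to already-controlled pieces; this requires checking that the vertical segments stay inside $\Xi_{m,0}$ and inside the strip $2m\pi<\Im(\xi z)<(2m+1)\pi$.) In either route the remaining work is elementary but somewhat delicate estimation in $m$, $p$, $\kappa$, entirely parallel to the proof of \cite[Lemma~5.2]{Murakami:CANJM2023}. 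Finally, for $m=p-1$ one checks that all three segments avoid $\diamond_{\nu}$ when $\nu$ is small, since each is bounded away from the point $1$.
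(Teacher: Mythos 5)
Your treatment of $\overline{P_0P_{70}}$ and $\overline{P_6P_1}$ is correct and essentially the paper's, and your proof that $P_1\in W^{-}_m$ is a genuinely different and attractive route: the paper's Lemma~\ref{lem:P1} evaluates $\Re\Phi_m(P_1)-\Re\Phi_m(\saddle_m)$ via the dilogarithm expression \eqref{eq:F_K_L0_L1} and then runs monotonicity arguments in $m$ and $p$, whereas you integrate $\Phi'_m=\log\bigl(3-2\cosh(\xi z)\bigr)$ along $K_{\saddle}$ from $\saddle_m$ to $Q:=K_{\saddle}\cap L_{m+1/2}$ and then along $L_{m+1/2}$ to $P_1$, using $\int_0^{\pi}\log(3+2\cos t)\,dt=\pi\kappa$; your identity and the bound $2\kappa\log 10<2p\pi^2$ check out, and this bypasses the dilogarithm estimates entirely.

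The gap is $\overline{P_0P_1}$. Your primary route --- that the derivative along $J$ changes sign at most once --- is exactly the hard part, and you leave it unproven. Your fallback is not correct as stated either: splitting at $I_{m+1/2}$ and projecting the left half vertically onto the real axis does not reduce to ``already-controlled pieces'', because nothing established so far places the real-axis segment between $m/p$ and $(2m+1)/(2p)$ in $W^{-}_m$; moreover the relevant dividing point is governed by $K_{\saddle}$, not $I_{m+1/2}$, since a vertical ray from a point of $J$ with $\Re{z}<\Re{Q}$ crosses $K_{\saddle}$ before reaching $L_{m+1/2}$, after which $\partial\Re\Phi_m/\partial y$ changes sign and the monotone-projection argument breaks. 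The paper's fix is to work inside the quadrilateral $P_{70}P_0P_1Q$: there $\Re(\xi z)\ge0$ and $2m\pi\le\Im(\xi z)\le(2m+1)\pi$, so $\Re\Phi_m$ is increasing in $\Im{z}$ by Lemma~\ref{lem:hexagon_derivative}, and every point of the lower boundary $\overline{P_0P_1}$ lies vertically below a point of the upper boundary $\overline{P_{70}Q}\cup\overline{QP_1}$, which is already in $W^{-}_m$ by Lemma~\ref{lem:P60P34} and by your $\overline{P_6P_1}$ step. Replacing your real-axis target by $\overline{P_{70}Q}\subset K_{\saddle}$ (and splitting $J$ at $\Re{z}=\Re{Q}$ rather than at $I_{m+1/2}$) repairs the argument; as written, it has a genuine hole.
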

\begin{proof}
\par
A point on the segment $\overline{P_{6}P_{1}}$ is parametrized as $\frac{m+1/2}{p}+\frac{\overline{\xi}}{2p\pi}t$ ($-\frac{(p+m)\kappa}{2p^2\pi}\le t\le\Im\saddle_m$).
We have
\begin{align*}
  \frac{d}{d\,t}\Re\Phi_m\left(\frac{m+1/2}{p}+\frac{\overline{\xi}}{2p\pi}t\right)
  =&
  \Re
  \left(
    \frac{\overline{\xi}}{2p\pi}
    \log
    \left(
      3-2\cosh\left(\frac{m+1/2}{p}\xi+\frac{|\xi|^2t}{2p\pi}\right)
    \right)
  \right)
  \\
  =&
  \frac{\kappa}{2p\pi}
  \log
  \left(
    3+2\cosh\left(\frac{(m+1/2)\kappa}{p}+\frac{|\xi|^2t}{2p\pi}\right)
  \right)>0.
\end{align*}
From Lemma~\ref{lem:P1} below we know that $\Re\Phi_m(P_{1})<\Re\Phi_m(\saddle_m)$.
It follows that $\overline{P_{6}P_{1}}\subset W^{-}_m$.
\par
From Lemma~\ref{lem:hexagon_derivative}, $\Re\Phi_m(z)$ is increasing with respect to $\Im{z}$ in the quadrilateral $P_{70}P_0P_1Q$, where $Q$ is the crossing between $K_{\saddle}$ and $L_{m+1/2}$.
Since the upper segments $\overline{P_{70}Q}$ and $\overline{QP_1}$ are in $W^{-}_m$, so are the lower segments $\overline{P_{70}P_0}$ and $\overline{P_0P_1}$.
\end{proof}
\begin{lem}\label{lem:P1}
The point $P_1$ is in $W^{-}_m$.
\end{lem}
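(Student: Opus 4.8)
The plan is to join $\saddle_m$ to $P_1$ by a two-segment polygonal path contained in $\Xi_{m,0}$ on which formula~\eqref{eq:Phi'_log} is valid, and then to compare the two contributions to $\Re\Phi_m$.

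First I would set $Q:=K_{\saddle}\cap L_{m+1/2}=\frac{(2m+1)\pi\i}{\xi}$ and record two elementary facts: $Q$ lies on the segment $\overline{P_{70}\saddle_m}\subset K_{\saddle}$ strictly between $P_{70}$ and $\saddle_m$ (which reduces to $\frac{m\kappa^2}{4p^2\pi^2}<\frac12<1$), and $Q$ lies on the segment $\overline{P_6P_1}\subset L_{m+1/2}$ strictly between $P_6$ and $P_1$ (an inequality of the same type as Lemma~\ref{lem:P}, using $p\ge1$ and $m\le p-1$). Consequently the path running from $\saddle_m$ to $Q$ along $K_{\saddle}$ and then from $Q$ to $P_1$ along $L_{m+1/2}$ stays inside $\Xi_{m,0}$; moreover $K_{\saddle}$ lies between $\overline{K}$ and $\underline{K}$ and $L_{m+1/2}$ lies between $L_m$ and $L_{m+1}$, so \eqref{eq:Phi'_log} applies along the whole path. (For $m=p-1$ one also checks that $\overline{QP_1}$ misses $\diamond_{\nu}$, since on that segment $\Im(\xi z)=(2m+1)\pi$, which lies outside the range of imaginary parts defining $\underline{\nabla}^{-}_{p-1,\nu}$.)

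Next I would compute the two increments. Along $K_{\saddle}$, parametrising $z=\frac{2\pi\i}{\xi}t$ exactly as in the proof of Lemma~\ref{lem:P60P34}, we have $\frac{d}{d\,t}\Re\Phi_m\left(\frac{2\pi\i}{\xi}t\right)=\frac{4p\pi^2}{|\xi|^2}\log(3-2\cos 2\pi t)$, with $Q,\saddle_m$ at $t=m+\frac12,\ t=m+1$. Hence, by $2\pi$-periodicity of the integrand and the reflection $u\mapsto 2\pi-u$,
\begin{equation*}
  \Re\Phi_m(\saddle_m)-\Re\Phi_m(Q)
  =\frac{4p\pi^2}{|\xi|^2}\int_{1/2}^{1}\log(3-2\cos 2\pi t)\,dt
  =\frac{p\pi}{|\xi|^2}\int_{0}^{2\pi}\log(3-2\cos u)\,du .
\end{equation*}
Since $3-2\cos u=|e^{\i u}-e^{\kappa}|\,|e^{\i u}-e^{-\kappa}|$ (the roots of $w^2-3w+1$ being $e^{\pm\kappa}$) and $\int_0^{2\pi}\log|e^{\i u}-a|\,du$ equals $0$ for $|a|<1$ and $2\pi\log|a|$ for $|a|>1$, the last integral equals $2\pi\kappa$, so $\Re\Phi_m(\saddle_m)-\Re\Phi_m(Q)=\frac{2p\pi^2\kappa}{|\xi|^2}$. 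Along $L_{m+1/2}$, parametrising $z=\frac{(2m+1)\pi\i+s}{\xi}$ (so $s=\Re(\xi z)$, with $s=0$ at $Q$ and $s=\frac{(2m+3/2)\kappa}{p}$ at $P_1$), one has $\cosh(\xi z)=-\cosh s$, hence $\Phi'_m(z)=\log(3+2\cosh s)$ is real and
\begin{equation*}
  \Re\Phi_m(P_1)-\Re\Phi_m(Q)
  =\frac{\kappa}{|\xi|^2}\int_{0}^{(2m+3/2)\kappa/p}\log(3+2\cosh s)\,ds .
\end{equation*}

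Subtracting, the proof would be reduced to the numerical inequality
\begin{equation*}
  2p\pi^2>\int_{0}^{(2m+3/2)\kappa/p}\log(3+2\cosh s)\,ds ,
\end{equation*}
which has ample room to spare: $m\le p-1$ gives $\frac{(2m+3/2)\kappa}{p}<2\kappa<2$, and since $\log(3+2\cosh s)$ is increasing with $3+2\cosh(2\kappa)=10$ (because $\cosh 2\kappa=2\cosh^2\kappa-1=\frac72$), the right-hand side is at most $2\kappa\log 10<5<\pi^2\le p\pi^2$. Therefore $\Re\Phi_m(P_1)<\Re\Phi_m(\saddle_m)$, i.e.\ $P_1\in W^{-}_m$. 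I expect the only delicate point to be the path-inclusion bookkeeping — in particular verifying cleanly that $Q$ lies between $P_6$ and $P_1$ on $L_{m+1/2}$, and that the broken path avoids the excised regions when $m=p-1$ — rather than the final estimate.
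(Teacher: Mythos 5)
Your proof is correct, but it takes a genuinely different route from the paper's. The paper plugs $P_1$ (which lies on $L_{m+1/2}$) into the closed-form expression \eqref{eq:F_K_L0_L1}, obtaining $\xi\Phi_m(P_1)-\xi\Phi_m(\saddle_m)$ in terms of two values of $\Li_2$, then extracts $\frac{|\xi|^2}{\kappa}\bigl(\Re\Phi_m(P_1)-\Re\Phi_m(\saddle_m)\bigr)$ as $\Re+\frac{2p\pi}{\kappa}\Im$ of that quantity and proves it negative by showing monotonicity first in $m$ and then in $p$, finishing with the single numerical evaluation $-17.2195\ldots$ at $p=1$. You instead integrate $\Phi_m'(z)=\log\bigl(3-2\cosh(\xi z)\bigr)$ along the broken path $\saddle_m\to Q\to P_1$ with $Q=K_{\saddle}\cap L_{m+1/2}$, and your two leg computations check out: on $K_{\saddle}$ the periodicity/reflection reduction together with the mean-value formula $\int_0^{2\pi}\log\left|e^{\i u}-a\right|\,du=2\pi\log^{+}|a|$ gives exactly $\Re\Phi_m(\saddle_m)-\Re\Phi_m(Q)=\frac{2p\pi^2\kappa}{|\xi|^2}$ (note $3-2\cos u=\Delta(\FE;e^{\i u})$, so the $\kappa$ appearing here is the logarithmic Mahler measure of the Alexander polynomial, whose roots are $e^{\pm\kappa}$); on $L_{m+1/2}$ one indeed has $\Re(\xi P_1)=\frac{(2m+3/2)\kappa}{p}$ and $\cosh(\xi z)=-\cosh s$, and the bound $2\kappa\log 10<\pi^2\le 2p\pi^2$ closes the argument. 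The path bookkeeping is also sound: both legs satisfy the hypotheses under which \eqref{eq:Phi'_log} holds, and for $m=p-1$ the $K_{\saddle}$ leg avoids $\diamond_{\nu}$ because $\Re(\xi z)=0<\kappa-2\pi\nu$ there, complementing your observation for the $L_{m+1/2}$ leg. What your route buys is the elimination of all dilogarithm numerics and of the two monotonicity arguments — the only numerical input is $2\kappa\log 10<\pi^2$, and the large margin (about $4.4$ against $2p\pi^2\ge 19.7$) becomes visible; what the paper's route buys is brevity, since \eqref{eq:F_K_L0_L1} is already in hand and the computation parallels the corresponding lemma in the author's earlier paper.
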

\begin{proof}
The following proof is similar to that of \cite[Lemma~5.3]{Murakami:CANJM2023}.
\par
Since $P_1$ is on $L_{m+1/2}$, we have $\Im\bigl(\xi(P_1-2m\pi\i/\xi)\bigr)=\pi$ and so $P_1-\frac{2m\pi\i}{\xi}$ is on $L_{1/2}$.
So, from \eqref{eq:F_K_L0_L1} we have
\begin{align*}
  &\xi\Phi_m(P_1)-\xi\Phi_m(\saddle_m)
  \\
  =&
  \Li_2\left(-e^{-\kappa-\frac{(4m+3)\kappa}{2p}}\right)
  -
  \Li_2\left(-e^{-\kappa+\frac{(4m+3)\kappa}{2p}}\right)
  \\
  &+
  \frac{(4m+3)\kappa^2}{2p}-\kappa\pi\i
\end{align*}
Its real part is
\begin{equation*}
  \Li_2\left(-e^{-\kappa-\frac{(4m+3)\kappa}{2p}}\right)
  -
  \Li_2\left(-e^{-\kappa+\frac{(4m+3)\kappa}{2p}}\right)
  +
  \frac{(4m+3)\kappa^2}{2p}
\end{equation*}
and its imaginary part is $-\kappa\pi$.
\par
Therefore we have
\begin{equation}\label{eq:P1_saddle}
\begin{split}
  &\frac{|\xi|^2}{\kappa}
  \bigl(\Re\Phi_m(P_1)-\Re\Phi_m(\saddle_m)\bigr)
  \\
  =&
  \Re\bigl(\xi\Phi_m(P_1)-\xi\Phi_m(\saddle_m)\bigr)
  +
  \frac{2p\pi}{\kappa}
  \Im\bigl(\xi\Phi_m(P_1)-\xi\Phi_m(\saddle_m)\bigr)
  \\
  =&
  \Li_2\left(-e^{-\kappa-\frac{(4m+3)\kappa}{2p}}\right)
  -
  \Li_2\left(-e^{-\kappa+\frac{(4m+3)\kappa}{2p}}\right)
  +
  \frac{(4m+3)\kappa^2}{2p}
  -2p\pi^2,
\end{split}
\end{equation}
which is increasing with respect to $m$, fixing $p$.
When $m=p-1$, \eqref{eq:P1_saddle} equals
\begin{equation}\label{eq:P1_saddle2}
  \Li_2\left(-e^{-\kappa-\frac{(4p-1)\kappa}{2p}}\right)
  -
  \Li_2\left(-e^{-\kappa+\frac{(4p-1)\kappa}{2p}}\right)
  +
  \frac{(4p-1)\kappa^2}{2p}
  -2p\pi^2.
\end{equation}
Its derivative with respect to $p$ is
\begin{equation*}
  \frac{\kappa}{2p^2}
    \log\left(3+2\cosh\left(\kappa\bigl(2-\frac{1}{2p}\bigr)\right)\right)
  -2\pi^2,
\end{equation*}
which is less than $\log\bigl(3+2\cosh(2\kappa)\bigr)-2\pi^2=\log(10)-2\pi^2<0$.
Since \eqref{eq:P1_saddle2} equals $-17.2195...$ when $p=1$, we conclude that \eqref{eq:P1_saddle} is negative, proving the lemma.
\end{proof}
\begin{rem}
One can also show that the polygonal line $\overline{P_{70}P_7P_{6}}$ is in $W^{-}_m$.
\end{rem}
The results in Lemmas~\ref{lem:P60P34}, \ref{lem:P3P5}, \ref{lem:G}, and \ref{lem:segments_W-m} are summarized in Figure~\ref{fig:cross}.
\begin{figure}[h]
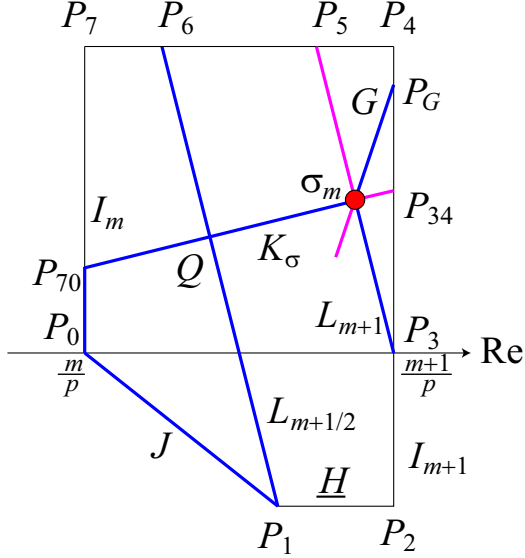

\pic{0.4}{cross}
\caption{The blue (magenta, respectively) lines are included in $W^{-}_m$ ($W^{+}_{m}$, respectively).}
\label{fig:cross}
\end{figure}
\par
Now, we are ready to prove Lemma~\ref{lem:Poisson_Phi}.
\begin{proof}[Proof of Lemma~\ref{lem:Poisson_Phi}]
First, suppose that $m<p-1$.
\begin{enumerate}
\item
Since $m/p=P_0$ and $(m+1)/p=P_3$, it follows from Figure~\ref{fig:cross} that these points are in $W^{-}_m$.
\item
Consider the polygonal line $C_{+}:=\overline{P_0P_{70}\saddle_mP_3}$.
From Figure~\ref{fig:cross}, it is in $W^{-}_m$ and in the upper half plane $\{z\in\C\mid\Im{z}\ge0\}$.
So, it is contained in $R_{+}$.
\item
From Figure~\ref{fig:cross}, we know that the line segment $J$ is in $W^{-}_m$ and in the lower half plane $\{z\in\C\mid\Im{z}\le0\}$.
This implies that $J\subset R_{-}$.
\par
We will show that the segments $\overline{P_{1}P_{2}}$ and $\overline{P_{2}P_{3}}$ are also in $R_{-}$.
\par
We first show that $\overline{P_{1}P_{2}}\subset R_{-}$, that is, $\Re\Phi_m(z)-\Re\Phi_m(\saddle_m)<-2\pi\Im{z}$, if $z\in\overline{P_{1}P_{2}}$.
From the proof of Lemma~\ref{lem:hexagon_derivative}, we see that $-\pi<\frac{\partial\,\Re\Phi_m(z)}{\partial\,y}<0$ if $z=x+y\i$ is in the pentagonal region $QP_{1}P_{2}P_{3}\saddle_m$.
We also know that if $z\in\overline{Q\saddle_mP_{3}}$, then $\Re\Phi_m(z)-\Re\Phi_m(\saddle_m)\le0$.
Since the difference between the imaginary part of the point on $\overline{Q\saddle_mP_{3}}$ and that of the point on $\overline{P_{1}P_{2}}$ is less than or equal to $2\Im\saddle_m$, it follows that for $z\in\overline{P_{1}P_{2}}$, we have $\Re\Phi_m(z)-\Re\Phi_m(\saddle_m)<\pi\times(2\Im\saddle_m)=-2\pi\times\Im{z}$.
\par
Next we will show that $\overline{P_2P_3}\subset R_{-}$.
To prove this, consider the function $r(y):=\Re\Phi_m\bigl((m+1)/p+y\i\bigr)-\Re\Phi_m(\saddle_m)+2\pi y$.
Since $\frac{d}{d\,y}r(y)=\frac{\partial}{\partial\,y}\Re\Phi_m\bigl((m+1)/p+y\i\bigr)+2\pi>0$ and $r(0)<0$ from the argument above, we conclude that $r(y)<0$ if $y\ge-\Im\saddle_m$.
So if $z\in\overline{P_2P_3}$, then $z\in R_{-}$.
\par
Therefore, we can put $C_{-}:=\overline{P_0P_1P_2P_3}\subset R_{-}$.
\end{enumerate}
Next, we consider the case where $m=p-1$.
In this case, we can push $P_3$ slightly to the left to avoid $\diamond_{\nu}$.
Accordingly, we move the segments $\overline{\saddle_mP_3}$ and $\overline{P_2P_3}$ slightly.
\par
The proof is complete.
\end{proof}
Therefore we can apply Proposition~\ref{prop:Poisson} to the series of functions $\psi_{N}(z)=\varphi_{m,N}(z)-\varphi_{m,N}(\saddle_m)$.
We conclude that
\begin{equation}\label{eq:sum_integral_Phi}
\begin{split}
  &\frac{1}{N}
  e^{-N\varphi_{m,N}(\saddle_m)}
  \sum_{m/p\le k/N\le(m+1)/p}
  e^{N\varphi_{m,N}(k/N)}
  \\
  =&
  e^{-N\varphi_{m,N}(\saddle_m)}
  \int_{m/p}^{(m+1)/p}
  e^{N\varphi_{m,N}(z)}
  \,dz
  +
  O(e^{-\varepsilon_mN})
\end{split}
\end{equation}
for $\varepsilon_m>0$ if $m<p-1$, and
\begin{equation}\label{eq:sum_integral_Phi_p-1}
\begin{split}
  &\frac{1}{N}
  e^{-N\varphi_{p-1,N}(\saddle_m)}
  \sum_{(p-1)/p\le k/N\le1-\delta}
  e^{N\varphi_{p-1,N}(k/N)}
  \\
  =&
  e^{-N\varphi_{p-1,N}(\saddle_{p-1})}
  \int_{(p-1)/p}^{1-\delta}
  e^{N\varphi_{p-1,N}(z)}
  \,dz
  +
  O(e^{-\varepsilon_{p-1}N})
\end{split}
\end{equation}
for $\varepsilon_{p-1}$.
\section{Saddle point method of order two}\label{sec:saddle}
We would like to know the asymptotic behavior of the integrals appearing in the right hand sides of \eqref{eq:sum_integral_Phi} and \eqref{eq:sum_integral_Phi_p-1} by using the saddle point method of order two.
\par
To describe it, let us consider a holomorphic function $\eta(z)$ in a domain $D\ni O$ with $\eta(0)=\eta'(0)=\eta''(0)=0$ and $\eta^{(3)}(0)\ne0$.
Write $\eta^{(3)}(0)=6re^{\theta\i}$ with $r>0$ and $-\pi<\theta\le\pi$.
Then $\eta(z)$ is of the form $\eta(z)=re^{\theta\i}z^3\times g(z)$, where $g(z)$ is holomorphic with $g(0)=1$.
The origin is called a saddle point of $\Re{\eta(z)}$ of order two.
We put $V:=\{z\in D\mid\Re{\eta(z)}<0\}$.
\par
\label{page:E_G}
There exists a small disk $\hat{D}\subset D$ centered at $O$, where we can define a cubic root $g^{1/3}(z)$ of $g(z)$ such that $g^{1/3}(0)=1$.
Put $G(z):=zg^{1/3}(z)$ in $\hat{D}\subset D$.
We can choose $\hat{D}$ so that $G$ gives a bijection from $\hat{D}$ to $E:=G(\hat{D})$ from the inverse function theorem because $G'(0)=1$.
Since $re^{\theta\i}G(z)^3=\eta(z)$, the function $G$ also gives a bijection from the region $V\cap\hat{D}$ to the region $U:=\{w\in E\mid\Re(re^{\theta\i}w^3)<0\}$.
\par
The region $U$ splits into the three connected components (valleys) $U_1$, $U_2$, and $U_3$.
Therefore the region $V\cap\hat{D}$ also splits into three valleys $V_k:=G^{-1}(U_k)$ ($k=1,2,3$) of $\Re{\eta(z)}$.
\begin{rem}\label{rem:ray}
Since $G'(0)=1$, and $U_{k}$ contains the ray $\{w\in E\mid w=se^{((2k-1)\pi-\theta)\i/3},s>0\}$ as a bisector, $V_{k}$ also contains a segment $\{z\in\hat{D}\mid z=te^{((2k-1)\pi-\theta)\i/3}\,(\text{$t>0$: small})\}$.
\end{rem}
The following is the statement of the saddle point method of order two.
\begin{prop}\label{prop:saddle}
Let $\eta(z)$ be a holomorphic function in a domain $D\ni O$ with $\eta(0)=\eta'(0)=\eta''(0)=0$ and $\eta^{(3)}(0)\ne0$.
Write $\eta^{(3)}(0)=6re^{\theta\i}$ with $r>0$ and $-\pi<\theta\le\pi$.
Put $V:=\{z\in D\mid\Re{\eta(z)}<0\}$ and define $V_k$ \rm{(}$k=1,2,3$\rm{)} as above.
Let $C\subset D$ be a path from $a$ to $b$ with $a,b\in V$.
\par
We assume that there exist paths $P_{k}\subset V\cup\{O\}$ from $a$ to $O$, and $P_{k+1}\subset V\cup\{O\}$ from $O$ to $b$ such that the following hold:
\begin{enumerate}
\item
$(P_{k}\cap\hat{D})\setminus\{O\}\subset V_{k}$,
\item
$(P_{k+1}\cap\hat{D})\setminus\{O\}\subset V_{k+1}$,
\item
the path $P_{k}\cup P_{k+1}$ is homotopic to $C$ in $D$ keeping $a$ and $b$ fixed,
\end{enumerate}
where $\hat{D}\in O$ is a disk as above.
\par
Let $\{h_N(z)\}$ be a series of holomorphic function in $D$ that uniformly converges to a holomorphic function $h(z)$ with $h(0)\ne0$.
We also assume that $\left|h_N(z)\right|$ is bounded irrelevant to $z$ or $N$.
Then we have
\begin{equation}\label{eq:saddle}
  \int_{C}h_N(z)e^{N\eta(z)}\,dz
  =
  \frac{h(0)\Gamma(1/3)\i}{\sqrt{3}r^{1/3}N^{1/3}}\omega^{k}e^{-\theta\i/3}
  \left(1+O(N^{-1/3})\right)
\end{equation}
as $N\to\infty$, where $\omega:=e^{2\pi\i/3}$.
\end{prop}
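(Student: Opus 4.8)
The plan is to reduce $\int_C h_N(z)e^{N\eta(z)}\,dz$, by Cauchy's theorem and the biholomorphic change of variables $w=G(z)$, to two copies of the one-dimensional model integral $\int_0^\infty e^{-rs^3}\,ds$, and then to evaluate the latter by a Watson-lemma argument. First, since $h_N(z)e^{N\eta(z)}$ is holomorphic on $D$ and $P_k\cup P_{k+1}$ is homotopic to $C$ in $D$ keeping $a,b$ fixed by hypothesis~(iii), Cauchy's theorem gives $\int_Ch_N e^{N\eta}\,dz=\int_{P_k}h_N e^{N\eta}\,dz+\int_{P_{k+1}}h_N e^{N\eta}\,dz$. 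Fix the disk $\hat D\ni O$ from the construction of $G$; shrinking it we may assume $a,b\notin\overline{\hat D}$. Cutting $P_k$ at the last time it meets $\partial\hat D$ before reaching $O$, and $P_{k+1}$ at the first time it meets $\partial\hat D$ after leaving $O$, we write $P_j=P_j''\cup P_j'$ where $P_j'\subset\overline{\hat D}$ is an arc with $O$ as one endpoint and $P_j''\subset V$ is compact. On $P_j''$ the continuous function $\Re\eta$ is strictly negative, hence bounded above by some $-\delta<0$; with the uniform bound on $|h_N|$ this gives $\int_{P_j''}h_N e^{N\eta}\,dz=O(e^{-\delta N})$, so $\int_Ch_N e^{N\eta}\,dz=\int_{P_k'}h_N e^{N\eta}\,dz+\int_{P_{k+1}'}h_N e^{N\eta}\,dz+O(e^{-\delta N})$.

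Next I change variables by $w=G(z)$ on $\hat D$, using $\eta(z)=re^{\theta\i}G(z)^3$ and $dz=(G^{-1})'(w)\,dw$, and put $\tilde h_N(w):=h_N(G^{-1}(w))(G^{-1})'(w)$. This sequence is holomorphic on $E=G(\hat D)$, uniformly bounded near $O$, and converges uniformly to $\tilde h(w):=h(G^{-1}(w))(G^{-1})'(w)$, with $\tilde h(0)=h(0)$ since $(G^{-1})'(0)=1$. By hypotheses~(i)--(ii), $G(P_j')$ is an arc ending at $O$ contained in $U_j\cup\{O\}$. Shrinking $\hat D$ so that $E$ is star-shaped about $O$ (hence each valley $U_j$ is simply connected), I deform $G(P_j')$ inside $U_j\cup\{O\}$ onto the steepest-descent ray $\gamma_j(s):=s\,e^{((2j-1)\pi-\theta)\i/3}$, $0\le s\le\rho$, which by Remark~\ref{rem:ray} is the bisector of $U_j$ and along which $re^{\theta\i}\gamma_j(s)^3=-rs^3$; the connecting arc stays in $U_j$, a compact set away from $O$, so there $\Re(re^{\theta\i}w^3)\le-\delta'<0$ and it contributes $O(e^{-\delta' N})$. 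Accounting for orientation ($P_k$ runs into $O$, $P_{k+1}$ out of it) and using the uniform bound on $\tilde h_N$,
\begin{equation*}
  \int_Ch_N e^{N\eta}\,dz
  =
  -\int_{\gamma_k}\tilde h_N(w)e^{Nre^{\theta\i}w^3}\,dw
  +\int_{\gamma_{k+1}}\tilde h_N(w)e^{Nre^{\theta\i}w^3}\,dw
  +O(e^{-\varepsilon N}).
\end{equation*}

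For $j\in\{k,k+1\}$, putting $\alpha_j:=((2j-1)\pi-\theta)/3$ and substituting $w=s\,e^{\alpha_j\i}$ and then $t=Nrs^3$,
\begin{equation*}
  \int_{\gamma_j}\tilde h_N(w)e^{Nre^{\theta\i}w^3}\,dw
  =
  \frac{e^{\alpha_j\i}}{3(Nr)^{1/3}}
  \int_0^{Nr\rho^3}\tilde h_N\bigl((t/(Nr))^{1/3}e^{\alpha_j\i}\bigr)e^{-t}t^{-2/3}\,dt.
\end{equation*}
The integrand is dominated by a constant multiple of $e^{-t}t^{-2/3}\in L^1(0,\infty)$ and converges pointwise to $h(0)e^{-t}t^{-2/3}$ (using $\tilde h_N\to\tilde h$, $\tilde h$ continuous, $\tilde h(0)=h(0)$), so by dominated convergence the last integral tends to $h(0)\int_0^\infty e^{-t}t^{-2/3}\,dt=h(0)\Gamma(1/3)$; splitting $\tilde h_N(s e^{\alpha_j\i})-h(0)=(\tilde h_N-\tilde h)(se^{\alpha_j\i})+(\tilde h(se^{\alpha_j\i})-\tilde h(0))$ and using $|\tilde h(w)-\tilde h(0)|=O(|w|)$ with $\int_0^\infty s\,e^{-Nrs^3}\,ds=O(N^{-2/3})$ upgrades this to $\int_{\gamma_j}\tilde h_N e^{Nre^{\theta\i}w^3}\,dw=\frac{h(0)\Gamma(1/3)}{3(Nr)^{1/3}}e^{\alpha_j\i}\bigl(1+O(N^{-1/3})\bigr)$, the $\tilde h_N-\tilde h$ contribution being of smaller order since $\|h_N-h\|_\infty=O(N^{-1})$ in our situation. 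Substituting back and using $e^{\alpha_{k+1}\i}-e^{\alpha_k\i}=e^{(2k\pi-\theta)\i/3}(e^{\pi\i/3}-e^{-\pi\i/3})=\sqrt3\,\i\,\omega^k e^{-\theta\i/3}$ together with $\sqrt3/3=1/\sqrt3$ and $(Nr)^{1/3}=N^{1/3}r^{1/3}$ gives exactly \eqref{eq:saddle}.

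The delicate part is the geometric reduction used above: one must choose $\hat D$ small enough that the biholomorphic image $E=G(\hat D)$ and its three connected valleys $U_j$ genuinely behave like sectors with apex $O$, so that the prescribed descent paths $P_k',P_{k+1}'$ can be pushed onto the model rays $\gamma_k,\gamma_{k+1}$ with only exponentially small error, and so that the localization away from $O$ costs only $O(e^{-\varepsilon N})$. Once this is set up, the remaining work is the standard Watson-type estimate of the third paragraph.
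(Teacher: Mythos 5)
Your proposal is correct and follows essentially the same route as the paper: Cauchy's theorem to swap $C$ for $P_k\cup P_{k+1}$, exponential-decay estimates away from the saddle, the change of variables $w=G(z)$ reducing to the cubic model, a Watson-type evaluation along rays into the two adjacent valleys, and the phase difference $e^{\alpha_{k+1}\i}-e^{\alpha_k\i}=\sqrt{3}\,\i\,\omega^{k}e^{-\theta\i/3}$. The only cosmetic difference is that the paper rotates each valley onto the positive real axis and invokes the classical Watson lemma after a $C^{\infty}$ extension, while you parametrize the bisector directly and use dominated convergence; note also that your appeal to $\|h_N-h\|_{\infty}=O(N^{-1})$ to absorb the $\tilde h_N-\tilde h$ term into the $O(N^{-1/3})$ relative error matches an assumption the paper itself makes implicitly (writing $h_N^{\ast}=h^{\ast}+g_N/N$) and holds in the intended application.
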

The proposition may be well known to experts, but we give a proof in Appendix~\ref{sec:saddle_proof} because the author is not an expert and could not find appropriate references.
\par
\par
We will apply Proposition~\ref{prop:saddle} to
\begin{itemize}
\item
$\eta(z):=\Phi_{m}(z+\saddle_m)-\Phi_m(\saddle_m)$,
\item
$D:=\{z\in\C\mid z+\saddle_m\in\Xi_{m,\chi}\}$,
\item
$h_N(z):=\exp\left[N\bigl(\varphi_{m,N}(z+\saddle_m)-\Phi_{m}(z+\saddle_m)\bigr)\right]$, and
\item
$C:=[m/p-\saddle_m,(m+1)/p-\saddle_m]$ for $m<p-1$, and $C:=[(p-1)/p,1-\delta]$ for $m=p-1$, where $\delta$ is a positive small number (see Lemma~\ref{lem:Poisson_Phi}).
\end{itemize}
Note that $\eta(0)=\eta'(0)=\eta''(0)=0$, $\eta^{(3)}(0)=-2\xi^2\ne0$, $h(z):=\lim_{N\to\infty}h_N(z)=1$, and that $V$ is equal to the region $\{z\in\C\mid z+\sigma_m\in W^{-}_m\}$.
\par
Since $\eta(z)=-\frac{\xi^2}{3}z^3+\dots$, we can define a holomorphic function $g(z):=-\frac{3\eta(z)}{\xi^2z^3}$ so that $g(0)=1$.
Put $G(z):=zg^{1/3}(z)$ as above.
Let $\hat{D}\subset D$ be a small disk centered at $0$ such that the function $G(z)$ is a bijection.
Then the region $V$ splits into three valleys $V_{m,1}$, $V_{m,2}$, and $V_{m,3}$.
From Remark~\ref{rem:ray}, the argument of the bisector of $V_{m,k}$ is given by $(2k-1)\pi/3-\theta/3\pmod{2\pi}$ ($k=1,2,3$), where $\theta:=\arg(-2\xi^2)=-\pi+2\arctan(2p\pi/\kappa)$.
So the valley $V_{m,k}$ is approximated by the small sector
\begin{equation*}
  \{z\in\C\mid z=te^{\tau\i},|\tau-\alpha_k|<\frac{\pi}{6},\text{$t>0$: small}\},
\end{equation*}
where we put
\begin{equation}\label{eq:alpha}
\begin{split}
  \alpha_1
  &:=
  -\frac{2}{3}\arctan(2p\pi/\kappa)+\frac{2\pi}{3},
  \\
  \alpha_2
  &:=
  -\frac{2}{3}\arctan(2p\pi/\kappa)-\frac{2\pi}{3},
  \\
  \alpha_3
  &:=
  -\frac{2}{3}\arctan(2p\pi/\kappa).
\end{split}
\end{equation}
Note that since $\pi/4<\arctan(2p\pi/\kappa)<\pi/2$, we have $\pi/3<\alpha_1<\pi/2$, $-\pi<\alpha_2<-5\pi/6$, and $-\pi/3<\alpha_3<-\pi/6$.
See Figure~\ref{fig:valleys}.
\begin{figure}[h]
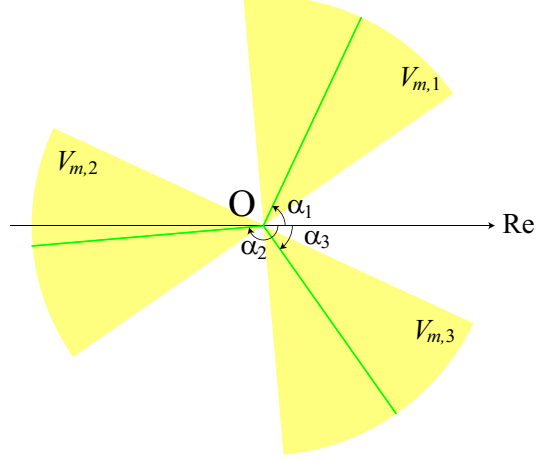

\begin{center}\pic{0.3}{valleys}\end{center}
\caption{The yellow regions indicates the valleys $V_{m,1}$, $V_{m,2}$, and $V_{m,3}$.}
\label{fig:valleys}
\end{figure}
\begin{rem}
Denote by $P_{G}$ the intersection between $G$ and the boundary of $\Xi_{m,0}$ as in Figure~\ref{fig:cross}.
Note that $P_{G}\subset I_{m+1}$ if $m<p-1$ and $P_{G}\subset\overline{H}$ if $p=1$ from Remark~\ref{rem:G}.
The arguments of $\overrightarrow{\saddle_{m}P_{G}}$, $\overrightarrow{\saddle_{m}P_{70}}$, and $\overrightarrow{\saddle_{m}P_{3}}$ are
\begin{equation}\label{eq:beta}
\begin{split}
  \beta_1
  &:=
  -\arctan(2p\pi/\kappa)+\frac{3}{4}\pi,
  \\
  \beta_2
  &:=
  -\arctan(2p\pi/\kappa)-\frac{\pi}{2},
  \\
  \beta_3
  &:=
  -\arctan(2p\pi/\kappa),
\end{split}
\end{equation}
respectively, because the vector $\overrightarrow{\saddle_{m}P_{70}}$ has the same direction as $-\i/\xi$, the vector $\overrightarrow{\saddle_{m}P_3}$ has the same direction as $1/\xi$, and $G$ is their bisection.
\par
Since $\pi/4<\arg(2p\pi/\kappa)<\pi/2$, we can see
\begin{align*}
  \alpha_1-\beta_1
  &=
  -\frac{\pi}{12}+\frac{\arctan(2p\pi/\kappa)}{3},
  \\
  \beta_2-\alpha_2
  &=
  \frac{\pi}{6}-\frac{\arctan(2p\pi/\kappa)}{3},
  \\
  \alpha_3-\beta_3
  &=
  \frac{\arctan(2p\pi/\kappa)}{3},
\end{align*}
and
\begin{equation*}
  \alpha_2<\beta_2<\beta_3<\alpha_3<\beta_1<\alpha_1,
\end{equation*}
where $\alpha_k$ ($k=1,2,3$) are given in \eqref{eq:alpha}.
We also conclude that $|\alpha_k-\beta_k|<\pi/6$, that is, $\overline{\saddle_mP_G}$ is in the valley $V_{m,1}$, $\overline{\saddle_mP_{50}}$ is in the valley $V_{m,2}$ and $\overline{\saddle_mP_3}$ is in the valley $V_{m,3}$.
See Figure~\ref{fig:saddle_point}.
\begin{figure}[h]
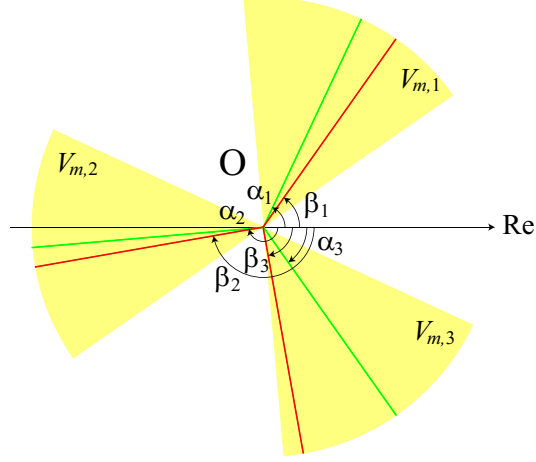

\pic{0.3}{saddle_point}
\caption{The yellow regions indicate the valleys.}
\label{fig:saddle_point}
\end{figure}
\end{rem}
\par
We need to show that the assumption of Proposition~\ref{prop:saddle} holds, that is, we will show the following lemma.
\begin{lem}\label{lem:saddle_Phi}
First suppose that $m=0,1,2,\dots,p-2$.
If a disk $\tilde{D}\subset\Xi_{m,\chi}$ centered at $\saddle_m$ is small enough, then the following hold
\begin{enumerate}
\item
There exists a path $\rho_{2}\subset W^{-}_m\cup\{\saddle_m\}$ connecting $m/p$ and $\saddle_m$ such that $(\rho_{2}\cap\tilde{D})\setminus\{\saddle_m\}\subset V_{m,2}$,
\item
There exists a path $\rho_{3}\subset W^{-}_m\cup\{\saddle_m\}$ connecting $\saddle_m$ and $(m+1)/p$ such that $(\rho_{3}\cap\tilde{D})\setminus\{\saddle_m\}\subset V_{m,3}$.
\end{enumerate}
Next, suppose that $m=p-1$.
If a disk $\tilde{D}\subset\Xi_{p-1,\chi}$ centered at $\saddle_{p-1}$ is small enough, then the following hold:
\begin{enumerate}
\item
There exists a path $\rho_{2}\subset W^{-}_{p-1}\cup\{\saddle_{p-1}\}$ connecting $1-1/p$ and $\saddle_{p-1}$ such that $(\rho_{2}\cap\tilde{D})\setminus\{\saddle_{p-1}\}\subset V_{p-1,2}$,
\item
There exists a path $\rho_{3}\subset W^{\nu}_m\cup\{\saddle_{p-1}\}$ connecting $\saddle_{p-1}$ and $1-\delta$ such that $(\rho_{3}\cap\hat{D})\setminus\{\saddle_{p-1}\}\subset V_{p-1,3}$,
\end{enumerate}
\end{lem}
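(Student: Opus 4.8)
The plan is to assemble $\rho_2$ and $\rho_3$ from line segments that the earlier lemmas have already placed inside $W^{-}_m$, and then to read off from the angular data in the remark preceding the lemma that, close to $\saddle_m$, these segments run into the required valleys.

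For the generic case $m<p-1$ I would take $\rho_2:=\overline{P_0P_{70}}\cup\overline{P_{70}\saddle_m}$, where $P_0=m/p$, and $\rho_3:=\overline{\saddle_m P_3}$, where $P_3=(m+1)/p$. That $\rho_2\setminus\{\saddle_m\}\subset W^{-}_m$ follows from Lemma~\ref{lem:segments_W-m} for the edge $\overline{P_0P_{70}}$ and from Lemma~\ref{lem:P60P34} for $\overline{P_{70}\saddle_m}$ minus its endpoint (the portion of $\overline{P_{70}P_{34}}$ strictly between $P_{70}$ and $\saddle_m$). Since $\saddle_m$ lies on $\overline{P_3P_5}$ strictly between $P_3$ and $P_5$, the inclusion $\rho_3\setminus\{\saddle_m\}\subset W^{-}_m$ is exactly Lemma~\ref{lem:P3P5}. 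Next I would check the valley conditions near $\saddle_m$: inside a small disk $\tilde D$ centered at $\saddle_m$ the path $\rho_2$ is the straight segment leaving $\saddle_m$ in the direction $\overrightarrow{\saddle_m P_{70}}$, of argument $\beta_2$, and $\rho_3$ leaves $\saddle_m$ in the direction $\overrightarrow{\saddle_m P_3}$, of argument $\beta_3$. Arguing as in Remark~\ref{rem:ray} and using $G'(0)=1$, a ray from $\saddle_m$ of argument $\beta$ lies, sufficiently near $\saddle_m$, in $V_{m,k}$ whenever $|\beta-\alpha_k|<\pi/6$; since $|\alpha_2-\beta_2|<\pi/6$ and $|\alpha_3-\beta_3|<\pi/6$ by the remark preceding the lemma, shrinking $\tilde D$ gives $(\rho_2\cap\tilde D)\setminus\{\saddle_m\}\subset V_{m,2}$ and $(\rho_3\cap\tilde D)\setminus\{\saddle_m\}\subset V_{m,3}$.

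For $m=p-1$ I expect essentially the same argument, with one cosmetic change near the point $1$. The path $\rho_2$ is unchanged: it runs along $I_{p-1}$ and along $K_{\saddle}$, on which $\Re(\xi z)$ never exceeds $(p-1)\kappa/p<\kappa$, so it is automatically disjoint from the neighborhood $\diamond_{\nu}$ of $1$, and the reasoning above applies verbatim. For $\rho_3$ one cannot reach $1=(m+1)/p$, so I would perturb $\overline{\saddle_{p-1}P_3}$ near its endpoint $P_3$ to land at $1-\delta$ while avoiding $\diamond_{\nu}$, exactly as in the proof of Lemma~\ref{lem:Poisson_Phi}. Because $\Re\Phi_{p-1}$ is strictly below $\Re\Phi_{p-1}(\saddle_{p-1})$ at every point of $\overline{\saddle_{p-1}P_3}$ other than $\saddle_{p-1}$ (Lemma~\ref{lem:P3P5}) and $1-\delta\in W^{-}_{p-1}$ for small $\delta$ (Lemma~\ref{lem:Poisson_Phi}), a sufficiently small perturbation carried out away from $\saddle_{p-1}$ keeps $\rho_3$ inside $W^{-}_{p-1}$ and leaves the local valley condition at $\saddle_{p-1}$ untouched.

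The step I expect to need the most care is not any single construction but the bookkeeping that ties them together: one must be certain that the two descending directions $\overrightarrow{\saddle_m P_{70}}$ and $\overrightarrow{\saddle_m P_3}$ produced by Lemmas~\ref{lem:P60P34} and \ref{lem:P3P5} land in the valleys $V_{m,2}$ and $V_{m,3}$, respectively, and not in the ascending valley $V_{m,1}$ (nor get swapped). This is precisely the chain $\alpha_2<\beta_2<\beta_3<\alpha_3<\beta_1<\alpha_1$ recorded in the remark preceding the lemma, which follows from $\pi/4<\arctan(2p\pi/\kappa)<\pi/2$. Granting this, conditions (i) and (ii) of Proposition~\ref{prop:saddle} (with $k=2$) follow as above; condition (iii), needed only when the proposition is actually applied, is automatic because $\Xi_{m,\chi}$, hence the translated domain $D$, is simply connected, so $\rho_2\cup\rho_3$ is homotopic to the segment $C$ rel endpoints.
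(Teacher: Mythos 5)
Your construction is exactly the paper's: the paper takes $\rho_2:=\overline{P_0P_{70}\saddle_m}$ and $\rho_3:=\overline{\saddle_mP_3}$ (pushed slightly left near $1$ when $m=p-1$), with the membership in $W^{-}_m$ and the valley conditions justified by the same lemmas and the same angular comparison $|\alpha_k-\beta_k|<\pi/6$ from the preceding remark. Your write-up simply spells out details the paper leaves implicit; there is no gap.
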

Note again that since $\Xi_{m,\chi}$ is simply-connected, the path $\rho_{2}\cup\rho_{3}$ is homotopic to the interval $[m/p,(m+1)/p]$ ($[1-1/p,1-\delta]$, respectively) if $m<p-1$ (if $m=p-1$, respectively).
\begin{proof}
The proof is essentially the same for both cases $m<p-1$ and $m=p-1$.
\begin{enumerate}
\item
The path $\rho_2:=\overline{P_0P_{70}\saddle_m}$ is a required one for $m=0,1,2,\dots,p-1$.
\item
When $m<p-1$ consider the path $\rho_3:=\overline{\saddle_m P_3}$, and when $m=p-1$ push it a little more to the left near the point $1$.
\end{enumerate}
This completes the proof.
\end{proof}
\par
If $m<p-1$, we apply Proposition~\ref{prop:saddle} to $\eta(z)=\Phi_m(z+\saddle_m)-\Phi_m(\saddle_m)$, $h_N(z)=\exp\left[N\bigl(\varphi_{m,N}(z+\saddle_m)-\Phi_{m}(z+\saddle_m)\bigr)\right]$, $C:=[m/p-\saddle_m,(m+1)/p-\saddle_m]$, and $k=2$.
Noting that $h_N(z)$ converges to $1$, that $\eta^{(3)}(0)=-2\xi^2=2|\xi|^2e^{\theta\i}$ with $\theta=-\pi+2\arctan(2p\pi/\kappa)$ from the argument above, we have
\begin{equation*}
\begin{split}
  &\int_{m/p}^{(m+1)/p}
  e^{N\bigl(\varphi_{m,N}(z)-\Phi_{m}(\saddle_m)\bigr)}\,dz
  \\
  =&
  \int_{C}
  e^{N\bigl(\varphi_{m,N}(z+\saddle_m)-\Phi_{m}(\saddle_m)\bigr)}\,dz
  =
  \int_{C}
  h_N(z)e^{N\eta(z)}\,dz
  \\
  =&
  \frac{\Gamma(1/3)\i}{\sqrt{3}(|\xi|^2/3)^{1/3}N^{1/3}}
  \omega^2e^{\pi\i/3-2\arctan(2p\pi/\kappa)\i/3}
  \bigl(1+O(N^{-1/3})\bigr)
  \\
  =&
  \frac{\Gamma(1/3)\i}{3^{1/6}|\xi|^{2/3}N^{1/3}}
  e^{-\bigl(\pi+2\arctan(2p\pi/\kappa)\bigr)\i/3}
  \bigl(1+O(N^{-1/3})\bigr)
\end{split}
\end{equation*}
as $N\to\infty$.
Similarly, if $m=p-1$, putting $C:=[1-1/p-\saddle_m,1-\saddle_m-\delta]$, we have
\begin{equation*}
\begin{split}
  &\int_{1-1/p}^{1-\delta}
  e^{N\bigl(\varphi_{p-1,N}(z)-\Phi_{p-1}(\saddle_{p-1})\bigr)}\,dz
  \\
  =&
  \frac{\Gamma(1/3)\i}{3^{1/6}|\xi|^{2/3}N^{1/3}}
  e^{-\bigl(\pi+2\arctan(2p\pi/\kappa)\bigr)\i/3}
  \bigl(1+O(N^{-1/3})\bigr)
\end{split}
\end{equation*}
as $N\to\infty$.
Since $\Phi_m(\saddle_m)=4p\pi^2/\xi$ from \eqref{eq:F_derivatives}, we conclude
\begin{multline*}
  \int_{m/p}^{(m+1)/p}
  e^{N\varphi_{m,N}(z)}\,dz
  \\
  =
  \frac{\Gamma(1/3)\i}{3^{1/6}|\xi|^{2/3}N^{1/3}}
  e^{-\bigl(\pi+2\arctan(2p\pi/\kappa)\bigr)\i/3}
  e^{4p\pi^2N/\xi}
  \bigl(1+O(N^{-1/3})\bigr)
\end{multline*}
if $m<p-1$, and
\begin{multline*}
  \int_{1-1/p}^{1-\delta}
  e^{N\varphi_{p-1,N}(z)}\,dz
  \\
  =
  \frac{\Gamma(1/3)\i}{3^{1/6}|\xi|^{2/3}N^{1/3}}
  e^{-\bigl(\pi+2\arctan(2p\pi/\kappa)\bigr)\i/3}
  e^{4p\pi^2N/\xi}
  \bigl(1+O(N^{-1/3})\bigr).
\end{multline*}
\par
Since $\varphi_{m,N}(\saddle_m)=f_N(\saddle_0)$ converges to $F(\saddle_0)=4p\pi^2/\xi$ as $N\to\infty$ from \eqref{eq:F_derivatives}, together with \eqref{eq:sum_integral_Phi} and \eqref{eq:sum_integral_Phi_p-1}, we finally have
\begin{multline}\label{eq:approx_sum_Phi}
  \sum_{m/p\le k/N\le(m+1)/p}e^{N\varphi_{m,N}(k/N)}
  \\
  =
  \frac{\Gamma(1/3)e^{\pi\i/6}}{3^{1/6}}
  \left(\frac{N}{\xi}\right)^{2/3}
  e^{4p\pi^2N/\xi}
  \bigl(1+O(N^{-1/3})\bigr)
\end{multline}
if $m<p-1$, and
\begin{multline}\label{eq:approx_sum_Phi_p-1}
  \sum_{1-1/p\le k/N\le1-\delta}e^{N\varphi_{p-1,N}(k/N)}
  \\
  =
  \frac{\Gamma(1/3)e^{\pi\i/6}}{3^{1/6}}
  \left(\frac{N}{\xi}\right)^{2/3}
  e^{4p\pi^2N/\xi}
  \bigl(1+O(N^{-1/3})\bigr)
\end{multline}
because $\Re(4p\pi^2/\xi)>0$, where we define $\xi^{2/3}$ to be $|\xi|^{2/3}e^{2\arctan(2p\pi/\kappa)\i/3}$.
\par
It remains to obtain the asymptotic behavior of $\sum_{1-1/p\le k/N<1}e^{N\varphi_{p-1,N}(k/N)}$ instead of the sum for $1-1/p\le k/N\le 1-\delta$.
To do that, we need to estimate the sum $\sum_{1-\delta<k/N<1}e^{N\varphi_{p-1,N}(k/N)}$.
We use the following lemma, which corresponds to \cite[Lemma~6.1]{Murakami:CANJM2023}.
\begin{lem}\label{lem:Phi1}
For any $\varepsilon$, there exists $\delta'>0$ such that
\begin{equation*}
  \Re\varphi_{p-1,N}
  \left(
    \frac{2k+1}{2N}
  \right)
  <
  \Re\Phi_{p-1}(\saddle_{p-1})-\varepsilon
\end{equation*}
for sufficiently large $N$, if $1-\delta'<k/N<1$.
\end{lem}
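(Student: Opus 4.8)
The obstacle is that $z=1$, the point at which the nodes $\frac{2k+1}{2N}$ accumulate as $k/N\to1$, lies on $\underline{K}$ at a corner of the excised set $\underline{\nabla}^{-}_{p-1,\nu}$, so the uniform convergence $\varphi_{p-1,N}\to\Phi_{p-1}$ of Lemma~\ref{lem:convergence_fN} is unavailable there; moreover $z=1$ is exactly the point where $3-2\cosh(\xi z)=0$ (since $2\cosh\kappa=3$), so $\Phi_{p-1}'$ blows up at $1$. The plan is therefore to avoid analysing $\varphi_{p-1,N}$ inside the corner altogether: first show that $k\mapsto\Re\varphi_{p-1,N}\bigl(\tfrac{2k+1}{2N}\bigr)$ is decreasing throughout a fixed-width left–neighbourhood of $k=N$, which reduces the assertion to a single anchor node that sits outside the corner and can be handled by Lemma~\ref{lem:convergence_fN} and Lemma~\ref{lem:P3P5}.

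For the monotonicity, write $t_k:=e^{-k\xi}\prod_{l=1}^{k}\bigl(1-e^{(N-l)\xi/N}\bigr)\bigl(1-e^{(N+l)\xi/N}\bigr)$ for the $k$-th summand in the Habiro--Le formula \eqref{eq:Habiro_Le}. Comparing \eqref{eq:J_FN} (equivalently \eqref{eq:J_phiN}) with \eqref{eq:Habiro_Le} term by term, one sees that for $(p-1)N/p<k\le N$ the quantity $\exp\bigl(N\varphi_{p-1,N}((2k+1)/(2N))\bigr)$ equals $t_k$ up to a factor independent of $k$; since $t_k\ne0$ for $k\le N-1$, this gives
\begin{equation*}
  N\Re\varphi_{p-1,N}\Bigl(\tfrac{2k+3}{2N}\Bigr)-N\Re\varphi_{p-1,N}\Bigl(\tfrac{2k+1}{2N}\Bigr)
  =
  \log\Bigl(e^{-\kappa}\bigl|1-e^{(N-k-1)\xi/N}\bigr|\,\bigl|1-e^{(N+k+1)\xi/N}\bigr|\Bigr).
\end{equation*}
With $j:=N-k-1$, the elementary bounds $|1-e^{j\xi/N}|\le\frac{j|\xi|}{N}e^{j|\xi|/N}$ and $|1-e^{(2N-j)\xi/N}|\le 1+e^{2\kappa}$ make the right–hand side negative as soon as $j/N<\delta_1$, for a fixed $\delta_1>0$ depending only on $|\xi|$ and $\kappa$. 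Hence $k\mapsto\Re\varphi_{p-1,N}\bigl(\tfrac{2k+1}{2N}\bigr)$ is strictly decreasing on $\lceil(1-\delta_1)N\rceil\le k\le N-1$.

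Now fix $\varepsilon>0$; by Lemma~\ref{lem:P3P5} (with $m=p-1$ and $P_3=1$, so $\overline{\saddle_{p-1}P_3}$ is a strictly descending direction for $\Re\Phi_{p-1}$) together with continuity of $\Phi_{p-1}$ at $1$ one has $\Re\Phi_{p-1}(1)<\Re\Phi_{p-1}(\saddle_{p-1})$, and we take $\varepsilon$ below the fixed threshold $\tfrac12\bigl(\Re\Phi_{p-1}(\saddle_{p-1})-\Re\Phi_{p-1}(1)\bigr)$ — the only case needed in Section~\ref{sec:saddle}. Choose $\delta'\in(0,\min\{\delta_1,1/p\})$ small enough that $\Re\Phi_{p-1}(1-\delta')<\Re\Phi_{p-1}(\saddle_{p-1})-2\varepsilon$, and then shrink $\nu$ so that $\nu<p\delta'$; a short check of the defining inequalities shows $1-\delta'$ then lies strictly between $L_{p-2+\nu}$ and $L_{p+1-\nu}$ and outside all four triangles $\underline{\nabla}^{\pm}_{p-1,\nu},\overline{\nabla}^{\pm}_{p-1,\nu}$, hence inside the convergence region $\Theta^{\ast}_{p-1,\nu}$ of \eqref{eq:convergence_Phi}. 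Put $k_0:=\lceil(1-\delta')N\rceil$. Then $\frac{2k_0+1}{2N}\to1-\delta'$, so $\frac{2k_0+1}{2N}\in\Theta^{\ast}_{p-1,\nu}$ for $N$ large, and Lemma~\ref{lem:convergence_fN} with continuity of $\Phi_{p-1}$ give
\begin{equation*}
  \Re\varphi_{p-1,N}\Bigl(\tfrac{2k_0+1}{2N}\Bigr)\longrightarrow\Re\Phi_{p-1}(1-\delta')<\Re\Phi_{p-1}(\saddle_{p-1})-2\varepsilon
  \qquad(N\to\infty),
\end{equation*}
so $\Re\varphi_{p-1,N}\bigl(\tfrac{2k_0+1}{2N}\bigr)<\Re\Phi_{p-1}(\saddle_{p-1})-\varepsilon$ for $N$ large. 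Since $\delta'<\delta_1$, the monotonicity of the previous paragraph yields $\Re\varphi_{p-1,N}\bigl(\tfrac{2k+1}{2N}\bigr)\le\Re\varphi_{p-1,N}\bigl(\tfrac{2k_0+1}{2N}\bigr)$ for all $k_0\le k\le N-1$, that is, for all $k$ with $1-\delta'<k/N<1$ once $N>1/\delta'$, which is the claim.

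The step I expect to take the most care is the second one: justifying the term-by-term matching of \eqref{eq:J_FN} with \eqref{eq:Habiro_Le} — and hence the exact product formula for the ratio of consecutive values of $e^{N\varphi_{p-1,N}}$ — and then pinning down the constant $\delta_1$ from the product–ratio estimate so that it is compatible with the $\delta'$ forced by the descent of $\Re\Phi_{p-1}$ near $1$ and with the $\nu$ needed to keep the anchor node out of the corner; the remaining ingredients are routine continuity arguments and the already-proved Lemmas~\ref{lem:P3P5} and \ref{lem:convergence_fN}.
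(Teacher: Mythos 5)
Your argument is correct, and it is the same kind of argument as the one the paper delegates to \cite[Lemma~6.1]{Murakami:CANJM2023}: the summands decrease once $k/N$ is within a fixed distance of $1$, because each step multiplies the term by $e^{-\kappa}\bigl(1-q^{N-k-1}\bigr)\bigl(1-q^{N+k+1}\bigr)$ whose modulus drops below $1$ when $N-k-1$ is a small fraction of $N$, and the anchor node $k_0$ is controlled by the uniform convergence of Lemma~\ref{lem:convergence_fN} together with Lemma~\ref{lem:P3P5}. The one step you flag — the term-by-term matching of \eqref{eq:J_FN} with \eqref{eq:Habiro_Le} — can be closed without appealing to how \eqref{eq:J_FN} was derived: when $k$ increases by $1$ the two $T_N$-arguments in \eqref{eq:fN} shift by exactly $-\gamma/N$ and $+\gamma/N$ respectively, so Lemma~\ref{lem:TN_gamma_Omega} gives
\begin{equation*}
  N\varphi_{p-1,N}\Bigl(\tfrac{2k+3}{2N}\Bigr)-N\varphi_{p-1,N}\Bigl(\tfrac{2k+1}{2N}\Bigr)
  =\L_1\Bigl(\tfrac{(N-k-1)\gamma}{N}\Bigr)+\L_1\Bigl(\tfrac{(N+k+1)\gamma}{N}-2p+1\Bigr)-\kappa,
\end{equation*}
and since $\Re\L_1(w)=\log\bigl|1-e^{2\pi\i w}\bigr|$ for every branch choice (Definition~\ref{defn:L1_L2} and Lemma~\ref{lem:L2_Li2}), the real part is exactly your product formula; one only has to check, as you would for the matching anyway, that the two arguments lie in $\Omega$, which holds because their imaginary parts are negative and their real parts lie in $(0,1)$ for $1\le N-k-1<\delta_1 N$ with $\delta_1<1/p$. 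Your restriction of $\varepsilon$ to values below $\tfrac12\bigl(\Re\Phi_{p-1}(\saddle_{p-1})-\Re\Phi_{p-1}(1)\bigr)$ is also the right reading of the statement: since $\tfrac1N\log|t_{N-1}|$ converges to a finite limit, the inequality cannot hold for arbitrarily large $\varepsilon$, and only one fixed $\varepsilon>0$ is used in Section~\ref{sec:saddle}.
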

Since a proof is similar to that of \cite[Lemma~6.1]{Murakami:CANJM2023}, we omit it.
\par
From Lemma~\ref{lem:Phi1}, we conclude that
\begin{equation*}
  \sum_{1-\delta<k/N<1}
  \exp\left(N\varphi_{p-1,N}\left(\frac{2k+1}{2N}\right)\right)
\end{equation*}
is of order $O\left(e^{N(\Re\Phi_{p-1}(\saddle_{p-1})-\varepsilon)}\right)$ if $\delta'<\delta$.
Since $\Phi_{p-1}(\saddle_{p-1})=4p\pi^2\i/\xi$ from \eqref{eq:Phi_derivatives}, we have
\begin{equation*}
  \sum_{1-1/p\le k/N<1}e^{N\Phi_{p-1,N}(k/N)}
  =
  \frac{\Gamma(1/3)e^{\pi\i/6}}{3^{1/6}}
  \left(\frac{N}{\xi}\right)^{2/3}
  e^{4p\pi^2N/\xi}
  \bigl(1+O(N^{-1/3})\bigr)
\end{equation*}
from \eqref{eq:approx_sum_Phi_p-1}.
Together with \eqref{eq:J_phiN} and \eqref{eq:approx_sum_Phi}, we have
\begin{equation}
\begin{split}
  &J_N\bigl(\FE;e^{\xi/N}\bigr)
  \\
  =&
  \bigl(1-e^{-4pN\pi^2/\xi}\bigr)
  \frac{\Gamma(1/3)e^{\pi\i/6}}{3^{1/6}}
  \left(\frac{N}{\xi}\right)^{2/3}
  e^{4p\pi^2N/\xi}
  \left(\sum_{m=0}^{p-1}\beta_{p,m}\right)
  \bigl(1+O(N^{-1/3})\bigr).
\end{split}
\end{equation}
Now from \eqref{eq:beta_pm} and \eqref{eq:Habiro_Le}, the sum in the parentheses is just $J_{p}\bigl(\FE;e^{4\pi^2N/\xi}\bigr)$.
Therefore we finally have
\begin{align*}
  &J_N\left(\FE;e^{\xi/N}\right)
  \\
  =&
  J_p\left(\FE;e^{4\pi^2N/\xi}\right)
  \frac{\Gamma(1/3)e^{\pi\i/6}}{3^{1/6}}
  \left(\frac{N}{\xi}\right)^{2/3}\exp\left(\frac{2\kappa\pi\i}{\xi}N\right)
  \left(1+O(N^{-1/3})\right),
\end{align*}
where we replace $e^{4p\pi^2N/\xi}$ with $e^{(4p\pi^2N)/\xi+2N\pi\i}=e^{2N\kappa\pi\i/\xi}$ on purpose; see \S~\ref{sec:CS}.
Note that we choose the argument of $\xi^{2/3}$ as $\frac{2}{3}\arctan(2p\pi/\kappa)$, which is between $\pi/6$ and $\pi/3$.
\begin{proof}[Proof of Corollary~\ref{cor:conjugate}]
Since the figure-eight knot is amphicheiral, that is, it is equivalent to its mirror image, we have $J_N(\FE;q^{-1})=J_N(\FE;q)$.
It follows that $J_N\left(\FE;e^{\xi'/N}\right)=J_N\left(\FE;e^{-\xi'/N}\right)=J_N\left(\FE;e^{\overline{\xi}/N}\right)=\overline{J_N\left(\FE;e^{\xi/N}\right)}$, where $\overline{\xi}$ is the complex conjugate.
So we obtain
\begin{equation*}
\begin{split}
  &J_N\left(\FE;e^{\xi'/N}\right)
  \\
  \underset{N\to\infty}{\sim}&
  \overline{J_p\left(\FE;e^{4\pi^2N/\xi}\right)}
  \frac{\Gamma(1/3)e^{-\pi\i/6}}{3^{1/6}}
  \left(\frac{N}{\overline{\xi}}\right)^{2/3}
  \exp\left(\frac{-2\kappa\pi\i}{\overline{\xi}}N\right)
  \\
  \underset{\phantom{N\to\infty}}{=}&
  J_p\left(\FE;e^{4\pi^2N/\xi'}\right)
  \frac{\Gamma(1/3)e^{\pi\i/6}}{3^{1/6}}
  \left(\frac{N}{\xi'}\right)^{2/3}
  \exp\left(\frac{S_{-\kappa}(\FE)}{\xi'}N\right),
\end{split}
\end{equation*}
where we put $(\xi')^{1/3}:=|\xi'|^{1/3}e^{-\arctan(2p\pi/\kappa)\i/3}\times e^{-\pi\i/3}$.
The last equality follows since $e^{-2\kappa\pi\i N/\overline{\xi}}=e^{2\kappa\pi\i N/\xi'}=e^{2\kappa\pi\i N/\xi'+4N\pi\i}=e^{(-2\kappa\pi\i-8pN\pi^2)N/\xi'}=e^{(S_{-\kappa}(\FE)-8pN\pi^2)/\xi'}$ and the Chern--Simons invariant is defined modulo an integer multiple of $\pi^2$ (see \S~\ref{sec:CS}).
\end{proof}
\section{Chern--Simons invariant}\label{sec:CS}
In this section, we show a relation between $S_{\kappa}(E)=2\kappa\pi\i$ appearing in Theorem~\ref{thm:main} to the Chern--Simons invariant.
For the definition of the Chern--Simons invariant of a representation from the fundamental group of a three-manifold with toric boundary to $\SL(2;\C)$, we refer the readers to \cite{Kirk/Klassen:COMMP1993}.
\par
Let $W$ be the three-manifold obtained from $S^3$ by removing the open tubular neighborhood of a knot $K\subset{S^3}$.
We denote by $X(W)$ the $\SL(2;\C)$ character variety, that is, the set of characters of representations from $\pi_1(W)$ to $\SL(2;\C)$.
Let $E(\partial{W})$ be the quotient space $\bigl(\Hom(\pi_1(\partial{W}),\C)\times\C^{\times}\bigr)/G$, where $\C^{\times}:=\C\setminus\{0\}$ and $G:=\langle x,y,b\mid xy=yx,bxbx=byby=b^2=1\rangle$ acts on $\Hom(\pi_1(\partial{W}),\C)\times\C^{\times}$ as follows:
\begin{equation}\label{eq:G_action}
\begin{split}
  x\cdot(\alpha,\beta;z)
  &:=
  (\alpha+1/2,\beta;z\exp(-4\pi\i\beta)),
  \\
  y\cdot(\alpha,\beta;z)
  &:=
  (\alpha,\beta+1/2;z\exp(4\pi\i\alpha)),
  \\
  b\cdot(\alpha,\beta)
  &:=
  (-\alpha,-\beta;z).
\end{split}
\end{equation}
Here we fix a generator $(\mu^{\ast},\lambda^{\ast})\in\Hom(\pi_1(\partial{W});\C)\cong\C^2$ for a meridian $\mu$ (homotopy class of the loop that goes around $K$) and a preferred longitude $\lambda$ (homotopy class of the loop that goes along $K$ so that its linking number with $K$ is zero).
Then the projection $p\colon E(\partial{W})\to X(\partial{W})$ sending $[\alpha,\beta;z]$ to $[\alpha,\beta]$ becomes a $\C^{\times}$-bundle, where the square brackets mean the equivalence class.
\par
The $\SL(2;\C)$ Chern--Simons invariant of $W$ defines a lift $\cs_{W}\colon X(W)\to E(\partial{W})$ of $X(W)\xrightarrow{i^{\ast}}X(\partial{W})$, that is, $p\circ c_{W}=i^{\ast}$ holds, where $i^{\ast}$ is induced by the inclusion map $i\colon\partial{W}\to W$.
\begin{equation*}
\begin{tikzcd}
                                            &E(\partial{W})\arrow[d,"p"] \\
X(W)\arrow[ru,"\cs_{W}"]\arrow[r,"i^{\ast}"]&X(\partial{W})
\end{tikzcd}
\end{equation*}
For a representation $\rho$, we have $\cs_{W}([\rho])=\left[\frac{u}{4\pi\i},\frac{v}{4\pi\i};\exp\left(\frac{2}{\pi\i}\CS_{u,v}(\rho)\right)\right]$ if $\rho(\mu)=\begin{pmatrix}e^{u/2}&\ast\\0&e^{-u/2}\end{pmatrix}$ and $\rho(\lambda)=\begin{pmatrix}e^{v/2}&\ast\\0&e^{-v/2}\end{pmatrix}$ up to conjugation, where $[\rho]\in X(W)$ means the equivalence class, and $\CS_{u,v}(\rho)$ is the $\SL(2;\C)$ Chern--Simons invariant of $\rho$ associated with $(u,v)$.
Note that $\CS_{u,v}(\rho)$ is defined modulo $\pi^2$, and depends on the choice of branches of logarithms of $e^{u/2}$ and $e^{v/2}$.
\par
Now, we calculate the $\SL(2;\C)$ Chern--Simons invariant of the figure-eight knot.
See also \cite[\S~5.2]{Murakami/Yokota:2018} for calculation about the figure-eight knot complement.
\par
\begin{figure}[h]
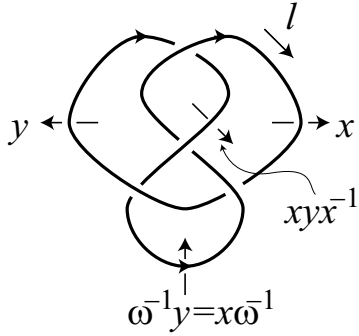

\pic{0.3}{fig8}
\caption{The figure-eight knot $\FE$ and generators of $G_{\FE}:=\pi_1(S^3\setminus{\FE})$.}
\label{fig:pi1_fig8}
\end{figure}
By using generators $x,y$ as indicated in Figure~\ref{fig:pi1_fig8}, the fundamental group $G_{\FE}:=\pi_1(S^3\setminus\FE)$ has a presentation $\langle x,y\mid \omega x=y\omega\rangle$, where $\omega:=xy^{-1}x^{-1}y$.
We choose (the homotopy class of) $x$ as the meridian $\mu$, and (the homotopy class of) $l$ depicted in Figure~\ref{fig:pi1_fig8} as the preferred longitude $\lambda$.
The loop $l$ presents the element $x\omega^{-1}\overleftarrow{\omega}^{-1}x^{-1}\in G_{\FE}$, where $\overleftarrow\omega:=yx^{-1}y^{-1}x$ is the word obtained from $\omega$ by reading backward.
Due to \cite{Riley:QUAJM31984} (see also \cite[\S~3]{Murakami:ACTMV2008}), for a real number $u$ with $0\le u\le\kappa$ we consider the non-Abelian representation $\rho_{u}\colon G_{\FE}\to\SL(2;\C)$ sending $x$ to $\begin{pmatrix}e^{u/2}&1\\0&e^{-u/2}\end{pmatrix}$ and $y$ to $\begin{pmatrix}e^{u/2}&0\\d&e^{-u/2}\end{pmatrix}$, where $d$ is given as
\begin{equation*}
  d
  :=
  \frac{3}{2}-\cosh{u}
  +
  \frac{1}{2}\sqrt{(2\cosh(u)+1)(2\cosh(u)-3)}.
\end{equation*}
The preferred longitude is sent to $\begin{pmatrix}e^{v(u)/2}&\ast\\0&e^{-v(u)/2}\end{pmatrix}$,
where
\begin{equation*}
\begin{split}
  v(u)
  :=&
  2\log
  \left(
    \cosh(2u)-\cosh(u)-1
    -
    \sinh(u)\sqrt{(2\cosh(u)+1)(2\cosh(u)-3)}
  \right)
  \\
  &+2\pi\i.
\end{split}
\end{equation*}
Here we add $2\pi\i$ so that $v(0)=0$.
\par
It is well known \cite{Thurston:GT3M} that when $u=0$, the irreducible representation $\rho_{0}$ induces a complete hyperbolic structure in $S^3\setminus{\FE}$, and when $0<u<\kappa$, $\rho_{u}$ is irreducible and induces an incomplete hyperbolic structure.
When $u=\kappa$, the representation $\rho_{\kappa}$ becomes reducible (and non-Abelian), and the hyperbolic structure collapses.
In fact, in this case, both $x$ and $y$ are sent to upper triangular matrices, and so every element of $G_{\FE})$ is sent to an upper triangular matrix, which means that $\rho_{\kappa}$ is reducible.
This kind of reducible and non-Abelian representation is called affine, and corresponds to the zeroes of the Alexander polynomial.
See \cite{Burde:MATHA1967}, \cite{deRham:ENSEM21967}, \cite[Exercise~11.2]{Kauffman:Knots}, \cite[2.4.3. Corollary]{Le:RUSSM1993}.
\par
Now, we calculate the $\SL(2;\C)$ Chern--Simons invariant $\CS_{\kappa,v(\kappa)}(\rho_{\kappa})$ associated with $(\kappa,v(\kappa))=(\kappa,2\pi\i)$.
See \cite{Kirk/Klassen:COMMP1993} for details.
\par
Since the Chern--Simons invariant of a representation is determined by its character, and $\rho_{\kappa}$ shares the same character (trace) with the Abelian representation $\rho^{\rm{Abel}}_{\kappa}$ sending $\mu:=x$ to the diagonal matrix $\begin{pmatrix}e^{\kappa/2}&0\\0&e^{-\kappa/2}\end{pmatrix}$ and $\lambda:=l$ to the identity matrix, it can be easily seen that $\cs_{W}(\rho^{\rm{Abel}}_{\kappa})=\left[\frac{\kappa}{4\pi\i},0;1\right]$, where we put $W:=S^3\setminus{N(\FE)}$ with $N(\FE)$ is the open tubular neighborhood of $\FE$ in $S^3$.
Since we have
\begin{equation*}
  \left[\frac{\kappa}{4\pi\i},0;1\right]
  =
  \left[\frac{\kappa}{4\pi\i},\frac{1}{2};e^{\kappa}\right]
\end{equation*}
from \eqref{eq:G_action}, we conclude that $\CS_{\kappa,2\pi\i}(\rho_{\kappa})=\kappa\pi\i/2$.
Note that here we change the pair $(\kappa,0)$ to $(\kappa,2\pi\i)$.
\par
As in \cite{Murakami:JTOP2013}, if we define
\begin{equation}\label{eq:S_fig8}
  S_{u}(\FE)
  :=
  \CS_{u,v(u)}(\rho_{u})
  +\pi\i u+\frac{1}{4}uv(u)
\end{equation}
for $0\le u\le\kappa$, then we see that $S_{\kappa}(\FE)=2\kappa\pi\i$ when $\bigl(u,v(u)\bigr)=(\kappa,2\pi\i)$.
\par
Similarly, we see that $\CS_{-\kappa,2\pi\i}(\rho_{-\kappa})=-\kappa\pi\i/2$, and that $S_{-u}(\FE)=-2\kappa\pi\i$.

\appendix
\section{Proof of the Poisson summation formula}\label{sec:Poisson_proof}
In this appendix, we give a proof of the Poisson summation formula following \cite[Proposition~4.2]{Ohtsuki:QT2016}.
\begin{proof}[Proof of Proposition~\ref{prop:Poisson}]
Let $\varepsilon>0$ be small enough so that $\Re{\psi(a)}<-\varepsilon$,
$\Re{\psi(b)}<-\varepsilon$, $\Re{\psi(z)}-2\pi\Im{z}<-\varepsilon$ if $z\in C_{+}$, and $\Re{\psi(z)}+2\pi\Im{z}<-\varepsilon$ if $z\in C_{-}$.
Then for sufficiently large $N$, the following also hold:
\renewcommand{\theenumi}{\roman{enumi}}
\begin{enumerate}
\item\label{item:psi(a)}
$\Re{\psi_N(a)}<-\varepsilon$,
\item\label{item:psi(b)}
$\Re{\psi_N(b)}<-\varepsilon$,
\item\label{item:C+}
$\Re{\psi_N(z)}-2\pi\Im{z}<-\varepsilon$ if $z\in C_{+}$,
\item\label{item:C-}
$\Re{\psi_N(z)}+2\pi\Im{z}<-\varepsilon$ if $z\in C_{-}$.
\end{enumerate}
Moreover, there exists $\delta>0$ such that $\Re{\psi_N(t)}<-\varepsilon$ if $t\in[a,a+\delta]$ or $t\in[b-\delta,b]$ from (\ref{item:psi(a)}) and (\ref{item:psi(b)}) for such $N$.
\par
Let $\beta\colon\R\to[0,1]$ be a $C^{\infty}$-function such that
\begin{equation*}
  \beta(t)
  =
  \begin{cases}
    1&\text{if $t\in[a+\delta,b-\delta]$},
    \\
    0&\text{if $t<a$ or $t>b$}.
  \end{cases}
\end{equation*}
We also assume that $\beta(t)$ is in the Schwartz space $\mathcal{S}(\R)$, that is, $\sup_{x\in\R}\left|x^{m}f^{(n)}(x)\right|<\infty$ for any non-negative integers $m$ and $n$.
Put $\Psi_N(x):=\beta(x/N)e^{N\psi_N(x/N)}$.
\par
We have
\begin{equation}\label{eq:a_delta_f}
  \left|\sum_{a\le k/N<a+\delta}e^{N\psi_N(k/N)}\right|
  \le
  \sum_{a\le k/N<a+\delta}e^{N\Re{\psi_N(k/N)}}
  <
  \delta Ne^{-\varepsilon N},
\end{equation}
where the second inequality follows since $\Re{\psi_N(k/N)}<-\varepsilon$ when $a\le k/N\le a+\delta$.
Similarly we have
\begin{equation}\label{eq:b_delta_f}
  \left|\sum_{b-\delta\le k/N<b}e^{N\psi_N(k/N)}\right|
  <
  \delta Ne^{-\varepsilon N}.
\end{equation}
We also have
\begin{equation}\label{eq:a_delta_F}
  \left|\sum_{k/N<a+\delta}\Psi_N(k)\right|
  \le
  \sum_{a\le k/N<a+\delta}\beta(k/N)e^{N\Re{\psi_N(k/N)}}
  <
  \delta Ne^{-\varepsilon N}
\end{equation}
and
\begin{equation}\label{eq:b_delta_F}
  \left|\sum_{k/N>b-\delta}\Psi_N(k)\right|
  <
  \delta Ne^{-\varepsilon N}.
\end{equation}
Since $\Psi_N(k)=e^{N\psi_N(k/N)}$ if $a+\delta\le k/N\le b-\delta$, we have
\begin{equation}\label{eq:F_f}
\begin{split}
  &
  \left|\sum_{k\in\Z}\Psi_N(k)-\sum_{a\le k/N\le b}e^{N\psi_N(k/N)}\right|
  \\
  \le&
  \left|\sum_{k/N<a+\delta}\Psi_N(k)\right|
  +
  \left|\sum_{a\le k/N<a+\delta}e^{N\psi_N(k/N)})\right|
  \\
  &+
  \left|\sum_{b-\delta<k/N\le b}\Psi_N(k)\right|
  +
  \left|\sum_{k/N>b-\delta}e^{N\psi_N(k/N)})\right|
  \\
  <&
  4\delta Ne^{-\varepsilon N}
\end{split}
\end{equation}
from \eqref{eq:a_delta_f}--\eqref{eq:b_delta_F}.
\par
Since $\Psi_N(t)$ is also in $\mathcal{S}(\R)$, we can apply the Poisson summation formula (see for example \cite[Theorem~3.1]{Stein/Shakarchi2003}):
\begin{equation}\label{eq:F_Fhat}
  \sum_{k\in\Z}\Psi_N(k)
  =
  \sum_{l\in\Z}\hat\Psi_N(l),
\end{equation}
where $\hat\Psi_N$ is the Fourier transform of $\Psi_N$, that is, $\hat\Psi_N(l):=\int_{-\infty}^{\infty}\Psi_N(t)e^{-2l\pi\i t}\,dt$.
\par
Putting $s:=t/N$, we have
\begin{equation}\label{eq:Fhat}
  \hat\Psi_N(l)
  =
  N
  \int_{-\infty}^{\infty}
  \beta(s)e^{N\bigl(\psi_N(s)-2l\pi\i s\bigr)}\,ds.
\end{equation}
\par
From the properties of $\beta(s)$, we have
\begin{equation}\label{eq:Fhat_0}
\begin{split}
  &\left|
    \frac{1}{N}\hat\Psi_N(0)-\int_{a}^{b}e^{N\psi_N(s)}\,ds
  \right|
  \\
  \le&
  \left|\int_{a}^{a+\delta}\bigl(\beta(s)-1\bigr)e^{N\psi_N(s)}\,ds\right|
  +
  \left|\int_{b-\delta}^{b}\bigl(\beta(s)-1\bigr)e^{N\psi_N(s)}\,ds\right|
  \\
  \le&
  \int_{a}^{a+\delta}\bigl(1-\beta(s)\bigr)e^{N\Re{\psi_N(s)}}\,ds
  +
  \int_{b-\delta}^{b}\bigl(1-\beta(s)\bigr)e^{N\Re{\psi_N(s)}}\,ds
  \\
  <&
  2\delta e^{-\varepsilon N}.
\end{split}
\end{equation}
\par
Therefore we have
\begin{equation}\label{eq:f_sum_int}
\begin{split}
  &\left|
    \frac{1}{N}\sum_{a\le k/N\le b}e^{N\psi_N(k/N)}
    -
    \int_{a}^{b}e^{N\psi_N(s)}\,ds
  \right|
  \\
  \le&
  \left|
    \frac{1}{N}\sum_{a\le k/N\le b}e^{N\psi_N(k/N)}
    -
    \frac{1}{N}
    \sum_{l\in\Z}\hat\Psi_N(l)
  \right|
  +
  \left|
    \frac{1}{N}
    \sum_{l\in\Z}\hat\Psi_N(l)
    -
    \int_{a}^{b}e^{N\psi_N(s)}\,ds
  \right|
  \\
  &\text{(from \eqref{eq:F_Fhat})}
  \\
  \le&
  \left|
    \frac{1}{N}\sum_{a\le k/N\le b}e^{N\psi_N(k/N)}
    -
    \frac{1}{N}
    \sum_{m\in\Z}\Psi_N(k)
  \right|
  +
  \left|
    \frac{1}{N}\hat\Psi_N(0)
    -
    \int_{a}^{b}e^{N\psi_N(s)}\,ds
  \right|
  \\
  &+
  \frac{1}{N}
  \sum_{l\in\Z,l\ne0}|\hat\Psi_N(l)|
  \\
  &\text{(from \eqref{eq:F_f} and \eqref{eq:Fhat_0})}
  \\
  <&
  \frac{1}{N}
  \sum_{l\in\Z,l\ne0}|\hat\Psi_N(l)|
  +
  6\delta e^{-\varepsilon N}.
\end{split}
\end{equation}
\par
Next we calculate $\hat\Psi_N(l)$ for $l\ne0$.
Integrating the right hand side of \eqref{eq:Fhat} by parts twice, we have
\begin{align*}
  \hat\Psi_N(l)
  =&
  \frac{1}{2l\pi\i}
  \int_{-\infty}^{\infty}
  \frac{d}{d\,s}\left(\beta(s)e^{N \psi_N(s)}\right)e^{-2l\pi\i Ns}\,ds
  \\
  =&
  -\frac{1}{4l^2\pi^2 N}
  \int_{-\infty}^{\infty}
  \frac{d^2}{d\,s^2}\left(\beta(s)e^{N \psi_N(s)}\right)e^{-2l\pi\i Ns}\,ds.
\end{align*}
Putting $B_N(s):=\beta''(s)+2N\beta'(s)\psi'_N(s)+N\beta(s)\psi''_N(s)+N^2\beta(s)\bigl(\psi'(s)\bigr)^{2}$ and $\tilde{B}_N(s):=N\psi''_N(s)+N^2\bigl(\psi'_N(s)\bigr)^{2}$, we have
\begin{align*}
  &-4l^2\pi^2N\hat\Psi_N(l)
  \\
  =&
  \int_{a}^{b}
  B_N(s)e^{N\bigl(\psi_N(s)-2l\pi\i s\bigr)}\,ds
  \\
  =&
  \int_{a+\delta}^{b-\delta}
  \tilde{B}_N(s)e^{N\bigl(\psi_N(s)-2l\pi\i s\bigr)}\,ds
  \\
  &+
  \int_{a}^{a+\delta}
  B_N(s)e^{N\bigl(\psi_N(s)-2l\pi\i s\bigr)}\,ds
  +
  \int_{b-\delta}^{b}
  B_N(s)e^{N\bigl(\psi_N(s)-2l\pi\i s\bigr)}\,ds
  \\
  =&
  \int_{a}^{b}
  \tilde{B}_N(s)e^{N\bigl(\psi_N(s)-2l\pi\i s\bigr)}\,ds
  \\
  &-
  \int_{a}^{a+\delta}
  \tilde{B}_N(s)e^{N\bigl(\psi_N(s)-2l\pi\i s\bigr)}\,ds
  -
  \int_{b-\delta}^{b}
  \tilde{B}_N(s)e^{N\bigl(\psi_N(s)-2l\pi\i s\bigr)}\,ds
  \\
  &+
  \int_{a}^{a+\delta}
  B_N(s)e^{N\bigl(\psi_N(s)-2l\pi\i s\bigr)}\,ds
  +
  \int_{b-\delta}^{b}
  B_N(s)e^{N\bigl(\psi_N(s)-2l\pi\i s\bigr)}\,ds,
\end{align*}
where the second equality follows because $B_N(s)=\tilde{B}_N(s)$ when $s\in[a+\delta,b-\delta]$.
So we have
\begin{equation}\label{eq:Fhat_int}
\begin{split}
  &\left|
    4l^2\pi^2 N
    \hat\Psi_N(l)
    +
    \int_{a}^{b}\tilde{B}_N(s)e^{N\bigl(\psi_N(s)-2l\pi\i s\bigr)}\,ds
  \right|
  \\
  \le&
  \left|
    \int_{a}^{a+\delta}B_N(s)e^{N\bigl(\psi_N(s)-2l\pi\i s\bigr)}\,ds
  \right|
  +
  \left|
    \int_{b-\delta}^{b}B_N(s)e^{N\bigl(\psi_N(s)-2l\pi\i s\bigr)}\,ds
  \right|
  \\
  &+
  \left|
    \int_{a}^{a+\delta}\tilde{B}_N(s)e^{N\bigl(\psi_N(s)-2l\pi\i s\bigr)}\,ds
  \right|
  +
  \left|
    \int_{b-\delta}^{b}\tilde{B}_N(s)e^{N\bigl(\psi_N(s)-2k\pi\i s\bigr)}\,ds
  \right|.
\end{split}
\end{equation}
Since $\Re{\psi_N(s)}<-\varepsilon$ if $a\le s\le a+\delta$, we have
\begin{equation}\label{eq:Ka}
\begin{split}
  \left|
    \int_{a}^{a+\delta}
    B_N(s)e^{N\bigl(\psi_N(s)-2l\pi\i s\bigr)}\,ds
  \right|
  \le&
  \int_{a}^{a+\delta}
  \left|B_N(s)\right|e^{N\Re{\psi_N(s)}}\,ds
  \\
  <&
  \delta e^{-\varepsilon N}\times\max_{s\in[a,a+\delta]}|B_N(s)|
  \le
  K_aN^2e^{-\varepsilon N},
\end{split}
\end{equation}
where we put
\begin{align*}
  K_a
  :=&
  \max_{s\in[a,a+\delta]}|\beta''(s)|
  +
  \max_{s\in[a,a+\delta]}|2\beta'(s)\psi'_N(s)|
  +
  \max_{s\in[a,a+\delta]}|\beta(s)\psi''_N(x)|
  \\
  &+
  \max_{s\in[a,a+\delta]}|\beta(s)\bigl(\psi'_N(s)\bigr)^2|
  \\
  \ge&
  \max_{s\in[a,a+\delta]}
  \left|
    \frac{\beta''(s)}{N^2}
    +\frac{2\beta'(s)\psi'_N(s)}{N}
    +\frac{\beta(s)\psi''_N(s)}{N}
    +\beta(s)\bigl(\psi'_N(s)\bigr)^2
  \right|
  \\
  =&
  \max_{s\in[a,a+\delta]}\left|B_N(s)\right|\frac{1}{N^2}.
\end{align*}
Similarly, putting
\begin{align*}
  K_b
  :=&
  \max_{s\in[b-\delta,b]}|\beta''(s)|
  +
  \max_{s\in[b-\delta,b]}|2\beta'(s)\psi'_N(s)|
  +
  \max_{s\in[b-\delta,b]}|\beta(s)\psi''_N(x)|
  \\
  &+
  \max_{s\in[b-\delta,b]}|\beta(s)\bigl(\psi'_N(s)\bigr)^2|,
  \\
  \tilde{K}_a
  :=&
  \max_{s\in[a,a+\delta]}|\psi''_N(s)|
  +
  \max_{s\in[a,a+\delta]}|\bigl(\psi'_N(s)\bigr)^2|,
  \\
  \tilde{K}_b
  :=&
  \max_{s\in[b-\delta,b]}|\psi''_N(s)|
  +
  \max_{s\in[b-\delta,b]}|\bigl(\psi'_N(s)\bigr)^2|,
\end{align*}
we have
\begin{align}
  \left|\int_{b-\delta}^{b}B_N(s)e^{N\bigl(\psi_N(s)-2k\pi\i s\bigr)}\,ds\right|
  <&
  K_bN^2e^{-\varepsilon N},
  \label{eq:Kb}
  \\
  \left|\int_{a}^{a+\delta}\tilde{B}_N(s)e^{N\bigl(\psi_N(s)-2k\pi\i s\bigr)}\,ds\right|
  <&
  \tilde{K}_aN^2e^{-\varepsilon N},
  \label{eq:tildeKa}
  \\
  \left|\int_{b-\delta}^{b}\tilde{B}_N(s)e^{N\bigl(\psi_N(s)-2k\pi\i s\bigr)}\,ds\right|
  <&
  \tilde{K}_bN^2e^{-\varepsilon N}.
  \label{eq:tildeKb}
\end{align}
Therefore we have
\begin{equation}\label{eq:hatFl}
  \left|\hat\Psi_N(l)\right|
  <
  \frac{1}{4l^2\pi^2N}
  \left|
    \int_{a}^{b}\tilde{B}_N(s)e^{N\bigl(\psi_N(s)-2l\pi\i s\bigr)}\,ds
  \right|
  +
  \frac{KN}{4l^2\pi^2}e^{-\varepsilon N}
\end{equation}
from \eqref{eq:Ka}--\eqref{eq:tildeKb}, where we put $K:=K_a+K_b+\tilde{K}_a+\tilde{K}_b$.
\par
To evaluate $\int_{a}^{b}\tilde{B}_N(s)e^{N\bigl(\psi_N(s)-2l\pi\i s\bigr)}\,ds$, we consider the paths $C_{\pm}\subset R_{\pm}$.
Note that $\tilde{B}_N$ is defined in $D$.
\par
By replacing the path $[a,b]$ with $C_{\pm}$, we have
\begin{equation}\label{eq:Cpm}
\begin{split}
  \left|\int_{a}^{b}\tilde{B}_N(s)e^{N\bigl(\psi_N(s)-2\pi\i ls\bigr)}\,ds\right|
  =&
  \left|\int_{C_{\pm}}\tilde{B}_N(z)e^{N\bigl(\psi_N(z)-2\pi\i lz\bigr)}\,dz\right|
  \\
  \le&
  \int_{C_{\pm}}\left|\tilde{B}_N(z)\right|e^{N\bigl(\Re{\psi_N(z)}+2l\pi\Im{z}\bigr)}\,|dz|
  \\
  \le&
  \max_{z\in C_{\pm}}\left|\tilde{B}_N(z)\right|
  \times
  \int_{C_{\pm}}e^{N\bigl(\Re{\psi_N(z)}+2l\pi\Im{z}\bigr)}\,|dz|
  \\
  \le&
  K_{\pm}N^2
  \int_{C_{\pm}}e^{N\bigl(\Re{\psi_N(z)}+2l\pi\Im{z}\bigr)}\,|dz|,
\end{split}
\end{equation}
where we put
\begin{align*}
  K_{\pm}
  :=&
  \max_{z\in C_{\pm}}|\psi''_N(z)|
  +
  \max_{z\in C_{\pm}}\left|\bigl(\psi'_N(z)|\bigr)^2\right|
  \\
  \ge&
  \max_{z\in C_{\pm}}
  \left|
    \frac{\psi''_N(z)}{N}+\bigl(\psi'_N(z)|\bigr)^2
  \right|
  =
  \max_{z\in C_{\pm}}\left|\tilde{B}_N(z)\right|\frac{1}{N^2}.
\end{align*}
\par
If $l\ge1$, we use $C_{-}$.
Since $C_{-}\subset R_{-}$, we have $\Im{z}\le0$ and $\Re{\psi_N(z)}+2\pi\Im{z}<-\varepsilon$  from (\ref{item:C-}).
So from \eqref{eq:Cpm}, we have
\begin{equation}\label{eq:F_positive}
  \left|\int_{a}^{b}\tilde{B}_N(s)e^{N\bigl(\psi_N(s)-2\pi\i ls\bigr)}\,ds\right|
  <
  \tilde{K}_{-}N^2e^{-\varepsilon N},
\end{equation}
where $\tilde{K}_{-}:=K_{-}\times\text{(length of $C_{-}$)}$.
\par
Similarly, if $l\le-1$, putting $\tilde{K}_{+}:=K_{+}\times\text{(length of $C_{+}$)}$, we have
\begin{equation}\label{eq:F_negative}
  \left|\int_{a}^{b}\tilde{B}_N(s)e^{N\bigl(\psi_N(s)-2\pi\i ls\bigr)}\,ds\right|
  <
  \tilde{K}_{+}N^2e^{-\varepsilon N}
\end{equation}
from (\ref{item:C+}).
\par
Therefore, from \eqref{eq:hatFl}--\eqref{eq:F_negative}, we have
\begin{align*}
  &\left|\sum_{l\in\Z,l\ne0}\hat\Psi_N(l)\right|
  \\
  <&
  \sum_{l=1}^{\infty}
  \left(
    \frac{\tilde{K}_{-}N}{4l^2\pi^2}e^{-\varepsilon N}
    +
    \frac{KN}{4l^2\pi^2}e^{-\varepsilon N}
  \right)
  +
  \sum_{l=1}^{\infty}
  \left(
    \frac{\tilde{K}_{+}N}{4l^2\pi^2}e^{-\varepsilon N}
    +
    \frac{KN}{4l^2\pi^2}e^{-\varepsilon N}
  \right)
  \\
  =&
  \left(
    \frac{\tilde{K}_{-}}{24}
    +
    \frac{\tilde{K}_{+}}{24}
    +
    \frac{K}{12}
  \right)
  Ne^{-\varepsilon N},
\end{align*}
since $\sum_{l=1}^{\infty}\frac{1}{l^2}=\frac{\pi^2}{6}$.
\par
From \eqref{eq:f_sum_int} we finally have
\begin{equation*}
  \left|
    \frac{1}{N}\sum_{a\le k/N\le b}e^{N\psi(k/N)}
    -
    \int_{a}^{b}e^{N\psi_N(s)}\,ds
  \right|
  <
  \left(
    6\delta
    +
    \frac{\tilde{K}_{-}}{24}
    +
    \frac{\tilde{K}_{+}}{24}
    +
    \frac{K}{12}
  \right)
  e^{-\varepsilon N},
\end{equation*}
proving the proposition.
\end{proof}
\section{Proof of the saddle point method of order two}\label{sec:saddle_proof}
In this appendix, we give a proof of Proposition~\ref{prop:saddle}.
\par
Let $c:=re^{\theta\i}$ be a complex number with $r>0$ and $-\pi<\theta\le\pi$, and put $U:=\{z\in\C\mid\Re(cz^3)<0\}$.
If we write $z:=se^{\tau\i}$ with $s>0$ and $\tau\in\R$, then since $cz^3=rs^3e^{(\theta+3\tau)\i}$, the region $U$ has three connected components $U_{k}$ ($k=1,2,3$):
\begin{equation}\label{eq:valley}
  U_{k}
  :=
  \left\{
    w\in\C\Bigm|
    w=se^{\tau\i},
    s>0,
    \left|\tau+\theta/3-(2k-1)\pi/3\right|<\pi/6
  \right\}.
\end{equation}
Note that $U_{k}$ ($k=1,2,3$) is obtained from $U_{k-1}$ by the $2\pi/3$-rotation around the origin $O$, where $U_{0}$ means $U_{3}$.
The origin $O$ is a saddle point of order two for the function $\Re(cz^3)$, and the regions $U_{k}$ are called valleys.
\par
First of all, we study the asymptotic behavior of the integral $\int_{C}h_N(z)e^{N\times cz^3}\,dz$ as $N\to\infty$, where $C$ is a path start at the origin into a valley, and $h_N(z)$ is a holomorphic function depending on $N$.
The following lemma follows from the techniques described in \cite[II.4]{Wong:1989}.
\begin{lem}\label{lem:cz^3}
Let $D$ be an open bounded region in $\C$ containing $O$, $h_N(z)$ be a holomorphic function in $D$ depending on a positive integer $N$, and $U_{k}$ be as above.
We assume that $h_N(z)$ uniformly converges to a holomorphic function $h(z)$ with $h(0)\ne0$ and that $\left|h_N(z)\right|$ is bounded irrelevant to $z$ or $N$.
We also assume that $U_{k}\cap D$ is connected and simply-connected for each $k$.
For a point $a\in U_{k}\cap D$, let $C\subset(U_{k}\cap D)\cup\{O\}$ be a path from $O$ to $a$.
Then we have
\begin{equation*}
  \int_{C}h_N(z)e^{N\times cz^3}\,dz
  =
  \frac{e^{\bigl((2k-1)\pi-\theta\bigr)\i/3}h(0)\Gamma(1/3)}{3r^{1/3}N^{1/3}}
  \left(1+O(N^{-1/3})\right)
\end{equation*}
as $N\to\infty$, where $\Gamma(x):=\int_{0}^{\infty}t^{x-1}e^{-t}\,dt$ is the gamma function.
\end{lem}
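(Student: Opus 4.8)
The plan is to deform the contour onto the steepest–descent ray of the valley $U_{k}$, reduce the integral to a one–dimensional Laplace integral, and finish with Watson's lemma, along the lines of \cite[II.4]{Wong:1989}.

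First I would set $\tau_{k}:=\bigl((2k-1)\pi-\theta\bigr)/3$, the argument of the bisecting ray of $U_{k}$ in \eqref{eq:valley}, and observe that $c\,e^{3\tau_{k}\i}=re^{\theta\i}e^{((2k-1)\pi-\theta)\i}=-r$, so that along $z=te^{\tau_{k}\i}$ one has $cz^{3}=-rt^{3}<0$. Fix $\delta>0$ small enough that the segment $\sigma_{\delta}:=\{te^{\tau_{k}\i}\mid 0\le t\le\delta\}$ lies in $U_{k}\cap D$. Since $h_{N}$ is holomorphic on $D$ and $U_{k}\cap D$ is connected and simply connected, Cauchy's theorem allows me to replace $C$ by $\sigma_{\delta}$ followed by a path $C'$ from $\delta e^{\tau_{k}\i}$ to $a$ contained in $U_{k}\cap D$. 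The path $C'$ is a compact subset of the open set $\{\Re(cz^{3})<0\}$, hence $\Re(cz^{3})\le-\eta$ on $C'$ for some $\eta>0$; with the uniform bound $|h_{N}|\le B$ this gives $\bigl|\int_{C'}h_{N}(z)e^{Ncz^{3}}\,dz\bigr|\le B\,\mathrm{length}(C')\,e^{-\eta N}$, which is exponentially small and so may be absorbed into the error term.

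It remains to analyse $\int_{\sigma_{\delta}}h_{N}(z)e^{Ncz^{3}}\,dz=e^{\tau_{k}\i}\int_{0}^{\delta}h_{N}(te^{\tau_{k}\i})e^{-Nrt^{3}}\,dt$. The substitution $u=Nrt^{3}$ turns this into
\begin{equation*}
  \frac{e^{\tau_{k}\i}}{3(Nr)^{1/3}}
  \int_{0}^{Nr\delta^{3}}
  h_{N}\!\left(\bigl(u/(Nr)\bigr)^{1/3}e^{\tau_{k}\i}\right)u^{-2/3}e^{-u}\,du ,
\end{equation*}
where for $u$ in this range the argument stays on $\sigma_{\delta}\subset D$. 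Writing $h_{N}(w)=h(0)+\bigl(h(w)-h(0)\bigr)+\bigl(h_{N}(w)-h(w)\bigr)$ splits the integral into three parts. For the $h(0)$ part I extend the upper limit to $\infty$ (the tail is exponentially small) and get $\dfrac{e^{\tau_{k}\i}h(0)}{3(Nr)^{1/3}}\,\Gamma(1/3)$, which is exactly the claimed leading term since $e^{\tau_{k}\i}=e^{((2k-1)\pi-\theta)\i/3}$. Using $|h(w)-h(0)|\le L|w|=L\bigl(u/(Nr)\bigr)^{1/3}$ on a neighbourhood of $O$, the second part is $O(N^{-2/3})$, with implied constant $\tfrac{1}{3}L\,\Gamma(2/3)r^{-2/3}$. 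The third part is at most $\dfrac{\Gamma(1/3)}{3(Nr)^{1/3}}\sup_{D}|h_{N}-h|$, which is $o(N^{-1/3})$ by uniform convergence and, in every application of this lemma in the paper, $O(N^{-4/3})$ because there $h_{N}-h=O(N^{-1})$. Collecting the three parts yields $\dfrac{e^{((2k-1)\pi-\theta)\i/3}h(0)\Gamma(1/3)}{3r^{1/3}N^{1/3}}\bigl(1+O(N^{-1/3})\bigr)$.

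The elementary ingredients — the path deformation and the integrals $\int_{0}^{\infty}u^{(j-2)/3}e^{-u}\,du=\Gamma\bigl((j+1)/3\bigr)$ — are routine. The step that needs care is the error analysis: because the integrand carries its own $N$–dependence through $h_{N}$, one cannot simply quote Watson's lemma for a fixed amplitude but must separate $h_{N}=h(0)+(h-h(0))+(h_{N}-h)$ and control each term, using holomorphy of $h$ near $O$ for the second and the (uniform, and in the applications rapid) convergence $h_{N}\to h$ for the third.
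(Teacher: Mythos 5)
Your proof is correct and follows essentially the same route as the paper's: deform the contour by Cauchy's theorem onto the bisecting (steepest-descent) ray of $U_{k}$, discard the exponentially small contribution away from $O$, and evaluate the resulting Laplace integral in the Watson's-lemma manner after splitting off the $N$-dependence of the amplitude. Your explicit remark that uniform convergence alone yields only $o(N^{-1/3})$ relative error from the $h_N-h$ term, so that the stated $O(N^{-1/3})$ really relies on the rate $h_N-h=O(N^{-1})$ available in the applications, pinpoints a hypothesis the paper's proof uses implicitly when it writes $h_N^{\ast}=h^{\ast}+g_N/N$ with $g_N$ bounded.
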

\begin{proof}
Let $\tilde{U}_{k}$ be the region obtained from $U_{k}$ by the $\bigl(\theta-(2k-1)\pi\bigr)/3$-rotation around $O$, that is,
\begin{equation}\label{eq:tildeU_k}
  \tilde{U}_k
  :=
  \left\{w\in\C\mid w=se^{\tau\i},s>0,|\tau|<\pi/6\right\}.
\end{equation}
The same rotation sends $D$ to $\tilde{D}$, $C$ to $\tilde{C}\subset(\tilde{U}_{k}\cap\tilde{D})\cup\{O\}$, and $a$ to $\tilde{a}:=e^{\bigl(\theta-(2k-1)\pi\bigr)\i/3}a\in\tilde{U}_{k}\cap\tilde{D}$.
See Figure~\ref{fig:U}.
\begin{figure}[h]
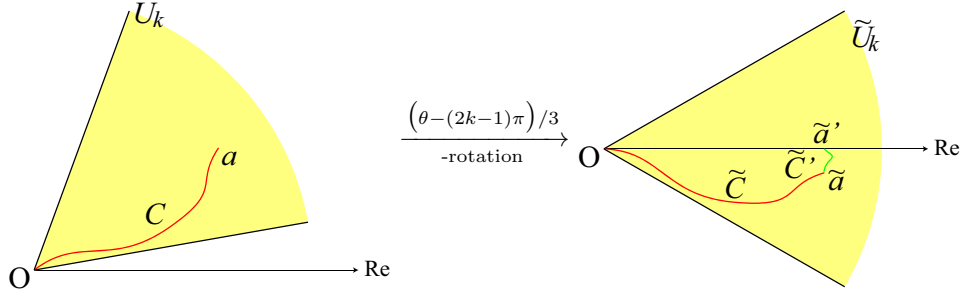

  \begin{center}
    \pic{0.24}{U}
    $\xrightarrow[\text{-rotation}]{\bigl(\theta-(2k-1)\pi\bigr)/3}$
    \pic{0.24}{U_rot}
  \end{center}
  \caption{The yellow regions are $U_k$ and $\tilde{U}_k$,
           the red curves are $C$ and $\tilde{C}$, and the green curve is $\tilde{C}'$.}
  \label{fig:U}
\end{figure}
\par
Putting
\begin{align*}
  w
  &:=
  e^{\bigl(\theta-(2k-1)\pi\bigr)\i/3}z,
  \\
  \intertext{and}
  \tilde{h}_N(w)
  &:=
  h_N\left(e^{\bigl((2k-1)\pi-\theta\bigr)\i/3}w\right),
\end{align*}
we have
\begin{equation}\label{eq:rotate}
  \int_{C}h_N(z)e^{N\times cz^3}\,dz
  =
  e^{\bigl((2k-1)\pi-\theta\bigr)\i/3}
  \int_{\tilde{C}}\tilde{h}_N(w)e^{-Nrw^3}\,dw.
\end{equation}
\par
Since $\tilde{U}_k\cap\tilde{D}$ is connected, we can choose $\tilde{a}'>0$ in $\R\cap\tilde{U}_{k}\cap\tilde{D}$ and connect $\tilde{a}$ to $\tilde{a}'$ by a path $\tilde{C}'\subset\tilde{U}_{k}\cap\tilde{D}$.
Now the function $\tilde{h}_N$ is defined in $\tilde{D}$, and we will extend $\tilde{h}_N\Bigl|_{\tilde{U}_k\cap\tilde{D}\cap\R}$ to a $C^{\infty}$ function $h_N^{\ast}(t)$ for any $t\ge0$.
Here we assume the following:
\begin{enumerate}
\item
$h_N^{\ast}(t)$ is bounded,
\item
$h_N^{\ast}(t)$ converges uniformly to a $C^{\infty}$ function $h^{\ast}(t)$,
\item$h_N^{\ast}(t)=\tilde{h}_N(t)$ and $h^{\ast}(t)=\tilde{h}(t):=h\left(e^{\bigl((2k-1)\pi-\theta\bigr)\i/3}t\right)$ for $t\in\tilde{U}_k\cap\tilde{D}\cap\R$.
\end{enumerate}
Then since $\tilde{U}_{k}\cap\tilde{D}$ is simply-connected, by Cauchy's theorem we have
\begin{equation}\label{eq:Cauchy}
  \int_{\tilde{C}}\tilde{h}_N(w)e^{-Nrw^3}\,dw
  =
  I_1-I_2-I_3,
\end{equation}
where we put
\begin{align*}
  I_1
  &:=
  \int_{0}^{\infty}h_N^{\ast}(w)e^{-Nrw^3}\,dw,
  \\
  I_2
  &:=
  \int_{\tilde{a}'}^{\infty}h_N^{\ast}(w)e^{-Nrw^3}\,dw,
  \\
  I_3
  &:=
  \int_{\tilde{C}'}\tilde{h}_N(w)e^{-Nrw^3}\,dw.
\end{align*}
\par
We use Watson's lemma \cite{Watson:PROLM1918} to evaluate $I_1$.
Putting $t:=w^3$, we have
\begin{equation*}
  I_1
  =
  \int_{0}^{\infty}
  h_N^{\ast}\left(t^{1/3}\right)\frac{1}{3t^{2/3}}e^{-Nrt}\,dt.
\end{equation*}
Since $h_N^{\ast}(s)$ uniformly converges to an analytic function $h^{\ast}(s)$ in $\tilde{D}\cap\R$, we conclude that
\begin{equation*}
  h_N^{\ast}(s)
  =
  h^{\ast}(s)
  +
  \frac{g_N(s)}{N}
\end{equation*}
with $|g_N(s)|<c$, where $c$ is a constant independent of $s$.
Since $h^{\ast}(0)=h(0)$, $h_N^{\ast}(s)$ is of the form
\begin{equation*}
  h_N^{\ast}(s)
  =
  h(0)+\frac{g_N(s)}{N}+\sum_{j=1}^{\infty}b_{j}s^j
\end{equation*}
near $0$, where $b_{j}:=\frac{1}{j!}\frac{d^j}{d\,s^j}h(0)$.
So we have
\begin{equation*}
  h_N^{\ast}\bigl(t^{1/3}\bigr)\frac{1}{3t^{2/3}}
  =
  \frac{h(0)}{3}t^{-2/3}
  +
  \frac{g_N\bigl(t^{1/3}\bigr)}{3t^{2/3}N}
  +
  \sum_{j=1}^{\infty}\frac{b_{j}}{3}t^{(j-2)/3}.
\end{equation*}
Since $|g_N(s)|<c$, we have
\begin{equation*}
  \left|
    \int_{0}^{\infty}\frac{g_N(t^{1/3})}{3t^{2/3}N}e^{-Nrt}\,dt
  \right|
  <
  \frac{c}{3N}
  \int_{0}^{\infty}t^{-2/3}e^{-Nrt}\,dt
  =
  \frac{c\Gamma(1/3)}{3r^{1/3}N^{4/3}}.
\end{equation*}
Therefore from Watson's lemma \cite[P.~133]{Watson:PROLM1918} (see also \cite[P.~20]{Wong:1989}), we have
\begin{equation}\label{eq:Watson}
\begin{split}
  I_1
  =&
  \frac{h(0)\Gamma(1/3)}{3(rN)^{1/3}}
  +
  \sum_{j=1}^{\infty}\frac{b_{j}\Gamma\bigl((j+1)/3\bigr)}{3}(rN)^{-(j+1)/3}
  +
  O(N^{-4/3})
  \\
  =&
  \frac{h(0)\Gamma(1/3)}{3(rN)^{1/3}}
  +
  O(N^{-2/3})
\end{split}
\end{equation}
as $N\to\infty$.
\par
As for $I_2$, since $|h_N^{\ast}(w)|<M$ if $w\in\R$ for some $M>0$, we have
\begin{equation}\label{eq:I2}
\begin{split}
  \left|I_2\right|
  &\le
  \int_{\tilde{a}'}^{\infty}\left|h_N^{\ast}(w)\right|e^{-rN\tilde{a}^{\prime2}w}\,dw
  =
  \frac{Me^{-{\tilde{a}^{\prime3}rN}}}{\tilde{a}^{\prime2}rN}
  <
  M_1e^{-\varepsilon_1N}
\end{split}
\end{equation}
if $N>1$, where we put $M_1:=\frac{M}{r\tilde{a}^{\prime2}}$ and $\varepsilon_1:=r\tilde{a}^{\prime3}>0$.
\par
As for $I_3$, we note that if $w\in\tilde{C}'\subset\tilde{U}_{k}$, then $\Re{w^3}>\varepsilon_2$ for some $\varepsilon_2>0$, since $\left|\arg\left(w^3\right)\right|<\pi/2$ from \eqref{eq:tildeU_k}.
So we have
\begin{equation}\label{eq:I3}
\begin{split}
  \left|I_3\right|
  &<
  \max_{w\in\tilde{C}'}\left|\tilde{h}_N(w)\right|
  \int_{\tilde{C}'}e^{-Nr\varepsilon_2}\,dw
  \le
  M_2e^{-r\varepsilon_2N},
\end{split}
\end{equation}
where we put $M_2:=\max_{w\in\tilde{C}'}\left|\tilde{h}_N(w)\right|\times\text{(length of $\tilde{C}'$)}$.
\par
From \eqref{eq:Cauchy}, \eqref{eq:I2} and \eqref{eq:I3}, we have
\begin{equation*}
  \left|
    \int_{\tilde{C}}\tilde{h}_N(z)e^{-Nrw^3}\,dw
    -
    I_1
  \right|
  \le
  \left|I_2\right|+\left|I_3\right|
  =
  O(e^{-\varepsilon_3N})
\end{equation*}
with $\varepsilon_3:=\min\{\varepsilon_1,r\varepsilon_2\}$.
Therefore from \eqref{eq:rotate} and \eqref{eq:Watson} we finally have
\begin{equation*}
  \int_{C}h_N(z)e^{N\times cz^3}\,dz
  =
  \frac{e^{\bigl((2k-1)\pi-\theta\bigr)\i/3}h(0)\Gamma(1/3)}{3r^{1/3}N^{1/3}}
  \left(1+O(N^{-1/3})\right).
\end{equation*}
This completes the proof.
\end{proof}
\begin{cor}\label{cor:cz^3}
Let $c:=re^{\theta\i}$, $D$, $h_N(z)$, $h(z)$, and $U_{k}$ be as in Lemma~\ref{lem:cz^3}.
Let $C\subset D$ be a path from $a_{k}\in U_{k}\cap D$ to $a_{k+1}\in U_{k+1}\cap D$, where $U_{4}$ means $U_{1}$.
We also assume that there exist paths $C_{k}$ from $a_{k}$ to $O$ and $C_{k+1}$ from $O$ to $a_{k+1}$ with the following properties:
\begin{enumerate}
\item
$C_{k}\setminus\{O\}\subset U_{k}\cap D$,
\item
$C_{k+1}\setminus\{O\}\subset U_{k+1}\cap D$,
\item
the path $C_{k}\cup C_{k+1}$ is homotopic to $C$ in $D$ keeping $a_{k}$ and $a_{k+1}$ fixed.
\end{enumerate}
Then we have
\begin{equation*}
  \int_{C}h_N(z)e^{N\times cz^3}\,dz
  =
  \frac{h(0)\Gamma(1/3)}{\sqrt{3}r^{1/3}N^{1/3}}\i\omega^{k}e^{-\theta\i/3}
  \left(1+O(N^{-1/3})\right),
\end{equation*}
where we put $\omega:=e^{2\pi\i/3}$.
\end{cor}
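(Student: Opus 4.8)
The plan is to deduce Corollary~\ref{cor:cz^3} from Lemma~\ref{lem:cz^3} by cutting the contour $C$ at the saddle point $O$. Since $h_N(z)e^{N\times cz^3}$ is holomorphic in $D$ and the path $C_{k}\cup C_{k+1}$ is homotopic to $C$ in $D$ keeping $a_{k}$ and $a_{k+1}$ fixed, Cauchy's theorem gives
\begin{equation*}
  \int_{C}h_N(z)e^{N\times cz^3}\,dz
  =
  \int_{C_{k}}h_N(z)e^{N\times cz^3}\,dz
  +
  \int_{C_{k+1}}h_N(z)e^{N\times cz^3}\,dz .
\end{equation*}
The path $C_{k}$ runs from $a_{k}$ to $O$, so reversing orientation and applying Lemma~\ref{lem:cz^3} to $-C_{k}\subset(U_{k}\cap D)\cup\{O\}$, a path from $O$ to $a_{k}\in U_{k}\cap D$, gives
\begin{equation*}
  \int_{C_{k}}h_N(z)e^{N\times cz^3}\,dz
  =
  -\frac{e^{\bigl((2k-1)\pi-\theta\bigr)\i/3}h(0)\Gamma(1/3)}{3r^{1/3}N^{1/3}}
  \bigl(1+O(N^{-1/3})\bigr),
\end{equation*}
while Lemma~\ref{lem:cz^3} applied directly to $C_{k+1}\subset(U_{k+1}\cap D)\cup\{O\}$, a path from $O$ to $a_{k+1}\in U_{k+1}\cap D$, gives
\begin{equation*}
  \int_{C_{k+1}}h_N(z)e^{N\times cz^3}\,dz
  =
  \frac{e^{\bigl((2k+1)\pi-\theta\bigr)\i/3}h(0)\Gamma(1/3)}{3r^{1/3}N^{1/3}}
  \bigl(1+O(N^{-1/3})\bigr),
\end{equation*}
where the exponent comes from applying the lemma with index $k+1$, so that $2(k+1)-1=2k+1$; when $k=3$ the index of $U$ is read modulo $3$, which is harmless since $e^{w}$ has period $2\pi\i$.

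Adding the two displays (the $O(N^{-1/3})$ terms simply combine), I obtain
\begin{equation*}
  \int_{C}h_N(z)e^{N\times cz^3}\,dz
  =
  \frac{h(0)\Gamma(1/3)}{3r^{1/3}N^{1/3}}
  \left(e^{\bigl((2k+1)\pi-\theta\bigr)\i/3}-e^{\bigl((2k-1)\pi-\theta\bigr)\i/3}\right)
  \bigl(1+O(N^{-1/3})\bigr),
\end{equation*}
and then factor the bracket as $e^{(2k\pi-\theta)\i/3}\bigl(e^{\pi\i/3}-e^{-\pi\i/3}\bigr)=2\i\sin(\pi/3)\,e^{(2k\pi-\theta)\i/3}=\sqrt{3}\,\i\,\omega^{k}e^{-\theta\i/3}$, using $2\sin(\pi/3)=\sqrt{3}$ and $e^{2k\pi\i/3}=\omega^{k}$. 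Substituting and simplifying $\sqrt{3}/3=1/\sqrt{3}$ yields exactly
\begin{equation*}
  \int_{C}h_N(z)e^{N\times cz^3}\,dz
  =
  \frac{h(0)\Gamma(1/3)}{\sqrt{3}r^{1/3}N^{1/3}}\i\omega^{k}e^{-\theta\i/3}
  \bigl(1+O(N^{-1/3})\bigr),
\end{equation*}
which is the claimed formula.

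This corollary carries no real difficulty; it is a bookkeeping consequence of Lemma~\ref{lem:cz^3}. The only points that deserve attention are that the orientation reversal of $C_{k}$ contributes precisely the minus sign which, together with the $e^{\pm\pi\i/3}$ splitting, assembles into the factor $\sqrt{3}\,\i$, and that the homotopy hypothesis is exactly what licenses the Cauchy-theorem splitting of $\int_{C}$ into $\int_{C_{k}}+\int_{C_{k+1}}$ --- note that $D$ itself need not be simply connected, only that $U_{k}\cap D$ and $U_{k+1}\cap D$ be connected and simply connected as required in Lemma~\ref{lem:cz^3}, and that $C$ be homotopic to $C_{k}\cup C_{k+1}$ rel endpoints.
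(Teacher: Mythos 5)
Your proof is correct and follows essentially the same route as the paper: split $C$ into $C_k\cup C_{k+1}$ by Cauchy's theorem, apply Lemma~\ref{lem:cz^3} to each half (with a sign flip for the reversed $C_k$), and combine $e^{(2k+1)\pi\i/3}-e^{(2k-1)\pi\i/3}=\sqrt{3}\,\i\,\omega^{k}$. The bookkeeping, including the factor $\sqrt{3}/3=1/\sqrt{3}$, matches the paper's computation exactly.
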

\begin{proof}
By Cauchy's theorem, we have $\int_{C}h_N(z)e^{N\times cz^3}\,dz=\int_{C_{k}\cup C_{k+1}}h_N(z)e^{N\times cz^3}\,dz$.
Then from Lemma~\ref{lem:cz^3} we have
\begin{align*}
  &\int_{C_{k}\cup C_{k+1}}h_N(z)e^{N\times cz^3}\,dz
  \\
  =&
  \frac{e^{-\theta\i/3}h(0)\Gamma(1/3)}{3r^{1/3}N^{1/3}}
  \left(e^{(2k+1)\pi\i/3}-e^{(2k-1)\pi\i/3}\right)
  \left(1+O(N^{-1/3})\right)
  \\
  =&
  \frac{e^{-\theta\i/3}h(0)\Gamma(1/3)}{\sqrt{3}r^{1/3}N^{1/3}}\i\omega^k
  \left(1+O(N^{-1/3})\right),
\end{align*}
completing the proof.
\end{proof}
\par
Now, we prove Proposition~\ref{prop:saddle}.
\begin{proof}[Proof of Proposition~\ref{prop:saddle}]
By Cauchy's theorem we study the integral $\int_{P_{k}\cup P_{k+1}}h_N(z)e^{N\eta(z)}\,dz$.
Since any point on $P_{k}$ or $P_{k+1}$ outside $\hat{D}$ satisfies the inequality $\Re\eta(z)<-\varepsilon$ for some $\varepsilon>0$, the integrals along $P_{k}$ and $P_{k+1}$ outside $\hat{D}$ is of order $O(e^{-\varepsilon N})$.
So it is enough to show that the integral $\int_{P}h_N(z)e^{N\eta(z)}\,dz$ equals the right-hand side of \eqref{eq:saddle}, where we put $P:=\bigl(-P_{k}\cup P_{k+1}\bigr)\cap\hat{D}$.
\par
Define the function $G$ so that $\eta(z)=re^{\theta\i}G(z)^3$ and that $G$ is a bijection from $\hat{D}$ to $E:=G(\hat{D})$, as described at the beginning of Section~\ref{sec:saddle}.
Let $\hat{P}$ be the image of $P$ by $G$, and $a_{k}$ and $a_{k+1}$ be the endpoints of $\hat{P}$ with $a_{k}\in V_{k}$ and $a_{k+1}\in V_{k+1}$.
Putting $w:=G(z)$ and $c:=re^{\theta\i}$, we have
\begin{equation*}
  \int_{P}h_{N}(z)e^{N\eta(z)}\,dz
  =
  \int_{\hat{P}}\gamma_N(w)e^{Ncw^3}\,dw,
\end{equation*}
since $\eta(z)=re^{\theta\i}G(z)^3$, where we put $\gamma_N(w):=h_N\bigl(G^{-1}(w)\bigr)\frac{d\,G^{-1}(w)}{d\,w}$.
Since $\frac{d}{d\,z}G(0)=1$ and $\gamma_N(w)$ converges to $\gamma(w):=h\bigl(G^{-1}(w)\bigr)\frac{d\,G^{-1}(w)}{d\,w}$, we have $\gamma(0)=h(0)$.
So from Corollary~\ref{cor:cz^3}, we conclude
\begin{equation*}
  \int_{\hat{P}}h_N(z)e^{N\eta(z)}\,dz
  =
  \frac{h(0)\Gamma(1/3)}{\sqrt{3}r^{1/3}N^{1/3}}\i\omega^{k}e^{-\theta\i/3}
  \bigl(1+O(N^{-1/3})\bigr),
\end{equation*}
completing the proof.
\end{proof}

\section{Some computer calculations on the stevedore knot}\label{sec:stevedore}
Theorem~\ref{thm:main} states that the colored Jones polynomial of the figure-eight knot $\FE$ evaluated at $(2\pi\i+\kappa)/N$ grows exponentially with growth rate determined by the Chern--Simons invariant of an affine representation associated with the pair $(\kappa,2\pi\i)$, where $e^{\kappa}=(3+\sqrt{5})/2$ is a zero of the Alexander polynomial $\Delta(\FE;t)=-t+3-t^{-1}$.
Corollary~\ref{cor:conjugate} also says that the same is true for $-\kappa$.
\par
In this appendix, we use the computer programs Mathematica \cite{Mathematica} and PARI/GP \cite{PARI2} to study the asymptotic behavior of $J_N(\St;e^{(2\pi\i\pm\tilde{\kappa})/N})$ for the stevedore knot $\St$ with $\tilde{\kappa}:=\log{2}$ , expecting a similar asymptotic behavior as $\FE$.
Note that $e^{\pm\tilde{\kappa}}=2^{\pm1}$ annihilates the Alexander polynomial $\Delta(\St;t):=-2t+5-2t^{-1}$ of $\St$.
\par
The stevedore knot $\St$ is the mirror image of the $6_1$ knot in Rolfsen's book \cite{Rolfsen:1990} (see also the Knot Atlas \cite{BarNatan/Morrison:knotatlas}) as depicted in Figure~\ref{fig:stevedore}.
Note that in the Knot Info it is denoted by $6_1$ \cite{Livingston/Moore:knotInfo}.
\begin{figure}[h]
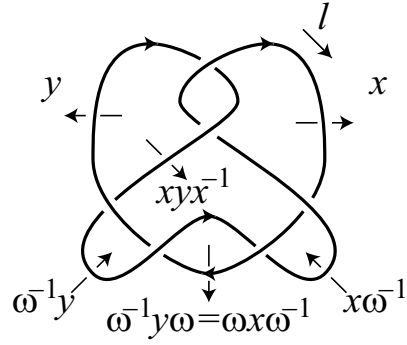

  \pic{0.3}{stevedore}
  \caption{The stevedore knot}
  \label{fig:stevedore}
\end{figure}
\par
Due to \cite[Theorem~5.1]{Masbaum:ALGGT12003}, we obtain
\begin{equation*}
  J_N(\St;q)
  =
  \sum_{k=0}^{N-1}
  q^{-k(N+k+1)}
  \prod_{a=1}^{k}\bigl((1-q^{N+a})(1-q^{N-a})\bigr)
  \sum_{l=0}^{k}
  q^{l(k+1)}
  \frac{\prod_{b=l+1}^{k}(1-q^b)}{\prod_{c=1}^{k-l}(1-q^c)}.
\end{equation*}
Put $J_N^{\pm}:=J_N\left(\St;e^{(2\pi\i\pm\tilde{\kappa})/N}\right)$.
By using PARI/GP \cite{PARI2}, we calculate $(2\pi\i\pm\tilde{\kappa})\log\left(J_{N+1}^{\pm}/J_N^{\pm}\right)$ for $N=2,3,4,\dots,200$, and plot them by using Mathematica \cite{Mathematica} in Figures~\ref{fig:stevedore_plus_kappa} and \ref{fig:stevedore_minus_kappa}.
\begin{figure}[h]
\pic{0.6}{stevedore_plus_kappa_Re}
\hfil
\pic{0.6}{stevedore_plus_kappa_Im}
\caption{Plots of the real part (left) and the imaginary part (right) of $(2\pi\i+\tilde{\kappa})\log\left(J_{N+1}^{+}/J_N^{+}\right)$ with $N=2,3,4,\dots,200$.}
\label{fig:stevedore_plus_kappa}
\end{figure}
\begin{figure}[h]
\pic{0.6}{stevedore_minus_kappa_Re}
\hfil
\pic{0.6}{stevedore_minus_kappa_Im}
\caption{Plots of the real part (left) and the imaginary part (right) of $(2\pi\i-\tilde{\kappa})\log\left(J_{N+1}^{-}/J_N^{-}\right)$ with $N=2,3,4,\dots,200$.}
\label{fig:stevedore_minus_kappa}
\end{figure}
The plots indicate that $J_N^{+}$ grows like $\exp\left(\frac{S_{+}}{2\pi\i+\tilde{\kappa}}N\right)\times\text{(polynomial in $N$)}$ with
\begin{equation}\label{eq:S+}
  S_{+}:=-6.485+5.697\i,
\end{equation}
and that $J_N^{-}$ grows like $\exp\left(\frac{S_{-}}{2\pi\i-\tilde{\kappa}}N\right)\times\text{(polynomial in $N$)}$ with
\begin{equation}\label{eq:S-}
  S_{-}:=-0.06880+8.747\i.
\end{equation}
Here we use Mathematica \cite{Mathematica} again to find the constants $S_{\pm}$ so that $S_{\pm}+c_{\pm,1}N^{-1}+c_{\pm,2}N^{-2}$ best fits the data.
Note that the constants $S_{\pm}$ are defined modulo integral multiples of $2\pi\i(2\pi\i\pm\tilde{\kappa})$, and that they may also be defined modulo integral multiples of $\pi^2$ because of the definition of the $\SL(2;\C)$ Chern--Simons invariant (see \S~\ref{sec:CS}).
\par
From Theorem~\ref{thm:main} we expect that $S_{\pm}=\pm2\tilde{\kappa}\pi\i$.
However, since $\pm2\tilde{\kappa}\pi\i=\pm4.355\i$, neither $S_{+}$ nor $S_{-}$ fits with $\pm2\tilde{\kappa}\pi\i$ even modulo $2\pi\i(\pm2\tilde{\kappa}+2\pi\i)$ nor $\pi^2$.
\par
Now, let us seek for other interpretations of $S_{\pm}$.
\par
Let $x$, $y$ be elements in the fundamental group $G_{\St}:=\pi_1(S^3\setminus\St)$ as indicated in Figure~\ref{fig:stevedore}.
Then the group $G_{\St}$ has the following presentation
\begin{equation*}
  G_{\St}
  =
  \langle
    x,y\mid\omega^2x=y\omega^2
  \rangle,
\end{equation*}
where we put $\omega:=xy^{-1}x^{-1}y$ as in the case of the figure-eight knot.
The preferred longitude $l$ is given as $x^{3}\omega^{-2}{\overleftarrow\omega}^{-2}x^{-3}$, where $\overleftarrow\omega:=yx^{-1}y^{-1}x$ as before.
\par
Let $\rho\colon G_{\St}\to\SL(2;\C)$ be a non-Abelian representation.
Due to R.~Riley it is of the form
\begin{equation*}
  \rho(x)
  =
  \begin{pmatrix}
    m^{1/2}&1 \\
    0      &m^{-1/2}
  \end{pmatrix},
  \quad
  \rho(y)
  =
  \begin{pmatrix}
    m^{1/2}&0 \\
    d     &m^{-1/2}
  \end{pmatrix}
\end{equation*}
up to conjugation, for some $m\ne0$ and $d$.
\par
From the relation $\omega^2x=y\omega^2$, $d$ and $m$ should satisfy the following equation known as Riley's equation.
\begin{multline*}
  d^4
  +\left(2(m+m^{-1})-5\right)d^3
  +\left((m^2+m^{-2})-6(m+m^{-1})+13\right)d^2
  \\
  -\left(m^2+m^{-2}-7(m+m^{-1})+14\right)d
  -\left(2(m+m^{-1})-5\right)
  =0.
\end{multline*}
We call the left hand side of this equation the Riley polynomial.
\par
If $(m,d)=(1,0.1049+1.552\i)$, then $\rho$ is the holonomy representation of $G_{\St}$ and defines the complete hyperbolic structure of $S^3\setminus\St$.
If $(m,d)=(2,0)$ or $(1/2,0)$, then $\rho$ gives an affine representation.
\par
Let us consider irreducible representations corresponding to $1\le m\le2$.
\par
The Riley polynomial is a quartic equation with respect to $d$, and there are four solutions $d_1(m), d_2(m), d_3(m)$, and $d_4(m)$.
To describe them we introduce the following functions.
Let $D(m)$ be the discriminant of the Riley polynomial with respect to $d$, that is,
\begin{multline*}
  D(m)
  :=
  5(m^{6}+m^{-6})
  -32(m^{5}+m^{-5})
  +56(m^{4}+m^{-4})
  -118(m^3+m^{-3})
  \\
  +124(m^2+m^{-2})
  +32(m+m^{-1})
  +123.
\end{multline*}
We also put
\begin{equation*}
\begin{split}
  A(m)
  :=&
  4B(m)C(m)^{-1/3}+4C(m)^{1/3}
  +3\left(2(m+m^{-1})-5\right)^2
  \\
  &
  -8\left(m^2+m^{-2}-6(m+m^{-1})+13\right),
\end{split}
\end{equation*}
where
\begin{align*}
  B(m)
  :=&
  m^4+m^{-4}-6(m^3+m^{-3})+5(m^2+m^{-2})+3(m+m^{-1})+9,
  \\
  C(m)
  :=&
  \frac{3\sqrt{3}}{2}\sqrt{-D(m)}+m^6+m^{-6}-9(m^5+m^{-5})+21(m^4+m^{-4})
  \\
  &-\frac{9}{2}(m^{3}+m^{-3})+6(m^2+m^{-2})-27(m+m^{-1})-\frac{31}{2}.
\end{align*}
We also put
\begin{align*}
  J_{\pm}(m)
  :=&
  \pm3\sqrt{3}\bigr(2(m+m^{-1})+1\bigr)A(m)^{-1/2}
  -2B(m)C(m)^{-1/3}-2C(m)^{1/3}
  \\
  &
  -8\bigl(m^2+m^{-2}-6(m+m^{-1})+13\bigr)
  +3\bigl(2(m+m^{-1})-5\bigr)^2.
\end{align*}
\par
Now define the following four functions for $1\le m\le2$.
\begin{align*}
  d_1(m)
  &:=
  \frac{-1}{12}
  \left(6(m+m^{-1})-15+\sqrt{3}\sqrt{A(m)}+\sqrt{6}\sqrt{J_{-}(m)}\right),
  \\
  d_2(m)
  &:=
  \frac{-1}{12}
  \left(6(m+m^{-1})-15+\sqrt{3}\sqrt{A(m)}-\sqrt{6}\sqrt{J_{-}(m)}\right),
  \\
  d_3(m)
  &:=
  \frac{-1}{12}
  \left(6(m+m^{-1})-15-\sqrt{3}\sqrt{A(m)}+\sqrt{6}\sqrt{J_{+}(m)}\right),
  \\
  d_4(m)
  &:=
  \frac{-1}{12}
  \left(6(m+m^{-1})-15-\sqrt{3}\sqrt{A(m)}-\sqrt{6}\sqrt{J_{+}(m)}\right).
\end{align*}
Note the following:
\begin{itemize}
\item
$A(m)$, $B(m)$, $D(m)$, $J_{+}(m)$, and $J_{-}(m)$ are in $\R$.
\item
$A(m)>0$, $B(m)>0$, and $J_{-}(m)<0$ for $1\le m\le2$,
\item
$D(m)>0$ for $1\le m<m_0$, $D(m_0)=0$, and $D(m)<0$ for $m_0<m\le2$, where $m_0=1.950$ is the unique solution to the equation $D(m)=0$ between $1$ and $2$,
\item
$J_{+}(m)<0$ for $1\le m<m_0$, $J_{+}(m_0)=0$, and $J_{+}(m)>0$ for $m_0<m\le2$,
\item
$\Im{C(m)}=0$ and $\Re{C(m)}>0$ for $m_0\le m\le2$, and $\Im{C(m)}>0$ for $1\le m<m_0$,
\end{itemize}
which are checked by Mathematica (the author does not have proofs).
\par
We plot, by using Mathematica, the complex valued functions $d_i(m)$ ($i=1,2,3,4$) for $1\le m\le2$ on the complex plane as in Figure~\ref{fig:d}.
\begin{figure}[h]
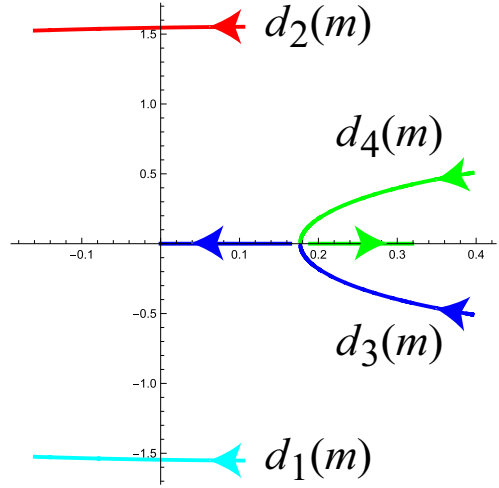

  \pic{0.7}{d}
  \caption{The cyan, red, blue, and green curves indicate
  $d_1(m)$, $d_2(m)$, $d_3(m)$, and $d_4(m)$, respectively.
  The arrows indicate the directions of increase with respect to $m$. }
  \label{fig:d}
\end{figure}
The following facts are also suggested by Mathematica (see Figure~\ref{fig:d}):
\begin{itemize}
\item
$d_2(m)=\overline{d_1(m)}$, and $d_4(m)=\overline{d_3(m)}$ for $1\le m\le2$,
\item
$d_2(1)=0.1049+1.552\i$ and $d_2(2)=-0.1595+1.525\i$.
\item
$d_3(m)\in\R$ and $d_4(m)\in\R$ for $m_0\le m\le2$,
\item
$d_3(m_0)=d_4(m_0)=0.1770$, $d_3(2)=0$, and $d_4(2)=0.3189$,
\item
$d_3(1)=0.3951-0.5068\i$.
\end{itemize}
Therefore, for each $i$, $d_i(m)$ gives an irreducible representation $\rho_m\colon G_{\St}\to\SL(2;\C)$ except for $d_3(2)$, and if $m\ne m_0$, they are mutually distinct.
\par
If we write $\rho_{d_i(m)}$ for the irreducible representation corresponding to $d_i(m)$, then we have the following:
\begin{itemize}
\item
$\rho_{d_i(1)}$ is a parabolic representation for $i=1,2,3,4$,
\item
$\rho_{d_3(2)}$ is an affine representation since $d_3(2)=0$,
\item
$\rho_{d_2(1)}$ is the holonomy representation, and $\rho_{d_1(1)}$ gives the holonomy representation for the mirror image of $\St$, because we have
\begin{align*}
  \rho_{d_2(1)}(l)
  &=
  \begin{pmatrix}
    -1&-1.827-2.565\i \\
     0&-1
  \end{pmatrix},
  \\
  \rho_{d_1(1)}(l)
  &=
  \begin{pmatrix}
    -1&-1.827+2.565\i \\
     0&-1
  \end{pmatrix}.
\end{align*}
\end{itemize}
\par
Let $\lambda(m)$ be the $(1,1)$-entry of $\rho_{d_2(m)}(l)$, and put $v(u):=2\log\lambda\bigl(e^{u/2}\bigr)$, where we choose the logarithm branch so that $v(0)=0$.
Then the $\SL(2;\C)$ Chern--Simons invariant of $\rho_{d_2(e^{u/2})}$ associated with $(u,v(u))$ is given as
\begin{equation*}
  \CS_{u,v(u)}(\rho_{d_2(e^{u/2})})
  =
  \cv(S^3\setminus\St)
  +
  \frac{1}{2}\int_{0}^{u}v(s)\,ds
  -
  \frac{1}{4}uv(u),
\end{equation*}
where $\cv(S^3\setminus\St)=-6.791 + 3.164\i$ is the complex volume, which is defined to be $\i\Vol(S^3\setminus\St)-2\pi^2\CS^{\rm{SO}(3)}(S^3\setminus\St)\pmod{\pi^2}$ with $\CS^{\rm{SO}(3)}$ the $\rm{SO}(3)$ Chern--Simons invariant of the Levi--Civita connection.
Here the complex volume and $\rm{SO}(3)$ Chern--Simons invariant are taken from the Knot Info, where $\Vol(S^3\setminus\St)=3.163963229$ and $\CS^{\rm{SO}(3)}(S^3\setminus\St)=0.155977017$.
Observe that $-0.155977017\times2\pi^2+\pi^2=6.79074$.
Note that $\cv(S^3\setminus\St)$ coincides with the $\SL(2;\C)$ Chern--Simons invariant $\CS_{(0,0)}(\rho_{2}(1))$.
See \cite[Chapter~5]{Murakami/Yokota:2018}.
\begin{figure}[h]
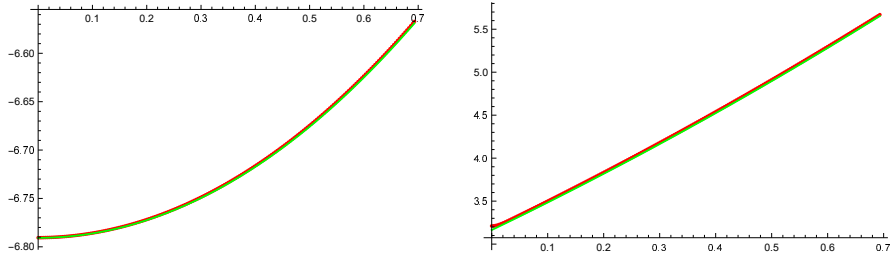

\pic{0.6}{CS_Re}
\hfil
\pic{0.6}{CS_Im}
\caption{The left picture shows the plots of the real parts of $(2\pi\i+u)\log\left(J_{201}(u)/J_{200}(u)\right)$ (red) and $S_{u}(\St)$ (green) for $0\le u\le\log{2}$ (left), and the right picture shows the plots of the imaginary parts of $(u+2\pi\i)\log\left(J_{201}(u)/J_{200}(u)\right)$ (red) and $S_{u}(\St)$ (green) for $0\le u\le\log{2}$ (left), where we put $J_N(u):=J_N(\St;e^{(u+2\pi\i)/N})$ and we use PARI/GP and Mathematica.}
\label{fig:CS}
\end{figure}
\par
Putting
\begin{equation}\label{eq:S_stevedore}
  S_{u}(\St)
  :=
  \CS_{u,v(u)}(\rho_{d_2(e^{u/2})})
  +u\pi\i+\frac{1}{4}uv(u),
\end{equation}
the graphs depicted in Figure~\ref{fig:CS} indicate that
\begin{equation*}
  J_{N}(\St;e^{(u+2\pi\i)/N})
  \underset{N\to\infty}{\sim}
  \text{(polynomial in $N$)}\times
  \exp\left(\frac{S_{u}(\St)}{u+2\pi\i}N\right)
\end{equation*}
for $0\le u\le\tilde{\kappa}=\log{2}$.
When $u=\tilde{\kappa}$, Mathematica calculates $S_{\tilde{\kappa}}(\St)=-6.569+5.653\i$, which is close to $S_{+}$ appearing in \eqref{eq:S+}.
\par
Note that the case $u=\tilde{\kappa}$ does not correspond to an affine representation.
This also suggests that for $0<u\le\tilde{\kappa}$ the representation $d_2(e^{u/2})$ induces an incomplete hyperbolic structure of $S^3\setminus\St$ but the author does not know whether it is correct or not.
The author does not know either any topological/geometric interpretation about the asymptotic behavior of $J_N(\St;e^{(u+2\pi\i)/N})$ for $u<0$.
\par
Compare this with Theorem~\ref{thm:main} and Corollary~\ref{cor:conjugate}, where $S_{\pm\kappa}(\FE)=\pm2\kappa\pi\i$ are the Chern--Simons invariants of affine representations, which correspond to the fact that when $u=\pm\kappa$ the hyperbolic structure collapses.

\bibliography{mrabbrev,hitoshi}

\def\cprime{$'$}
\providecommand{\bysame}{\leavevmode\hbox to3em{\hrulefill}\thinspace}
\providecommand{\MR}{\relax\ifhmode\unskip\space\fi MR }
\providecommand{\MRhref}[2]{%
  \href{http://www.ams.org/mathscinet-getitem?mr=#1}{#2}
}
\providecommand{\href}[2]{#2}
\begin{thebibliography}{10}

\bibitem{Andersen/Hansen:JKNOT2006}
J.~E. Andersen and S.~K. Hansen, \emph{Asymptotics of the quantum invariants
  for surgeries on the figure 8 knot}, J. Knot Theory Ramifications \textbf{15}
  (2006), no.~4, 479--548. \MR{2221531}

\bibitem{BarNatan/Morrison:knotatlas}
D.~Bar-Natan, S.~Morrison, et~al., \emph{The {K}not {A}tlas}, URL:
  \url{http://katlas.org}, July 2023.

\bibitem{Burde:MATHA1967}
G.~Burde, \emph{Darstellungen von {K}notengruppen}, Math. Ann. \textbf{173}
  (1967), 24--33. \MR{212787}

\bibitem{deRham:ENSEM21967}
G.~de~Rham, \emph{Introduction aux polyn\^{o}mes d'un n\oe ud}, Enseign. Math.
  (2) \textbf{13} (1967), 187--194 (1968). \MR{240804}

\bibitem{Dimofte/Gukov:Columbia}
T.~Dimofte and S.~Gukov, \emph{Quantum field theory and the volume conjecture},
  Interactions between hyperbolic geometry, quantum topology and number theory,
  Contemp. Math., vol. 541, Amer. Math. Soc., Providence, RI, 2011, pp.~41--67.
  \MR{2796627}

\bibitem{Dimofte/Gukov/Lenells/Zagier:CNTP2010}
T.~Dimofte, S.~Gukov, J.~Lenells, and D.~Zagier, \emph{Exact results for
  perturbative {C}hern-{S}imons theory with complex gauge group}, Commun.
  Number Theory Phys. \textbf{3} (2009), no.~2, 363--443. \MR{2551896}

\bibitem{Faddeev:LETMP1995}
L.~D. Faddeev, \emph{Discrete {H}eisenberg-{W}eyl group and modular group},
  Lett. Math. Phys. \textbf{34} (1995), no.~3, 249--254. \MR{1345554}

\bibitem{Gromov:INSHE82}
M.~Gromov, \emph{Volume and bounded cohomology}, Inst. Hautes \'{E}tudes Sci.
  Publ. Math. (1982), no.~56, 5--99 (1983). \MR{686042}

\bibitem{Gukov:COMMP2005}
S.~Gukov, \emph{Three-dimensional quantum gravity, {C}hern-{S}imons theory, and
  the {A}-polynomial}, Comm. Math. Phys. \textbf{255} (2005), no.~3, 577--627.
  \MR{2134725}

\bibitem{Gukov/Murakami:FIC2008}
S.~Gukov and H.~Murakami, \emph{{${\rm SL}(2,\mathbb C)$} {C}hern-{S}imons
  theory and the asymptotic behavior of the colored {J}ones polynomial},
  Modular forms and string duality, Fields Inst. Commun., vol.~54, Amer. Math.
  Soc., Providence, RI, 2008, pp.~261--277. \MR{2454330}

\bibitem{Habiro:SURIK2000}
K.~Habiro, \emph{On the colored {J}ones polynomials of some simple links},
  S\={u}rikaisekikenky\={u}sho K\={o}ky\={u}roku (2000), no.~1172, 34--43,
  Recent progress towards the volume conjecture (Japanese) (Kyoto, 2000).
  \MR{1805727}

\bibitem{Hikami/Murakami:COMCM2008}
K.~Hikami and H.~Murakami, \emph{Colored {J}ones polynomials with polynomial
  growth}, Commun. Contemp. Math. \textbf{10} (2008), no.~suppl. 1, 815--834.
  \MR{MR2468365}

\bibitem{Jaco/Shalen:MEMAM1979}
W.~H. Jaco and P.~B. Shalen, \emph{Seifert fibered spaces in {$3$}-manifolds},
  Mem. Amer. Math. Soc. \textbf{21} (1979), no.~220, viii+192. \MR{539411}

\bibitem{Johannson:1979}
K.~Johannson, \emph{Homotopy equivalences of {$3$}-manifolds with boundaries},
  Lecture Notes in Mathematics, vol. 761, Springer, Berlin, 1979. \MR{551744}

\bibitem{Kashaev:MODPLA95}
R.~M. Kashaev, \emph{A link invariant from quantum dilogarithm}, Modern Phys.
  Lett. A \textbf{10} (1995), no.~19, 1409--1418. \MR{1341338}

\bibitem{Kashaev:LETMP97}
\bysame, \emph{The hyperbolic volume of knots from the quantum dilogarithm},
  Lett. Math. Phys. \textbf{39} (1997), no.~3, 269--275. \MR{1434238}

\bibitem{Kauffman:Knots}
L.~H. Kauffman, \emph{On knots}, Annals of Mathematics Studies, vol. 115,
  Princeton University Press, Princeton, NJ, 1987. \MR{907872}

\bibitem{Kirk/Klassen:COMMP1993}
P.~Kirk and E.~Klassen, \emph{Chern-{S}imons invariants of {$3$}-manifolds
  decomposed along tori and the circle bundle over the representation space of
  {$T^2$}}, Comm. Math. Phys. \textbf{153} (1993), no.~3, 521--557.
  \MR{1218931}

\bibitem{Le:RUSSM1993}
T.~T.~Q. Le, \emph{Varieties of representations and their subvarieties of
  cohomology jumps for knot groups}, Mat. Sb. \textbf{184} (1993), no.~2,
  57--82. \MR{1214944}

\bibitem{Le:TOPOA2003}
\bysame, \emph{Quantum invariants of 3-manifolds: integrality, splitting, and
  perturbative expansion}, Topology Appl. \textbf{127} (2003), no.~1-2,
  125--152. \MR{1953323}

\bibitem{Livingston/Moore:knotInfo}
C.~Livingston and A.~H. Moore, \emph{Knotinfo: Table of knot invariants}, URL:
  \url{knotinfo.math.indiana.edu}, May 2023.

\bibitem{Masbaum:ALGGT12003}
G.~Masbaum, \emph{Skein-theoretical derivation of some formulas of {H}abiro},
  Algebr. Geom. Topol. \textbf{3} (2003), 537--556. \MR{1997328}

\bibitem{Murakami:JPJGT2007}
H.~Murakami, \emph{The colored {J}ones polynomials and the {A}lexander
  polynomial of the figure-eight knot}, JP J. Geom. Topol. \textbf{7} (2007),
  no.~2, 249--269. \MR{2349300}

\bibitem{Murakami:ACTMV2008}
\bysame, \emph{An introduction to the volume conjecture and its
  generalizations}, Acta Math. Vietnam. \textbf{33} (2008), no.~3, 219--253.
  \MR{2501844}

\bibitem{Murakami:JTOP2013}
\bysame, \emph{The coloured {J}ones polynomial, the {C}hern-{S}imons invariant,
  and the {R}eidemeister torsion of the figure-eight knot}, J. Topol.
  \textbf{6} (2013), no.~1, 193--216. \MR{3029425}

\bibitem{Murakami:CANJM2023}
\bysame, \emph{{The colored Jones polynomial of the figure-eight knot and a
  quantum modularity}}, Canad. J. Math. (2023), 1--31.

\bibitem{Murakami/Murakami:ACTAM12001}
H.~Murakami and J.~Murakami, \emph{The colored {J}ones polynomials and the
  simplicial volume of a knot}, Acta Math. \textbf{186} (2001), no.~1, 85--104.
  \MR{1828373}

\bibitem{Murakami/Murakami/Okamoto/Takata/Yokota:EXPMA02}
H.~Murakami, J.~Murakami, M.~Okamoto, T.~Takata, and Y.~Yokota, \emph{Kashaev's
  conjecture and the {C}hern-{S}imons invariants of knots and links},
  Experiment. Math. \textbf{11} (2002), no.~3, 427--435. \MR{1959752}

\bibitem{Murakami/Tran:2021}
H.~Murakami and A.~T. Tran, \emph{On the asymptotic behavior of the colored
  {J}ones polynomial of the figure-eight knot associated with a real number},
  arXiv:2109.04664 [math.GT], 2021.

\bibitem{Murakami/Yokota:JREIA2007}
H.~Murakami and Y.~Yokota, \emph{The colored {J}ones polynomials of the
  figure-eight knot and its {D}ehn surgery spaces}, J. Reine Angew. Math.
  \textbf{607} (2007), 47--68. \MR{2338120}

\bibitem{Murakami/Yokota:2018}
\bysame, \emph{Volume conjecture for knots}, SpringerBriefs in Mathematical
  Physics, vol.~30, Springer, Singapore, 2018. \MR{3837111}

\bibitem{Ohtsuki:QT2016}
T.~Ohtsuki, \emph{On the asymptotic expansion of the {K}ashaev invariant of the
  {$5_2$} knot}, Quantum Topol. \textbf{7} (2016), no.~4, 669--735.
  \MR{3593566}

\bibitem{Ohtsuki:INTJM62017}
\bysame, \emph{On the asymptotic expansions of the {K}ashaev invariant of
  hyperbolic knots with seven crossings}, Internat. J. Math. \textbf{28}
  (2017), no.~13, 1750096, 143. \MR{3737074}

\bibitem{Ohtsuki/Yokota:MATPC2018}
T.~Ohtsuki and Y.~Yokota, \emph{On the asymptotic expansions of the {K}ashaev
  invariant of the knots with 6 crossings}, Math. Proc. Cambridge Philos. Soc.
  \textbf{165} (2018), no.~2, 287--339. \MR{3834003}

\bibitem{Porti:MAMCAU1997}
J.~Porti, \emph{Torsion de {R}eidemeister pour les vari\'{e}t\'{e}s
  hyperboliques}, Mem. Amer. Math. Soc. \textbf{128} (1997), no.~612, x+139.
  \MR{1396960}

\bibitem{Riley:QUAJM31984}
R.~Riley, \emph{Nonabelian representations of {$2$}-bridge knot groups}, Quart.
  J. Math. Oxford Ser. (2) \textbf{35} (1984), no.~138, 191--208. \MR{745421}

\bibitem{Rolfsen:1990}
D.~Rolfsen, \emph{Knots and links}, Mathematics Lecture Series, vol.~7, Publish
  or Perish, Inc., Houston, TX, 1990, Corrected reprint of the 1976 original.
  \MR{1277811}

\bibitem{Stein/Shakarchi2003}
E.~M. Stein and R.~Shakarchi, \emph{Fourier analysis}, Princeton Lectures in
  Analysis, vol.~1, Princeton University Press, Princeton, NJ, 2003, An
  introduction. \MR{1970295}

\bibitem{PARI2}
{The PARI~Group}, Univ. Bordeaux, \emph{{PARI/GP version \texttt{2.15.3}}},
  2023, available from \url{http://pari.math.u-bordeaux.fr/}.

\bibitem{Thurston:GT3M}
W.~P. Thurston, \emph{The geometry and topology of three-manifolds. {V}ol.
  {IV}}, American Mathematical Society, Providence, RI, [2022] \copyright 2022,
  Edited and with a preface by Steven P. Kerckhoff and a chapter by J. W.
  Milnor. \MR{4554426}

\bibitem{Watson:PROLM1918}
G.~N. Watson, \emph{The {H}armonic {F}unctions {A}ssociated with the
  {P}arabolic {C}ylinder}, Proc. London Math. Soc. (2) \textbf{17} (1918),
  116--148. \MR{1575566}

\bibitem{Mathematica}
{Wolfram Research, Inc.}, \emph{Mathematica, {V}ersion 13.2},
  \url{https://www.wolfram.com/mathematica}, Champaign, IL, 2022.

\bibitem{Wong:1989}
R.~Wong, \emph{Asymptotic approximations of integrals}, Computer Science and
  Scientific Computing, Academic Press, Inc., Boston, MA, 1989. \MR{1016818}

\end{thebibliography}
\bibliographystyle{amsplain}
\end{document}